\newtheorem{Theorem}{Theorem}[section]
\newtheorem{Definition}[Theorem]{Definition}
\newtheorem{Proposition}[Theorem]{Proposition}
\newtheorem{Lemma}[Theorem]{Lemma}
\newtheorem{Corollary}[Theorem]{Corollary}
\newtheorem{Remark}[Theorem]{Remark}
\newtheorem{Hypothesis}[Theorem]{Hypothesis}
\def\R{\mathbb R}
\def\N{\mathbb N}
\def\E{\mathbb E}
\def\P{\mathbb P}
\def\eps{\varepsilon}
\def\ds{\displaystyle}
\newcommand{\esssup}{\operatorname{ess\,sup}}
\newcommand{\essinf}{\operatorname{ess\,inf}}
\newcommand{\one}{1\mkern -4mu\mathrm{l}}
\title[Surface integrals for general measures in Hilbert spaces]{\bf Malliavin Calculus for non Gaussian differentiable measures and surface measures in Hilbert spaces}
\author[G. Da Prato]{Giuseppe Da Prato}
\address{Scuola Normale Superiore\\
Piazza dei Cavalieri, 7\\ 
56126 Pisa, Italy}
\email{g.daprato@sns.it}
\author[A. Lunardi]{Alessandra Lunardi}
\address{
Dipartimento di Matematica e Informatica\\
Universit\`a di Parma\\
Parco Area delle Scienze, 53/A\\
43124 Parma, Italy}
\email{alessandra.lunardi@unipr.it}
\author[L. Tubaro]{Luciano Tubaro}
\address{
Dipartimento di Matematica\\
Universit\`a di Trento\\
Via Sommarive 14\\
38123 Povo, Italy}
\email{tubaro@science.unitn.it}
\subjclass[2010]{28C20, 60H15, 35R15}
\keywords{Infinite dimensional analysis, probability measures in Hilbert spaces, surface integrals in Hilbert spaces,  invariant measures, stochastic PDEs.}
\begin{document}

 \begin{abstract}  
We construct surface measures in a Hilbert space  endowed with a probability measure $\nu$.  The theory fits 
for invariant measures of some stochastic partial differential equations such as  Burgers and reaction--diffusion equations. 
Other examples are weighted Gaussian measures and special product measures $\nu$ of non Gaussian measures; in this case we exhibit a Markov process having $\nu$ as invariant measure. In any case we prove integration by parts formulae on sublevel sets of good functions (including spheres and hyperplanes) that involve  surface integrals. 

 \end{abstract}

\maketitle

 \tableofcontents 
  

\section{Introduction}
Let $X$ be a separable  infinite dimensional Hilbert space with norm $\|\cdot \|$ and  inner product $\langle\cdot,\cdot   \rangle$, endowed with a non degenerate  Borel probability measure   $\nu$.  

 In this paper we define Sobolev spaces with respect to $\nu$, 
we construct surface measures naturally associated to $\nu$, and we describe their main properties. In particular, we aim at  integration by parts formulae for Sobolev functions, that involve traces of Sobolev functions on regular surfaces, and to an infinite dimensional  (non Gaussian) version  of the Divergence Theorem.

The surfaces considered here are level surfaces of a  Borel  function  $g $  that satisfies some   regularity and nondegeneracy assumptions,  which guarantee that such   level surfaces are smooth enough. 

In the case of Gaussian measures this problem has been extensively studied by different approaches. We quote here  \cite{Sk74,Ug79,AiMa88,FePr92,Ma97,Boga,Hi10,AMMP,DaLuTu14}, for an extensive bibliography see the review paper \cite{BogaReview}. 

The approach initiated by Airault and Malliavin in  \cite{AiMa88} for the Wiener measure in the space $X= \{ f\in C([0,1];\R):\; f(0)=0\}$ is naturally extendable to many other settings. It consists in the study of the function
$$F_{\varphi}(r )= \int_{\{x:\,g(x) \leq  r\}} \varphi(x)\nu(dx), \quad r\in \R ,  $$
which is well defined for every  $\varphi\in L^1(X, \nu)$. If $F_{\varphi}$ is differentiable at $r$, its derivative $F_{\varphi}'(r )$ is the candidate to be a surface integral, 
\begin{equation}
\label{scopo}
F_{\varphi}'(r ) = \int_X \varphi\,d\sigma^g_r . 
\end{equation}
It turns out that  $F_{\varphi}$ is differentiable   for good enough functions $\varphi$, and the second step of the construction is to show that 
there exists a measure $\sigma_r^g$ such that  \eqref{scopo} holds. Then, one needs to show that for every $r\in \R$, $\sigma_r^g $ is supported in $g^{-1}(r)$ for a suitable version of $g$, and to clarify the dependence on $g$. The equality  \eqref{scopo} is also a useful tool to prove an infinite dimensional version of the Divergence Theorem (or, of  integration by parts formulae). 
 This approach was  followed e.g. in \cite{Boga,DaLuTu14}, for Gaussian measures in Banach spaces, and in \cite{BogaMal} for general differentiable measures. Notice that if $\varphi \equiv 1$ and $g(x)$ is the distance of $x$ from a given hypersurface $\Sigma$, $F_1'(0)$ is just   the Minkowski content of $\Sigma$. 

A completely different approach is the one by Feyel and de La Pradelle, who constructed an infinite dimensional Hausdorff--Gauss surface measure by approximation with finite dimensional Hausdorff--Gauss surface measures \cite{FePr92}.  It uses in a very important way the structure of Gaussian  measures and it seems to be hardly extendable to non Gaussian settings, especially in the case of non product measures. 

A third approach comes from the general geometric measure theory, that relies on the theory of the BV functions (functions with bounded variation). BV functions for Gaussian measures in Banach spaces were studied e.g. in \cite{F,FH,AMMP}. By definition, a  Borel set $B$ has finite perimeter if its characteristic function is BV; in this case the perimeter measure is defined and its support is contained in the boundary of $B$. For good enough sets $B$, the perimeter measure coincides with the restriction to the boundary of $B$ of the surface measure  of Feyel and de La Pradelle; for a proof see \cite{CeLu14}.  
 
In our general framework we shall follow the first approach, and we are particularly interested in the case where $\nu$ is the invariant measure of some nonlinear
stochastic PDE. In the  case of linear equations, $\nu$ is a Gaussian measure and we refer to our paper \cite{DaLuTu14}. 

Let us describe our procedure. 
As usual, we denote by $C^1_b(X)$ the space of the bounded and continuously Fr\'echet differentiable functions $f:X \mapsto \R$ having  gradient with bounded norm,  by $\nabla f(x)$ the gradient of $f$ at $x$, and by $\partial_zf(x) = \langle \nabla f(x), z\rangle$ the derivative of $f$ at $x$ along any $z\in X$. 

 Our starting assumption is the following. 

\begin{Hypothesis}
\label{h1}
 There exists a linear bounded operator $R\in {\mathcal L}(X)$ such that $R\nabla :$ dom$\,(R\nabla)= C^1_b(X)\mapsto L^p(X, \nu; X)$ is closable in $L^p(X,\nu)$,  for any $p\in (1, +\infty)$. 
 \end{Hypothesis}

Then we denote by $W^{1,p}(X,\nu)$ the domain of the closure  $M_p$ of $R\nabla$   in $L^p(X,\nu)$. $W^{1,p}(X,\nu)$ is a Banach space with the graph norm, 
\begin{equation}
\label{graphnorm}
\|f\|_{W^{1,p}} = \bigg( \int_X|f(x)|^p\nu(dx)\bigg)^{1/p} +  \bigg( \int_X\|M_pf(x)\|^p\nu(dx)\bigg)^{1/p}. 
\end{equation}
So, by definition an element $f\in L^p(X, \nu)$ belongs to $W^{1,p}(X,\nu)$ iff there exists a sequence of $C^1_b$ functions $(f_n)$ such that $\lim_{n\to \infty} f_n =f$ in $ L^p(X, \nu)$ and the sequence $(R\nabla f_n)$ converges in $L^p(X, \nu; X)$, the limit of the latter is just $M_pf$.

Different choices of $R$ give rise to different Sobolev spaces. For instance, if $\nu$ is the  Gaussian measure $N_{0,Q}$ with mean $0$ and covariance $Q$, Hypothesis \ref{h1} is satisfied by $R= Q^{\alpha}$, for every $\alpha \geq 0$. Taking $\alpha =0$ and $R=I$ we obtain the Sobolev spaces studied in  \cite{DPZ}, taking $\alpha = 1/2$ we obtain $W^{1,p}(X,\nu)= {\mathbb D}^{1,p}(X, \nu)$, the usual Sobolev spaces of Malliavin calculus (\cite{Boga,Nualart}).

For general results ensuring that Hypothesis \ref{h1} holds  we quote \cite{AlRo90}.  An easy sufficient condition for $R\nabla $  to be closable in  $L^p(X,\nu)$ is the following one.

\begin{Hypothesis}
\label{h1'}
  For any $p>1$ and  $z\in X$ there exists $C_{p,z}>0$ such that  
\begin{equation}
\label{e1a}
\left|\int_X\langle R\nabla \varphi,z \rangle\,d\nu\right|\le C_{p,z}\,\|\varphi\|_{L^p(X,\nu)} , \quad \varphi\in C^1_b(X). 
\end{equation}
\end{Hypothesis}

In this case,   $\nu$ is Fomin differentiable along $R^*(X)$.  We refer to \cite{Boga2} for a general treatment of differentiable measures.

After the canonical identifications of the dual spaces  $(L^p(X, \nu))'$,  $(L^{p}(X,\nu;X))'$ with $L^{p'}(X,\nu)$, 
 $L^{p'}(X,\nu;X)$ respectively, with $p'=p/(p-1)$  (\cite{DU}), we denote  by $M_p^*: D(M_p^*)\subset 
L^{p'}(X,\nu;X)\to L^{p'}(X,\nu)$  the adjoint of $M_p$.
So, we have
\begin{equation}
\label{e1m}
\int_X \langle M_p\varphi,F  \rangle\,d\nu=\int_X \varphi\,M_p^*(F)\,d\nu,\quad \varphi\in D(M_p),\;F\in D(M_p^*).
\end{equation}
 In the case that  $\nu$ is the Gaussian measure $N_{0,Q}$,   taking $R=Q^{1/2}$, $M_p$ is the Malliavin derivative and $-M_p^*$ is   the Gaussian divergence or   Skorohod integral. See e.g. \cite{Boga,Nualart,Sa05}. 
In any case, 
the operator $-M_p^*$ plays the important role of (generalized) divergence.

Hypothesis \ref{h1'}  is equivalent to the assumption  that  for every 
 $z\in X$   the constant vector field $F_z(x) := z$ belongs 
to $D(M_p^*)$ for every $p>1$.  Indeed, fixed any $p>1$, $F_z\in D(M_p^*)$ iff the function $W^{1,p}(X, \nu)\mapsto \R$, $\varphi\to \int_X \langle M_p\varphi, z\rangle d\nu$ has a linear continuous extension to the whole $L^p(X, \nu)$. Since $C^1_b(X)$ is dense in $W^{1,p}(X, \nu)$, this is equivalent to the existence of $C_{p,z}$ such that \eqref{e1a}  holds, and  in this case   \eqref{e1m}, with  $F=F_z$ and $M_p^*(F_z)=:v_z$, reads as
 \begin{equation}
\label{e2m}
   \int_X  \langle M_p\varphi,z\rangle\, d\nu=\int_X\varphi\, v_z\,d\nu , \quad \varphi \in D(M_p).
\end{equation}
This is a natural generalization of the integration formula that holds for the Gaussian measure $N_{0,Q}$,   in which case taking $R= Q^{1/2}$, \eqref{e1a} holds for every $z\in X$. Moreover $v_z$ is an element of $L^q(X, \nu)$ for every $q\in (1, +\infty)$, it coincides with $  \langle Q^{-1 }z, \cdot \rangle$ if in addition $z\in Q (X)$.

Under Hypothesis \ref{h1'}, formula \eqref{scopo} is a useful tool to prove an integration formula, 
\begin{equation}
\label{parti}
 \int_{ \{g<r  \}} \langle M_p\varphi, z\rangle  \, d\nu = \int_{\{g<r\}}v_z \varphi \, d\nu + \int_{\{g=r\}} \varphi \langle \frac{M_pg }{\|M_pg\|}, z\rangle \,d \rho_r , 
\end{equation}
for all $z\in X$, $\rho_r = \|M_pg\| \sigma^g_r$, and for good enough $\varphi$ and $g$. The  normalized measure $\rho_r$ is particularly meaningful, since it is independent of the choice of $g$ within a large class of functions, being a sort of perimeter measure relevant to the set $\Omega : = g^{-1} (-\infty, r)$ (see Section 5).

 We already mentioned that we need some regularity/nondegeneracy conditions on $g$. Specifically, our assumption on $g$ is
\begin{Hypothesis}
\label{h2}
$g\in W^{1,p}(X,\nu)$ and $ M_pg\,\|M_pg\|^{-2}$ belongs to the domain of the adjoint $M_p^*$, for every $p>1$.
\end{Hypothesis}
So, regularity is meant as Sobolev regularity. The nondegeneracy condition is hidden in the condition that $ M_pg\,\|M_pg\|^{-2}$ belongs to $D(M_p^*)$ for every $p>1$. Indeed, if a vector field $F$ belongs to $D(M_p^*)$, then $\|F\|\in L^{p'}(X, \nu )$. If $g$ satisfies Hypothesis \ref{h2}, taking  $F =  M_pg\,\|M_pg\|^{-2}$, we obtain that $1/\|M_pg\| \in L^{p'}(X, \nu )$, for every $p'>1$. This condition is a generalization of the nondegeneracy condition of  \cite{AiMa88}. 
We recall that if $g$ is  smooth, its level surfaces are smooth near every point $x$ such that $\nabla g(x)\neq 0$. Here what replaces the gradient of $g$ is $M_pg$. $M_pg$ is allowed to vanish at some points, but not too much, otherwise $1/\|M_pg\|$ cannot belong to all $L^{p'}(X, \nu )$ spaces. 

Let us describe the content of the paper. 
 
In Section 2 we define Sobolev spaces and we prove their basic properties, and their properties that are useful for the construction of surface measures.

In Section 3 we
 construct  surface measures under Hypotheses \ref{h1} and \ref{h2}.  

In Section 4 we introduce and discuss the  $p$-capacities, that are used to obtain further properties of the surface measures. In particular, we show that Borel sets with null $p$-capacity for some $p>1$ are negligible with respect to our surface measures. 

Section 5 deals with  a comparison with a geometric measure theory approach, and to the proof of a variational result. Indeed, we show that for every $\varphi\in C^1_b(X)$ with nonnegative values, the integral of $\varphi$ with respect to $\rho_r$ is equal to the maximum of 
$$\int_{\Omega} M_p^*(F\varphi) \, d\nu, $$
where $\Omega = g^{-1} (-\infty, r)$, and $F$ runs among suitably smooth $X$-valued vector fields such that $\|F(x)\|=1$ for $\nu$-a.e $x\in X$. 

Sections 6, 7 and 8   are devoted to examples. In all of them we show that Hypothesis \ref{h1'} holds, and therefore Hypothesis \ref{h1} holds. Moreover, in all of them we prove that the functions $g(x) = \|x\|^2$ and $g(x) = \langle b, x\rangle$, with any $b\in X \setminus \{ 0\}$, satisfy Hypothesis \ref{h2}. 

In Section 6 we consider a weighted Gaussian measure, $\nu(dx) = w(x)\mu(dx)$, where $\mu$ is a nondegenerate centered Gaussian measure. Under suitable conditions on the weight $w$ and on $g$ we show that for every $r\in (\essinf g, \esssup g)$, $\rho_r$ coincides with the restriction of the weighted measure $w(x)\rho(dx)$ to the surface $g^{-1}(r)$, where $\rho$ is the above mentioned Gauss-Hausdorff measure of Feyel and de La Pradelle. Here we consider precise versions of $w$ and $g$, that are elements of Sobolev spaces without a continuous version in general. The results of Section 6 rely on \cite{Simone},  where weighted Gaussian measures in Banach spaces are studied. 

In Section 7 we introduce
an infinite product of   non Gaussian measures on $\R$, which is one of the  simplest generalizations of a Gaussian measure in a separable Hilbert space.  It is an invariant measure of a Markov semigroup $P_t$, described in $\S$ 7.1.  In this toy example we have explicit formulae for all the objects involved: $\nu$, $v_z$, $P_t$.  

 In Section 8  we consider the invariant measures of two particular stochastic PDEs. The first one is a reaction-diffusion equation with a polynomial nonlinearity, and the second one is a Burgers equation. In both cases a unique invariant measure $\nu$ exists, but it is not explicit in general. It is not a product measure, or a Gaussian measure with weight  (except in the case of reaction-diffusion equations, for a particular value of a parameter). However,  Hypothesis \ref{h1'}  is satisfied for every $p>1$ thanks to recent results  (\cite{DaDe16,DaDe15}) that allow  our machinery to work, taking as $R$ a suitable   power of the negative Dirichlet Laplacian. 
 
The verification of Hypothesis \ref{h2}  may be non trivial, since  $\nu$ is not explicit. (In fact, it may be nontrivial even for Gaussian measures, if $g$ is particularly nasty). It is reduced to show that $1/\|M_pg\|$ belongs to $L^p(X, \nu)$ for every $p$, and this is difficult to check, except for hyperplanes in which case $g(x) = \langle b, x\rangle$ for some $b\in X\setminus \{0\}$ and $M_pg$ is constant. 
We show that it holds 
 in the case of spherical surfaces,  when $g(x) = \|x\|^2$. In this  case, the problem is reduced to show that $x\mapsto \|R\nabla g(x)\|^{-1 } = \|2Rx\|^{-1}$ belongs to $L^p(X, \nu)$ for every $p>1$. 
 To show it we need some technical tools, namely we approximate $ \|R\nabla g \|^{-1}$ by a sequence of cylindrical functions $\varphi_n$ belonging to the domain of the infinitesimal generator $L$ of the transition semigroup in $L^2(X, \nu)$. For   functions $\varphi\in D(L)$ we know that $\int_X  L\varphi \,d\nu =0$, and  we use this equality to  estimate the $L^p$ norm of $\varphi_n$ by a constant independent of $n$. 
 
Section 9 contains just some comments and bibliographical remarks.

\section{Notation and preliminaries, Sobolev spaces}

As mentioned in the introduction, we consider  a separable Hilbert space  $X$ with norm $\|\cdot\|$ and scalar product $\langle\cdot, \cdot\rangle$,  endowed with a Borel non degenerate probability measure $\nu$.

We recall that for Fr\'echet differentiable functions $\varphi: X\mapsto \R$ we denote by  $\nabla \varphi(x)$ the  gradient of $\varphi $  at $x$, and by $\partial_z\varphi(x) = \langle \nabla \varphi(x), z\rangle$ its derivative along $z$, for every $z\in X$. 

By $C_b(X)$ (resp. $UC_b(X)$) we mean the space of all real continuous (resp. uniformly continuous) and bounded mappings   $\varphi:  X\to \R$, endowed with the sup norm $\|\cdot \|_{\infty}$. Moreover, $C^1_b(X)$  is  the subspace of $C_b(X)$  of all continuously Fr\'echet  differentiable functions, with bounded (resp. uniformly continuous and bounded) gradient. 

For $p>1$ we set as usual $p' = p/(p-1)$. 

 Throughout the paper we assume that Hypothesis \ref{h1} holds. The spaces $W^{1,p}(X, \nu)$ and the operators $M_p$ are defined in the introduction. Here we collect some of their basic properties. 

\begin{Lemma} 
\label{prodotto-ChainRule}
Let $1<p<\infty$. 
\begin{itemize}
\item[(i)]
If  $\varphi \in W^{1,p}(X,\nu)$, $\psi \in C^1_b(X)$ then  the product $\varphi \psi$ belongs to $W^{1,p}(X,\nu)$ and $M_p(\varphi \psi) = \psi M_{p}\varphi + \varphi  M_{p}\psi$. More generally, if 
 $\varphi \in W^{1,p_1}(X,\nu)$, $\psi \in W^{1,p_2}(X,\nu)$, and  $1/p_1 + 1/p_2 < 1$, then  the product $\varphi \psi$ belongs to $W^{1,p}(X,\nu)$ and $M_p(\varphi \psi) = \psi M_{p_1}\varphi + \varphi  M_{p_2}\psi$, with 
$$\frac{1}{p} = \frac{1}{p_1} + \frac{1}{p_2}. $$
\item[(ii)] Let $h\in C^1_b(\R)$ and $\varphi\in W^{1,p}(X,\nu)$.
Then $h\circ \varphi \in W^{1,p}(X,\nu)$ and we have
\begin{equation}
\label{e1h}
M_p(h\circ \varphi)=h'(\varphi) M_p\varphi.
\end{equation}
\item[(iii)] If $\varphi \in W^{1,p}(X,\nu)$, $\varphi(x) \geq 0$ for $\nu$-a.e. $x\in X$, then $x\mapsto (\varphi(x))^s\in W^{1, p/s}(X, \nu)$, for every $s\in (1, p)$, and 
\begin{equation}
\label{varphi^s}
\|\varphi ^s\|_{W^{1, p/s}(X, \nu)} \leq \|\varphi \|_{L^p(X, \nu)}^s + s \|M_p\varphi\|_{L^p(X, \nu; X)}  \|\varphi \|_{L^p(X, \nu)}^{s-1}. 
\end{equation}

\item[(iv)] 
For $1<p<\infty$,  $W^{1,p}(X, \nu)$ is reflexive. 

\item[(v)] 
If $p\in (1, +\infty) $ and  $f_n\in W^{1,p}(X, \nu)$, $n\in \N$, are such that  $f_n\to f$ in $L^p(X, \nu)$ and  $M_pf_n$ is bounded in $L^p(X, \nu; X)$, then
$f\in W^{1,p}(X, \nu)$. 

\item[(vi)] $W^{1,p}(X, \nu)\subset W^{1,q}(X, \nu)$ and $M_p\varphi = M_q\varphi$ for every $\varphi\in W^{1,p}(X, \nu)$,  for $1<q<p$. 
\end{itemize}
\end{Lemma}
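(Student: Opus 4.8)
The plan is to read off everything from the defining characterization of $W^{1,p}(X,\nu)$: a function belongs to it precisely when it is the $L^p$-limit of a sequence $(f_n)\subset C^1_b(X)$ for which $(R\nabla f_n)$ converges in $L^p(X,\nu;X)$, the latter limit being $M_p$ of the function. I would start with (vi), the keystone for the rest and the only place where finiteness of $\nu$ is essential: for $\varphi\in W^{1,p}(X,\nu)$ pick such an approximating sequence; since $\nu$ is a probability measure, convergence in $L^p$ forces convergence in $L^q$ for $q<p$, both for $(f_n)$ and for $(R\nabla f_n)$, so by definition of $M_q$ one obtains $\varphi\in W^{1,q}(X,\nu)$ with $M_q\varphi=M_p\varphi$.

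Parts (i) and (ii) are then pure closure arguments. For the first assertion of (i), given $\varphi\in W^{1,p}(X,\nu)$ and $\psi\in C^1_b(X)$, I would apply $R\nabla$ to the products $f_n\psi\in C^1_b(X)$ and use linearity of $R$ together with the classical Leibniz rule $\nabla(f_n\psi)=\psi\nabla f_n+f_n\nabla\psi$; boundedness of $\psi$ and of $R\nabla\psi$ give $\psi R\nabla f_n\to\psi M_p\varphi$ and $f_n R\nabla\psi\to\varphi M_p\psi$ in $L^p(X,\nu;X)$, whence the claim. The general product follows identically, using two approximating sequences and H\"older's inequality with $1/p=1/p_1+1/p_2$ to pass each of $\psi_n M_{p_1}\varphi_n$ and $\varphi_n M_{p_2}\psi_n$ to the limit. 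For (ii), since $h\in C^1_b(\R)$ is Lipschitz, $h\circ f_n\to h\circ\varphi$ in $L^p(X,\nu)$; writing $R\nabla(h\circ f_n)=h'(f_n)R\nabla f_n$, I would split $h'(f_n)R\nabla f_n-h'(\varphi)M_p\varphi$ into $h'(f_n)(R\nabla f_n-M_p\varphi)$, controlled by $\|h'\|_\infty$, and $(h'(f_n)-h'(\varphi))M_p\varphi$, which tends to $0$ in $L^p(X,\nu;X)$ by dominated convergence along an a.e.-convergent subsequence.

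For the functional-analytic statements I would identify $W^{1,p}(X,\nu)$ isometrically with the graph $\{(\varphi,M_p\varphi)\}\subset L^p(X,\nu)\times L^p(X,\nu;X)$. Since $X$ is Hilbert and $1<p<\infty$, both factors are reflexive, hence so is their product, and the graph is a norm-closed subspace because $M_p$ is closed; this gives (iv). Statement (v) then follows from reflexivity: if $f_n\to f$ in $L^p(X,\nu)$ with $(M_pf_n)$ bounded, a subsequence satisfies $M_pf_{n_k}\rightharpoonup G$ weakly in $L^p(X,\nu;X)$, so $(f_{n_k},M_pf_{n_k})\rightharpoonup(f,G)$ in the product; the graph, being a norm-closed subspace, is weakly closed, hence contains $(f,G)$ and $f\in W^{1,p}(X,\nu)$.

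The main obstacle is (iii), which I would treat last by combining the chain rule with a truncation. Since $\varphi\ge0$, I would choose $h_N\in C^1_b(\R)$ with $0\le h_N(t)\le(t^+)^s$, $0\le h_N'(t)\le s(t^+)^{s-1}$, and $h_N(t)\to(t^+)^s$ pointwise (take $(t^+)^s$ cut off smoothly above the level $N$, and equal to $0$ for $t\le0$; continuity of the derivative at $0$ is automatic because $s>1$). By (ii), and by (vi) since $p/s<p$, each $h_N\circ\varphi$ lies in $W^{1,p/s}(X,\nu)$ with $M_{p/s}(h_N\circ\varphi)=h_N'(\varphi)M_p\varphi$. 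The key computation is the H\"older estimate, with the conjugate exponents $s$ and $s/(s-1)$:
\begin{equation*}
\|h_N'(\varphi)M_p\varphi\|_{L^{p/s}(X,\nu;X)}\le s\,\|\varphi^{s-1}M_p\varphi\|_{L^{p/s}(X,\nu;X)}\le s\,\|\varphi\|_{L^p(X,\nu)}^{s-1}\,\|M_p\varphi\|_{L^p(X,\nu;X)},
\end{equation*}
uniformly in $N$, while dominated convergence with dominant $\varphi^p$ gives $h_N\circ\varphi\to\varphi^s$ in $L^{p/s}(X,\nu)$. Applying (v) with exponent $p/s$ yields $\varphi^s\in W^{1,p/s}(X,\nu)$, and the stated bound follows by adding $\|\varphi^s\|_{L^{p/s}(X,\nu)}=\|\varphi\|_{L^p(X,\nu)}^s$ to the gradient estimate, using weak lower semicontinuity of the norm. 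The delicate points are the explicit construction of $h_N$ with two-sided control on $h_N'$ and the verification that the chosen H\"older exponents reproduce exactly the right-hand side of the estimate.
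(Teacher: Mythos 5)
Your proof is correct. For parts (i), (ii), (iv), (v) and (vi) it follows essentially the paper's own route: closure/approximation arguments for (i)--(ii) (the paper merely says ``approach $\varphi\psi$ by $\varphi_n\psi_n$'' and ``approach $h\circ\varphi$ by $h\circ\varphi_n$''; your splitting $h'(f_n)(R\nabla f_n-M_p\varphi)+(h'(f_n)-h'(\varphi))M_p\varphi$ with dominated convergence along an a.e.\ convergent subsequence is exactly the detail left implicit), the graph isometry into $L^p(X,\nu)\times L^p(X,\nu;X)$ for (iv), and weak compactness for (v). The genuine divergence is in (iii). The paper argues by direct approximation: it regularizes as $\psi_n=\sqrt{\varphi_n^2+1/n}$, extracts a subsequence converging a.e.\ together with its gradients, and checks that $\psi_{n_k}^s$ and $M_p(\psi_{n_k}^s)=s\,\psi_{n_k}^{s-1}M_p\psi_{n_k}$ converge \emph{strongly} in $L^{p/s}$; this yields membership, the explicit identity $M_{p/s}(\varphi^s)=s\,\varphi^{s-1}M_p\varphi$, and then \eqref{varphi^s} by H\"older. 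You instead truncate $(t^+)^s$ smoothly, invoke the chain rule (ii) and the embedding (vi), prove the uniform bound $\|h_N'(\varphi)M_p\varphi\|_{L^{p/s}(X,\nu;X)}\le s\|\varphi\|_{L^p(X,\nu)}^{s-1}\|M_p\varphi\|_{L^p(X,\nu;X)}$, and conclude via the compactness statement (v) together with weak lower semicontinuity of the norm. Your route buys the advantage of never having to verify strong $L^{p/s}$ convergence of the gradients (the step the paper dismisses as ``easily seen,'' which in fact requires a domination or uniform integrability argument), at the price of not producing the explicit formula for $M_{p/s}(\varphi^s)$ --- which the statement does not require. One point you should make explicit: lower semicontinuity bounds $\|M_{p/s}(\varphi^s)\|_{L^{p/s}(X,\nu;X)}$ only once you know that the weak limit of $M_{p/s}(h_N\circ\varphi)$ is precisely $M_{p/s}(\varphi^s)$; this identification comes from the weak closedness of the graph used in your proof of (v), not from the bare statement of (v), so you should cite that argument rather than the statement when closing the estimate.
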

\begin{proof} The proof of statement (i) follows by  approaching  $\varphi \psi$ by $  \varphi_n \psi_n$, for any couple of  sequences $(\varphi_n)$, $(\psi_n)\subset C^1_b(X)$ that approach $\varphi$, $\psi$  in $W^{1,p_1}(X,\nu)$, $W^{1,p_2}(X,\nu)$, respectively. Of course if $\psi \in C^1_b(X)$ we take $\psi_n = \psi$ for every $n$. 

\vspace{3mm}

Concerning  statement (ii) we have just to approach $h\circ \varphi$ by $h\circ \varphi_n$, for any sequence $(\varphi_n)\subset C^1_b(X)$ that approaches $\varphi$ in $W^{1,p}(X,\nu)$. 

\vspace{3mm}

Let us prove  (iii). For every sequence $(\varphi_n) \subset C^1_b(X)$ such  that  $\lim_{n\to \infty} \varphi_n = \varphi$ in $W^{1,p}(X, \nu)$, the sequence $\psi_n(x):= \sqrt{\varphi_n(x)^2 + 1/n}$ has a subsequence $(\psi_{n_k})$ such that $\psi_{n_k}(x)\to \varphi(x)$, $R\nabla \psi_{n_k}(x)\to M_p\varphi(x)$ for $\nu$-a.e. $x\in X$, and it is easily seen that   $\psi_{n_k}^s\to  \varphi ^s$   in $L^{p/s}(X, \nu)$ and $M_p\psi_{n_k}^s = s \psi_{n_k}^{s-1} M_p \psi_{n_k} = s  (\varphi_{n_k}^2 + 1/n)^{s-3/2}\varphi_{n_k} M_p\varphi_{n_k}$ converges to 
$s \varphi^{s-1}M_p\varphi$ in $L^{p/s}(X, \nu;X)$. Therefore, $\varphi^s \in W^{1, p/s}(X, \nu)$ and  the H\"older inequality yields  estimate \eqref{varphi^s}. 
 
\vspace{3mm}

 Let us prove statement (iv). 
The mapping $u\mapsto Tu : = (u, M_pu)$ is an isometry from $W^{1,p} (X, \nu)$ to the product space $E:= L^p(X, \nu) \times L^p(X, \nu; X)$, which implies that  the range of $T$ is closed in $E$. Now, $L^p(X, \nu)$ and $L^p(X, \nu; X)$ are reflexive (for the latter statement, see e.g. \cite[Ch. IV]{DU})  so that $E$ is reflexive, and $T(W^{1,p} (X, \nu))$ is reflexive too. Being isometric to a reflexive space,  $W^{1,p} (X, \nu)$ is reflexive. 

\vspace{3mm}

Statement (v) is a consequence of (iv). Since  $(f_n)$ is bounded  in $W^{1,p} (X, \nu)$ which is reflexive, there exists a subsequence that weakly converges to an element of $W^{1,p} (X, \nu)$. Since  $f_n\to f$ in $L^p(X, \nu)$, the weak limit is  $f$. Therefore,  $f\in W^{1,p} (X, \nu)$. 

\vspace{3mm}

Statement (vi) is an immediate consequence of the definition. 
\end{proof}
 
We shall use the following extension of Lemma  \ref{prodotto-ChainRule}(ii) to compositions with piecewise linear functions. 

\begin{Lemma} 
\label{ChainRuleLipschitz}
Let $\alpha <\beta \in \R  $,  and set 
\begin{equation}
\label{defh}
h( r)=\int_{-\infty}^r {\one}_{[\alpha,\beta]}(s)ds =\left\{ \begin{array}{ll} 0 & {\rm if}\;\;r\leq \alpha, \\
 r-\alpha & {\rm if}\; \;\alpha \leq r \leq \beta, \\
 \beta-\alpha & {\rm if}\; \;r\geq \beta .
 \end{array}\right.
 \end{equation} 
Then   $h\circ \varphi \in W^{1,p}(X,\nu)$ for every 
$\varphi\in W^{1,p}(X,\nu)$,  and we have
\begin{equation}
\label{Mph}
M_p(h\circ \varphi)=\one_{[\alpha, \beta]}(\varphi) M_p\varphi.
\end{equation}
\end{Lemma}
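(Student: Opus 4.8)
The plan is to reduce the claim about the piecewise-linear $h$ of \eqref{defh} to the already-established $C^1$ chain rule, Lemma \ref{prodotto-ChainRule}(ii), by a smooth approximation argument followed by a passage to the limit justified by part (v) of the same lemma. The essential point is that $h$ is Lipschitz with $h' = \one_{[\alpha,\beta]}$ a.e., so the formula \eqref{Mph} is exactly what one expects from the chain rule; the only work is to make sense of it when $h$ is merely Lipschitz rather than $C^1$.

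First I would construct a sequence $h_n \in C^1_b(\R)$ with $\|h_n'\|_\infty \le 1$, $h_n \to h$ uniformly on $\R$, and $h_n'(s) \to \one_{[\alpha,\beta]}(s)$ for every $s \neq \alpha, \beta$ (one may take $h_n$ obtained by mollifying $h$, or by rounding the two corners of $h$ on intervals of length $1/n$ around $\alpha$ and $\beta$). By Lemma \ref{prodotto-ChainRule}(ii) each $h_n \circ \varphi$ lies in $W^{1,p}(X,\nu)$ with $M_p(h_n \circ \varphi) = h_n'(\varphi)\, M_p\varphi$. The uniform convergence $h_n \to h$ gives $h_n \circ \varphi \to h \circ \varphi$ in $L^p(X,\nu)$, since $\varphi \in L^p$ and $h$ is bounded. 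For the gradients, the uniform bound $\|h_n'\|_\infty \le 1$ yields $\|M_p(h_n\circ\varphi)\|_{L^p(X,\nu;X)} \le \|M_p\varphi\|_{L^p(X,\nu;X)}$, so the sequence $(M_p(h_n\circ\varphi))_n$ is bounded in $L^p(X,\nu;X)$. Part (v) of Lemma \ref{prodotto-ChainRule} then immediately gives $h\circ\varphi \in W^{1,p}(X,\nu)$.

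It remains to identify $M_p(h\circ\varphi)$ with $\one_{[\alpha,\beta]}(\varphi)\,M_p\varphi$. Here I would show that $h_n'(\varphi)\,M_p\varphi \to \one_{[\alpha,\beta]}(\varphi)\,M_p\varphi$ in $L^p(X,\nu;X)$: pointwise $\nu$-a.e. convergence holds on the set $\{\varphi \neq \alpha\} \cap \{\varphi \neq \beta\}$, and the integrand is dominated by $\|M_p\varphi\| \in L^p$, so dominated convergence applies once we know the exceptional set $\{\varphi = \alpha\} \cup \{\varphi = \beta\}$ causes no trouble. Since $h_n'$ is uniformly bounded, the values of $h_n'(\varphi)$ on this exceptional set contribute at most $\nu(\{\varphi=\alpha\}\cup\{\varphi=\beta\})\,\|M_p\varphi\|^p$ in a crude bound, so it suffices that $M_p\varphi = 0$ $\nu$-a.e. on $\{\varphi = \alpha\} \cup \{\varphi = \beta\}$. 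This is the key technical point and the one obstacle I anticipate: I would establish it as the locality property that the gradient of a Sobolev function vanishes a.e. on any level set $\{\varphi = c\}$. One clean way is to apply the already-proven chain rule to functions vanishing to first order at $c$, or to note that $M_p$ coincides $\nu$-a.e. on $\{\varphi=c\}$ for any two functions agreeing there; I would invoke this locality (which follows from approximating by $C^1_b$ functions and the standard fact that a Lipschitz function with derivative zero on a level set kills the gradient there). With $M_p\varphi=0$ on the two level sets, the pointwise limit of $h_n'(\varphi)\,M_p\varphi$ equals $\one_{[\alpha,\beta]}(\varphi)\,M_p\varphi$ $\nu$-a.e., and dominated convergence gives convergence in $L^p(X,\nu;X)$. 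Since $M_p$ is a closed operator, matching the $L^p$-limit of $h_n\circ\varphi$ with the $L^p(X,\nu;X)$-limit of its gradients yields \eqref{Mph}.

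I expect the smooth-approximation and boundedness steps to be entirely routine; the genuine subtlety is the locality statement $M_p\varphi = 0$ $\nu$-a.e. on $\{\varphi = \alpha\}$ and $\{\varphi=\beta\}$, which is what makes the indicator $\one_{[\alpha,\beta]}(\varphi)$ well-defined as a factor (the ambiguity at the endpoints being irrelevant precisely because the gradient vanishes there). If the paper prefers to avoid a separate locality lemma, an alternative is to choose the mollifications $h_n$ so that $h_n'(s) \to \one_{[\alpha,\beta]}(s)$ at the endpoints as well (e.g. converging to the value $1/2$ or to $1$ at $\alpha,\beta$) and then observe that the limit is independent of this choice modulo a null contribution, which again reduces to the same locality fact. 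Either way, the proof closes by the closedness of $M_p$.
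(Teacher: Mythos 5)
Your overall strategy (smooth approximation of $h$, the $C^1$ chain rule of Lemma \ref{prodotto-ChainRule}(ii), the uniform bound $\|h_n'\|_\infty\le 1$ giving domination, and closedness of $M_p$ to pass to the limit) is exactly the paper's. The genuine problem is in what you call the key technical point: you invoke the locality property that $M_p\varphi=0$ $\nu$-a.e.\ on the level sets $\{\varphi=\alpha\}$, $\{\varphi=\beta\}$ as an available fact. In this paper that property is Corollary \ref{partepositiva}, which comes \emph{after} Lemma \ref{ChainRuleLipschitz} and is proved by ``a minor modification'' of its proof; more to the point, for the abstract closure operator $M_p$ of $R\nabla$ there is no independent ``standard fact'' to cite --- the usual proofs of locality for such Sobolev-type operators go precisely through a Lipschitz chain rule of this kind. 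So your primary route is circular (or, at best, rests on a lemma you would have to prove first by essentially the same argument you are trying to give).

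The fix is the one you mention only as a fallback, and it is exactly the paper's proof: choose the approximations from \emph{outside}, i.e.\ take smooth $\theta_n$ with $0\le\theta_n\le 1$, $\theta_n\equiv 1$ on $[\alpha,\beta]$, supported in $[\alpha-1/n,\beta+1/n]$, and set $h_n(r)=\int_{-\infty}^r\theta_n(s)\,ds$. Then $\theta_n(\xi)\to\one_{[\alpha,\beta]}(\xi)$ at \emph{every} $\xi\in\R$, including the endpoints, so $h_n'(\varphi)\,M_p\varphi\to\one_{[\alpha,\beta]}(\varphi)\,M_p\varphi$ pointwise everywhere, with domination by $\|M_p\varphi\|\in L^p$; dominated convergence in $L^p(X,\nu;X)$ and closedness of $M_p$ then finish the proof with no exceptional set and no locality statement at all. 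Note that your closing remark --- that this choice ``again reduces to the same locality fact'' --- is wrong: with this choice nothing about the level sets is needed, and indeed the paper afterwards \emph{derives} the locality statement (Corollary \ref{partepositiva}) as a consequence of this lemma, not the other way around. (Your use of Lemma \ref{prodotto-ChainRule}(v) to get membership in $W^{1,p}$ is correct but redundant once the gradients converge in $L^p$.)
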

\begin{proof} 
 We approach $h$ by a sequence of $C^1_b$ functions, choosing a sequence of smooth compactly supported functions  $\theta_n:\R\mapsto \R$ such that $\theta_n(\xi)\to 
 {\one}_{[\alpha,\beta]}(\xi)$ for every $\xi\in \R$,   $0\leq \theta_n(\xi) \leq 1 $ for every $\xi\in \R$, and setting 
 $$   h_n( r)=\int_{-\infty}^r \theta_n(s)ds, \quad r\in \R. $$
  Since $h_n\in C^1_b(\R)$,  by Lemma \ref{prodotto-ChainRule}(ii) $h_n\circ \varphi  \in W^{1,p}(X,\nu)$, and 
 $$
 M_p(h_n\circ \varphi )=(h_n'\circ \varphi ) M_p\varphi.
 $$
By the Dominated Convergence Theorem, $h_n \circ \varphi$ converges to $h\circ \varphi $ in $L^{p}(X,\nu)$. Moreover, $M_p(h_n\circ \varphi )$ converges pointwise to $\one_{[\alpha, \beta]}(\varphi) M_p\varphi$. Since $\| M_p(h_n\circ \varphi )(x)\|\leq \|\theta_n\|_{\infty}\| M_p\varphi(x)\| \leq \| M_p\varphi(x)\|$, still by the  Dominated Convergence Theorem $ M_p(h_n\circ \varphi )$ converges to $\one_{[\alpha, \beta]}(\varphi) M_p\varphi$ in $L^{p}(X,\nu; X)$, and the statement follows. 
\end{proof}

 \begin{Corollary}
\label{partepositiva}
For every $\varphi\in W^{1,p}(X,\nu)$, the positive  part $\varphi  _+$ of $\varphi $, the negative part $\varphi  _-$ of $\varphi $,  and $|\varphi|$ belong to $W^{1,p}(X,\nu)$, and we have
\begin{equation}
\label{Mpvarie}
M_p(\varphi  _+)=\one_{\varphi^{-1}(0, +\infty)}  M_p\varphi, \quad M_p(\varphi  _-)=-\one_{\varphi^{-1}(-\infty, 0)}  M_p\varphi, \quad M_p(|\varphi  |) = {\rm sign}\; \varphi  \;  M_p\varphi. 
\end{equation}
Moreover, $  M_p\varphi$ vanishes $\nu$-a.e. in the level set $\varphi^{-1}(c)$, for each $c\in \R$. 
\end{Corollary}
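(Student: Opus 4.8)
The plan is to obtain the formulas for $\varphi_+$, $\varphi_-$, and $|\varphi|$ as limits of the truncation operators $h\circ\varphi$ from Lemma \ref{ChainRuleLipschitz}, and then to deduce the vanishing of $M_p\varphi$ on level sets as a consequence. First I would treat the positive part. Applying Lemma \ref{ChainRuleLipschitz} with $\alpha=0$ and $\beta=n$ gives functions $h_n\circ\varphi\in W^{1,p}(X,\nu)$ with $M_p(h_n\circ\varphi)=\one_{[0,n]}(\varphi)M_p\varphi$, where $h_n(r)=\min\{r_+,n\}$. As $n\to\infty$ we have $h_n\circ\varphi\to\varphi_+$ pointwise and, by dominated convergence (dominated by $|\varphi|\in L^p$), in $L^p(X,\nu)$; likewise $\one_{[0,n]}(\varphi)M_p\varphi\to\one_{\varphi^{-1}(0,+\infty)}M_p\varphi$ pointwise, and in $L^p(X,\nu;X)$ since the integrands are dominated by $\|M_p\varphi\|$. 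Lemma \ref{prodotto-ChainRule}(v) (or simply the closedness of $M_p$) then yields $\varphi_+\in W^{1,p}(X,\nu)$ with the stated gradient. The identity for $\varphi_-$ follows from $\varphi_-=(-\varphi)_+$ together with $M_p(-\varphi)=-M_p\varphi$, and $|\varphi|=\varphi_++\varphi_-$ gives the third formula, with ${\rm sign}\,\varphi=\one_{\varphi^{-1}(0,+\infty)}-\one_{\varphi^{-1}(-\infty,0)}$.

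The subtle point in the three gradient formulas concerns the level set $\{\varphi=0\}$: the characteristic functions $\one_{\varphi^{-1}(0,+\infty)}$ and $\one_{\varphi^{-1}(-\infty,0)}$ deliberately exclude it, so I must check that no contribution arises from $\{\varphi=0\}$ in the limit. This is where the main point of the corollary enters. I expect the cleanest route is to prove the last assertion — that $M_p\varphi=0$ $\nu$-a.e. on $\varphi^{-1}(c)$ — first, and then use it to confirm that the boundary values of the truncations do not matter. For the level-set claim, by replacing $\varphi$ with $\varphi-c$ (legitimate since constants lie in $W^{1,p}$ with $M_p(\text{const})=0$) it suffices to treat $c=0$, i.e. to show $\one_{\varphi^{-1}(0)}M_p\varphi=0$ a.e.

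The hard part is precisely this level-set vanishing, and the natural mechanism is the chain rule applied to a family of Lipschitz functions concentrating near zero. Taking $\alpha=-\delta$, $\beta=\delta$ in Lemma \ref{ChainRuleLipschitz}, the function $h_\delta\circ\varphi$ satisfies $M_p(h_\delta\circ\varphi)=\one_{[-\delta,\delta]}(\varphi)M_p\varphi$. As $\delta\to 0$, the left side tends to $M_p$ of the pointwise limit of $h_\delta\circ\varphi$, which is $0$ (since $h_\delta\circ\varphi\to 0$ in $L^p$ by dominated convergence), while the right side converges pointwise and in $L^p(X,\nu;X)$ to $\one_{\varphi^{-1}(0)}M_p\varphi$ by dominated convergence, the dominating function again being $\|M_p\varphi\|$. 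Closedness of $M_p$ forces $\one_{\varphi^{-1}(0)}M_p\varphi=0$ $\nu$-a.e., which is the claim. With this in hand, the truncation limits producing $\varphi_+$, $\varphi_-$, $|\varphi|$ are unambiguous: whether one writes $\one_{[0,\infty)}$ or $\one_{(0,\infty)}$ the difference is supported on $\{\varphi=0\}$, where $M_p\varphi$ vanishes, so all the formulas in \eqref{Mpvarie} are consistent and correct.
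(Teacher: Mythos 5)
Your proof is correct, and it organizes the argument in a genuinely different way from the paper. The paper's proof is a one-line modification of the \emph{proof} of Lemma \ref{ChainRuleLipschitz}: take $\alpha=0$, $\beta=+\infty$, and choose the smooth approximants $\theta_n$ of the indicator so that they vanish on left half-lines; then $\theta_n(\varphi)\to\one_{\varphi^{-1}(0,+\infty)}$ pointwise, the open-interval indicator in \eqref{Mpvarie} comes out directly with no separate discussion of $\{\varphi=0\}$, and the statements about $\varphi_-$, $|\varphi|$ and the vanishing of $M_p\varphi$ on level sets are all deduced afterwards as consequences of the formula for $\varphi_+$. You invert this logical order: you first prove the level-set vanishing by applying Lemma \ref{ChainRuleLipschitz} with $[\alpha,\beta]=[-\delta,\delta]$ and letting $\delta\to0$ --- since $h_\delta\circ\varphi\to0$ uniformly while $M_p(h_\delta\circ\varphi)=\one_{[-\delta,\delta]}(\varphi)M_p\varphi\to\one_{\varphi^{-1}(0)}M_p\varphi$ in $L^p(X,\nu;X)$, closedness of $M_p$ forces this limit to be zero --- and only then pass to the truncations $[\alpha,\beta]=[0,n]$, where the ambiguity between $\one_{[0,+\infty)}$ and $\one_{(0,+\infty)}$ is harmless because it is supported on $\{\varphi=0\}$. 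Both routes are sound, and each buys something. Yours uses Lemma \ref{ChainRuleLipschitz} strictly as a black box (every $h$ you compose with is exactly of the form \eqref{defh} with finite $\alpha,\beta$), so it never reopens the approximation argument; by contrast, the paper's sketch with $\beta=+\infty$ implicitly has to handle primitives $h_n$ that are unbounded, hence not in $C^1_b(\R)$, so its ``minor modification'' tacitly requires an extra truncation step that your argument builds in from the start. What the paper's choice buys is brevity: picking the $\theta_n$ to vanish at $0$ makes the open half-line indicator appear at once, with the level-set statement as a free corollary rather than a preliminary step. The only blemish in your write-up is the claim in the first paragraph that $\one_{[0,n]}(\varphi)M_p\varphi\to\one_{\varphi^{-1}(0,+\infty)}M_p\varphi$ pointwise; the pointwise limit is $\one_{\varphi^{-1}([0,+\infty))}M_p\varphi$. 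But you flag and repair exactly this point in the sequel, so the proof as a whole is complete.
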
 
\begin{proof} The proof of the first statement is just a minor modification of the proof of Lemma \ref{ChainRuleLipschitz}; it is sufficient to take $\alpha =0$, $\beta = +\infty$ and approaching functions $\theta_n $ of $ {\one}_{[0,+\infty)}$ that vanish on some left half-line. The other statements are consequences of the first one. \end{proof}

We remark that taking $\alpha =0$, $\beta =1$ and $p=2$ in Lemma \ref{ChainRuleLipschitz}, we obtain that for every $\varphi\in W^{1,2}(X,\nu)$, the function $\varphi  _+ \wedge 1$ belongs to $W^{1,2}(X,\nu)$, and $\| \varphi  _+ \wedge 1\|_{W^{1,2}(X,\nu)} \leq \|\varphi \|_{W^{1,2}(X,\nu)}$. Namely, the quadratic form 
$${\mathcal E}(\varphi, \psi) := \int_X (\varphi \psi + \langle M_2\varphi, M_2\psi\rangle )d\nu , \quad \varphi, \; \psi \in W^{1,2}(X,\nu), $$
is a Dirichlet form. 

In the next lemma we exhibit a class of regular functions that belong to the Sobolev spaces. 

 \begin{Lemma}
\label{C^1crescita}
Let $\varphi\in C^1(X )$ be such that $\| \nabla \varphi\|$ is bounded in $\varphi^{-1}(-r, r)$ for every $r>0$, and 
$$\int_X (|\varphi|^p + \|R \nabla \varphi\|^p)d\nu < \infty. $$
Then $\varphi\in W^{1,p} (X, \nu)$ for every $p\in(1, +\infty)$, and $M_p \varphi= R \nabla \varphi$. 
\end{Lemma}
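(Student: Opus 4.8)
Lemma C^1crescita — Goal analysis.The plan is to approximate $\varphi$ from within $C^1_b(X)$ by truncating it, so that I can appeal directly to the definition of $W^{1,p}(X,\nu)$ as the closure of $R\nabla$ on $C^1_b(X)$. Fix once and for all a function $\theta\in C^1_b(\R)$ with $\theta(s)=s$ for $|s|\le 1/2$, with $\theta$ constant for $|s|\ge 1$, and with $|\theta'|\le 1$ and $|\theta(s)|\le |s|$ for every $s\in\R$, and set $h_n(r):=n\,\theta(r/n)$. Then $h_n\in C^1_b(\R)$ satisfies $h_n(r)=r$ for $|r|\le n/2$, $|h_n(r)|\le |r|$, $|h_n'|\le 1$, and $h_n'(r)=\theta'(r/n)$ equals $1$ for $|r|\le n/2$ and vanishes for $|r|\ge n$. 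I will work with the functions $\varphi_n:=h_n\circ\varphi$.

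The first, and main, step is to check that each $\varphi_n$ actually lies in $C^1_b(X)$. Since $h_n$ is bounded, $\varphi_n$ is bounded; it is of class $C^1$ with $\nabla\varphi_n=h_n'(\varphi)\,\nabla\varphi$, which is a continuous $X$-valued map because $\varphi\in C^1(X)$ and $h_n'$ is continuous. The only nontrivial point is the \emph{boundedness} of $\nabla\varphi_n$, and this is precisely where the standing assumption on $\varphi$ is used: the scalar factor $h_n'(\varphi(x))$ vanishes as soon as $|\varphi(x)|\ge n$, so the product $h_n'(\varphi)\nabla\varphi$ is supported in the strip $\varphi^{-1}(-n,n)$, where $\|\nabla\varphi\|$ is bounded by hypothesis. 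Hence $\|\nabla\varphi_n\|_\infty<\infty$ and $\varphi_n\in C^1_b(X)$, so that $\varphi_n\in W^{1,p}(X,\nu)$ with $M_p\varphi_n=R\nabla\varphi_n=h_n'(\varphi)\,R\nabla\varphi$ (using that $R$ is linear and $h_n'(\varphi)$ is scalar).

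It then remains to pass to the limit. For every $x$ one has $\varphi_n(x)\to\varphi(x)$, since eventually $|\varphi(x)|\le n/2$ and then $\varphi_n(x)=\varphi(x)$, while $|\varphi_n|\le|\varphi|\in L^p(X,\nu)$; dominated convergence gives $\varphi_n\to\varphi$ in $L^p(X,\nu)$. Likewise $h_n'(\varphi)\,R\nabla\varphi\to R\nabla\varphi$ pointwise, with $\|h_n'(\varphi)\,R\nabla\varphi\|\le\|R\nabla\varphi\|\in L^p(X,\nu)$ by the integrability assumption, so $M_p\varphi_n=R\nabla\varphi_n\to R\nabla\varphi$ in $L^p(X,\nu;X)$, again by dominated convergence. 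Since $(\varphi_n)\subset C^1_b(X)$ converges to $\varphi$ in $L^p(X,\nu)$ while $(R\nabla\varphi_n)$ converges in $L^p(X,\nu;X)$, the very definition of $W^{1,p}(X,\nu)$ yields $\varphi\in W^{1,p}(X,\nu)$ and $M_p\varphi=R\nabla\varphi$; alternatively one may invoke Lemma \ref{prodotto-ChainRule}(v) together with the closedness of $M_p$. As $p\in(1,+\infty)$ was arbitrary, the proof is complete.
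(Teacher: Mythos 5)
Your proof is correct and follows essentially the same route as the paper's: both approximate $\varphi$ by regularized truncations $n\,\theta(\varphi/n)$ with a cutoff $\theta\in C^1_b(\R)$ that is the identity near the origin and constant far out, use the hypothesis on $\|\nabla\varphi\|$ over the strips $\varphi^{-1}(-r,r)$ to see the truncations lie in $C^1_b(X)$, and conclude via dominated convergence and the definition of $W^{1,p}(X,\nu)$ as the domain of the closure of $R\nabla$. Your write-up is slightly more explicit about the bounds $|\theta(s)|\le|s|$ and $|\theta'|\le 1$ used for domination, but this is only a matter of presentation.
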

\begin{proof}
We approach $\varphi$ by regularized truncations, introducing $\theta\in C^{1}_b(\R)$ such that $\theta (\xi )= \xi $ for $|\xi|\leq 1$ and $\theta = $ constant for $\xi \geq 2$ and for $\xi\leq -2$. The functions $\varphi_n(x) := n \theta(\varphi(x)/n)$ belong to $C^{1}_b(X)$, they approach $\varphi$ pointwise and in $L^p(X, \nu)$ by the Dominated Convergence Theorem. Moreover, $R\nabla \varphi_n(x) = \theta'(\varphi(x)/n)R\nabla \varphi(x)$, which coincides with $R\nabla \varphi(x)$ if $|\varphi(x)| \leq n$ and vanishes if $|\varphi(x)| \geq 2n$. Still by the Dominated Convergence Theorem, $R\nabla \varphi_n$ converges to $R\nabla \varphi$ in $L^p(X, \nu; X)$. 

Notice that the assumption that $\| \nabla \varphi\|$ is bounded in $\varphi^{-1}(-r, r)$ for every $r>0$ guarantees that $\| \nabla \varphi_n\|$ is bounded in $X$, so that $\varphi_n\in C^1_b(X)$, for every $n\in \N$. 
\end{proof}

Some properties of the operators $M^*_p$ are in the next lemma.

 \begin{Lemma}
 \label{Le:divergenza} Let $1<p<\infty$. 
\label{h3h}
\begin{itemize}
\item[(i)] For any $F\in D(M_p^*)$ and any $\varphi\in C^1_b(X)$, the product  $\varphi F$ belongs to $D(M_p^*)$ and
\begin{equation}
\label{e1d}
M_p^*(\varphi F)=\varphi  M_p^*(F)-\langle M_p\varphi, F\rangle.
\end{equation}
More generally, for any $F\in D(M_p^*)$ and any $\varphi\in W^{1,q}(X, \nu)$ with $q>p$, the product $\varphi F$ belongs to $D(M_s^*)$ with $s= pq/(q-p)$ and
\eqref{e1d} holds with $s$ replacing $p$. 

\item[(ii)]  For any $F\in D(M_p^*)$, 
\begin{equation}
\label{e1f}
\int_X M_p^*F\, d\nu = 0. 
\end{equation}
\end{itemize}
\end{Lemma}
\begin{proof}
Let $\psi\in  C^1_b(X)$. From the identity $M_p(\varphi \psi) = \varphi M_p\psi + \psi M_p\varphi$ we obtain
\begin{equation}
\label{divprod}
\int_X \langle M_p\psi , \varphi F\rangle d\nu = \int_X \langle M_p(\varphi \psi ) - \psi M_p\varphi, F\rangle d\nu
=  \int_X \psi (\varphi M_p^*F - \langle  M_p\varphi, F\rangle)d\nu , 
\end{equation}
and  the first part of statement (i) follows from the definition of $M_p^*$. The argument is similar if  $\varphi \in W^{1,q}(X, \nu)$; in this case $\varphi \psi \in W^{1,q} (X, \nu) \subset W^{1,p} (X, \nu)$ 
since $p<q$, and we have $M_p(\varphi \psi) = \varphi M_p\psi + \psi M_q\varphi$, while $M_p\psi = M_s\psi$. Formula \eqref{divprod} reads as
$$\int_X \langle M_s\psi , \varphi F\rangle d\nu =  \int_X \psi \,g \,d\nu, $$
where now $g : = \varphi M_p^*F - \langle  M_q\varphi, F\rangle \in L^{s'}(X, \nu)$. 

Since $1\in W^{1,p}(X, \nu)$ and $M_p1=0$, statement (ii) follows from the definition of $M_p^*$.
\end{proof}

\begin{Lemma}
\label{Rem:divergenza}
Let  Hypothesis \ref{h1'} hold, and let $q>1$. Then for every $z\in X$ and $f\in W^{1, q}(X, \nu)$, the vector field 
$$F(x) := f(x)z, \quad x\in X, $$
belongs to $D(M_p^*)$ for every $p>q'$, and
\begin{equation}
\label{formulaM^*_p}
M^*_pF(x) =  -  \langle M_p f(x),  z\rangle  +   v_{z}(x) f(x), \quad x\in X, 
\end{equation}
where $v_z$ is the function in formula \eqref{e2m}. 
\end{Lemma}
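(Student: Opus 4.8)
The plan is to read the vector field $F(x)=f(x)z$ as the product of the scalar function $f$ with the \emph{constant} vector field $F_z(x):=z$, and then to combine the product rule for $M_p^*$ of Lemma~\ref{Le:divergenza}(i) with Hypothesis~\ref{h1'}. Indeed, the discussion leading to \eqref{e2m} shows that, under Hypothesis~\ref{h1'}, one has $F_z\in D(M_{p_0}^*)$ for \emph{every} $p_0>1$, with $M_{p_0}^*(F_z)=v_z$; moreover this $v_z$ is the same function for all $p_0$, being characterized by $\int_X\varphi\,v_z\,d\nu=\int_X\langle R\nabla\varphi,z\rangle\,d\nu$ for $\varphi\in C^1_b(X)$, and it lies in $L^s(X,\nu)$ for every $s\in(1,+\infty)$. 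This is exactly the ingredient that makes the product formula applicable, with $F_z$ playing the role of the divergence-admissible factor.

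Concretely, I would fix $p>q'$ and introduce the auxiliary exponent $p_0:=pq/(p+q)$. A one-line check shows $p_0\in(1,q)$: the inequality $p_0<q$ is immediate, while $p_0>1$ is equivalent to $pq>p+q$, i.e.\ to $p(q-1)>q$, which is precisely the hypothesis $p>q'=q/(q-1)$. With this choice, Lemma~\ref{Le:divergenza}(i) applied to the field $F_z\in D(M_{p_0}^*)$ and the function $f\in W^{1,q}(X,\nu)$ (admissible since $q>p_0$) yields that $F=f\,F_z$ belongs to $D(M_s^*)$ with $s=p_0q/(q-p_0)$, and that
\[
M_s^*(F)=f\,M_{p_0}^*(F_z)-\langle M_q f,F_z\rangle=v_z\,f-\langle M_q f,z\rangle .
\]
Solving $s=p_0q/(q-p_0)=p$ for $p_0$ returns exactly $p_0=pq/(p+q)$, so $s=p$ and the displayed identity is \eqref{formulaM^*_p}.

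To ensure the conclusion holds for \emph{all} $p>q'$ and not just for one value, I would note that $p_0\mapsto s(p_0)=p_0q/(q-p_0)$ has derivative $q^2/(q-p_0)^2>0$, hence is strictly increasing on $(1,q)$ and sweeps out the whole interval $(q',+\infty)$ as $p_0$ ranges over $(1,q)$; thus every admissible exponent $p$ is attained by the choice above. The only genuinely delicate point is the bookkeeping of the Sobolev exponents, and in particular the gradient appearing in \eqref{formulaM^*_p}: since $f$ is merely assumed to lie in $W^{1,q}$, the symbol $M_pf$ there must be understood as the gradient $M_qf$, the two coinciding whenever $f\in W^{1,p}(X,\nu)$ (for instance when $p\le q$) by Lemma~\ref{prodotto-ChainRule}(vi). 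Once the product rule of Lemma~\ref{Le:divergenza} is granted, no analytic obstacle remains, and the proof reduces to this exponent arithmetic.
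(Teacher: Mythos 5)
Your proof is correct, and it reaches the conclusion by a genuinely different route than the paper. The paper proves the lemma from scratch: for $\varphi\in C^1_b(X)$ it expands $\int_X\langle R\nabla\varphi,F\rangle\,d\nu=\int_X\langle f\,M\varphi,z\rangle\,d\nu$ via the product rule for $M_p$ (Lemma \ref{prodotto-ChainRule}(i)) together with formula \eqref{e2m}, obtaining $\int_X(fv_z-\langle M_qf,z\rangle)\varphi\,d\nu$, and then extends this identity to all $\varphi\in W^{1,p}(X,\nu)$ by density, checking that $fv_z-\langle M_qf,z\rangle$ lies in $L^s(X,\nu)$ for some $s>p'$ (possible precisely because $q>p'$); the conclusion then follows from the definition of $M_p^*$. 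You instead observe that this computation has already been carried out in greater generality in Lemma \ref{Le:divergenza}(i), and that the present lemma is its special case $F=F_z$, $\varphi=f$, once one solves the exponent equation $s=p_0q/(q-p_0)=p$ for the auxiliary exponent $p_0=pq/(p+q)$ and notes that $p_0\in(1,q)$ holds exactly when $p>q'$; the input $F_z\in D(M_{p_0}^*)$ with $M_{p_0}^*(F_z)=v_z$ for every $p_0>1$ is indeed what Hypothesis \ref{h1'} provides, as stated in the Introduction. Your route is shorter, avoids repeating the density argument (it is hidden inside the proof of Lemma \ref{Le:divergenza}(i)), and makes the threshold $p>q'$ appear transparently as the condition for the auxiliary exponent to exceed $1$; the paper's direct verification is self-contained and, along the way, exhibits explicitly that the candidate adjoint $fv_z-\langle M_qf,z\rangle$ belongs to $L^s(X,\nu)$ for every $s<q$, slightly more than the bare membership in $L^{p'}(X,\nu)$. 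Your closing remark that $M_pf$ in \eqref{formulaM^*_p} must be read as $M_qf$ is also consistent with the paper, whose own proof switches to $M_qf$ in its final display.
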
. 
\begin{proof} 
For every  $\varphi \in C^1_b(X)$ we have
\begin{equation}
\label{formuladiservizio}
\int_X \langle R\nabla \varphi, F\rangle \, d\nu =  \int_X   \langle f M\varphi, z\rangle \, d\nu
= \int_X   (\langle M_p( f \varphi) - \varphi M_pf, z\rangle )\, d\nu = 
\int_X  ( f v_{z}  - \langle M_pf, z\rangle  )\varphi \, d\nu 
\end{equation}
%
%
%
  by Lemma \ref{prodotto-ChainRule}(i) and formula \eqref{e2m}. Since $v_{z}\in L^s(X, \nu)$ for every $s\in (1, +\infty)$, the function  $ ( fv_{z}  - \langle M_pf, z\rangle  )$ belongs to 
$L^s(X, \nu)$ for every $s<q$. 

Approaching every $\varphi \in W^{1,p}(X, \nu)$ by a sequence $(\varphi_n)$ of $C^1_b$ functions, the left-hand side of \eqref{formuladiservizio}  converges to $ \int_X \langle M_p \varphi, F\rangle \, d\nu $. Since $q>p'$, there exists $s\in (1,q)$ such that $s>p'$. So, also the right-hand side converges, and we get 
$$ \int_X \langle M_p \varphi, F\rangle \, d\nu = 
\int_X   ( fv_{z}  - \langle M_qf, z\rangle  )\varphi \, d\nu $$
 for every $\varphi \in W^{1,p}(X, \nu)$.  \eqref{formulaM^*_p} follows from the definition of $M^*_p$. 
 \end{proof}

  
\section{Construction of  surface measures}


We recall that Hypothesis \ref{h1} holds throughout the paper. Moreover, 
from now on,   $g:X\mapsto \R$ is a  Borel  function   that satisfies  Hypothesis  \ref{h2}.  
 
The elements of $W^{1,p}(X,\nu)$  are equivalence classes of functions. If $g$ is a given   function, by $g\in W^{1,p}(X,\nu)$ we mean as usual that $g$ is a fixed  version of an element of $W^{1,p}(X,\nu)$. The results of this section are independent  of the particular chosen version $g$. Instead, in the next section the choice of the version will be important.

We recall  that $W^{1,p}(X, \nu) \subset  W^{1,q}(X, \nu)$ and $M_p \varphi = M_q \varphi$ for every $\varphi \in W^{1,p}(X, \nu)$  if $p>q$ (Lemma \ref{prodotto-ChainRule}(vi)). Therefore, $D(M_p^*) \supset D(M_q^*)$ and 
$M_p^*$ and $M_q^*$ agree on  $D(M_q^*)$ if $p>q$. To simplify notation we shall write $ M$ instead of $M_p$ and $M^*$ instead of $M_p^*$ on $\cap_{p>1}W^{1,p}(X, \nu)$ and on $\cap_{p>1}D(M_p^*) $, respectively. Moreover we 
set
\begin{equation}
\label{psi}
\Psi: = \frac{  Mg}{\|Mg\|^2}. 
\end{equation}

We start our analysis introducing  the function 
\begin{equation}
\label{e14}
F_{\varphi}(r ):=\int_{\{ g\le r\}}\varphi(x)\nu(dx),  \quad r\in \R,\quad \varphi\in L^1(X,\nu).
\end{equation}
We recall that the image measure $(\varphi\nu)\circ g^{-1} $ is defined on the Borel sets $B\subset \R$ by 
$$(\varphi\nu)\circ g^{-1} (B) = \int_{g^{-1}(B)}\varphi(x)\nu(dx). $$
So, $F_{\varphi}(r) = (\varphi\nu)\circ g^{-1}((-\infty, r])$. It is easy to see that $F_{\varphi}$ is continuously differentiable if and only if $(\varphi \nu)\circ g^{-1}$ is absolutely continuous with respect to the Lebesgue measure $\lambda$, with continuous density $q_{\varphi}$. In this case we have
$$F_{\varphi}'(r) = q_{\varphi}(r), \quad r\in \R . $$

So, our next step  is to show   that    $(\varphi\nu)\circ g^{-1} \ll\lambda$, for all $\varphi$ belonging either to $UC_b(X)$ or to $ W^{1,p}(X,\nu)$ for some $p>1$.   Also, we shall show that
the density 
\begin{equation}
\label{e15}
 \frac{d(\varphi\nu)\circ g^{-1}}{d\lambda}\,(r)=:q_\varphi(r),  
\end{equation}
is H\"older continuous if $\varphi\in W^{1,p}(X, \nu)$ for some $p>1$.

It will follow easily that for any $r\in\R$ the mapping $\varphi\mapsto F_{\varphi}'( r)$  is a linear positive functional on $UC_b(X)$, and by results of general measure theory it is indeed the integral of $\varphi$ with respect to a Borel measure. We shall see that such a measure is  concentrated on the surface $\{g=r\}$ if $g$ is continuous, on the surface $\{g^*=r\}$ if $g$ is not continuous, where $g^*$ is a suitable version of $g$. 

The next  lemma is the starting point of most sublevel sets approach to surface measures. Its proof is an abstract version of a well known procedure, see e.g. \cite[First Edition, Prop. 2.1.1]{Nualart}.

\begin{Lemma}
\label{l4}
Assume that Hypotheses \ref{h1} and \ref{h2} are fulfilled.  Then  for any $p>1$ and $\varphi\in W^{1,p}(X, \nu)$,  the measure $ (\varphi\nu)\circ g^{-1}$   is absolutely continuous with respect to  the Lebesgue measure $\lambda$.
Its density 
$$\frac{d[(\varphi\nu)\circ g^{-1}]}{d\lambda}(r)=:q_{\varphi}(r), \quad r\in \R. $$  
 is given by 
\begin{equation}
\label{e17}
q_{\varphi}(r)= \int_{\{g<r\}}\bigg(\langle M_p\varphi, \frac{  Mg}{\|Mg\|^2} \rangle -\varphi M^*_p\bigg(\frac{  Mg}{\|Mg\|^2} \bigg) \bigg)\,d\nu . 
\end{equation}
and it is bounded and  $\theta$-H\"older continuous in $\R$ for every $\theta <1-1/p$. There exists $K_p>0$, independent of $\varphi$, such that   
\begin{equation}
\label{e18}
 | q_{\varphi}(r)| \leq K_p\|\varphi\|_{W^{1,p}(X, \nu)}, \quad r\in \R. 
 \end{equation}
\end{Lemma}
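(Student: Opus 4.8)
The goal is to show that $(\varphi\nu)\circ g^{-1}\ll\lambda$ with the explicit, bounded, H\"older continuous density \eqref{e17}. My strategy is the standard sublevel-set integration-by-parts argument, but executed through the abstract operators $M_p$ and $M_p^*$ rather than through finite-dimensional charts. The key identity I want to establish is that for every test function $h\in C^1_b(\R)$,
\begin{equation}
\label{planmain}
\int_X (h\circ g)\,\varphi\,d\nu = \int_{\R} h(r)\,q_\varphi(r)\,dr,
\end{equation}
with $q_\varphi$ given by \eqref{e17}; once this holds for a rich enough class of $h$, the change-of-variables formula $\int_X (h\circ g)\varphi\,d\nu=\int_\R h\,d[(\varphi\nu)\circ g^{-1}]$ identifies $q_\varphi$ as the density of $(\varphi\nu)\circ g^{-1}$, giving both absolute continuity and the formula simultaneously.

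\textbf{First steps.} I would first verify that the integrand of \eqref{e17} is genuinely integrable. By Hypothesis \ref{h2} the field $\Psi=Mg/\|Mg\|^2$ lies in $D(M_p^*)$ for every $p$, so $M^*\Psi\in L^{p'}(X,\nu)$ and $\|\Psi\|=1/\|Mg\|\in L^{p'}(X,\nu)$; combined with $\varphi\in W^{1,p}$ and $M_p\varphi\in L^p(X,\nu;X)$, H\"older's inequality shows both terms $\langle M_p\varphi,\Psi\rangle$ and $\varphi\,M^*\Psi$ are in $L^1(X,\nu)$, so $q_\varphi(r)$ is well defined for each $r$. Next I would prove \eqref{planmain}. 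The natural route is to apply the chain rule Lemma \ref{prodotto-ChainRule}(ii), writing $M_p(H\circ g)=(H'\circ g)\,M_pg$ for a primitive $H$ of $h$, and then use the adjoint relation \eqref{e1m} together with the divergence product rule \eqref{e1d}. Concretely, taking $H(r)=\int_{-\infty}^r h(s)\,ds$ (after truncating $h$ to have compact support so that $H\in C^1_b$, via Lemma \ref{ChainRuleLipschitz}), I compute
\begin{equation}
\int_X (h\circ g)\,\varphi\,d\nu
= \int_X \langle M_p(H\circ g),\,\varphi\,\Psi\rangle\,d\nu,
\end{equation}
using $\langle M_pg,\Psi\rangle=\langle Mg,Mg\rangle/\|Mg\|^2=1$. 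Then \eqref{e1m} applied to the field $\varphi\Psi\in D(M^*)$ (legitimate by Lemma \ref{Le:divergenza}(i)) moves the derivative off $H\circ g$, producing $\int_X (H\circ g)\,M^*(\varphi\Psi)\,d\nu$, and expanding $M^*(\varphi\Psi)=\varphi M^*\Psi-\langle M_p\varphi,\Psi\rangle$ via \eqref{e1d} yields exactly $-\int_X (H\circ g)(r)$-type terms whose Fubini rearrangement against $H(r)=\int \one_{(-\infty,r)}h$ reproduces \eqref{planmain}.

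\textbf{Boundedness, continuity, and the main obstacle.} The bound \eqref{e18} follows directly from the $L^1$ estimate above: $|q_\varphi(r)|\le \|M_p\varphi\|_{L^p}\|\Psi\|_{L^{p'}}+\|\varphi\|_{L^p}\|M^*\Psi\|_{L^{p'}}$, and since $\|\Psi\|_{L^{p'}}$ and $\|M^*\Psi\|_{L^{p'}}$ are finite constants depending only on $g$ and $p$, this is $\le K_p\|\varphi\|_{W^{1,p}}$ uniformly in $r$. For the H\"older continuity I would estimate $q_\varphi(r_2)-q_\varphi(r_1)=\int_{\{r_1\le g<r_2\}}(\cdots)\,d\nu$, which is the integral of a fixed $L^1$ function over the thin slab $\{r_1\le g<r_2\}$; applying H\"older with exponent $p$ gives a bound by $\nu(\{r_1\le g<r_2\})^{1-1/p}$ times the $L^p$-norms, and the final ingredient is that $\nu(\{r_1\le g<r_2\})\le C|r_2-r_1|$, i.e. that the law $\nu\circ g^{-1}$ itself has bounded density. \emph{This last point is the main obstacle I anticipate}: one must show the push-forward of $\nu$ under $g$ is Lipschitz-continuous in its distribution, which is precisely the $\varphi\equiv 1$ instance of the whole construction and relies essentially on $1/\|Mg\|$ being integrable (Hypothesis \ref{h2}); it should come out of \eqref{planmain} applied with $\varphi=1$, since then $q_1(r)=-\int_{\{g<r\}}M^*\Psi\,d\nu$ is already bounded, but making the constant uniform and threading it through to the exponent $\theta<1-1/p$ requires care in choosing $p$ large. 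The technical subtlety throughout is justifying the truncation-and-limit passage so that the compactly-supported reductions of $h$ and $H$ do not disturb the identities, which I would handle by dominated convergence exactly as in the proof of Lemma \ref{ChainRuleLipschitz}.
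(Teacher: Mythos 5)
Your proposal is correct and follows essentially the same route as the paper's proof: both rest on the identity $(h\circ g)\,\varphi=\langle M(H\circ g),\varphi\Psi\rangle$ obtained from the chain rule and $\langle Mg,\Psi\rangle=1$ (the paper takes $H$ to be the piecewise-linear primitive of $\one_{[\alpha,\beta]}$ via Lemma \ref{ChainRuleLipschitz}, you take a smooth compactly supported $h$), followed by the adjoint relation, the product rule \eqref{e1d}, Fubini, the $L^1$ bound for \eqref{e18}, and the same bootstrap through the bounded density $q_1$ in the slab estimate for the H\"older continuity. One small repair: since $\Psi$ and $M^*\Psi$ lie in every $L^{q}(X,\nu)$ but not in $L^\infty$, the integrand $\langle M_p\varphi,\Psi\rangle-\varphi M^*\Psi$ belongs only to $L^s(X,\nu)$ for $s<p$, so the H\"older estimate on the slab $\{r_1<g\le r_2\}$ must be performed with exponent $s<p$ rather than $p$ itself (and no "large $p$" is needed), which still yields every exponent $\theta<1-1/p$, exactly as in the paper.
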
 
 \begin{proof}
 Fix any interval $[\alpha,\beta]\subset \R$  and consider the function $h$ defined in \eqref{defh}.  
By Lemma \ref{ChainRuleLipschitz}, $h \circ g \in W^{1,p}(X,\nu)$ for every $p>1$, and 
 $$
 Mn(h\circ g)= \one_{[\alpha, \beta]}(g) Mg.
 $$
   Therefore, 
 $$
\one_{[\alpha, \beta]} \circ g   =\frac{\langle M(h\circ g),Mg \rangle}{\|Mg\|^2} = \langle M(h\circ g),\Psi\rangle ,  $$
where $\Psi$ is defined in \eqref{psi} and belongs to $D(M_p^*)$ for every $p>1$ by Hypothesis \ref{h2}. 
 Let $\varphi\in C^1_b(X)$. Then $ \varphi\Psi\in D(M_p^*)$ for every $p>1$. Multiplying both sides by $\varphi$ and integrating  yields
  $$\int_X \one_{[\alpha,\beta]}(g(x)) \varphi(x) \nu(dx)   =  \int_X (h\circ g)\, M_p^*( \varphi\Psi )\,d\nu.$$
  On the other hand, by Lemma \ref{h3h}(i), $M^*_p( \varphi \Psi ) = M^*(\Psi) \varphi -  \langle M_p\varphi, \Psi \rangle$, and therefore
\begin{equation}
\label{1}
\int_X  \one_{[\alpha,\beta]}(g(x)) \varphi(x) \nu(dx)   =   \int_X (h\circ g)\, (M^*(\Psi) \varphi - \langle M_p\varphi, \Psi \rangle )\,d\nu .
\end{equation}
Approaching any $\varphi\in W^{1,p}(X, \nu)$ by a sequence of $C^1_b$ functions, we see that formula \eqref{1} holds for every $\varphi \in W^{1,p}(X, \nu)$. The right hand side may be rewritten as
$$  \int_X \int_{\R} \one_{(-\infty, g(x)]}(r )\one_{[\alpha, \beta]}( r)dr  \,(M^*(\Psi) \varphi - \langle M_p\varphi, \Psi \rangle )\, d\nu,  $$
so that by the Fubini Theorem,  
$$
(\varphi\nu)(\alpha\leq g\leq \beta)=
\int_\alpha^\beta dr\int_{\{g\geq r\}} (M^*(\Psi) \varphi - \langle M_p\varphi, \Psi \rangle ) \,d\nu .
$$
 Therefore   $(\varphi\nu)\circ g^{-1}$ has  density $q_{\varphi} $ given by
$$
q_{\varphi}(r)=\int_{\{ g\geq  r\}}(M^*(\Psi) \varphi - \langle M_p\varphi, \Psi \rangle ) \,d\nu 
= - \int_{\{g < r\}}( \langle M_p\varphi, \Psi \rangle -M^*(\Psi) \varphi  ) \,d\nu, 
$$
where the last equality follows from Lemma \ref{Le:divergenza}(ii). 
  Since $\Psi\in L^q(X,\nu;X)$ and $M^*\Psi \in L^q(X, \nu)$ for every $q>1$, the function $\langle M_p\varphi, \Psi \rangle -M^*(\Psi)\varphi $ belongs to $L^s(X, \nu)$ for every $s\in  [1, p)$, and there is $C_{p,s}>0$ such that
$$\|
\langle M_p\varphi, \Psi \rangle -M^*(\Psi)\varphi\|_{L^s(X, \nu)} \leq C_{p,s} \|\varphi\|_{W^{1,p}(X, \nu)}. $$
Taking $s=1$, estimate \eqref{e18} is immediate.

  Let us prove that $q_{\varphi}$ is H\"older continuous.  For 
$r_2>r_1$ and for every $s\in (1,p)$  we have
$$|q_{\varphi}(r_2 ) -q_{\varphi} (r_1)| = \bigg|\int_{\{r_1<g\leq r_2\}} (\langle M_p\varphi, \Psi \rangle -M^*(\Psi) \varphi )d\nu \bigg| $$
$$\leq  \|\langle M_p\varphi, \Psi \rangle -M^*(\Psi) \varphi \|_{L^{s}(X, \nu)} \bigg(\int_{r_1}^{r_2}q_1(r)dr\bigg)^{1/s'} 
\leq  C_{p,s} \|\varphi\|_{W^{1,p}(X, \nu)} 
(\|q_1\|_{\infty} (r_2-r_1))^{1-1/s}. $$
Therefore,  $q_{\varphi}$ is H\"older continuous with any exponent less than $1-1/p$.  
 \end{proof}

 Taking in particular $\varphi \equiv 1$, we obtain that $\nu(g^{-1}(r_0)) =
 \int_{r_0}^{r_0}d\nu  =0$ for every $r_0\in \R$. Therefore,  all the level surfaces of $g$ are $\nu$-negligible.  In particular,  
 $$F_{\varphi}(r) = \int_{\{ g  \leq  r\}}  \varphi  \,d\nu = \int_{\{ g < r\}}  \varphi  \,d\nu, \quad \varphi \in L^1(X, \nu). $$
 Moreover, by Lemma \ref{Le:divergenza}, for every $\varphi \in W^{1,p}(X, \nu)$ the product $\varphi  \, Mg/\|Mg\|^2$ belongs to $D(M_s^*)$ for every $s >p'$, and we have 
\begin{equation}
\label{qvarphi}
q_{\varphi}(r)= -\int_{\{ g < r\}} M_s^*\bigg(\varphi \frac{  Mg}{\|Mg\|^2}\bigg)  d\nu . 
\end{equation}
 
 Let us now consider  bounded and uniformly continuous functions $\varphi$. The proof of the next proposition is taken from the paper \cite{DaLuTu14} that deals with Gaussian measures. In the case of general measures there are not  substantial modifications, and the proof is added here just for completeness.
 
  \begin{Proposition}
 \label{p2n}
  For any $\varphi\in UC_b(X)$, $F_{\varphi}$ is continuously differentiable.
 \end{Proposition}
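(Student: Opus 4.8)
The plan is to deduce the statement for $\varphi\in UC_b(X)$ from the Sobolev case already settled in Lemma \ref{l4}, by a uniform approximation argument. Recall that for $\psi\in W^{1,p}(X,\nu)$ the measure $(\psi\nu)\circ g^{-1}$ has a continuous density $q_\psi$, so that $F_\psi(r)=\int_{-\infty}^r q_\psi(t)\,dt$ and $F_\psi$ is already $C^1$ with $F_\psi'=q_\psi$. The difficulty is that estimate \eqref{e18} controls $q_\psi$ only through the norm $\|\psi\|_{W^{1,p}(X,\nu)}$, which does not survive approximation of a merely uniformly continuous $\varphi$. Hence the heart of the matter is to replace \eqref{e18} by an estimate of $q_\psi$ in terms of $\|\psi\|_\infty$ alone.

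To obtain such an estimate I would argue as follows. Fix $r\in\R$ and a family $(\rho_\eps)_{\eps>0}$ of nonnegative smooth mollifiers with $\int_\R\rho_\eps=1$ and support shrinking to $\{r\}$. Since $(\psi\nu)\circ g^{-1}$ and $\nu\circ g^{-1}$ have densities $q_\psi$ and $q_1$, for every $\psi\in C^1_b(X)$ the image measure formula gives
\[
\int_\R \rho_\eps(t)\,q_\psi(t)\,dt=\int_X \rho_\eps(g(x))\,\psi(x)\,\nu(dx),
\]
and therefore, since $\rho_\eps\ge 0$,
\[
\Big|\int_\R \rho_\eps\,q_\psi\,dt\Big|\le \|\psi\|_\infty\int_X\rho_\eps(g)\,d\nu=\|\psi\|_\infty\int_\R\rho_\eps(t)\,q_1(t)\,dt.
\]
Letting $\eps\to0$ and using that $q_\psi$ and $q_1$ are continuous by Lemma \ref{l4} and that $q_1\ge0$ (being the density of the positive measure $\nu\circ g^{-1}$), the two sides converge to $q_\psi(r)$ and $\|\psi\|_\infty\,q_1(r)$ respectively, whence $|q_\psi(r)|\le q_1(r)\,\|\psi\|_\infty\le\|q_1\|_\infty\,\|\psi\|_\infty$ for all $r$; here $\|q_1\|_\infty<\infty$ by \eqref{e18} applied to $\psi\equiv1$. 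By linearity of $\psi\mapsto q_\psi$ this yields the contraction estimate $\|q_{\psi_1}-q_{\psi_2}\|_\infty\le\|q_1\|_\infty\,\|\psi_1-\psi_2\|_\infty$ for $\psi_1,\psi_2\in C^1_b(X)$.

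With this sup-norm bound in hand I would conclude by approximation. Every $\varphi\in UC_b(X)$ is a uniform limit of a sequence $(\varphi_n)\subset C^1_b(X)$: one first approximates $\varphi$ uniformly by bounded Lipschitz functions, e.g. by the inf-convolutions $x\mapsto\inf_y\big(\varphi(y)+\lambda\|x-y\|\big)$, and then approximates these uniformly by $C^1_b$ functions through a standard regularization available on separable Hilbert spaces. The contraction estimate shows that $(q_{\varphi_n})$ is uniformly Cauchy, hence converges uniformly to a continuous function $q_\varphi$. Passing to the limit in $F_{\varphi_n}(r)=\int_{-\infty}^r q_{\varphi_n}(t)\,dt$, and using that $F_{\varphi_n}\to F_\varphi$ pointwise because $\|\varphi_n-\varphi\|_\infty\to0$, yields $F_\varphi(r)=\int_{-\infty}^r q_\varphi(t)\,dt$ with $q_\varphi$ continuous. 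Thus $F_\varphi\in C^1(\R)$ with $F_\varphi'=q_\varphi$.

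The main obstacle is the sup-norm estimate of the second paragraph, which is what upgrades the $W^{1,p}$-bound of Lemma \ref{l4} into something stable under uniform approximation; once it is available, the rest is routine. A secondary technical point is the uniform approximability of $UC_b(X)$ by $C^1_b(X)$, which holds on separable Hilbert spaces but is not entirely trivial and should be invoked by citation rather than reproved.
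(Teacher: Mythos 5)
Your proof is correct in substance, and it reaches the paper's key estimate by a genuinely different route. Both arguments hinge on the same pointwise bound $|q_\psi(r)|\le q_1(r)\,\|\psi\|_\infty$ for $\psi\in C^1_b(X)$, followed by uniform approximation of $\varphi\in UC_b(X)$ by $C^1_b$ functions (the paper invokes \cite{LaLi86}, which is exactly the inf-convolution regularization you describe, so this step is indeed a citation, not a gap). The difference is how that bound is obtained. The paper applies the Disintegration Theorem to write $F_\psi(r)=\int_{-\infty}^r\bigl(\int_X\psi\,dm_s\bigr)q_1(s)\,ds$ with conditional probability measures $m_s$ supported on $\{g=s\}$, reads off $F_\psi'(r)=q_1(r)\int_X\psi\,dm_r$ for a.e.\ $r$, and then removes the exceptional set using continuity of the densities. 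You instead test the image measures against a nonnegative approximate identity $\rho_\eps$ and use only the change-of-variables formula together with the continuity of $q_\psi$ and $q_1$ supplied by Lemma \ref{l4}. Your derivation is more elementary and self-contained; what the paper's disintegration buys is an object it reuses later, since the measures $m_r$ reappear in the remark following Theorem \ref{costruzione} to identify $\sigma^g_r=q_1(r)\,m_r$ and to locate its support in $g^{-1}(r)$.

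One small point to patch in your final step: uniform convergence of $q_{\varphi_n}$ on all of $\R$ does not by itself justify passing to the limit in $\int_{-\infty}^r q_{\varphi_n}(t)\,dt$, because the interval is unbounded (the constants $1/n$ converge uniformly to $0$ yet have infinite integrals). The fix is already in your hands: applying your pointwise bound to $\psi=\varphi_n-\varphi_m$ and letting $m\to\infty$ gives $|q_{\varphi_n}(t)-q_\varphi(t)|\le q_1(t)\,\|\varphi_n-\varphi\|_\infty$, and since $q_1\in L^1(\R)$ (it is a probability density) this yields $q_{\varphi_n}\to q_\varphi$ in $L^1(\R)$, which is exactly what the limit passage needs. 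Alternatively, conclude as the paper does: $F_{\varphi_n}\to F_\varphi$ pointwise while $F_{\varphi_n}'=q_{\varphi_n}$ converges uniformly, so the standard theorem on differentiation of uniform limits gives that $F_\varphi$ is differentiable with continuous derivative $q_\varphi$.
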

\begin{proof}
  First, let $\varphi\in C^1_b(X)$.
By the Disintegration Theorem, see e.g. \cite[Theorem A1]{DaLuTu14}, we can write
\begin{equation}
\label{disintegrazione}   F_{\varphi}( r)=\int_{-\infty}^r\left(\int_X  \varphi\,dm_s\right)q_1(s)ds 
\end{equation}
where  $m_r $ is a probability measure on ${\mathcal B}(X)$, with support in $\{g=r\}$  for $\lambda$-a.e. $r\in \R$. Here, $\lambda$ is the Lebesgue measure.

    Then  there is a Borel set  $I_\varphi\subset \R$     such that $\lambda(I_\varphi)=0$ and $
   F_{\varphi}$ is differentiable on $\R\setminus I_\varphi$, with 
\begin{equation}
\label{e4n}
 F_{\varphi}' ( r)=  q_1(r) \int_{X}\varphi(x)m_r(dx),\quad\forall\;r\notin I_\varphi. 
\end{equation}
In particular,  
\begin{equation}
\label{e5n}
 F_{1}' ( r)= q_1( r),\quad r\notin I_1. 
\end{equation}
 By \eqref{e4n}  we have
$$
| F_{\varphi}' ( r)|\le \|\varphi\|_\infty q_1 (r ) ,\quad r\notin I_\varphi.
$$
 Taking into account \eqref{e5n}, yields
\begin{equation}
\label{e6n}
 | F_{\varphi}' ( r)|\le | F_{1}' ( r)|\;\|\varphi\|_\infty\quad  r\notin (I_\varphi\cup I_1). 
\end{equation}
Since both $ F_{\varphi}'  $ and $ F_{1}' $
are  continuous,  we have
\begin{equation}
\label{e7n}
 | F_{\varphi}' ( r)|\le | F_{1}' ( r)|\;\|\varphi\|_\infty,\quad  r\in \R,\;\varphi\in C^1_b(X).
\end{equation}

 Now let   $\varphi\in UC_b(X)$ and let $(\varphi_n)$ be a sequence in $C_b^1(X)$  convergent to $\varphi$ in $C_b(X)$ (e.g., \cite{LaLi86}). Then   \eqref{e7n} yields
\begin{equation}
\label{e27}
  |  F_{\varphi_m}' ( r)-  F_{\varphi_n}' ( r) |\le | F_{1}' ( r)|\|\varphi_m-\varphi_n\|_{\infty},\quad  r\in\R.
\end{equation}
Therefore  $(  F_{\varphi_n}' ( r))$ is a Cauchy  sequence in $C_b(X)$,  and the conclusion follows. 
 \end{proof}

The main result of this section is the following.

\begin{Theorem}
\label{costruzione}
Let Hypotheses \ref{h1} and  \ref{h2} hold. Then the function $F_{\varphi}$ is differentiable for every $\varphi\in C_b(X)$. For every $r\in \R$ there exists a  Borel measure $\sigma_r^g$ on $X$ such that 
\begin{equation}
\label{e15d}
F_\varphi '( r)  =\int_{X} \varphi(x)\,\sigma_r^g(dx),\quad \varphi\in C_b(X). 
\end{equation}
In particular, for $\varphi \equiv 1$ we obtain $\sigma_r^g(X)=F_1'(r ) = q_1(r)$.   Therefore, $\sigma_r^g$ is nontrivial  iff $F_1'(r ) > 0$.  
\end{Theorem}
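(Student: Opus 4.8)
The plan is to fix $r\in\R$ and realize the candidate surface integral $\varphi\mapsto F_\varphi'(r)$ as integration against a finite Borel measure. First I would check that, for $\varphi\in UC_b(X)$, the functional $L_r(\varphi):=F_\varphi'(r)$ is well defined (Proposition \ref{p2n}), linear, positive, and bounded: positivity because $r\mapsto F_\varphi(r)=\int_{\{g\le r\}}\varphi\,d\nu$ is nondecreasing when $\varphi\ge0$, and boundedness because \eqref{e7n} extends from $C^1_b(X)$ to $UC_b(X)$ by the very approximation used in Proposition \ref{p2n}, giving $|L_r(\varphi)|\le q_1(r)\,\|\varphi\|_\infty$. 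In particular $L_r(1)=F_1'(r)=q_1(r)$, which will be the total mass.

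The mechanism for the representation is the disintegration already invoked in the proof of Proposition \ref{p2n}: there are Borel probability measures $(m_s)_{s\in\R}$, with $\mathrm{supp}\,m_s\subset\{g=s\}$ for a.e.\ $s$, such that
\begin{equation}
\label{disint-plan}
F_\varphi(r)=\int_{-\infty}^r q_1(s)\Big(\int_X\varphi\,dm_s\Big)\,ds,\qquad \varphi\in L^1(X,\nu),
\end{equation}
so in particular for all $\varphi\in C_b(X)$. Setting $\sigma_s^g:=q_1(s)\,m_s$ produces, for a.e.\ $s$, a genuine finite Borel measure of mass $q_1(s)$ for which $F_\varphi'(s)=\int_X\varphi\,d\sigma_s^g$. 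Because each $m_s$ is automatically Radon on the Polish space $X$, no mass escapes; this is exactly what makes $\sigma_s^g$ a bona fide measure on $X$ rather than merely a functional. Using that $F_\varphi'$ is continuous for $\varphi\in UC_b(X)$, I would then pin down a representing measure $\sigma_r^g$ of $L_r$ for every $r$ (not just a.e.), identifying $\int_X\varphi\,d\sigma_r^g$ with the continuous function $F_\varphi'(r)$.

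Finally I would extend the representation from $UC_b(X)$ to all of $C_b(X)$ and simultaneously obtain differentiability of $F_\varphi$ there. The natural route is weak convergence of the normalized increments
$$\sigma_{r,\eps}:=\tfrac1\eps\,\nu|_{\{r<g\le r+\eps\}},\qquad \int_X\varphi\,d\sigma_{r,\eps}=\frac{F_\varphi(r+\eps)-F_\varphi(r)}{\eps},$$
towards $\sigma_r^g$ as $\eps\to0^+$ (and the analogous left increments). Testing against $\varphi\in UC_b(X)$ the quotients converge to $F_\varphi'(r)=\int_X\varphi\,d\sigma_r^g$, while the total masses converge to $q_1(r)=\sigma_r^g(X)$; since the bounded Lipschitz functions are convergence-determining for finite Borel measures on the separable space $X$, this forces $\sigma_{r,\eps}\to\sigma_r^g$ weakly, whence $\int_X\varphi\,d\sigma_{r,\eps}\to\int_X\varphi\,d\sigma_r^g$ for every $\varphi\in C_b(X)$. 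This yields both differentiability of $F_\varphi$ and \eqref{e15d}, and $\varphi\equiv1$ gives $\sigma_r^g(X)=F_1'(r)=q_1(r)$.

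I expect the main obstacle to be producing a genuine $\sigma$-additive Borel measure on $X$ for every $r$, i.e.\ ensuring tightness so that no mass is lost. The sup-norm bound $|L_r(\varphi)|\le q_1(r)\|\varphi\|_\infty$ alone only yields a functional on $UC_b(X)$, and the density estimate of Lemma \ref{l4} controls $(\varphi\nu)\circ g^{-1}$ through the Sobolev norm of $\varphi$, not through its mass, so uniform tightness of $\{\sigma_{r,\eps}\}$ is not immediate. The disintegration \eqref{disint-plan} is what circumvents this, since the $m_s$ are honest probability measures; the remaining delicate point is the passage from the a.e.\ statement and from $UC_b(X)$ to every $r$ and all of $C_b(X)$, which I would handle through the continuity of $F_\varphi'$ and convergence-determining classes as above.
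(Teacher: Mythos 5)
Your skeleton is close to the paper's own proof: the paper also forms the normalized slab measures (your $\sigma_{r,\eps}$, its $m_n$), uses Proposition \ref{p2n} to get convergence of $\int_X\varphi\,dm_n$ for bounded Lipschitz $\varphi$, and deduces weak convergence to a limit measure, which yields differentiability and \eqref{e15d} for all $\varphi\in C_b(X)$ simultaneously. The decisive difference is where the limit \emph{measure} comes from. The paper invokes a corollary of Prokhorov's theorem (\cite[Cor. 8.6.3]{BogaII}): if $(m_n)$ is a sequence of nonnegative Radon measures on the separable space $X$ such that $\int_X\varphi\,dm_n$ converges in $\R$ for every bounded Lipschitz $\varphi$, then $(m_n)$ converges weakly to some Borel measure. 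That statement is the existence engine of the whole theorem: it converts convergence of scalar integrals into tightness, and hence into an honest $\sigma$-additive limit, for \emph{every} $r$ at once, with no detour through the disintegration.

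Your proposal is missing this engine precisely at the point you yourself flag as delicate. The Disintegration Theorem gives genuine measures $\sigma^g_s=q_1(s)\,m_s$ only for \emph{a.e.} $s$; at an exceptional $r$ you propose to ``pin down'' a representing measure using continuity of $F_\varphi'$ together with the convergence-determining property of bounded Lipschitz functions. But that property only \emph{identifies} a limit which is already known to be a finite Borel measure; it cannot \emph{produce} one. What you actually have at an exceptional $r$ is a positive linear functional $L_r$ on $UC_b(X)$ with $|L_r(\varphi)|\le q_1(r)\|\varphi\|_\infty$, or equivalently a sequence $(\sigma^g_{s_k})$ along good $s_k\to r$ whose integrals against Lipschitz functions converge; on a non-compact space such a functional need not be countably additive (mass can escape to infinity), and the only tightness information the disintegration provides is the integrated bound $\int_J q_1(s)\,m_s(X\setminus K)\,ds=\nu\bigl(g^{-1}(J)\setminus K\bigr)$, which does not give uniform tightness of $\{\sigma^g_s\}$ for $s$ near $r$. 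To close the gap you must either cite the Prokhorov-type corollary the paper uses (in which case the disintegration step becomes unnecessary: apply the corollary directly to your $\sigma_{r,\eps}$), or extract uniform tightness by hand, e.g.\ choose compacts $K_j$ with $\nu(X\setminus K_j)<4^{-j}$, discard by a Borel--Cantelli argument the $s$-sets where $q_1(s)m_s(X\setminus K_j)>2^{-j}$, select good $s_k\to r$ in the remaining set, and apply Prokhorov to $(\sigma^g_{s_k})$. Your final step (upgrading from Lipschitz test functions to all of $C_b(X)$ via mass convergence and the Portmanteau theorem) is fine, but only once the measure $\sigma^g_r$ exists for the given $r$ --- and that existence, the heart of the theorem, is not established in the proposal as written.
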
 
 \begin{proof} Fix $r\in \R$ and $\varphi \in C_b(X)$. To show that $F_{\varphi}$ is differentiable at $r$, we shall show that for every vanishing sequence $(\eps_n)$ of nonzero numbers the incremental ratio $(F_{\varphi} (r+\eps_n) - F_{\varphi} (r))/\eps_n$ converges to a real limit independent of the sequence, as $n\to \infty$. 

Consider the measures $m_n$ defined by 
 $$m_n = \left\{ \begin{array}{ll}\ds \frac{1}{\eps_n} \one_{g^{-1}(r, r+ \eps_n)} \nu, &{\rm if}\; \eps_n>0, 
 \\
 \\
- \frac{1}{\eps_n} \one_{g^{-1}( r+ \eps_n, r)} \nu, &{\rm if}\; \eps_n<0. 
\end{array}\right. $$
Then $(m_n)$ is a sequence of nonnegative finite Borel (and since $X$ is separable, Radon) measures, and  we have 
$$\int_X \varphi \, dm_n = \frac{1}{\eps_n}(F_{\varphi}(r+\eps_n) - F_{\varphi}(r)). $$

In particular, if $\varphi $ is  Lipschitz continuous  and  bounded by  Proposition \ref{p2n} $F_{\varphi}$ is differentiable, and therefore 
\begin{equation}
\label{m}
\lim_{n\to \infty} \int_X \varphi \, dm_n = F_{\varphi}'(r) = q_{\varphi}(r). 
\end{equation}
So,  the sequence $\int_X \varphi \, dm_n $ converges to $q_{\varphi}(r)$. By a corollary of the Prokhorov Theorem (e.g. \cite[Cor. 8.6.3]{BogaII}), if a sequence  of nonnegative Radon measures $(m_n)$ is such that $\int_X \varphi \, dm_n $ converges in $\R$ for every Lipschitz continuous and bounded $\varphi$,   there exists a limiting Borel measure  such that $(m_n)$ converges weakly to it. The weak limit is independent of the chosen vanishing sequence,  
because for every Lipschitz continuous and bounded $\varphi$ equality \eqref{m} holds, so that denoting by $m$ the weak limit obtained through a sequence  $(\eps_n)$ and by $\widetilde{m}$ the weak limit obtained through another sequence 
 $(\widetilde{\eps_n})$,  we have $\int_X \varphi \, dm = \int_X \varphi \, d\widetilde{m}$ for every Lipschitz continuous and bounded $\varphi$, and this implies that 
 $m= \widetilde{m}$. So, there exists a Borel measure, that we denote by $\sigma^g_r$, such that  for every vanishing sequence $(\eps_n)$ of nonzero numbers, and for every $\varphi\in C_b(X)$ we have
$$\lim_{n\to \infty} \frac{1}{\eps_n}(F_{\varphi}(r+\eps_n) - F_{\varphi}(r)) = \int_X \varphi \, d \sigma^g_r. $$
This means that for every $\varphi\in C_b(X)$ the function $F_{\varphi}$ is differentiable at $r$, and \eqref{e15d} holds. 
\end{proof}

 \begin{Remark}
{\em From the proof of Theorem \ref{costruzione} it follows easily that 
  if $g$ is continuous then $\sigma_r^g$ has support in $g^{-1}( r)$. Indeed,  for every $\varepsilon>0$ and $\varphi\in C_b(X)$ with support contained in $g^{-1}(-\infty, r-\varepsilon) \cup g^{-1}(  r+ \varepsilon , +\infty)$, the function $  F_{\varphi}$ is constant in $(r-\varepsilon, r+\varepsilon)$, and therefore 
$F_{\varphi}'( r) =0$. By \eqref{e15d}, $\int_X \varphi \,d\sigma_r^g=0$. So, the support of $\sigma_r^g$ is contained in $\cap_{\varepsilon >0}g^{-1}[r-\varepsilon, r+\varepsilon] = g^{-1}( r)$.  }

{\em If $g$ is not continuous, the existence of $\varphi\in C_b(X)$ with support contained in $g^{-1}(-\infty, r-\varepsilon) \cup g^{-1}(  r+ \varepsilon , +\infty)$ is not guaranteed, and this argument does not work. However, the argument in Remark 3.6 of \cite{DaLuTu14} shows that $\sigma^g_r = q_1(r) m_r$ for a.e. $r\in \R$ such that $q_1(r)>0$, where $m_r$ are the measures used in the proof of Proposition \ref{p2n}. Since the support of $m_r$ is contained in $g^{-1}(r)$ for almost all $r\in \R$, the support of $\sigma^g_r$ is contained in $g^{-1}(r)$ for almost all $r\in \R$ with $q_1(r)>0$. }

{\em In the next section we will show that for every $r\in \R$  the support of $\sigma_r^g$ is contained in $g^{*-1}( r)$, for a suitable version of $g$ (Prop. \ref{supporto}). }
 \end{Remark}

Theorem \ref{costruzione} asserts that $\sigma^g_r$ is nontrivial iff $q_1(r)>0$. So, 
it is important to know whether $q_{1}(r)>0$.  An obvious sufficient condition for $q_{1}(r)>0$ (in view of the identity  $  q_{1}(r)=\int_{g^{-1}(r, +\infty)} M^*\Psi \,d\nu  $) is that    $\nu(g^{-1}(r, +\infty) )>0$, and $M^*\Psi \geq 0$ on $g^{-1}(r, +\infty)$, $M^*\Psi >0$ on a subset of $g^{-1}(r, +\infty)$ with positive measure.   However, this is not easy to check. 

In the Gaussian case, under reasonable assumptions on $g$ we have $q_1(r) >0$ if and only if  $r\in (\essinf g, \esssup g)$ (\cite[Lemma 3.9]{DaLuTu14}). 
The proof is not easily extendable to our general setting, and in the next 
proposition we use an argument from  \cite[Second Edition, Prop. 2.1.8]{Nualart}. We need a further hypothesis, 

\begin{Hypothesis}
\label{h3}
If  $\varphi\in W^{1,2}(X, \nu)$ and $ M_2\varphi =0$, then $\varphi $ is constant $\nu$-a.e.
\end{Hypothesis}

For Hypothesis \ref{h3} be satisfied, one needs that $R$ be  one to one. However, even in the case $R=I$, Hypothesis \ref{h3} is not obvious. If it holds, the Dirichlet form 
${\mathcal E}(\varphi, \psi) := \int_X \langle M_2\varphi, M_2\psi\rangle \,d \nu$ is called irreducible. 

Of course, a sufficient condition for Hypothesis \ref{h3} be satisfied, is  that a Poincar\'e inequality holds, namely that there exists $C>0$ such that 
\begin{equation}
\label{poincare}
\int_X \bigg(\varphi - \int_X\varphi \, d\nu\bigg)^2 d\nu \leq C\int_X \|M_2\varphi\|^2 d\nu, \quad \varphi\in W^{1,2}(X, \nu). 
\end{equation}
We shall use the following lemma.

\begin{Lemma}
\label{Le:caratteristica}
Let Hypothesis \ref{h3} hold. If $B$ is a Borel set such that $\one_B \in W^{1,2}(X, \nu)$, then either $\nu(B)=0$ or $\nu(B)=1$. 
\end{Lemma}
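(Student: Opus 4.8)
The plan is to exploit the idempotency of a characteristic function, which forces its gradient to vanish, and then invoke the irreducibility Hypothesis \ref{h3}. Set $\varphi := \one_B$. By assumption $\varphi \in W^{1,2}(X,\nu)$, and the crucial structural feature is that $\varphi$ takes only the two values $0$ and $1$; more precisely $\varphi^{-1}(0) = X\setminus B$ and $\varphi^{-1}(1) = B$, and these two level sets cover all of $X$ since $\varphi$ attains no other value.

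The first step is to apply the last assertion of Corollary \ref{partepositiva}, which states that for any $\varphi \in W^{1,p}(X,\nu)$ the gradient $M_p\varphi$ vanishes $\nu$-a.e.\ on each level set $\varphi^{-1}(c)$. Taking $p=2$ and applying this with $c=0$ and $c=1$, I conclude that $M_2\varphi = 0$ $\nu$-a.e.\ on $X\setminus B$ and $\nu$-a.e.\ on $B$. Because $X = (X\setminus B)\cup B$ and $\varphi$ takes no values outside $\{0,1\}$, these two level sets exhaust $X$ up to a $\nu$-null set, so in fact
\begin{equation*}
M_2\one_B = 0 \quad \nu\text{-a.e.\ on } X.
\end{equation*}

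The second step is to invoke Hypothesis \ref{h3}: since $\one_B \in W^{1,2}(X,\nu)$ and $M_2\one_B = 0$, the function $\one_B$ is constant $\nu$-a.e. As $\one_B$ takes values in $\{0,1\}$, this $\nu$-a.e.\ constant value is either $0$ or $1$. In the first case $\nu(B) = \int_X \one_B\,d\nu = 0$, and in the second case $\nu(B) = 1$, which is exactly the claimed dichotomy.

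I do not anticipate a genuine obstacle here: the entire argument rests on recognizing that the range $\{0,1\}$ of a characteristic function, combined with the level-set vanishing already established in Corollary \ref{partepositiva}, annihilates the gradient everywhere. The only point that deserves care is the covering argument in the first step — one must note that the two relevant level sets $\varphi^{-1}(0)$ and $\varphi^{-1}(1)$ genuinely partition $X$ (not merely a proper subset), so that the a.e.\ vanishing on each piece upgrades to a.e.\ vanishing on all of $X$. Once this is in place, Hypothesis \ref{h3} closes the argument immediately.
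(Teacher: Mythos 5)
Your proof is correct, but it follows a genuinely different route from the paper's. You reach the key identity $M_2\one_B=0$ $\nu$-a.e.\ by invoking the last assertion of Corollary \ref{partepositiva} (the vanishing of $M_p\varphi$ on every level set $\varphi^{-1}(c)$) at $c=0$ and $c=1$, and then observing that these two level sets partition $X$ exactly, since a characteristic function takes no other values. The paper instead follows the classical argument of Nualart: it picks $\varphi\in C^\infty_c(\R)$ with $\varphi(r)=r^2$ on $[0,1]$, so that $\varphi\circ\one_B=\one_B$ and $\varphi'\circ\one_B=2\one_B$, and applies the smooth chain rule (Lemma \ref{prodotto-ChainRule}(ii)) to get $M_2\one_B=2\one_B\,M_2\one_B$; since $2\one_B$ never equals $1$, this forces $M_2\one_B=0$ a.e. Both arguments then conclude identically via Hypothesis \ref{h3}. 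The trade-off is that your version is shorter given the paper's toolkit, but it leans on Corollary \ref{partepositiva}, whose level-set statement is itself derived from the Lipschitz-composition Lemma \ref{ChainRuleLipschitz}; the paper's proof needs only the elementary $C^1_b$ chain rule, so it is more self-contained and works verbatim in settings where the level-set vanishing result has not been established. Your covering observation — that $\varphi^{-1}(0)\cup\varphi^{-1}(1)$ is all of $X$, not merely a.e.\ all of it — is exactly the point that needs care, and you handled it correctly.
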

 \begin{proof} 
We follow   \cite[Prop. 1.2.6]{Nualart}.  Assume that $\one_B \in W^{1,2}(X, \nu)$ and
let  $\varphi\in C^\infty_c(\R)$ be such that
$$
\varphi(r)=r^2,\quad \forall\; r\in[0,1].
$$
Then $\varphi\circ  \one_B = \one_B$ and $\varphi' \circ  \one_B = 2\one_B$, since 
$$
\varphi'(\one_B(x))=\left\{\begin{array}{l}
\varphi'(1)=2,\quad\mbox{\rm if}\;x\in [0, 1]\\
\varphi'(0)=0,\quad\mbox{\rm if}\;x\notin [0, 1].
\end{array}\right.
$$
Now by the chain rule (Lemma \ref{prodotto-ChainRule}(ii))
$$
M_2(    \one_B  )=M_2(\varphi\circ   \one_B)=\varphi'(  \one_B)M_2(  \one_B)=2  \one_B M_2(  \one_B)$$
so that $M_2 (    \one_B  )=0$. By Hypothesis \ref{h3}, $ \one_B $ is constant a.e., and the conclusion follows.
 \end{proof}

\begin{Proposition}
\label{q_1>0}
Under Hypotheses \ref{h1},  \ref{h2},  \ref{h3}, assume in addition that $M^*_p\Psi \in W^{1,p}(X, \nu)$ for every $p>1$. Then
for every $r\in \R$ we have 
$$q_{1}(r) > 0 \; \Longleftrightarrow \;r\in (\essinf g, \esssup g). $$
\end{Proposition}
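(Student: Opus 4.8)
The plan is to prove the two implications separately, the forward one being elementary and the reverse one carrying all the weight. Write $a:=\essinf g$ and $b:=\esssup g$. For the implication $q_1(r)>0\Rightarrow r\in(a,b)$ I would first note that for $t<a$ one has $g\ge t$ $\nu$-a.e., so $\nu(\{g<a\})=\lim_{t\uparrow a}\nu(\{g<t\})=0$; hence for $r<a$, $\{g\le r\}\subset\{g<a\}$ gives $F_1(r)=0$, and symmetrically $F_1(r)=1$ for $r>b$. Thus $F_1$ is constant on $(-\infty,a)$ and on $(b,+\infty)$, so $q_1=F_1'$ vanishes there; by continuity of $q_1$ (Lemma \ref{l4}) also $q_1(a)=q_1(b)=0$, whence $\{q_1>0\}\subset(a,b)$.

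For the reverse implication I fix $r\in(a,b)$ and argue by contradiction, assuming $q_1(r)=0$. Set $B:=\{g<r\}=\{g\le r\}$, the level set being $\nu$-negligible. Since $r>a$ forces $\nu(B)>0$ and $r<b$ forces $\nu(B)<1$, we have $0<\nu(B)<1$. The whole point will be to show that the assumption $q_1(r)=0$ forces $\one_B\in W^{1,2}(X,\nu)$; this contradicts Lemma \ref{Le:caratteristica}, which under Hypothesis \ref{h3} forces $\nu(B)\in\{0,1\}$.

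The key is a regularity bootstrap powered by the extra hypothesis $M^*\Psi\in W^{1,p}(X,\nu)$ for every $p$. Applying Lemma \ref{l4} to $\varphi=M^*\Psi$ yields a density $q_{M^*\Psi}$ that is $\theta$-Hölder for every $\theta<1$. Since $q_1(r)=\int_{\{g\ge r\}}M^*\Psi\,d\nu=\int_r^{+\infty}q_{M^*\Psi}(s)\,ds$, this upgrades $q_1$ to $C^1(\R)$ with $q_1'=-q_{M^*\Psi}$, so $q_1\in C^{1,\theta}$ for every $\theta<1$. As $q_1\ge0$ and $q_1(r)=0$, the point $r$ is a global minimum, giving $q_1'(r)=0$, and a Taylor estimate with Hölder remainder yields $q_1(s)\le C|s-r|^{1+\theta}$ near $r$.

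Finally I would realize $\one_B$ as the $L^2$-limit of $u_n:=p_n\circ g$, where $p_n\in C^1_b(\R)$ satisfies $0\le p_n\le1$, $p_n\equiv1$ on $(-\infty,r-1/n]$, $p_n\equiv0$ on $[r,+\infty)$, and $|p_n'|\le 2n$. By Lemma \ref{prodotto-ChainRule}(ii), $M_2u_n=p_n'(g)\,Mg$, and Hölder's inequality with conjugate exponents $a,a'$ gives
$$\int_X |p_n'(g)|^2\|Mg\|^2\,d\nu\le\Big(\int_\R|p_n'(s)|^{2a}q_1(s)\,ds\Big)^{1/a}\Big(\int_X\|Mg\|^{2a'}\,d\nu\Big)^{1/a'}.$$
Since $|p_n'|\le 2n$ is supported in $[r-1/n,r]$ and $q_1(s)\le C|s-r|^{1+\theta}$ there, the first factor is $O\big(n^{2-(2+\theta)/a}\big)$; choosing $a=1+\theta/2>1$ annihilates the exponent, while $\int_X\|Mg\|^{2a'}\,d\nu<\infty$ because $\|Mg\|\in L^q(X,\nu)$ for every $q$ by Hypothesis \ref{h2}. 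Thus $M_2u_n$ stays bounded in $L^2(X,\nu;X)$, so Lemma \ref{prodotto-ChainRule}(v) yields $\one_B=\lim_n u_n\in W^{1,2}(X,\nu)$, the contradiction sought. The main obstacle is exactly this uniform bound: a crude estimate using only the continuity of $q_1$ leaves a slowly diverging factor, and it is the $C^{1,\theta}$ regularity produced by the extra hypothesis that makes the borderline exponent close precisely to zero.
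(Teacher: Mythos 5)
Your proof is correct and follows essentially the same route as the paper's: the extra hypothesis $M^*\Psi\in W^{1,p}(X,\nu)$ is used to upgrade $q_1$ to $C^{1,\theta}$ via $q_1'=-q_{M^*\Psi}$, the resulting bound $q_1(s)\le C|s-r|^{1+\theta}$ is fed into a H\"older estimate that keeps the $W^{1,2}$ norms of the approximating compositions with $g$ bounded, and the conclusion comes from Lemma \ref{prodotto-ChainRule}(v) together with the dichotomy of Lemma \ref{Le:caratteristica}. The only deviation is a local simplification in your favor: you obtain $q_1'(r)=0$ directly by Fermat's rule, since $r$ is an interior minimum of the nonnegative differentiable function $q_1$, whereas the paper deduces it more indirectly from the vanishing of the surface measure $\sigma^g_r$ and the approximation argument giving $q_{\varphi}(r)=0$ for all Sobolev $\varphi$.
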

 \begin{proof}
 The function $F_1(r) = \nu \{ x: \; g(x)\leq r\}$ is continuously differentiable, and it is constant in $(-\infty,  \essinf g) $ and in $(\esssup g, +\infty)$. Therefore, for every $r\in (-\infty,  \essinf g] 
 \cup [\esssup g, +\infty)$ we have $F_1'(r) = q_1(r) =0$. 
 
 To prove the converse, let us fix  $r_0$ such that $q_1(r_0)= 0$. We shall show that  the characteristic function $\one_{\{g>r_0\}}$ belongs to $W^{1,2}(X, \nu)$.  
 We approach $\one_{\{g>r_0\}}$ by the functions  $\varphi_{\eps} $ defined by 
$$\varphi_{\eps}(x) = \left\{ \begin{array}{ll}
0, & g(x) < r_0-\eps, \\
\\
\ds \frac{1}{2\eps}(g(x)- (r_0-\eps)), & r_0-\eps\leq g(x)\leq r_0+\eps, 
\\
\\
1, & g(x)> r_0+\eps. \end{array}\right. $$
for $\eps >0$. By Lemma \ref{ChainRuleLipschitz}, $\varphi_{\eps} \in W^{1,p}(X, \nu)$ for every $p$, and 
$$M(\varphi_{\eps} ) = \frac{1}{2\eps}\one_{\{r_0-\eps \leq g\leq r_0+\eps \}}Mg. $$
To estimate $\|M(\varphi_{\eps} )\|_{L^2(X, \nu;X)}$, we preliminary show that $q_1'$ is H\"older continuous, and that  $q_1'(r_0) =0$.

 The H\"older continuity of $q_1'$ follows from the regularity assumption on $M^*\Psi$. 
Indeed,  by \eqref{e17} we have

$$q_{1}(r)= -\int_{\{g<r\}}  M^* \Psi  \,d\nu  = -F_{M^*\Psi}(r), \quad r\in \R. $$
By assumption, $M^* \Psi\in W^{1,p}(X, \nu)$ for every $p>1$, so that by Lemma \ref{l4} $q_1$ is differentiable, and 
$$q_1'(r) = -q_{M^* \Psi}(r), \quad r\in \R. $$
Still by Lemma  \ref{l4}, $q_1'$ is H\"older continuous, with any exponent $\alpha \in (0, 1)$.

 Let us prove that $q_1'(r_0)=0$. Since $q_1(r_0)=0$, 
 $\sigma^g_{r_0}(X) =0$, and by Theorem \ref{costruzione} we get $q_{\varphi}(r_0)=0$ for every $\varphi\in C_b(X)$. Approaching every $\varphi\in W^{1,p}(X, \nu)$ by a sequence of $C^1_b$ functions and using estimate \eqref{e18}, we obtain $q_{\varphi}(r_0)=0$ for every $\varphi\in W^{1,p}(X, \nu)$. 
In particular,
$$q_{M^*_p \Psi}(r_0)= -q_1'(r_0)=0. $$
 Therefore, for every $\alpha \in (0,1) $ there exists $C_{\alpha}>0$ such that $|q_1(\xi)| \leq C_{\alpha}|\xi-r_0|^{1+\alpha}$, for every $\xi\in \R$. It follows that there exists $K_{\alpha} >0$ such that  for every $\eps >0$ we have
$$\bigg| \int_{r_0-\eps}^{r_0+\eps} q_1(\xi)d\xi\bigg| \leq K_{\alpha} \eps^{2+\alpha}. $$

Fix now  $\alpha \in (0, 1)$ and take $p = 2+ 4/\alpha$, so that $(\alpha +2)(p-2)/p = 2$. 
By the H\"older inequality we have
$$\int_X \|M\varphi_{\eps} \|^2 d\nu = \frac{1}{(2\eps)^2}\int_X \|Mg\|^2 \one_{\{r_0-\eps \leq g\leq r_0+\eps\}}\,d\nu$$
$$\leq \frac{1}{(2\eps)^2} \bigg(\int_X  \|Mg\|^p d\nu\bigg)^{2/p} \bigg( \int_{r_0-\eps}^{r_0+\eps} q_1(\xi)d\xi\bigg)^{(p-2)/p}$$
$$\leq \frac{1}{(2\eps)^2} \bigg(\int_X  \|Mg\|^p d\nu\bigg)^{2/p}(K_{\alpha} \eps^{2+\alpha})^{(p-2)/p} = C,  $$
with $C: = 2^{-2 } (\int_X  \|Mg\|^p d\nu) ^{2/p}K_{\alpha}^{(p-2)/p}$ independent of $\eps$. 

So, $\|\varphi_{\eps} \|_{W^{1, 2}(X, \nu)} $ is bounded by a constant independent of $\eps$. By Lemma \ref{prodotto-ChainRule}(v), $\one_{\{g>r_0\}}$ belongs to $W^{1,2}(X, \nu)$. 
By Lemma \ref{Le:caratteristica}, the measure of the set $\{ x: \; g(x) > r_0\}$ is either $0$ or $1$, namely $r_0\geq \esssup g$ or $r_0\leq \essinf g$ . 
 \end{proof}
   

 \subsection{Integration by parts formulae}


We recall that by Lemma \ref{prodotto-ChainRule}(i), the product $\varphi \psi$ belongs to $W^{1,p}(X, \nu)$ provided 
$\varphi\in W^{1,p_1}(X, \nu)$, $\psi\in W^{1,p_2}(X, \nu)$, with $1/p_1 + 1/p_2 \leq  1/p$. In this case, 
 for all  $F\in D(M_{p}^*)$ we may apply formula  \eqref{e1m} with $\varphi \psi$ replacing $\varphi$, and  we obtain
\begin{equation}
\label{e19n}
\int_X \langle M_{p_1}\varphi,F  \rangle\,\psi\,d\nu+\int_X \langle M_{p_2}\psi,F  \rangle\,\varphi\,d\nu=\int_X \varphi\,\psi\,M_{p}^*(F)\,d\nu .
\end{equation}

The following proposition is a first basic step towards an integration by parts formula.

 \begin{Proposition}
\label{partirozza}
 Assume that  Hypotheses \ref{h1} and  \ref{h2} are fulfilled.  Let  $p>1$,   $F\in  D(M^*_p)$, $\varphi \in   W ^{1,p_1}(X,\nu)$ for some $p_1>p$,   and 
assume that $ \langle Mg,F  \rangle\,\varphi$ belongs to $C_b(X)$ or to   $W ^{1,q}(X,\nu)$ for some $q>1$. 
 Then
\begin{equation}
\label{partiprima}
\int_{\{g < r\}} \langle M_{p_1}\varphi,F\rangle\,d\nu =  \int_{\{g<r\}}    \varphi\,M^*_p(F)\, d\nu + q_{ \langle Mg,F\rangle \varphi}(r), \quad r\in \R. 
\end{equation}
\end{Proposition}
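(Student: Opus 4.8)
The plan is to reduce the statement to the product integration-by-parts identity \eqref{e19n} applied to a piecewise linear cutoff in the $g$-variable, and then to differentiate in the endpoint of an interval. First I would fix an interval $[\alpha,\beta]\subset\R$, take the function $h$ of \eqref{defh}, and set $\psi:=h\circ g$; by Lemma \ref{ChainRuleLipschitz} one has $\psi\in W^{1,p_2}(X,\nu)$ for every $p_2$, with $M_{p_2}\psi=\one_{[\alpha,\beta]}(g)\,Mg$. Since $p_1>p$ we may pick $p_2$ so large that $1/p_1+1/p_2\le 1/p$, and then \eqref{e19n} (with $F\in D(M_p^*)$) yields
\[
\int_X \langle M_{p_1}\varphi,F\rangle\,(h\circ g)\,d\nu+\int_{\{\alpha\le g\le\beta\}}\langle Mg,F\rangle\,\varphi\,d\nu=\int_X \varphi\,(h\circ g)\,M_p^*(F)\,d\nu .
\]
All products here are integrable: $\|F\|$ and $M_p^*(F)$ lie in $L^{p'}(X,\nu)$, while $M_{p_1}\varphi,\varphi\in L^{p_1}(X,\nu)$ with $p_1\ge p$, and $h\circ g$ is bounded.

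Next I would use $h(g(x))=\int_\alpha^\beta \one_{\{g(x)\ge s\}}\,ds$ together with Fubini (legitimate since $\langle M_{p_1}\varphi,F\rangle,\ \varphi M_p^*(F)\in L^1(X,\nu)$ and $h$ is bounded) to rewrite the first and third integrals as $\int_\alpha^\beta\big(\int_{\{g\ge s\}}\cdots\,d\nu\big)\,ds$. The middle term is exactly $(\psi'\nu)(\alpha\le g\le\beta)$ with $\psi':=\langle Mg,F\rangle\varphi$; by hypothesis $\psi'$ lies in $C_b(X)$ or in $W^{1,q}(X,\nu)$, so that $(\psi'\nu)\circ g^{-1}\ll\lambda$ with density $q_{\psi'}$ (by Lemma \ref{l4} in the Sobolev case, by Proposition \ref{p2n} and Theorem \ref{costruzione} in the continuous case), whence that term equals $\int_\alpha^\beta q_{\psi'}(s)\,ds$. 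Thus
\[
\int_\alpha^\beta\Big[\int_{\{g\ge s\}}\langle M_{p_1}\varphi,F\rangle\,d\nu+q_{\psi'}(s)\Big]ds=\int_\alpha^\beta\Big[\int_{\{g\ge s\}}\varphi\,M_p^*(F)\,d\nu\Big]ds
\]
for every $[\alpha,\beta]$.

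Since the two maps $s\mapsto\int_{\{g\ge s\}}\langle M_{p_1}\varphi,F\rangle\,d\nu$ and $s\mapsto\int_{\{g\ge s\}}\varphi M_p^*(F)\,d\nu$ are continuous — the level sets $g^{-1}(s)$ being $\nu$-negligible, whence dominated convergence applies — and $q_{\psi'}$ is continuous as well, the equality of the integrals over every interval forces the integrands to coincide for every $s$. Finally I would split $\int_X=\int_{\{g<s\}}+\int_{\{g\ge s\}}$ and cancel, using the global identity $\int_X\langle M_{p_1}\varphi,F\rangle\,d\nu=\int_X\varphi M_p^*(F)\,d\nu$ (which is \eqref{e1m}, since $\varphi\in W^{1,p}(X,\nu)$ and $M_{p_1}\varphi=M_p\varphi$ by Lemma \ref{prodotto-ChainRule}(vi)); this turns the pointwise identity into \eqref{partiprima} with $r=s$.

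The only genuinely delicate points are the exponent bookkeeping needed to invoke \eqref{e19n} and to guarantee $L^1$-integrability of the various products, and the two-case treatment of the term $q_{\langle Mg,F\rangle\varphi}$ according to whether $\langle Mg,F\rangle\varphi$ is merely continuous and bounded or Sobolev; the passage from equality of interval-integrals to the pointwise statement, resting on the continuity of all three integrands, is the one analytic step that must be made explicit.
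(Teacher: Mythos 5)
Your argument reproduces, for the pair $(\varphi,F)$, the scheme of the paper's proof of Lemma \ref{l4}: the two-sided cutoff $h$ of \eqref{defh}, the product rule \eqref{e19n}, and Fubini, producing an identity between integrals over every interval $[\alpha,\beta]$, which you then differentiate. The paper's own proof is organized differently: it multiplies by the one-sided ramp $\theta_\varepsilon$ approximating $\one_{(-\infty,r]}$ and lets $\varepsilon\to 0$, so that the boundary term appears directly as the difference quotient $\varepsilon^{-1}\bigl(F_{u}(r)-F_{u}(r-\varepsilon)\bigr)$, with $u:=\langle Mg,F\rangle\varphi$, whose convergence to $q_u(r)$ is exactly what Lemma \ref{l4} (Sobolev case) or Theorem \ref{costruzione} (continuous case) supplies. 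In the Sobolev case $u\in W^{1,q}(X,\nu)$ your proof is correct and complete: Lemma \ref{l4} gives the absolute continuity of $(u\nu)\circ g^{-1}$ with H\"older continuous density, and your continuity argument legitimately upgrades the a.e.\ identity to every $s$.

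In the case where $u$ belongs merely to $C_b(X)$, however, two of your steps are not covered by the results you cite. First, the claim that $(u\nu)\circ g^{-1}\ll\lambda$ with density $q_u$: Proposition \ref{p2n} concerns $UC_b(X)$, not $C_b(X)$, and Theorem \ref{costruzione} only yields differentiability of $F_u$ at every point; to pass from everywhere-differentiability to $F_u(\beta)-F_u(\alpha)=\int_\alpha^\beta F_u'(s)\,ds$ one needs the nontrivial fundamental theorem of calculus for everywhere-differentiable functions with integrable derivative, which the paper never invokes. Second, your final passage from the interval identity to the pointwise one uses continuity of $q_u$, which in the $C_b$ case is established nowhere in the paper (Theorem \ref{costruzione} does not assert continuity of $r\mapsto\int_X u\,d\sigma^g_r$). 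Both problems disappear if you reorganize your own computation: keep the middle term as the increment $F_u(\beta)-F_u(\alpha)$ (legitimate, since the level sets of $g$ are $\nu$-negligible), so that your identity reads
\begin{equation*}
F_u(\beta)-F_u(\alpha)=\int_\alpha^\beta\Big(\int_{\{g\ge s\}}\varphi\,M_p^*(F)\,d\nu-\int_{\{g\ge s\}}\langle M_{p_1}\varphi,F\rangle\,d\nu\Big)\,ds ,
\end{equation*}
with a continuous integrand; then $F_u$ is $C^1$, its derivative $q_u$ coincides with that integrand at \emph{every} point, and \eqref{partiprima} follows after inserting the global identity \eqref{e1m}. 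This rearrangement (or simply adopting the paper's one-sided localization at $r$) closes the gap.
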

\begin{proof}
For any $\varepsilon>0$ we set
\begin{equation}
\label{e21n}
\theta_\varepsilon(\xi)=\left\{\begin{array}{l}
1,\quad\mbox{\rm if}\;\xi\le r-\varepsilon,\\
\\
\ds -\frac{1}{\varepsilon}( \xi -r) ,\quad\mbox{\rm if}\; r-\varepsilon< \xi< r,\\
\\
0,\quad\mbox{\rm if}\;\xi\ge r ,
\end{array}\right.
\end{equation}
so that
\begin{equation}
\label{e22n}
\theta'_\varepsilon(\xi)=\left\{\begin{array}{l}
0,\quad\mbox{\rm if}\;\xi < r-\varepsilon,\\
\\
\ds -\frac1\varepsilon,\quad\mbox{\rm if}\; r-\varepsilon< \xi< r ,\\
\\
0,\quad\mbox{\rm if}\;\xi > r .
\end{array}\right.
\end{equation}
By Lemma \ref{ChainRuleLipschitz} (applied to $-g$), the composition $\theta_\varepsilon \circ g$ belongs to $W^{1,p_2}(X, \nu)$ for every $p_2>1$, and 
$$
M(\theta_\varepsilon \circ g) = (\theta'_\varepsilon \circ g) M(g). 
$$
Since $p_1>p$, choosing $p_2$ large enough we have $1/p_1 + 1/p_2 \leq  1/p$, and we may use 
 formula \eqref{e19n} with $\psi = \theta_\varepsilon \circ g$, to obtain
\begin{equation}
\label{e23n}
\int_X \langle M\varphi,F  \rangle\,(\theta_\varepsilon \circ g)\,d\nu = \frac{1}{\varepsilon}\int_{\{r-\varepsilon\le g\le r \}} \langle Mg,F  \rangle\,\varphi\,d\nu
+ \int_{X} \varphi\,(\theta_\varepsilon \circ g)\,M^*_p(F)\,d\nu.
\end{equation}
 Since $ \langle Mg,F  \rangle\,\varphi\in  C_b(X)\cup W^{1,q}(X,\nu)$, by Lemma \ref{l4} or by Theorem \ref{costruzione}  we have
$$\lim_{\varepsilon\to 0} \frac{1}{\varepsilon}\int_{\{r-\varepsilon\le g\le r \}} \langle Mg,F  \rangle\,\varphi\,d\nu = q_{\langle Mg,F  \rangle\,\varphi}(r).$$
On the other hand, $\theta_{\eps} \circ g$ converges a.e. to $\one_{ \{g\leq r\}}$, and by the Dominated Convergence Theorem we have
 $$\lim_{\varepsilon\to 0}\int_X \langle M\varphi,F  \rangle\,(\theta_\varepsilon \circ g)\,d\nu  = \int_{ \{ g\le r \}} \langle M\varphi,F  \rangle\,d\nu, $$
$$\lim_{\varepsilon\to 0}\int_X \varphi\,(\theta_\varepsilon \circ g)\,M^*_p(F)\,d\nu =  \int_{ \{ g\le r \}} \varphi\, M^*_p(F)\,d\nu .  $$
The conclusion follows.
 \end{proof}

Note that by \eqref{e15d}, if $ \langle Mg,F  \rangle\,\varphi\in C_b(X) $ then $q_{ \langle Mg,F\rangle \varphi}(r) $ is just the   integral of 
$ \langle Mg,F\rangle \varphi$ with respect to $\sigma_r^g$, and \eqref{partiprima} may be rewritten as
\begin{equation}
\label{parti_noi}
\int_{\{g<r\} } \langle M\varphi,F\rangle\,d\nu =   \int_{\{g<r\} }    \varphi\,M^*_p(F)\, d\nu + \int_X  \langle Mg,F\rangle \varphi \, d \sigma_r^g. 
\end{equation}

To improve formula \eqref{parti_noi} and extend it to a wider class of functions we have to work a bit. To this aim, in the next section we introduce the $p$-capacity and then we use it as a tool.

\section{$p$-capacities}

\begin{Definition}
Let  Hypothesis \ref{h1} hold. For every open set $O\subset X$ and $p>1$ we define the $p$-capacity of $O$ by 
$$C_p(O) := \inf\{ \|f\|_{W^{1,p}(X, \nu)}:\; f(x) \geq \one_O \; \nu-a.e.,  \; f\in W^{1,p}(X, \nu)\}. $$
If $B$ is any Borel set, we define 
$$C_p(B) := \inf\{C_p(O): \; O\; \text{is open}, \; O \supset B\}. $$

A function $f:X\mapsto \R$ is called $C_p$-quasicontinuous if for every $\eps >0$  there is an open set $O$ such that $C_p(O) <\eps $ and  $f$ is continuous in $X\setminus O$. 
\end{Definition}

This is just  Definition 8.13.1 of \cite{Boga2}, with the choice $\mathcal F = W^{1,p}(X, \nu)$. It follows immediately from the definition that for every Borel sets $A$, $B$ we have
$$C_p(A\cup B) \le C_p(A) + C_p(B), \quad C_p(B) \geq (\nu(B))^{1/p}. $$

We recall some properties of the $p$-capacity, taken from \cite[Sect. 8.13]{Boga2}.

\begin{Proposition}
\label{capacita}
\begin{itemize}
\item[(i)] Every element $f\in W^{1,p}(X, \nu)$ has a $C_p$-quasicontinuous version $f^*$, which satisfies
$$C_p(\{ x: \; f^*(x) >r\}) \leq \frac{1}{r}\| f\|_{W^{1,p}(X, \nu)} , \quad r>0.$$
\item[(ii)] Let  $(f_n)$be a sequence that  converges to $f$ in $W^{1,p}(X, \nu)$. For every $n$ let $f_n^*$ be any $C_p$-quasicontinuous version of $f_n$. Then there is 
a subsequence  $(f_{n_k}^*)$ that converges pointwise to $f^*$, except at most on a set with null $p$-capacity. 
\item[(iii)] If $O$ is an open set, $f $ is $C_p$-quasicontinuous and $f(x)\geq 0$ for $\nu$-a.e. $x\in O$, then $f(x)\geq 0$ in $ O$, except at most on a set with null $p$-capacity. 
\end{itemize}
\end{Proposition}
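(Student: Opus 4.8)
The plan is to isolate a single analytic engine -- a weak-type (capacitary Chebyshev) inequality -- and then run the standard approximation machinery on top of it; these are exactly the arguments of the general theory recorded in \cite[Sect.~8.13]{Boga2}, specialized to $\mathcal F = W^{1,p}(X,\nu)$, so I would keep the verifications at sketch level.

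First I would prove the engine: if $u\in W^{1,p}(X,\nu)\cap C_b(X)$ and $u\ge 0$, then for $r>0$ the set $\{u>r\}$ is open and $u/r\ge \one_{\{u>r\}}$ everywhere, so $u/r$ is admissible in the definition of $C_p(\{u>r\})$; hence $C_p(\{u>r\})\le r^{-1}\|u\|_{W^{1,p}(X,\nu)}$. Applying this to $|u|$, which lies in $W^{1,p}(X,\nu)$ with the same gradient norm by Corollary \ref{partepositiva}, gives $C_p(\{|u|>r\})\le r^{-1}\|u\|_{W^{1,p}(X,\nu)}$ for continuous $u$. The second preliminary ingredient is countable subadditivity, $C_p(\bigcup_n B_n)\le\sum_n C_p(B_n)$, upgrading the finite subadditivity already recorded before the statement; this is the standard fact that $C_p$ is a Choquet capacity, and it follows from the lattice operations available through Lemma \ref{ChainRuleLipschitz} and Corollary \ref{partepositiva} (truncated suprema of near-optimal admissible functions). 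I would cite \cite{Boga2} here rather than reproduce it.

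For (i): since $C^1_b(X)$ is dense in $W^{1,p}(X,\nu)$, pick $f_n\in C^1_b(X)$ with $f_n\to f$, and pass to a subsequence with $\|f_{n+1}-f_n\|_{W^{1,p}(X,\nu)}\le 4^{-n}$. The engine gives $C_p(\{|f_{n+1}-f_n|>2^{-n}\})\le 2^{-n}$. Setting $O_k:=\bigcup_{n\ge k}\{|f_{n+1}-f_n|>2^{-n}\}$ produces a decreasing sequence of open sets with $C_p(O_k)\le 2^{-k+1}\to 0$, and on $X\setminus O_k$ the telescoping series converges uniformly, so $(f_n)$ converges uniformly there to a function $f^*$ continuous on each $X\setminus O_k$; this is the quasicontinuous version, and $f^*=f$ $\nu$-a.e. because $f_n\to f$ in $L^p(X,\nu)$. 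For the capacity estimate I would fix $r>0$ and $\eta\in(0,r)$ and observe that off $O_k$, for $n$ large, $\{f^*>r\}\subset O_k\cup\{f_n>r-\eta\}$; subadditivity and the engine give $C_p(\{f^*>r\})\le C_p(O_k)+(r-\eta)^{-1}\|f_n\|_{W^{1,p}(X,\nu)}$, and letting $n\to\infty$, then $k\to\infty$, then $\eta\to0$ yields the stated bound $r^{-1}\|f\|_{W^{1,p}(X,\nu)}$.

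Statement (ii) is then immediate from the same Borel--Cantelli scheme: choosing a subsequence with $\|f_{n_k}-f\|_{W^{1,p}(X,\nu)}\le 4^{-k}$ and applying the estimate of (i) to the quasicontinuous version $f_{n_k}^*-f^*$ of $f_{n_k}-f$, one gets $C_p(\{|f_{n_k}^*-f^*|>2^{-k}\})\le 2^{-k}$, and countable subadditivity forces $f_{n_k}^*\to f^*$ off a set of null $p$-capacity. For (iii), the clean route is through negative parts: writing $f^-=\max(-f,0)$, the hypothesis $f\ge 0$ $\nu$-a.e.\ on $O$ reads $f^-=0$ $\nu$-a.e.\ on $O$, and since taking negative parts commutes q.e.\ with passage to quasicontinuous versions (by (ii)), it suffices to show that a quasicontinuous function vanishing a.e.\ on $O$ vanishes q.e.\ on $O$. \emph{The main obstacle lies precisely here}, in converting an ``a.e.\ on an open set'' statement into a ``q.e.\ on an open set'' statement. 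When $O=X$ this is transparent -- the function in play is the quasicontinuous version of the zero element of $W^{1,p}(X,\nu)$, hence null q.e.\ by (i)--(ii) -- but the genuinely local statement requires the quasi-open set technology (superlevel sets of quasicontinuous functions are quasi-open, and a quasi-open $\nu$-null set has null $p$-capacity). I would reduce to the global case by a $C^1_b$ cutoff, or simply invoke the corresponding result of \cite[Sect.~8.13]{Boga2}; this localization, together with establishing countable subadditivity, is the only non-routine ingredient of the whole proposition.
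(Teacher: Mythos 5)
The paper gives no proof of this proposition at all: it is stated as a list of known properties of the $p$-capacity, recalled from \cite[Sect.~8.13]{Boga2} with the choice $\mathcal F = W^{1,p}(X,\nu)$. Your sketch is a correct reconstruction of that standard theory (Chebyshev-type capacitary estimate for continuous functions, telescoping Cauchy argument for the quasicontinuous version, Borel--Cantelli for (ii)), and it defers precisely the two genuinely non-routine ingredients --- countable subadditivity and the ``a.e.\ on an open set implies q.e.'' localization needed for (iii) and for applying (i) to \emph{arbitrary} quasicontinuous versions in (ii) --- to the same reference, so it is consistent with the paper's approach.
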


We are ready to exhibit a class of sets that are negligible with respect to all the  measures $\sigma_r^g$ constructed in Section 3.

\begin{Proposition}
\label{capacity}
Under Hypotheses \ref{h1} and \ref{h2}, let $B\subset X$ be a Borel set with $C_{p}(B) = 0$ for some $p>1$. Then $\sigma_r^g(B)=0$, for every $r\in \R$. 
\end{Proposition}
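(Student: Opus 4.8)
The plan is to show that a Borel set $B$ with $C_p(B)=0$ is negligible for every surface measure $\sigma_r^g$ by approximating $B$ from outside by open sets of small capacity, whose characteristic functions can be dominated by Sobolev functions of small norm, and then exploiting the uniform estimate \eqref{e18} from Lemma \ref{l4} together with the identity \eqref{e15d} that realizes $\sigma_r^g$ as the derivative $F_\varphi'(r)$. The key point is that $\sigma_r^g$ is the weak limit (as constructed in Theorem \ref{costruzione}) of nonnegative measures, hence itself nonnegative, so estimating $\sigma_r^g(O)$ for small open neighbourhoods $O\supset B$ suffices.

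First I would fix $r\in\R$ and, given $\eps>0$, use the definition of $p$-capacity of the Borel set $B$: since $C_p(B)=0$, there is an open set $O\supset B$ with $C_p(O)<\eps$, and by definition of $C_p(O)$ there is a function $f\in W^{1,p}(X,\nu)$ with $f\ge \one_O$ $\nu$-a.e. and $\|f\|_{W^{1,p}(X,\nu)}<\eps$. Because $\sigma_r^g$ is a nonnegative Borel measure and $B\subset O$, it is enough to bound $\sigma_r^g(O)$. The natural way is to compare $\one_O$ with the nonnegative Sobolev function $f$ and pass to the surface measure.

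The main technical step is to convert the bound on $f$ into a bound on $\sigma_r^g(O)$. Here I would proceed by approximating $\one_O$ by bounded continuous functions $\varphi$ with $0\le\varphi\le f$ (or, more robustly, replacing $f$ by $f\wedge \text{const}$ truncations using Lemma \ref{ChainRuleLipschitz} and Corollary \ref{partepositiva}, which keep the $W^{1,p}$ norm under control). For such $\varphi\in C_b(X)\cap W^{1,p}(X,\nu)$ nonnegativity of $\sigma_r^g$ gives $\int_X\varphi\,d\sigma_r^g=F_\varphi'(r)=q_\varphi(r)$ by \eqref{e15d} and Lemma \ref{l4}, and then the uniform estimate \eqref{e18} yields
\begin{equation*}
\int_X \varphi\,d\sigma_r^g = q_\varphi(r)\le K_p\|\varphi\|_{W^{1,p}(X,\nu)}\le K_p\|f\|_{W^{1,p}(X,\nu)}< K_p\,\eps .
\end{equation*}
Taking the supremum over such $\varphi$ approximating $\one_O$ from below (via monotone or dominated convergence for the measure $\sigma_r^g$) gives $\sigma_r^g(O)\le K_p\,\eps$, whence $\sigma_r^g(B)\le \sigma_r^g(O)\le K_p\,\eps$. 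Letting $\eps\to0$ forces $\sigma_r^g(B)=0$, and since $r$ was arbitrary the claim follows.

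The step I expect to be the main obstacle is the comparison $\int_X\varphi\,d\sigma_r^g\le K_p\|f\|_{W^{1,p}}$ for continuous $\varphi$ squeezed between $\one_O$ and $f$: one must ensure both that the approximating $\varphi$ stay in $C_b(X)$ (so that \eqref{e15d} applies and nonnegativity of $\sigma_r^g$ can be used) and that their $W^{1,p}$ norms remain controlled by $\|f\|_{W^{1,p}}$. A clean way around this is to first regularize $f$ so that it dominates $\one_O$ by a definite margin on the open set $O$, then build continuous $\varphi$ supported where $f\ge 1/2$, say, using a one-dimensional Lipschitz truncation of $f$ as in Lemma \ref{ChainRuleLipschitz}; the inequality \eqref{e18} is robust enough that the extra constant from truncation is harmless. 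Once this domination is in place, the rest is a routine approximation-and-limit argument.
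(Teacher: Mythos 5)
Your overall strategy --- outer approximation of $B$ by open sets $O$ of small capacity, a dominating function $f\in W^{1,p}(X,\nu)$ with $\|f\|_{W^{1,p}(X,\nu)}<\eps$, continuous approximants of $\one_O$ from below, and the uniform bound \eqref{e18} --- is the same as the paper's. But the step you yourself flag as the main obstacle is a genuine gap, and your proposed workaround does not close it. Your chain $\int_X \varphi\, d\sigma_r^g = q_\varphi(r) \le K_p\|\varphi\|_{W^{1,p}(X,\nu)} \le K_p \|f\|_{W^{1,p}(X,\nu)}$ needs approximants $\varphi$ lying in $C_b(X)\cap W^{1,p}(X,\nu)$ with $W^{1,p}$-norm dominated by that of $f$, and no construction in your sketch produces such functions. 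The natural continuous approximants of $\one_O$, e.g. $\theta_n(x)=\min\{1, n\,\mathrm{dist}(x,X\setminus O)\}$, are Lipschitz but are not known to belong to $W^{1,p}(X,\nu)$ at all (membership is only guaranteed for limits of $C^1_b$ functions), let alone with norm controlled by $\|f\|_{W^{1,p}(X,\nu)}$; conversely, the Lipschitz truncations of $f$ from Lemma \ref{ChainRuleLipschitz}, which do have controlled Sobolev norm, are merely Sobolev functions, not continuous, so \eqref{e15d} does not apply to them --- the identity $\int_X\varphi\,d\sigma_r^g=q_\varphi(r)$ for Sobolev $\varphi$ is Theorem \ref{est}, which comes later and whose proof uses the very proposition you are proving, so invoking it here would be circular. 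Note also that pointwise domination $\varphi\le f$ $\nu$-a.e. gives no comparison of Sobolev norms, and no $\sigma_r^g$-a.e. information either, since $\sigma_r^g$ is concentrated on a $\nu$-negligible level set.

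The missing idea, which is how the paper's proof works, is a monotonicity argument that avoids any Sobolev estimate on the approximants: since $f\ge \one_O\ge \theta_n$ $\nu$-a.e., the function $r\mapsto F_{f-\theta_n}(r)=\int_{\{g\le r\}}(f-\theta_n)\,d\nu$ is nondecreasing; both $F_f$ (by Lemma \ref{l4}) and $F_{\theta_n}$ (by Theorem \ref{costruzione}) are differentiable, hence $F_{\theta_n}'(r)\le F_f'(r)=q_f(r)$. Combining this with \eqref{e15d} and \eqref{e18} gives $\int_X\theta_n\,d\sigma_r^g = F_{\theta_n}'(r)\le q_f(r)\le K_p\|f\|_{W^{1,p}(X,\nu)}\le K_p\eps$, where only $f$ itself ever enters the Sobolev estimate. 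Letting $n\to\infty$ (monotone or dominated convergence for the finite measure $\sigma_r^g$) yields $\sigma_r^g(O)\le K_p\eps$, and then $\sigma_r^g(B)=0$ as in your sketch. With this replacement of your comparison step, your argument becomes exactly the paper's proof.
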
 
\begin{proof}
For every $\eps >0$ let  $O_{\eps}\supset B$ be an open set such that $C_{p}(O_{\eps})<\eps$. Then there exists $f_{\eps}\in W^{1,p}(X, \nu)$ such that 
$\|f_{\eps}\|_{W^{1,p}(X, \nu)} \leq \eps$, $f_{\eps}\geq 0$ $\nu$-a.e.,  and $f_{\eps}\geq 1$ $\nu$-a.e.  in $O_{\eps}$.   
Let us fix an increasing sequence $(\theta_n)\subset C_b(X)$  that converges to $\one_{O_{\eps}}$ pointwise. For instance, we can take
$$\theta_n(x) =\left\{ \begin{array}{ll}
0, & x\in X\setminus O_{\eps}, 
\\
\\
n\,{\rm dist}(x, X\setminus O_{\eps}), & 0<{\rm dist}(x, X\setminus O_{\eps})< 1/n, 
\\
\\
1, & {\rm dist}(x, X\setminus O_{\eps})\geq 1/n
\end{array}\right. $$
Then, $\lim_{n\to \infty} \theta_n(x)= \one_{O_{\eps}}(x)$, for every $x\in X$. Using  the Dominated Convergence Theorem, and then formula \eqref{e15d},  we get 
\begin{equation}
\label{cap}
\sigma_r^g(O_{\eps}) = \int_{X} \one_{O_{\eps}} \,d\sigma_r^g = \lim_{n\to \infty} \int_{X}  \theta_n \,d\sigma_r^g  =  \lim_{n\to \infty} q_{ \theta_n}( r). 
\end{equation}
On the other hand, $f_{\eps}(x) \geq \one_{O_{\eps}}(x) \geq  \theta_n (x) $, for $\nu$-a.e. $x\in X$, so that the function
$F_{f_{\eps}- \theta_n }$ is increasing. In particular, $F_{f_{\eps}- \theta_n }'( r) =  q_{f_{\eps}}( r) - q_{ \theta_n}( r) \geq 0$ for every $r\in \R$ and $n\in \N$. 
Therefore, \eqref{cap} yields  
$$\sigma_r^g(O_{\eps}) \leq  q_{f_{\eps}} ( r ), \quad r\in \R. $$
On the other hand, by  \eqref{e18} we have 
$$|q_{f_{\eps}} ( r )| \leq K_p\|f_{\eps}\|_{W^{1,p}(X, \nu)}\leq K_p\eps ,$$
with $K_p$ independent of $\eps$. Therefore, $\sigma_r^g(O_{\eps}) \leq  K_p\eps $ for every $\eps >0$, which implies  $\sigma_r^g(B) =0$. 
\end{proof}

Now we extend formula \eqref{e15d} to Sobolev functions. The procedure is similar to \cite{CeLu14}, where Gaussian measures were considered. 

\begin{Theorem}
\label{est}
Let Hypotheses \ref{h1} and  \ref{h2} hold, and let $\varphi \in W^{1,p}(X, \nu)$ for some $p>1$. Fix any $r\in \R$. There exists a unique $\psi\in L^1( X, \sigma^g_r)$ such that   every sequence of $C^1_b$ functions $(\varphi_n)$ that converges to 
$\varphi$ in $W^{1,p}(X, \nu)$,  also converges in $L^1( X, \sigma^g_r)$ to   $\psi$. Setting  $T\varphi := \psi $, we have
\begin{equation}
\label{e15e}
F_{\varphi}'(r)   =\int_{X} T\varphi (x)\,\sigma_r^g(dx) .
\end{equation}
Moreover, 
\begin{itemize}
\item[(i)] $(\varphi_n)$ converges to $\psi$ in $L^q( X, \sigma^g_r)$, and $T\in \mathcal{L} ( W^{1,p}(X, \nu), L^q( X, \sigma^g_r))$ for every $q\in [1, p)$;
 \item[(ii)] for every $p$-quasicontinuous version $\varphi ^*$ of $\varphi $, we have $T\varphi (x)= \varphi ^*(x)$ for $\sigma_r^g$-a.e. $x\in X$. In particular, if $\varphi$ is continuous, then $T\varphi (x)= \varphi (x)$ for $\sigma_r^g$-a.e. $x\in X$;
 \item[(iii)] If  $\varphi_1 \in W^{1,p_1}(X, \nu)$, $\varphi_2 \in W^{1,p_2}(X, \nu)$, and $1/p_1 + 1/p_2 <1$, then $T(\varphi_1\varphi_2) = T(\varphi_1) T(\varphi_2)$ (as elements of $L^1(X, \sigma^g_r$). 
 \end{itemize}
\end{Theorem}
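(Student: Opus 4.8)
The plan is to construct the trace operator $T$ by establishing that convergence in $W^{1,p}(X,\nu)$ forces convergence in $L^q(X,\sigma^g_r)$, and the central tool is the capacity estimate from Proposition \ref{capacita}(i) combined with the negligibility result of Proposition \ref{capacity}. First I would prove the basic estimate that for $\varphi\in C^1_b(X)$ one has a bound of the form $\|\varphi\|_{L^q(X,\sigma^g_r)}\le C\|\varphi\|_{W^{1,p}(X,\nu)}$ for $q\in[1,p)$. To get this, observe that $|\varphi|^s\in W^{1,p/s}(X,\nu)$ by Lemma \ref{prodotto-ChainRule}(iii), so applying \eqref{e18} to $|\varphi|^s$ gives $\int_X|\varphi|^s\,d\sigma^g_r = q_{|\varphi|^s}(r)\le K_{p/s}\||\varphi|^s\|_{W^{1,p/s}(X,\nu)}$, and the estimate \eqref{varphi^s} controls the right-hand side by $\|\varphi\|_{W^{1,p}(X,\nu)}^s$. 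Choosing $s=q$ yields the desired continuity.

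Once this estimate is in hand, I would define $T$ on $W^{1,p}(X,\nu)$ by density: given $\varphi\in W^{1,p}(X,\nu)$ and any sequence $(\varphi_n)\subset C^1_b(X)$ converging to $\varphi$ in $W^{1,p}(X,\nu)$, the estimate applied to the differences $\varphi_n-\varphi_m$ shows $(\varphi_n)$ is Cauchy in $L^q(X,\sigma^g_r)$, hence converges to some $\psi$; the same estimate shows the limit is independent of the chosen sequence, so $T\varphi:=\psi$ is well defined and $T$ is bounded from $W^{1,p}(X,\nu)$ into $L^q(X,\sigma^g_r)$, proving (i). Formula \eqref{e15e} follows by passing to the limit in \eqref{e15d} written for each $\varphi_n$, using $F'_{\varphi_n}(r)=q_{\varphi_n}(r)\to q_\varphi(r)=F'_\varphi(r)$ from \eqref{e18} on the left and $L^1(\sigma^g_r)$-convergence on the right. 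Uniqueness of $\psi$ is immediate from this identity together with the defining convergence property.

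For the identification (ii), I would invoke Proposition \ref{capacita}(ii): along the approximating sequence there is a subsequence $(\varphi_{n_k})$ converging to the quasicontinuous version $\varphi^*$ pointwise outside a set of null $p$-capacity. By Proposition \ref{capacity} that exceptional set is $\sigma^g_r$-negligible, so $\varphi_{n_k}\to\varphi^*$ $\sigma^g_r$-a.e.; since the same subsequence converges to $T\varphi$ in $L^q(X,\sigma^g_r)$, a further subsequence converges $\sigma^g_r$-a.e. to $T\varphi$, forcing $T\varphi=\varphi^*$ $\sigma^g_r$-a.e. The continuous case is the special instance where $\varphi$ itself is a quasicontinuous version of itself. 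Finally, for the multiplicativity (iii), I would choose sequences $(\varphi_{1,n})$, $(\varphi_{2,n})$ in $C^1_b(X)$ approximating $\varphi_1$, $\varphi_2$ in the respective Sobolev spaces; by Lemma \ref{prodotto-ChainRule}(i) the products $\varphi_{1,n}\varphi_{2,n}$ approximate $\varphi_1\varphi_2$ in $W^{1,p}(X,\nu)$ with $1/p=1/p_1+1/p_2$, so $T(\varphi_1\varphi_2)$ is the $L^1(\sigma^g_r)$-limit of $\varphi_{1,n}\varphi_{2,n}$, while (i) gives $\varphi_{i,n}\to T\varphi_i$ in suitable $L^{q_i}(\sigma^g_r)$ spaces with $1/q_1+1/q_2\le 1$, and Hölder's inequality lets me pass to the limit in the product.

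The main obstacle I anticipate is the very first estimate, specifically getting the $L^q(X,\sigma^g_r)$-bound with the correct dependence on the Sobolev norm rather than merely the $L^1(\sigma^g_r)$-control that \eqref{e18} provides directly. The trick of applying the linear estimate \eqref{e18} to the nonlinear quantity $|\varphi|^q$ (using Lemma \ref{prodotto-ChainRule}(iii) to know $|\varphi|^q$ is Sobolev) is what upgrades $L^1$ to $L^q$, and the restriction $q<p$ is exactly what is needed so that $|\varphi|^q\in W^{1,p/q}(X,\nu)$ with $p/q>1$, keeping all the machinery of Lemma \ref{l4} applicable. Handling the positive and negative parts carefully via Corollary \ref{partepositiva}, or simply working with $|\varphi|$, should dispose of any sign issues in applying the chain rule to the power function.
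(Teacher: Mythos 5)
Your proposal is correct and follows essentially the same route as the paper: the key trick of applying the linear bound \eqref{e18} to the nonlinear quantity $|\varphi|^q$ (via Lemma \ref{prodotto-ChainRule}(iii), estimate \eqref{varphi^s}, and Corollary \ref{partepositiva}) to upgrade the $L^1(\sigma_r^g)$-control to $L^q(\sigma_r^g)$ for $q<p$, the identification \eqref{e15e} by passing to the limit with \eqref{e18}, the use of Propositions \ref{capacita}(ii) and \ref{capacity} plus a.e.-convergent subsequences for (ii), and the H\"older/product argument for (iii) are all exactly the paper's arguments. The only cosmetic difference is that you state the $L^q$ bound for a single $C^1_b$ function before applying it to differences, while the paper applies it directly to the differences $|\varphi_n-\varphi_m|$ and $|\varphi_n-\varphi_m|^q$.
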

\begin{proof}  By  \eqref{e15d}, for every $\varphi\in C^1_b(X)$ we have
$$
q_{|\varphi  |}( r)  =\int_{X} |\varphi  |\,\sigma_r^g(dx)  . 
$$
Take $\varphi = \varphi _n -\varphi_m$. By Corollary \ref{partepositiva},  $|\varphi _n -\varphi_m|\in W^{1,p}(X, \nu)$, and $\lim_{n, m\to \infty}\|\, |\varphi _n -\varphi_m|\,\|_{W^{1,p}(X, \nu)} =0$. 
Using   estimate 
\eqref{e18} we get 
\begin{equation}
\label{e15f}
q_{|\varphi _n -\varphi_m|}( r) = \int_{X} |\varphi _n -\varphi_m|\,\sigma_r^g(dx)  \leq K_p\|\,|\varphi_n - \varphi_m\,\|_{W^{1,p}(X, \nu)} . 
\end{equation}
Therefore, $(\varphi_n )$ is a Cauchy sequence in $L^1( X, \sigma^g_r)$ and it converges to a limit $\psi$ in $L^1( X, \sigma^g_r)$. The limit function $\psi $ is apparently the same for all  sequences that converge to 
$\varphi$ in $W^{1,p}(X, \nu)$. Indeed, if $\varphi_n \to \varphi$, $\widetilde{\varphi}_n \to \widetilde{\varphi}$ in $W^{1,p}(X, \nu)$ as $n\to \infty$, the difference $\varphi_n -\widetilde{\varphi}_n$ vanishes in $L^1(X, \sigma_r^g)$  by estimate \eqref{e18} with $\varphi _n -\varphi_m$ replaced by $\varphi_n -\widetilde{\varphi}_n$. 

Still by estimate \eqref{e18}, the sequence $(q_{\varphi_n} ( r))$ converges to $q_\varphi ( r)$, and \eqref{e15e} follows. 

\vspace{3mm}

To  prove statement (i) we follow the above procedure, replacing $|\varphi _n -\varphi_m|$ with  $|\varphi _n -\varphi_m|^q$ that belongs to $C^1_b(X)$ for $q>1$, and vanishes in $W^{1, p/q}(X, \nu)$ as $n$, $m\to +\infty$ by     \eqref{varphi^s}. By estimate \eqref{e18} we have  
$$q_{|\varphi _n -\varphi_m|^q}( r)  \leq K_{p/q}\|\,|\varphi_n - \varphi_m|^q\,\|_{W^{1,p/q}(X, \nu)} , $$
so that $(\varphi_n )$ is a Cauchy sequence in $L^q( X, \sigma^g_r)$ and its $L^1( X, \sigma^g_r)$-limit $\psi$ belongs to $L^q( X, \sigma^g_r)$. 

\vspace{3mm}

Let us prove (ii). By Proposition \ref{capacita}(ii), a subsequence  $(\varphi_{n_k})$ converges to $\varphi^*(x)$  for every $x\in X$ except at most on a set with zero  $p$-capacity. By   Proposition \ref{capacity}, such a subsequence converges $\sigma_r^g$-a.e to $\varphi^* $. By the first part of this proposition, $(\varphi_{n_k})$ converges to $T\varphi$ in $L^1(X, \sigma_r^g)$. A further  subsequence of $(\varphi_{n_k})$ converges to $T\varphi$, $\sigma_r^g$-a.e. Therefore, $T\varphi = \varphi^*$, $\sigma_r^g$-a.e.

\vspace{3mm}

To prove (iii) it is enough to approach $\varphi_1$ and $\varphi_2$  by  sequences $(\varphi_{1,n})$, $(\varphi_{2,n})$ of $C^1_b$ functions, in $W^{1, p_1}(X, \nu)$, $W^{1, p_2}(X, \nu)$, respectively. The product $\varphi_{1,n} \varphi_{2,n}$ converges to $\varphi_1\varphi_2$ in $W^{1,s}(X, \nu)$, for $s= (p_1+p_2)/p_1p_2$, therefore $T(\varphi_1\varphi_2) = \lim_{n\to \infty} \varphi_{1,n} \varphi_{2,n}$ in $L^1( X, \sigma_r^g)$. On the other hand, $T(\varphi_1) = \lim_{n\to \infty} \varphi_{1,n}$ in $L^q( X, \sigma_r^g)$ for every $q<p_1$, 
$T(\varphi_2) = \lim_{n\to \infty} \varphi_{1,n}$ in $L^r( X, \sigma_r^g)$ for every $r<p_2$. Choosing $q<p_1$ and $r<p_2$ such that $1/q+1/r=1$, we obtain $T(\varphi_1)T(\varphi_2) = \lim_{n\to \infty} \varphi_{1,n} \varphi_{2,n}$ in $L^1( X, \sigma_r^g)$, and the statement follows. 
 \end{proof}

The results that we have proved up to now are independent of the version of $g$ that we have considered. Instead, from now on 
  we fix a $p$-quasicontinuous version $g^*$ of $g$, for some $p>1$. This is because we shall consider the $\nu$-negligible sets $(g^*)^{-1}(r)$  for $r\in \R$.

With the aid of Theorem \ref{est} we can study the  supports of the measures $\sigma_r^g$.

\begin{Proposition}
\label{supporto}
For every $r_0\in \R$, the support of $\sigma_{r_0}^g$ is contained  in $g^{*-1}(r_0)$. 
\end{Proposition}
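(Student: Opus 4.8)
The plan is to exploit Theorem \ref{est}, which lets us integrate traces of Sobolev functions against $\sigma_{r_0}^g$, testing against functions of the special form $h\circ g$ with $h$ smooth. The key point is that the surface integral of such a function can be evaluated explicitly in terms of $q_1$, and that when $h$ vanishes exactly at $r_0$ this integral is forced to be zero. Since $g^*$ is only quasicontinuous, the level set $g^{*-1}(r_0)$ need not be closed, so the assertion that the support is contained in $g^{*-1}(r_0)$ is to be read as the statement that $\sigma_{r_0}^g$ gives full measure to $g^{*-1}(r_0)$, equivalently $\sigma_{r_0}^g(\{g^*\neq r_0\})=0$.

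First I would fix $r_0\in\R$ and choose $h\in C^1_b(\R)$ with $h\geq 0$ and $h^{-1}(0)=\{r_0\}$, for instance $h(s)=1-e^{-(s-r_0)^2}$. Since $g\in W^{1,p}(X,\nu)$ for every $p>1$ (Hypothesis \ref{h2}), Lemma \ref{prodotto-ChainRule}(ii) gives $h\circ g\in W^{1,p}(X,\nu)$ for every $p>1$. Moreover, because $g^*$ is $p$-quasicontinuous and $h$ is continuous, the composition $h\circ g^*$ is a $p$-quasicontinuous version of $h\circ g$; hence Theorem \ref{est}(ii) applies and yields $T(h\circ g)=h\circ g^*$ $\sigma_{r_0}^g$-a.e.

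Next I would compute $F_{h\circ g}'(r_0)$. Using the image measure $\nu\circ g^{-1}$, which by Lemma \ref{l4} applied with $\varphi\equiv 1$ is absolutely continuous with density $q_1$, one gets $F_{h\circ g}(r)=\int_{\{g\leq r\}}h(g)\,d\nu=\int_{-\infty}^r h(s)q_1(s)\,ds$, so that $F_{h\circ g}'(r_0)=h(r_0)q_1(r_0)=0$ because $h(r_0)=0$. Combining this with formula \eqref{e15e} of Theorem \ref{est} gives $\int_X h(g^*)\,d\sigma_{r_0}^g=F_{h\circ g}'(r_0)=0$. Since $h\geq 0$, it follows that $h\circ g^*=0$ $\sigma_{r_0}^g$-a.e., and since $h(t)=0$ if and only if $t=r_0$, we conclude that $g^*=r_0$ $\sigma_{r_0}^g$-a.e., i.e. $\sigma_{r_0}^g$ is concentrated on $g^{*-1}(r_0)$.

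The step I expect to demand the most care is the identification $T(h\circ g)=h\circ g^*$ off a $\sigma_{r_0}^g$-null set. This rests on checking that the composition of the continuous $h$ with the $p$-quasicontinuous $g^*$ is again $p$-quasicontinuous (straightforward, since on $X\setminus O$ with $C_p(O)$ small $g^*$ is continuous, hence so is $h\circ g^*$), and, crucially, it relies on Proposition \ref{capacity}, which guarantees that sets of null $p$-capacity are $\sigma_{r_0}^g$-negligible; this is exactly what makes Theorem \ref{est}(ii) available in the present context. Everything else reduces to the elementary computation of $F_{h\circ g}'(r_0)$ through the one-dimensional image measure.
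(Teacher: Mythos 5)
Your proof is correct, and it takes a genuinely leaner route than the paper's. The paper fixes $\eps>0$, approximates the indicator of $\{|g^*-r_0|>\eps\}$ by piecewise linear functions $\chi_n$ vanishing on $[r_0-\eps,r_0+\eps]$ (hence it needs the Lipschitz chain rule, Lemma \ref{ChainRuleLipschitz}), observes that $F_{\chi_n\circ g}$ is constant on $[r_0-\eps,r_0+\eps]$ so that $q_{\chi_n\circ g}(r_0)=0$, identifies $T(\chi_n\circ g)$ with $\chi_n\circ g^*$ via Theorem \ref{est}(ii) exactly as you do, and then performs two limits: $n\to\infty$ by dominated convergence to get $\sigma_{r_0}^g\left(\{|g^*-r_0|>\eps\}\right)=0$, and then $\eps\to 0$. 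You replace this two-parameter approximation by a single smooth test function $h\ge 0$ with $h^{-1}(0)=\{r_0\}$, and you replace the local-constancy argument by the pointwise factorization $q_{h\circ g}=h\,q_1$, which follows from the change-of-variables formula for the image measure $\nu\circ g^{-1}$ together with the continuity of both densities (Lemma \ref{l4}); the conclusion then comes from positivity, since a nonnegative function with zero integral against $\sigma_{r_0}^g$ vanishes $\sigma_{r_0}^g$-a.e. Your version buys economy: only the $C^1_b$ chain rule (Lemma \ref{prodotto-ChainRule}(ii)) is needed, and no limiting procedure at the level of the surface measure; the paper's version avoids invoking the image-measure identity, using only the qualitative fact that $F_{\chi_n\circ g}$ is locally constant near $r_0$. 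Both arguments rest on the same two pillars --- formula \eqref{e15e} together with Theorem \ref{est}(ii), with Proposition \ref{capacity} behind it --- and both prove the statement in the measure-theoretic form you spell out, namely $\sigma_{r_0}^g\left(\{g^*\neq r_0\}\right)=0$, which is also all the paper's own proof establishes, since $g^{*-1}(r_0)$ need not be closed.
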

\begin{proof}
Fix $\eps >0$, and set  $A:= g^{*-1}(-\infty, r_0-\eps) \cup g^{*-1}(r_0+\eps ,  \infty )$. Our aim is to show that
\begin{equation}
\label{A}
\int_X \one_A d\sigma_{r_0}^g =0, 
\end{equation}
which implies that the support of  $\sigma_{r_0}^g$ is contained  in $g^{*-1}([r_0-\eps, r_0+\eps])$. Since $\eps$ is arbitrary, the statement will follow. 

We approach $\one_{(-\infty, r_0-\eps) \cup(r_0+\eps , + \infty )}$ by a sequence of Lipschitz functions, 
$$\chi_n(\xi)=\left\{ \begin{array}{ll}
1, & \xi\leq r_0-\eps -1/n, \\
-n(\xi -( r_0-\eps)), & r_0-\eps -1/n\leq \xi\leq r_0-\eps , 
\\
0, & r_0-\eps \leq \xi\leq r_0+\eps, 
\\
n(\xi-(r_0+\eps)), & r_0+\eps\leq \xi \leq r_0+\eps+ 1/n, 
\\
1, & \xi \geq r_0+\eps. 
\end{array}\right. $$
We have $\lim_{n\to \infty} \chi_n(\xi)= \one_{(-\infty, r_0-\eps) \cup(r_0+\eps , + \infty )}(\xi)$, for every $\xi \in \R$. Consequently, 
$\chi_n\circ g^*$ converges pointwise, for every $x\in X$, to $ \one_A$. Since $0\leq \chi_n\circ g^*\leq 1$, by the Dominated Convergence Theorem we get
\begin{equation}
\label{limite}
\int_X \one_A d\sigma_{r_0}^g = \lim_{n\to \infty}\int_X \chi_n\circ g^*\, d\sigma_{r_0}^g. 
\end{equation}
For every  $n$, $ \chi_n\circ g\in W^{1,p}(X, \nu)$, by Lemma \ref{ChainRuleLipschitz}. By Lemma \ref{l4}, $q_{\chi_n\circ g}$ is continuous, so that the function $F_{\chi_n\circ g}( r) = \int_{g^{-1}(-\infty, r)}\chi_n\circ g\,d\mu$, whose derivative is $q_{\chi_n\circ g}$, is $C^1$. By the definition of  $\chi_n$,   $F_{\chi_n\circ g}$ is constant, equal to  $F_{\chi_n\circ g}(r_0-\eps)$, in the interval  $[r_0-\eps, r_0+\eps]$, so that the derivative  $q_{\chi_n\circ g}$ vanishes  in $(r_0-\eps, r_0+\eps)$. In particular, it vanishes at $r_0$.  

By \eqref{e15e} we have
$$q_{\chi_n\circ g}(r_0) = \int_X  T(\chi_n\circ g)\, d\sigma_{r_0}^g , $$
where $T$ is the  operator defined in Theorem \ref{est}. Since  $g^*$ is $p$-quasicontinuous and $\chi_n$ is continuous,  $\chi_n\circ g^*$ is $p$-quasicontinuous. It coincides with 
$\chi_n\circ g$ outside a $\nu$-negligible set, therefore it is a $p$-quasicontinuous version of $\chi_n\circ g$. 
By Theorem \ref{est},  $T(\chi_n\circ g)$ coincides with  $\chi_n\circ g^*$, up to $\sigma_{r_0}^g$-negligible sets. Therefore, for every  $n\in \N$, 
$$0= q_{\chi_n\circ g}(r_0) = \int_X T(\chi_n\circ g)\, d\sigma_{r_0}^g = \int_X \chi_n\circ g^*\, d\sigma_{r_0}^g,  $$
and  \eqref{A} follows from  \eqref{limite}. 
 \end{proof}

 Proposition \ref{supporto} justifies the following definition.

 \begin{Definition}
 Let $\varphi\in W^{1,p}(X, \nu)$ for some $p>1$, and let $r\in \R$. We define the trace of $\varphi$ at $g^{*-1}(r)$ as the function $T\varphi $ given by Theorem \ref{est}. 
 \end{Definition}

Characterizing the range of the trace operator is a difficult problem, that is out of reach for the moment. In the case of Gaussian measures in Banach spaces the range of the trace has been characterized only for $g\in X^*$ (\cite{CeLu14}). Even worse, for very smooth functions in Hilbert spaces such as $g(x) = \|x\|^2$ we do not know whether the traces of elements of $W^{1,p}(X, \nu)$ belong to $L^p(X, \sigma^g_r)$ with the same $p$. See the discussion in \cite{CeLu14}.

 Now we read again formula \eqref{partiprima} in terms of surface integrals.

\begin{Corollary}
\label{parti_generale}
Assume that  Hypotheses \ref{h1} and  \ref{h2} are fulfilled.  Let  $p>1$,   $F\in  D(M^*_p)$, $\varphi \in   W ^{1,p_1}(X,\nu)$ for some $p_1>p$,   and 
assume that   $ \langle Mg,F  \rangle\,\varphi$ belongs to $C_b(X)$ or to $W ^{1,q}(X,\nu)$ for some $q>1$. 
Then for every $r\in \R$
\begin{equation}
\label{parti_noi_generale}
\begin{array}{lll}
\ds \int_{\{g<r\} } \langle M_{p_1}\varphi,F\rangle\,d\nu & = & \ds \int_{\{g<r\}}    \varphi\,M^*_p(F)\, d\nu + \int_X T( \langle Mg,F\rangle \varphi )\, d \sigma_r^g
\\
\\
& = & \ds \int_{\{g<r\}}    \varphi\,M^*_p(F)\, d\nu + \int_{g^{*-1}( r)} T( \langle Mg,F\rangle \varphi )\, d \sigma_r^g. 
\end{array}
\end{equation}
\end{Corollary}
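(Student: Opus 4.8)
The plan is to recognize that this Corollary does no new analytic work: it simply rewrites the identity \eqref{partiprima} of Proposition \ref{partirozza} by re-expressing the density term $q_{\langle Mg,F\rangle\varphi}(r)$ as a surface integral against $\sigma_r^g$, and then localizes that integral onto the level surface using Proposition \ref{supporto}. What remains is a clean identification carried out in the two cases allowed by the hypothesis.

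First I would set $\psi := \langle Mg,F\rangle\,\varphi$, which by assumption lies either in $C_b(X)$ or in $W^{1,q}(X,\nu)$ for some $q>1$, and which in either case has the density $q_\psi(r)$ appearing in \eqref{partiprima}. The goal is to establish
$$q_\psi(r)=\int_X T\psi\,d\sigma_r^g.$$
If $\psi\in W^{1,q}(X,\nu)$, this is immediate: Lemma \ref{l4} gives $q_\psi(r)=F_\psi'(r)$, and Theorem \ref{est}, formula \eqref{e15e}, states precisely that $F_\psi'(r)=\int_X T\psi\,d\sigma_r^g$. If instead $\psi\in C_b(X)$, then Theorem \ref{costruzione} (formula \eqref{e15d}) yields $q_\psi(r)=F_\psi'(r)=\int_X\psi\,d\sigma_r^g$, and by Theorem \ref{est}(ii) the trace of a continuous function coincides with the function itself $\sigma_r^g$-a.e., so that $\int_X\psi\,d\sigma_r^g=\int_X T\psi\,d\sigma_r^g$. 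Substituting this identity into \eqref{partiprima} produces the first equality of \eqref{parti_noi_generale}.

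For the second equality I would invoke Proposition \ref{supporto}, according to which the support of $\sigma_{r}^g$ is contained in $g^{*-1}(r)$. Consequently integrating $T(\langle Mg,F\rangle\varphi)$ over $X$ against $\sigma_r^g$ is the same as integrating it over $g^{*-1}(r)$, which closes the chain of equalities. No estimates or limiting arguments beyond those already packaged into Lemma \ref{l4}, Theorem \ref{costruzione} and Theorem \ref{est} are needed.

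I expect the only delicate point to be the consistent bookkeeping of the trace operator $T$ across the two branches. The operator $T$ is defined in Theorem \ref{est} only for Sobolev functions, so in the purely continuous branch (where $\psi$ need not belong to any $W^{1,p}(X,\nu)$) one must make explicit that $T\psi$ is to be read as the restriction of $\psi$ to the surface $g^{*-1}(r)$, and verify that this reading agrees with the Sobolev definition whenever $\psi$ happens to lie in both classes. Once this compatibility is fixed, the argument is a direct combination of \eqref{partiprima}, the representation \eqref{e15d} or \eqref{e15e}, and the support statement of Proposition \ref{supporto}.
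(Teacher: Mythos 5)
Your proof is correct and follows essentially the same route as the paper: combine Proposition \ref{partirozza} with the trace representation of $q_{\langle Mg,F\rangle\varphi}(r)$ (formula \eqref{e15e}, or \eqref{e15d} in the continuous case) and use Proposition \ref{supporto} to localize the surface integral onto $g^{*-1}(r)$. If anything, your explicit two-case treatment is slightly more careful than the paper's own proof, which invokes only \eqref{e15e} and leaves the $C_b$ branch (and the reading of $T\psi$ there) implicit.
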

\begin{proof}
By formula \eqref{e15e} for every $r\in \R$ we have
$$\int_{g^{*-1}( r)}T(  \langle Mg,F\rangle \varphi ) \,d\sigma_r^g = \int_{X}T(  \langle Mg,F\rangle \varphi ) \,d\sigma_r^g  = q_{ \langle Mg,F\rangle \varphi }( r).  $$
On the other hand,   Proposition \ref{partirozza}  yields
$$q_{ \langle Mg,F\rangle \varphi }( r)=  \int_{\{g<r\}} \langle M_{p_1}\varphi, F\langle \,d\nu    -\int_{\{g<r\}}    \varphi\,M^*_p(F)\, d\nu .  $$
and the statement follows. 
\end{proof}

Since  the operators $M_p$ and $M_p^*$ play the role of the gradient and of the negative divergence, formula  \eqref{parti_noi_generale} is a version of the Divergence Theorem  in our context. The similarity gets better if we assume that   $\|Mg\|\in W^{1,q}(X, \nu)$ for every $q>1$. In this case, recalling Theorem \ref{est}(iii)
we may rewrite \eqref{parti_noi_generale}  as 
\begin{equation}
\label{Th:divergenza}
\begin{array}{lll}
\ds \int_{\{g<r\} } \langle M_{p_1}\varphi,F\rangle\,d\nu & = & \ds \int_{\{g<r\} }    \varphi M^*_pF\, d\nu + \int_{X} T( \langle \frac{Mg}{\|Mg\|},F\rangle \varphi ) T(\|Mg\| )\, d \sigma_r^g
\\
\\
& = &  \ds \int_{\{g<r\} }    \varphi M^*_pF\, d\nu + \int_{X} T( \langle \frac{Mg}{\|Mg\|},F\rangle \varphi )  \, d \rho_r,  
\end{array}
\end{equation}
where 
$$\rho_r (dx): = T(\|Mg\|)(x)  \sigma_r^g(dx), $$
 so that  $Mg/\|Mg\|  $ plays the role of the exterior normal vector to the surface $g^{*-1}(r)$, and the weighted measure $\rho_r $ plays the role of normalized surface measure. In fact $\rho_r $ is a distinguished surface measure and it will be discussed in the next section.

Let us consider now the case of constant vector fields $F$. Namely, we fix $z\in X$ and we assume that $F_z(x) \equiv z$ belongs to $D(M_p^*)$ for some $p>1$. 
We recall that $F_z\in D(M^*_p)$ iff there exists $C_{p,z}>0$ such that 
\begin{equation}
\label{verificabile}
\bigg| \int_X \langle R\nabla \varphi, z\rangle d\nu\bigg| \leq C_{p,z} \|\varphi\|_{L^p(X, \nu)}, \quad \varphi \in C^1_b(X) 
\end{equation}
(see the Introduction). In this case,  
we set $v_z: = M^*_p(F_z)$ and we rewrite \eqref{parti_noi_generale} for every $\varphi\in W^{1,p}(X, \nu)$ as 
\begin{equation}
\label{parti_noi_z}
\int_{g^{-1}(-\infty, r) } \langle M_p\varphi,z\rangle\,d\nu = \int_{g^{-1}(-\infty, r) }    \varphi\,v_z\, d\nu + \int_{X} T( \langle Mg,z\rangle \varphi )\, d \sigma_r^g. 
\end{equation}
provided $\langle Mg,z\rangle \varphi $ belongs to $C_b(X)$ or to $W^{1,q}(X, \nu)$ for some $q>1$.

\section{Dependence on $g$: comparison with the geometric measure theory approach}

Even for continuous or smooth $g$, the measures $\sigma_r^g$ constructed  in the previous sections depend explicitly on the defining function $g$, and not only on the sets $g^{-1}(r)$ or $g^{-1}(-\infty, r)$.  
In particular, if we replace $g$ by $\widetilde{g} = \theta \circ g$ by a smooth $\theta : \R\mapsto \R$ such that $\inf \theta' >0$, it is easy to check that $\widetilde{g} $ satisfies Hypothesis \ref{h2}, and  using the definition we see that for every $r\in \R$, setting $\Sigma := g^{-1}(r) =  \widetilde{g} ^{-1}(\theta(r))$ we have
$$\int_{\Sigma} \varphi \,d\sigma^g_r = \theta'(r) \int_{\Sigma} \varphi \,d\sigma^{\widetilde{g}}_{\theta(r )}, \quad \varphi \in C_b(X). $$
So, it is desirable to modify the  construction of our surface measures in order to get rid of the dependence on $g$, and to get a surface measure with some intrinsic analytic or geometric properties.

In the case of Gaussian measures in Banach spaces, for suitably smooth hypersurfaces $g^{-1}(r)$ the measure $|\nabla_Hg|_H \sigma_r^g$, where $H$ is the Cameron-Martin space,  is independent of $g$, and it coincides with the restriction of  the Hausdorff--Gauss measure of Feyel and de La Pradelle (\cite{FePr92}) to the hypersurface, and with the perimeter measure relevant to the set $\Omega = g^{-1}(-\infty, r)$ from the geometric measure theory in abstract Wiener spaces (\cite{F,FH,AMMP}). See \cite{CeLu14}. 

In our setting, what plays the role of $|\nabla_Hg|_H $ is  $\|Mg\|$. We shall show that $\|Mg\| \sigma_r^g$ depends on $g$ only through the set $g^{-1}(-\infty, r)$, among a class of good enough $g$, and it is a sort of perimeter measure.

As a first step, we notice that if $\|Mg\|\in W^{1,q}(X, \gamma)$ for some $q>1$, then $Mg/\|Mg\|  \in D(M_s^*)$ for every $s>q/(q-1)$. This comes from Lemma \ref{Le:divergenza}, writing 
$$\frac{Mg}{\|Mg\|} = \frac{Mg}{\|Mg\|^2} \|Mg\|, $$ 
and recalling that $Mg/\|Mg\|^2 \in D(M_p^*)$ for every $p>1$, by Hypothesis \ref{h2}. Lemma  \ref{Le:divergenza} also yields
$$M_s^* \bigg( \varphi \frac{Mg}{\|Mg\|} \bigg) = \varphi \|Mg\| M_p^*\bigg(  \frac{Mg}{\|Mg\|^2}\bigg) - \langle M_q( \varphi  \|Mg\|) ,  \frac{Mg}{\|Mg\|^2}\rangle , $$
for every $\varphi \in C^1_b(X)$. Comparing with \eqref{e17}, we obtain
\begin{equation}
\label{startingpoint}
q_{\|Mg\|\varphi}(r) = -\int_{\{ g  <r\} } M_s^*\bigg( \varphi \frac{Mg}{\|Mg\|}   \bigg)\,d\nu, \quad r\in \R. 
\end{equation}
and by Theorem \ref{est},  
\begin{equation}
\label{startingpoint2}
\int_{g^{*-1}(r)} \varphi \,T(\|Mg\|) d\sigma^g_r = -\int_{\{ g  <r\} } M_s^*\bigg( \varphi \frac{Mg}{\|Mg\|}   \bigg)\,d\nu, \quad r\in \R. 
\end{equation}
%
%
%
The right-hand side of \eqref{startingpoint} and of \eqref{startingpoint2} is the negative  integral over $g^{-1}(-\infty, r)$ of $M_s^*(\varphi F)$, where $F= Mg/\|Mg\|$ plays the role of the exterior unit normal vector to the level surfaces of $g$.  It is indeed the   exterior unit normal vector to $\partial \{ x: \; g(x)<r\}$ if $g$ is smooth enough and $R=I$. 

To go on, it is convenient to introduce spaces of $W^{1,p}$ vector fields. 

\begin{Definition}
For every $p>1$ we denote by $W^{1,p}(X, \nu;X)$ the space of vector fields $F:X\mapsto  X$ such that for a given orthonormal basis $\{ e_i:\; i\in \N\}$, the functions $f_i: \langle F, e_i\rangle \in W^{1,p}(X, \nu)$ for every $i\in \N$, and $(\sum_{i=1}^{\infty} \|M_pf_i\|^2) ^{1/2} \in L^p(X, \nu)$. 
\end{Definition}

It is easy to see that the definition does not depend on the chosen orthonormal basis. 
The  standard proof of the following lemma is left to the reader. 

\begin{Lemma}
\label{Le:D1p}
\begin{itemize}
\item[(i)] If $F_1\in W^{1,p_1}(X, \nu;X)$, $F_2\in W^{1,p_2}(X, \nu;X)$, with $1/p:= 1/p_1 + 1/p_2 <1$, then $x\mapsto \langle F_1(x), F_2(x)\rangle $ belongs to $W^{1,p}(X, \nu)$. %
\item[(ii)]  If $F\in W^{1,p_1}(X, \nu;X)$, $\varphi \in W^{1,p_2}(X, \nu)$, with $1/p:= 1/p_1 + 1/p_2 <1$, then $\varphi  F  $ belongs to $W^{1,p}(X, \nu)$. 
\item[(iii)] If $F\in W^{1,p}(X, \nu;X)$ for some $p>1$, then  $x\mapsto \|F(x)\|$ belongs to $W^{1,p}(X, \nu)$. 
\end{itemize}
\end{Lemma}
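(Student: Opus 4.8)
The plan is to reduce everything to the scalar product rule (Lemma \ref{prodotto-ChainRule}(i)) and the closure property (Lemma \ref{prodotto-ChainRule}(v)), combined with Cauchy--Schwarz and Minkowski for the $\ell^2$-sums appearing in the definition and Hölder's inequality for the integrability bookkeeping. Throughout I write $f_{j,i}:=\langle F_j,e_i\rangle$, I abbreviate $\|MF\|_{HS}:=(\sum_i\|M_pf_i\|^2)^{1/2}\in L^p(X,\nu)$ for the Hilbert--Schmidt norm of the derivative occurring in the definition, and I use that, the norm on $W^{1,p}(X,\nu;X)$ being a graph norm, a field in this space satisfies $\|F\|\in L^p(X,\nu)$. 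For \emph{(i)} I would look at the partial sums $S_N:=\sum_{i=1}^N f_{1,i}f_{2,i}$. Each summand lies in $W^{1,p}(X,\nu)$ by Lemma \ref{prodotto-ChainRule}(i), with $M_p(f_{1,i}f_{2,i})=f_{2,i}M_{p_1}f_{1,i}+f_{1,i}M_{p_2}f_{2,i}$, hence so does $S_N$. By Cauchy--Schwarz the tail $|\langle F_1,F_2\rangle-S_N|\le(\sum_{i>N}f_{1,i}^2)^{1/2}(\sum_{i>N}f_{2,i}^2)^{1/2}$ tends to $0$ pointwise and is dominated by $\|F_1\|\,\|F_2\|\in L^p(X,\nu)$ (Hölder, since $1/p=1/p_1+1/p_2$); similarly the tail of $M_pS_N$ has $X$-norm controlled by $\|F_2\|\,\|MF_1\|_{HS}+\|F_1\|\,\|MF_2\|_{HS}\in L^p$. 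Dominated convergence then gives $S_N\to\langle F_1,F_2\rangle$ in $L^p(X,\nu)$ and convergence of $M_pS_N$ in $L^p(X,\nu;X)$, and closedness of $M_p$ yields the claim.

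Part \emph{(ii)} is the shortest. The components of $\varphi F$ are $\varphi f_i$, which lie in $W^{1,p}(X,\nu)$ by Lemma \ref{prodotto-ChainRule}(i), with $M_p(\varphi f_i)=f_iM_{p_2}\varphi+\varphi M_{p_1}f_i$. Summing and applying Minkowski in $\ell^2$ gives the pointwise estimate $(\sum_i\|M_p(\varphi f_i)\|^2)^{1/2}\le\|M_{p_2}\varphi\|\,\|F\|+|\varphi|\,\|MF\|_{HS}$, whose right-hand side belongs to $L^p(X,\nu)$ by Hölder; the first factor of that same bound also controls $\|\varphi F\|=|\varphi|\,\|F\|$ in $L^p$. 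Hence $\varphi F\in W^{1,p}(X,\nu;X)$.

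For \emph{(iii)} I expect the real work, since $F\mapsto\|F\|$ is a composition with the $X$-norm, which is only Lipschitz (non-smooth at $0$) and involves an infinite sum; I would therefore use a triple approximation. First, fixing $N$ and $\delta>0$, set $g_{N,\delta}:=(\sum_{i=1}^N f_i^2+\delta)^{1/2}$; approximating each $f_i$ by $C^1_b$ functions and using that $(y_1,\dots,y_N)\mapsto(\sum y_i^2+\delta)^{1/2}$ is smooth with gradient of norm at most $1$, one obtains $g_{N,\delta}\in W^{1,p}(X,\nu)$ with $M_pg_{N,\delta}=g_{N,\delta}^{-1}\sum_{i=1}^N f_iM_pf_i$ and the uniform Cauchy--Schwarz bound $\|M_pg_{N,\delta}\|\le(\sum_{i=1}^N\|M_pf_i\|^2)^{1/2}\le\|MF\|_{HS}$. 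Letting $\delta\downarrow0$, I would invoke Corollary \ref{partepositiva} (so that $M_pf_i=0$ a.e. on $\{f_i=0\}$, hence on $\{g_N=0\}$ with $g_N:=(\sum_{i=1}^N f_i^2)^{1/2}$) together with dominated convergence to get $g_N\in W^{1,p}(X,\nu)$, again with $\|M_pg_N\|\le\|MF\|_{HS}$. Finally $g_N\uparrow\|F\|$ in $L^p(X,\nu)$ while $\|M_pg_N\|$ stays bounded in $L^p$ by $\|MF\|_{HS}$, so Lemma \ref{prodotto-ChainRule}(v) gives $\|F\|\in W^{1,p}(X,\nu)$. The main obstacle is precisely the passage to the limit in $\delta$ and $N$ for the non-smooth, infinite-dimensional composition: one must justify the vector-valued chain rule for $g_{N,\delta}$ from the scalar lemmas and control the derivative across the zero set of $F$. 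The single ingredient that makes all three limits survive dominated convergence and Lemma \ref{prodotto-ChainRule}(v) is the uniform bound $\|M_pg_{N,\delta}\|\le\|MF\|_{HS}$.
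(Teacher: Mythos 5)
Your proof is correct. Note that the paper itself offers no proof to compare against: the authors state only that ``the standard proof of the following lemma is left to the reader,'' and your argument is precisely the standard one they presumably had in mind --- reduce everything to the scalar product rule of Lemma \ref{prodotto-ChainRule}(i), the closedness of $M_p$ (equivalently Lemma \ref{prodotto-ChainRule}(v)), and Cauchy--Schwarz/H\"older for the $\ell^2$-sums, with the regularized truncations $g_{N,\delta}=(\sum_{i=1}^N f_i^2+\delta)^{1/2}$ handling the non-smoothness of the norm in part (iii). The only point worth flagging is one you already identified: the paper's literal definition of $W^{1,p}(X,\nu;X)$ requires only $f_i\in W^{1,p}(X,\nu)$ componentwise and $(\sum_i\|M_pf_i\|^2)^{1/2}\in L^p(X,\nu)$, and does not explicitly say $\|F\|\in L^p(X,\nu)$; this integrability must be read as implicit in the definition (otherwise statement (iii) could not hold, since $W^{1,p}(X,\nu)\subset L^p(X,\nu)$), and your proof uses it exactly where it is needed, in the H\"older estimates of parts (i)--(ii) and in the final dominated-convergence step of part (iii).
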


The following theorem is the main result of this section.

\begin{Theorem}
\label{Th:perimeter}
Let Hypotheses \ref{h1} and  \ref{h2} hold, and assume in addition that $Mg\in W^{1,q}(X, \nu;X)$ for some $q>2$. 
Then for every $\varphi \in C^1_b(X)$ with nonnegative values and for any  $t\in (q', q)$, $s> q'$ we have 
\begin{equation}
\label{misuraperimetro}
\int_X \varphi T(\|Mg\|)d\sigma^g_r =  \max \bigg\{ \int_{\{g <r\}  }M_s^*(\varphi F) \,d\nu: \; F\in   W^{1,t}(X) \cap D(M_s^*), \, \|F(x) \|\leq 1 \, {\rm a.e.}
 \bigg\}. 
\end{equation}
The maximum is attained at $F= -Mg/\|Mg\|$. 
\end{Theorem}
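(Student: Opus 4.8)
The plan is to reduce everything to a single localized integration-by-parts identity and then to read off the maximum via Cauchy--Schwarz. First I would fix an admissible competitor $F\in W^{1,t}(X;X)\cap D(M_s^*)$ with $\|F(x)\|\le 1$ a.e. and rewrite the bulk integral $\int_{\{g<r\}}M_s^*(\varphi F)\,d\nu$. By the product rule for the divergence (Lemma \ref{Le:divergenza}(i), formula \eqref{e1d}) one has $M_s^*(\varphi F)=\varphi M_s^*(F)-\langle M\varphi,F\rangle$, and inserting this into the Divergence Theorem \eqref{parti_noi_generale} of Corollary \ref{parti_generale} makes the two bulk terms cancel. The hypotheses of that corollary are met because $\langle Mg,F\rangle\varphi\in W^{1,\cdot}(X,\nu)$ by Lemma \ref{Le:D1p}(i) (using $Mg\in W^{1,q}(X;X)$ and $F\in W^{1,t}(X;X)$) together with $\varphi\in C^1_b(X)$. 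The outcome is the key identity
$$\int_{\{g<r\}}M_s^*(\varphi F)\,d\nu=-\int_X T(\varphi\langle Mg,F\rangle)\,d\sigma_r^g,$$
valid for every admissible $F$; note this already contains \eqref{startingpoint2} as the special case $F=Mg/\|Mg\|$.

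With the identity at hand, the upper bound is immediate. Since $\|F(x)\|\le 1$ $\nu$-a.e., the Cauchy--Schwarz inequality gives $\langle Mg,F\rangle\ge-\|Mg\|$ $\nu$-a.e., and because $\varphi\ge 0$ this yields $-\varphi\langle Mg,F\rangle\le\varphi\|Mg\|$ $\nu$-a.e. I would then push this inequality through the trace operator. The function $\varphi(\|Mg\|+\langle Mg,F\rangle)$ is nonnegative $\nu$-a.e. and lies in $W^{1,\cdot}(X,\nu)$, so its quasicontinuous version is nonnegative outside a set of null $p$-capacity by Proposition \ref{capacita}(iii); that set is $\sigma_r^g$-negligible by Proposition \ref{capacity}, and since $T$ coincides with the quasicontinuous version $\sigma_r^g$-a.e. (Theorem \ref{est}(ii)) and is linear, we obtain $T(\varphi\|Mg\|)\ge-T(\varphi\langle Mg,F\rangle)$ $\sigma_r^g$-a.e. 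Integrating against $\sigma_r^g$ and using $T(\varphi\|Mg\|)=\varphi\,T(\|Mg\|)$ (from the multiplicativity of Theorem \ref{est}(iii) combined with $T\varphi=\varphi$ for continuous $\varphi$) bounds $\int_{\{g<r\}}M_s^*(\varphi F)\,d\nu$ above by $\int_X\varphi\,T(\|Mg\|)\,d\sigma_r^g$, for every admissible $F$.

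It then remains to show the bound is attained at $F_0=-Mg/\|Mg\|$. Admissibility is the delicate point: $\|F_0\|=1$ a.e. since $Mg\neq 0$ a.e., the nondegeneracy $1/\|Mg\|\in L^{p'}(X,\nu)$ for all $p'$ being built into Hypothesis \ref{h2}; the membership $F_0\in D(M_s^*)$ for $s>q'$ follows from Lemma \ref{Le:divergenza} by writing $F_0=-(Mg/\|Mg\|^2)\,\|Mg\|$ with $\|Mg\|\in W^{1,q}(X,\nu)$ (Lemma \ref{Le:D1p}(iii)); and $F_0\in W^{1,t}(X;X)$ for $t\in(q',q)$ follows from a quotient rule applied to $Mg\in W^{1,q}(X;X)$, which I would justify by regularizing $\|Mg\|$ with $\sqrt{\|Mg\|^2+1/n}$ as in Lemma \ref{prodotto-ChainRule}(iii) and estimating the gradient by H\"older using $1/\|Mg\|\in L^{p'}$ for all $p'$. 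Once admissibility is secured, the key identity with $\langle Mg,F_0\rangle=-\|Mg\|$ gives $\int_{\{g<r\}}M_s^*(\varphi F_0)\,d\nu=\int_X T(\varphi\|Mg\|)\,d\sigma_r^g=\int_X\varphi\,T(\|Mg\|)\,d\sigma_r^g$, matching the upper bound and proving \eqref{misuraperimetro}.

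I expect the verification that $F_0\in W^{1,t}(X;X)$ --- that is, the quotient chain rule near the zero set of $Mg$ --- to be the main technical obstacle, since the map $v\mapsto v/\|v\|$ is not globally Lipschitz and one must trade the blow-up against the integrability of $1/\|Mg\|$. Everything else is the integration-by-parts identity, Cauchy--Schwarz, and the monotonicity of the trace through the capacity estimates, all of which are routine given the results already established.
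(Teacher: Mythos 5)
Your proposal is correct and takes essentially the same route as the paper's proof: the same key integration-by-parts identity $\int_{\{g<r\}}M_s^*(\varphi F)\,d\nu=-\int_X T(\langle Mg,F\rangle\varphi)\,d\sigma_r^g$ (the paper writes it as $-q_{\langle Mg,F\rangle\varphi}(r)$, which is the same thing by Theorem \ref{est}), the same Cauchy--Schwarz upper bound using $\varphi\ge 0$, and the same two-part admissibility argument for the maximizer (product decomposition via Hypothesis \ref{h2} and $\|Mg\|\in W^{1,q}(X,\nu)$ for membership in $D(M_s^*)$; regularization plus H\"older against $1/\|Mg\|\in\bigcap_{p>1}L^p(X,\nu)$ for membership in $W^{1,t}(X,\nu;X)$, which is exactly how the paper gets past the $t<q/2$ barrier of the naive product rule). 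The only differences are organizational --- you push the pointwise inequality through the trace operator via the capacity results where the paper uses the simpler monotonicity $\varphi_1\le\varphi_2 \Rightarrow q_{\varphi_1}(r)\le q_{\varphi_2}(r)$, and you invoke Lemma \ref{Le:D1p}(i) for $\langle Mg,F\rangle$ where the paper runs a componentwise series argument --- and your sign bookkeeping (Cauchy--Schwarz as $\langle Mg,F\rangle\ge-\|Mg\|$, maximizer $-Mg/\|Mg\|$) is in fact more careful than the paper's.
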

\begin{proof}
By Lemma \ref{Le:D1p}(iii), $\|Mg\|\in W^{1,q}(X, \nu)$ and therefore $\|Mg\|\varphi \in W^{1,q}(X, \nu)$. Then, by formulae \eqref{startingpoint} and \eqref{startingpoint2},  
$$ \int_X \varphi\, T(\|Mg\|)d\sigma^g_r  = q_{\|Mg\|\varphi}(r). $$
So, we have to show that $q_{\|Mg\|\varphi}(r)$ is  equal to the right-hand side of   \eqref{misuraperimetro}.  The proof is in two steps. 

In the first step we shall prove that the vector field 
 $F = Mg/\|Mg\|$  is one of the admissible vector fields in the right hand side of \eqref{misuraperimetro}, namely that it   belongs to $ D(M_s^*)$ for every $s> q'=q/(q-1)$ and to  $  W^{1,t}(X,\nu;X)$ for every $t\in (q', q)$.  
 
 In the second step we shall prove that for every admissible vector field $F$ in the right hand side of  \eqref{misuraperimetro}, the integral $ \int_{\{g<r\}} M_s^*(\varphi F)d\nu$ is equal to $q_{\langle Mg, F\rangle \varphi }(r) $ (of course, we need to show that $\langle Mg, F\rangle \varphi$ belongs to $W^{1,p}(X, \nu)$ for some $p$). Then, using the definition of $q_{\varphi}$,  it will be easy to see that $q_{\langle Mg, F\rangle \varphi }(r) \leq q_{\|Mg\|\varphi}(r)$ if $\varphi$ has nonnegative values. 
 
  In view of formula \eqref{startingpoint}, the statement will follow. 
  
  Throughout the proof we denote by $\{e_i:\; i\in \N\}$ any orthonormal basis of $X$. 
 
\vspace{3mm}

\noindent  {\em Step 1.}
 $F$ may be written as the product of $ Mg/\|Mg\|^2$, which is in $D(M_p^*)$ for every $p$ by Hypothesis  \ref{h2}, and the scalar function $\|Mg\|\in W^{1,q}(X, \nu)$ by Lemma \ref{Le:D1p} (iii). Lemma \ref{Le:divergenza}(i) implies that $F\in D(M_s^*)$ for $s= pq/(q-p)$, for every $p\in (1,q)$. Letting $p\to 1$, we obtain $F\in D(M_s^*)$ for every $s> q/(q-1)=q'$. 
 
Let us prove that $F  \in W^{1,t}(X,\nu;X)$, for every $t<q$. We have  $F= Mg \,\psi$, with $\psi = 1/\|Mg\|$. As easily seen approximating $\psi $ by 
$$\ds \psi_n(x) :=   \bigg(\ds\sum_{i=1}^n \langle Mg(x), e_i \rangle^2 + 1/n\bigg)^{-1/2}, $$
$\psi \in W^{1,p}(X, \nu)$ for every $p<q$,  and $M_p \psi = \|Mg\|^{-2} \sum_{k=1}^{\infty} M_q(\langle Mg, e_k\rangle )e_k$. Using Lemma \ref{Le:D1p}(ii), we obtain $F\in W^{1,t}(X, \nu;X) $ if $1/t = 1/p +1/q <1$, so that  $F\in W^{1,t}(X, \nu;X) $ for $t\in (1, q/2)$. To avoid this restriction 
we use the definition of the spaces $W^{1,t}(X, \nu;X)$ instead of Lemma \ref{Le:D1p}, and we take advantage  of $\|Mg\|^{-1}\in L^p(X, \nu)$ for every $p$, which is a consequence of Hypothesis \ref{h2} (see the introduction). Setting $f_i:= \langle F, e_i\rangle = \langle Mg, e_i\rangle /\|Mg\|$ for $i\in \N$, each $f_i$ belongs to $W^{1,p}(X, \nu)$ for $p\in (1, q)$ and
$$M_pf_i = \frac{M_q \langle Mg, e_i\rangle}{\|Mg\|} -  \langle Mg, e_i\rangle\frac{\sum_{k=1}^{\infty}(M_q\langle Mg, e_k\rangle ) \, e_k}{ \|Mg\|^2 }$$
so that 
$$ \|M_pf_i \| \leq \frac{\|M_q \langle Mg, e_i\rangle \|}{\|Mg\|} + \frac{|\langle Mg, e_i\rangle|}{ \|Mg\|^2}\bigg(\sum_{k=1}^{\infty}\|M_q\langle Mg, e_k\rangle \|^2\bigg)^{1/2}$$
which implies
$$\bigg( \sum_{i=1}^{\infty} \|M_pf_i \|^2 \bigg)^{1/2} \leq 2 \frac{\bigg(\sum_{i=1}^{\infty}\|M_q \langle Mg, e_i\rangle \|^2\bigg)^{1/2}}{\|Mg\|} . $$
Therefore, $F \in W^{1,t}(X,\nu;X)$, for every $t<q$.

\vspace{3mm}

\noindent  
{\em Step 2.}
Now we show that   if  $F\in   W^{1,t}(X,\nu;X) \cap D(M_s^*)$ for some $t >q'$, $s>q'$, is such that $\|F(x) \|\leq 1 $ $\nu$-a.e., then we have 
\begin{equation}
\label{OK}
\int_{\{g<r\}}   M_s^*(\varphi F) \,d\nu \leq  q_{\|Mg\|\varphi}(r) .
\end{equation}
To this aim, we prove that $\langle Mg, F\rangle \in W^{1,p}(X, \nu)$ for some $p>1$.  

Set $f_i(x):= \langle F(x), e_i\rangle$ for $i\in \N$, $x\in X$. Since  $|\langle Mg, F\rangle| \leq \|Mg\|$, $\langle Mg, F\rangle \in L^{q}(X, \nu)$ and the series
$s_n = \sum_{i=1}^n f_i \langle Mg, e_i\rangle $ converges to $\langle Mg, F\rangle$ in $ L^{q}(X, \nu)$. Let us prove that it converges in a Sobolev space. For every $i\in \N$, 
we have  $\langle Mg, e_i \rangle \in W^{1,q}(X, \nu)$, $f_i\in W^{1,t}(X, \nu)$ and  $t>q'$,  so that   $\langle Mg, e_i \rangle f_i\in W^{1,p}(X, \nu)$ with $p = qt/(q+t)$ by Lemma \ref{prodotto-ChainRule}(i). Moreover, 
$$M_{p}s_n = \sum_{i=1}^n   f_i M_q\langle Mg, e_i\rangle  + \sum_{i=1}^n \langle Mg, e_i\rangle M_{t}f_i, $$
so that 
$$\|M_{p}s_n\| \leq \bigg(\sum_{i=1}^n \|M_q\langle Mg, e_i\rangle  \|^2\bigg)^{1/2} + \bigg( \sum_{i=1}^n \|M_tf_i\|^2\bigg)^{1/2}\|Mg\|$$
and the  series $(M_{p}s_n)$ converges in $L^p(X, \nu;X)$. 
 Therefore,  $\langle Mg, F\rangle \varphi \in W^{1,p}(X, \nu)$ for every $\varphi\in C^1_b(X)$, and  Proposition \ref{partirozza} yields
$$\int_{\{g<r\}}  M_s^*(\varphi F) \,d\nu = - q_{\langle Mg, F\rangle \varphi}. $$
We recall now that if $\varphi_1 \leq \varphi_2$ a.e., then $q_{\varphi_1}(r) \leq q_{\varphi_2}(r)$, for every $r$. In our case, $\varphi $ has nonnegative values, so that 
 $\langle Mg(x), F(x)\rangle \varphi(x) = \langle Mg(x)/\|Mg(x)\|, F(x)\rangle \varphi (x)\|Mg(x)\| \leq  \varphi (x)\|Mg(x) \|$ for a.e. $x$, and therefore $q_{\langle Mg, F\rangle \varphi}(r)\leq q_{\|Mg\|\varphi}(r)$ and \eqref{OK} follows. 
\end{proof}

Let $g_1$, $g_2$ satisfy the assumptions of Theorem  \ref{Th:perimeter}, and assume that for some $r\in \R$ we have $\{x:\; g_1(x) <r\} = \{x:\; g_2(x) <r\} =: \Omega$. By Theorem \ref{est}, for every $\varphi\in C^1_b(X)$, 
$$q_{\|Mg_i\|\varphi}(r) = \int_X T_i(\varphi \|Mg_i\|) \,d \sigma_r^{g_{i}} =  \int_X \varphi T_i( \|Mg_i\|) \,d \sigma_r^{g_{i}}, \quad i=1, 2, $$
where $T_i$ is the trace of Sobolev functions in $L^1(X, \sigma_r^{g_{i}})$. 
%
%
If in addition $\varphi$ has nonnegative values, by Theorem \ref{Th:perimeter} the left hand side depends only on the set $\Omega$. Approximating  every nonnegative  $\varphi \in UC_b(X)$ by a sequence of nonne\-gative $C^1_b$ functions, we obtain  $ \int_X  \varphi T_1(\|Mg_1\| )\,d \sigma_r^{g_{1} }=  \int_X  \varphi T_2( \|Mg_2\|) \,d \sigma_r^{g_{2} }$; splitting every $\varphi \in UC_b(X)$ as $\varphi = \varphi^+ - \varphi^-$ we obtain $ \int_X  \varphi T_1(\|Mg_1\|) \,d \sigma_r^{g_{1}} =  \int_X  \varphi T_2(\|Mg_2\| )\,d \sigma_r^{g_{2} }$. This shows that the weighted measures $T_1( \|Mg_1\|) \,d \sigma_r^{g_{1}}$ and $T_2(\|Mg_2\| )\,d \sigma_r^{g_{2} }$ coincide.

Fix any $g$ satisfying the assumptions of Proposition \ref{Th:perimeter}, and define
\begin{equation}
\label{rho_r}
\rho_r (dx) := T( \|Mg  \|)  \sigma_r^g(dx). 
\end{equation}
Taking in particular $\varphi \equiv 1$, we get 
$$ \rho _r(g^{-1}(r) ) =  \sup \bigg\{ \int_{\{g<r\}  }M_s^*(  F) \,d\nu: \; F\in   W^{1,t}(X) \cap D(M_s^*), \, \|F(x) \|\leq 1 \, {\rm a.e.}\bigg \} <+\infty . $$
We recall that a bounded Borel set $\Omega \subset \R^n$ has finite perimeter if $\one_{\Omega}$ is a function with bounded variation, and in this case  the perimeter measure $m$  is defined as the total variation measure of $D\one_{\Omega}$. Equivalently, $\Omega$ has finite perimeter if and only if 
$$ \sup \bigg\{ \int_{\Omega } \text{div}\;F \,dx: \; F\in  C^1_c(\Omega, \R^n), \, \|F(x) \|\leq 1 \,\forall x\in \Omega \bigg \} <+\infty , $$
and in this case for every $\varphi\in C^1_b(\R^n)$ with nonnegative values we have 
$$\int \varphi\,dm = \sup \bigg\{ \int_{\Omega } \text {div}\;(F\varphi)  \,dx: \; F\in  C^1_c(\Omega, \R^n), \, \|F(x) \|\leq 1 \,\forall x\in \Omega \bigg \}, $$
to be compared  to formula  \eqref{misuraperimetro}. In our setting  the operators $-M_s^*$ play the role of the   divergence, the measure $\rho_r$ plays the role of the perimeter measure, and  $\rho_r (g^{-1}(r) )$ may be called the (generalized) perimeter of the set $g^{-1}(-\infty, r)$. The vector field $Mg/\|Mg\|$ plays the role of the exterior normal vector field at  $g^{-1}(r) $. 
%
%
It would be worth (although it is not the aim of this paper) to develop a theory of BV functions for general differentiable measures in Hilbert or Banach spaces, and to go on in the investigation of perimeter measures.

\section{Weighted Gaussian  measures} 

We refer to the paper \cite{Simone}, where weighted Gaussian measures in Banach spaces were studied. Let $\nu(dx) = w(x)\mu(dx)$, where $\mu $ is a centered nondegenerate Gaussian measure with covariance $Q$. The nonnegative weight $w$ satisfies 
\begin{equation}
\label{w}
w, \; \log w\in W^{1,s}(X, \mu) \;\; \forall s>1. 
\end{equation}
Of course, every $C^1$ weight with positive infimum and such that 
$w(x)$, $\|\nabla w(x)\|\leq C\exp(\alpha\|x\|)$ for some  $C$, $\alpha >0$  satisfy  assumption \eqref{w}. Examples of  discontinuous weights that satisfy  \eqref{w} are in \cite{Simone} (in the space $X = \ell ^2$) and in \cite{DaLu14} (in the space $X= L^2(0,1)$
 with respect to the Lebesgue measure). 

Since we are considering two different measures, $\mu$ and $\nu$, it is convenient to denote  by $M_p^{\mu}$, $M_p^{\nu}$ the operators obtained by our procedure using the measures $\mu$ $\nu$, respectively. Instead,  we consider only the covariance of $\mu$, and we denote it by $Q$ without superindex.

The Sobolev spaces considered in \cite{Simone} are modeled on the classical Sobolev spaces of the Malliavin calculus, which coincide with the ones described here with the choice $R=Q^{1/2}$. 

To prove this, we first consider the Gaussian measure $\mu$. 
It is convenient to introduce an orthonormal basis of $X$ consisting of eigenvectors of $Q$, 
$Qe_k= \mu_ke_k$ for every $k\in \N$.

We recall that the Cameron-Martin space $H$ of $\mu$  coincides with  $Q^{1/2}(X)$, it is endowed with the scalar product $\langle h_1, h_2\rangle_H := \langle Q^{-1/2}h_1, Q^{-1/2}h_2\rangle$, 
and that for every $h\in H$, $h = Q^{1/2}z$,  we have 
\begin{equation}
\label{partigaussiana}
\int_X \langle Q^{1/2}\nabla \varphi(x), z\rangle  \mu(dx) = \int_X \partial_h \varphi(x) \mu(dx) = \int_X \varphi(x)  \hat{h}(x) \mu(dx), \quad \varphi\in C^1_b(X), 
\end{equation}
where $\hat{h}= R_{\mu}^{-1}h$, $R_{\mu}$ being usual extension of $Q$  to the closure of $X^*$ in $L^2(X, \mu)$. We refer to \cite{Boga} for the general theory of 
Gaussian measures in Banach spaces; all the results that we mention here about Sobolev spaces for general Gaussian measures are contained in Chapter 5 of \cite{Boga}. In our Hilbert setting 
the function $\hat{h}$ is called  white noise function $W_z$ in \cite{DP06}, and it is given by 
 \begin{equation}
\label{e7}
\hat{h} (x) = \sum_{k=1}^{\infty} \mu_k^{-1/2} \langle x, e_k\rangle \langle z, e_k\rangle , 
\end{equation}
the series being convergent in $L^p(X, \mu)$ for every $p>1$.
By definition, for every differentiable function $\varphi $ and for every $h\in H$ we have $\langle \nabla_H\varphi(x), h\rangle_H = \langle \nabla \varphi(x), h\rangle$. Therefore, $\nabla_H\varphi(x) = Q\nabla \varphi(x)$, and $|\nabla_H\varphi(x)|_H = \|Q^{1/2}\nabla \varphi(x)\|$. Our Sobolev spaces $W^{1,p}(X, \mu)$ coincide with the classical Sobolev spaces ${\mathbb D}^{1,p}(X, \nu)$ of the Malliavin calculus,  and our operator $M_p^{\mu}$ is just $Q^{-1/2}\nabla_H$. Formula \eqref{partigaussiana} is readily extended to any $\varphi\in W^{1,q}(X, \mu)$, with $q>1$. 

We recall the definition of the Gaussian divergence of $H$-valued vector fields. For a given $\Phi \in L^1(X, \mu;H)$, a function $\beta\in L^1(X, \mu)$ is called Gaussian divergence of $\Phi$, and denoted by div$_\mu \Phi$, if 
$$\int_X \langle \nabla_H \varphi, \Phi\rangle_H d\mu = -\int_X \varphi\, \beta \, d\mu, \quad \varphi\in C^1_b(X). $$
Recalling that $\nabla_H\varphi = Q\nabla \varphi$ for every $\varphi\in C^1_b(X)$ and that $\langle h,k\rangle_H = \langle Q^{-1/2}h, Q^{-1/2}k\rangle$, this means 
$$\int_X \langle Q^{1/2}\nabla \varphi, Q^{-1/2}\Phi\rangle d\mu = -\int_X \varphi\, \beta \, d\mu, \quad \varphi\in C^1_b(X). $$
So, a vector field $\Phi \in L^{p'}(X, \mu;H)$ (namely, such that such that $\widetilde{\Phi} =Q^{-1/2}\Phi \in L^{p'}(X, \mu;X)$) has Gaussian divergence div$_\mu \Phi \in L^{p'}(X, \mu)$
if and only if $\widetilde{\Phi}$ belongs to $D(M^{\mu *}_p)$, and in this case  div$_\mu \Phi = - M^{\mu *}_p \widetilde{\Phi}$.

Now, let us consider the weighted measure $\nu$. Applying \eqref{partigaussiana} to $\varphi w$, which belongs to $W^{1,q}(X, \mu)$ for every  $q>1$, we get 
$$\int_X \langle Q^{1/2}\nabla \varphi(x), z\rangle  \nu(dx) = \int_X \partial_h \varphi(x) \nu(dx) = \int_X \varphi(x)( \hat{h}(x) - \partial _h \log w(x))\nu(dx), \quad \varphi\in C^1_b(X), $$
 By the H\"older inequality,  $\hat{h}  - \partial _h \log w\in L^q(X, \nu)$ for every $q>1$, and applying once again  the H\"older inequality we obtain that Hypothesis \ref{h1'} is satisfied. Then, we   consider the Sobolev spaces $W^{1,p}(X, \nu)$ defined in the Introduction, still with $R=Q^{1/2}$. They coincide with the Sobolev spaces $W^{1,p}(X, \nu)$ of  \cite{Simone}. We remark that the test functions taken into consideration in  \cite{Simone} are the smooth cylindrical functions ${\mathcal FC}^{\infty}_b(X)$, namely functions of the type $\varphi(x) = \theta(\langle x, v_1\rangle, \ldots , \langle x, v_n\rangle)$ with $n\in \N$, $\theta  \in C^{\infty}_b(\R^n)$, $v_1$, \ldots $v_k\in X$, instead of $C^1_b(X)$ as we did. However, in the basic definitions and estimates nothing changes if we replace 
 ${\mathcal FC}^{\infty}_b(X)$ by 
$C^1_b(X)$.  

The hypersurfaces considered in  \cite{Simone} are level surfaces of functions $g$ whose regularity and summability properties are given in terms of the Gaussian measure $\mu$. Namely, as in \cite{Feyel,CeLu14}, $g\in \mathbb D^{2,p}(X, \mu)$ for every $p>1$, and there exists $\delta >0$ such that $1/|\nabla_Hg|_H \in L^p(g^{-1}(-\delta, \delta), \mu)$ for every $p>1$. Here we assume for simplicity that 
$1/|\nabla_Hg|_H \in L^p(X, \mu)$ for every $p$, which means that $1/\|M^{\mu}g\|\in  L^p(X, \mu)$ for every $p$. 
Now we prove that,  under these assumptions, $g$ satisfies Hypothesis \ref{h2}. 

\begin{Lemma}
\label{Le:reg}
Let $g\in \mathbb D^{2,p}(X, \mu)$ for every $p>1$ be such that $1/|\nabla_Hg|_H \in L^p(X, \mu)$ for every $p>1$. Then $g$ satisfies Hypothesis \ref{h2}, for both measures $\mu$ and $\nu$.
\end{Lemma}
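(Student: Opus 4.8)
The plan is to reduce everything to a divergence estimate for the Gaussian measure $\mu$ and then transfer it to $\nu$ by means of the weight. Recall that with $R=Q^{1/2}$ one has $M^\mu g=Q^{1/2}\nabla g$ and $\|M^\mu g\|=|\nabla_H g|_H$, so the field occurring in Hypothesis~\ref{h2} is $\Psi:=M^\mu g\,\|M^\mu g\|^{-2}$, and under the divergence/adjoint dictionary recalled in this section it is exactly $\Psi=Q^{-1/2}\Phi$ for the $H$-valued field $\Phi:=\nabla_H g/|\nabla_H g|_H^2$. Since $g\in\mathbb D^{2,p}(X,\mu)\subset\mathbb D^{1,p}(X,\mu)=W^{1,p}(X,\mu)$ for every $p$, the membership $g\in W^{1,p}(X,\mu)$ is immediate, and by that dictionary it remains to prove that $\Phi$ has a Gaussian divergence with $\operatorname{div}_\mu\Phi\in L^{q}(X,\mu)$ for every $q>1$, equivalently $\Psi\in D(M^{\mu*}_p)$ for every $p>1$.

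The heart of the argument is this divergence estimate. First I would show $\Phi\in\mathbb D^{1,q}(X,\mu;H)$ for every $q$: writing $\Phi=\psi\,\nabla_H g$ with $\psi:=|\nabla_H g|_H^{-2}$, the nondegeneracy assumption $1/|\nabla_H g|_H\in\bigcap_p L^p(X,\mu)$ gives $\psi\in\bigcap_p L^p$, while $g\in\mathbb D^{2,p}$ gives $\nabla_H g\in\mathbb D^{1,p}(X,\mu;H)$ with Hessian $\nabla_H^2 g$ in every $L^p$. The chain rule for the singular map $h\mapsto|h|_H^{-2}$ is made rigorous by approximating $\psi$ with $\psi_n:=(\sum_{i\le n}\langle\nabla_H g,e_i\rangle_H^2+1/n)^{-1}$, exactly as in the proof of Lemma~\ref{prodotto-ChainRule}(iii), yielding $\nabla_H\psi=-2\,(\nabla_H^2 g)\nabla_H g\,|\nabla_H g|_H^{-4}$, whose $H$-norm is dominated by $2\|\nabla_H^2 g\|\,|\nabla_H g|_H^{-3}$ and hence lies in every $L^q$ by H\"older. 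A further H\"older estimate on the product $\psi\,\nabla_H g$ places $\Phi$ in $\mathbb D^{1,q}(X,\mu;H)$ for every $q$, so by Meyer's inequalities (\cite{Boga,Nualart}) $\Phi\in\operatorname{Dom}(\operatorname{div}_\mu)$ and
$$\operatorname{div}_\mu\Phi=\frac{L_\mu g}{|\nabla_H g|_H^2}-\frac{2\,\langle(\nabla_H^2 g)\nabla_H g,\nabla_H g\rangle_H}{|\nabla_H g|_H^4},$$
where $L_\mu$ is the Ornstein--Uhlenbeck operator. Both terms lie in every $L^q(X,\mu)$: the first by $\|L_\mu g\|_{q_1}\,\||\nabla_H g|_H^{-2}\|_{q_2}$ (finite, since $g\in\mathbb D^{2,p}$ forces $L_\mu g\in\bigcap_p L^p$), the second by $\|\nabla_H^2 g\|\,|\nabla_H g|_H^{-2}$ again via H\"older. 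This establishes Hypothesis~\ref{h2} for $\mu$.

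It remains to transfer the conclusion to $\nu=w\,\mu$. That $g\in W^{1,p}(X,\nu)$ and $M^\nu g=M^\mu g$ follows by approximating $g$ by $C^1_b$ functions in $\mathbb D^{1,p_1}(X,\mu)$ with $p_1$ large and using H\"older with $w\in\bigcap_s L^s(X,\mu)$; in particular $\|M^\nu g\|=|\nabla_H g|_H$ and $\Psi$ is unchanged. To see $\Psi\in D(M^{\nu*}_p)$ I would test against $\varphi\in C^1_b(X)$ and move the weight inside the $\mu$-integral: since $\Psi\in D(M^{\mu*}_s)$ (just proved) and $w\in\bigcap_s W^{1,s}(X,\mu)$, Lemma~\ref{Le:divergenza}(i) gives $w\Psi\in D(M^{\mu*}_s)$ with $M^{\mu*}_s(w\Psi)=w\,M^{\mu*}\Psi-\langle M^\mu w,\Psi\rangle$, whence
$$\int_X\langle M^\nu\varphi,\Psi\rangle\,d\nu=\int_X\langle M^\mu\varphi,w\Psi\rangle\,d\mu=\int_X\varphi\,\bigl(M^{\mu*}\Psi-\langle M^\mu\log w,\Psi\rangle\bigr)\,d\nu,$$
using $M^\mu w=w\,M^\mu\log w$ (valid as $w,\log w\in\bigcap_s W^{1,s}(X,\mu)$). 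The bracketed function is the candidate $M^{\nu*}_p\Psi$, and it lies in every $L^q(X,\nu)$: the term $M^{\mu*}\Psi$ does by the previous paragraph together with $w\in\bigcap_s L^s$, and $\langle M^\mu\log w,\Psi\rangle$ does because $M^\mu\log w\in\bigcap_s L^s(X,\mu)$ by \eqref{w} while $\|\Psi\|=1/|\nabla_H g|_H\in\bigcap_p L^p(X,\mu)$.

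The main obstacle is the divergence estimate of the second paragraph: specifically, the rigorous chain rule for the singular factor $|\nabla_H g|_H^{-2}$ and the verification that $\Phi\in\mathbb D^{1,q}$ for \emph{every} $q$. This is precisely where the second-order regularity $g\in\mathbb D^{2,p}$ and the nondegeneracy $1/|\nabla_H g|_H\in\bigcap_p L^p$ are both indispensable, the former controlling $L_\mu g$ and the Hessian, the latter the singular denominators. By contrast, the passage from $\mu$ to $\nu$ is routine once the weight assumptions \eqref{w} are invoked, as they are tailored to survive all the H\"older estimates above.
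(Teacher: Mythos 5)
Your proposal is correct, and its two halves relate differently to the paper's proof. For the Gaussian measure $\mu$ you follow essentially the paper's route: the paper also reduces to the fact that $g\in\mathbb D^{2,p}$ implies $\nabla_Hg\in\mathbb D^{1,p}(X,\mu;H)$, concludes (without detail) that $\Phi=\nabla_Hg/|\nabla_Hg|_H^2\in\mathbb D^{1,p}(X,\mu;H)$, and then invokes the same key fact you attribute to Meyer's inequalities, namely that every $\mathbb D^{1,p}$ field has Gaussian divergence in $L^p$; your contribution here is to make explicit the singular chain rule for $|\nabla_Hg|_H^{-2}$ via the regularized approximants and to write out ${\rm div}_\mu\Phi$, details the paper suppresses. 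For the weighted measure $\nu$, however, you take a genuinely different route: the paper cites Proposition 5.5 of \cite{Simone} as a black box (any $\Phi\in\mathbb D^{1,q}(X,\mu;H)$ has ${\rm div}_\nu\Phi\in L^r(X,\nu)$ for $r<q$), whereas you re-derive what is needed in this special case directly from Lemma \ref{Le:divergenza}(i) applied to $w\Psi$, obtaining the explicit formula $M^{\nu*}\Psi=M^{\mu*}\Psi-\langle M^\mu\log w,\Psi\rangle$. The paper's citation is shorter and rests on a general result; your computation is self-contained within the paper's own toolkit and exhibits the $\nu$-adjoint as the $\mu$-adjoint corrected by the logarithmic derivative of the weight, which is more informative.

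One point deserves flagging, though it is not a genuine gap: the identity $M^\mu w=w\,M^\mu\log w$ is not literally covered by the paper's chain rule, Lemma \ref{prodotto-ChainRule}(ii), since $r\mapsto e^r$ is not in $C^1_b(\R)$; it requires a truncation argument resting on the standing assumption \eqref{w} that both $w$ and $\log w$ lie in every $W^{1,s}(X,\mu)$. The paper itself uses this identity implicitly when deriving the integration by parts formula for $\nu$ at the start of Section 6, so your use of it is consistent with the paper's conventions, but in a fully rigorous write-up that step should be justified (or the term $\langle M^\mu w,\Psi\rangle\,d\mu$ simply left unconverted, since $\langle M^\mu w,\Psi\rangle\in\bigcap_{s>1}L^s(X,\mu)$ already suffices to close the estimate after writing $d\nu=w\,d\mu$ on the left-hand side only).
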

\begin{proof} The assumption  $g\in \mathbb D^{2,p}(X, \mu)$ is equivalent to $\nabla_Hg \in \mathbb D^{1,p}(X,\mu;H)$, for every $p>1$. It follows that $\nabla_Hg/ |\nabla_Hg|_H^2 \in \mathbb D^{1,p}(X,\mu;H)$, for every $p>1$.
Every vector field  $\Phi \in \mathbb D^{1,p}(X,\mu;H)$ with $p>1$ has Gaussian divergence div$_\mu \Phi \in L^p(X, \mu)$. By the above considerations, $\Psi = Q^{-1/2}\nabla_Hg/ |\nabla_Hg|_H^2  $ belongs to the domain of $M^{\mu *}_p$, for every $p>1$. On the other hand, $Q^{-1/2}\nabla_Hg/ |\nabla_Hg|_H^2 = Mg/\|Mg\|^2$. Then, $g$ satisfies Hypothesis \ref{h2} for the measure $\mu$. 

Concerning the weighted measure  $\nu$, again we have to compare the divergence operator with our operators $M^{\nu *}_p$. 
The divergence operator is defined in  \cite{Simone} as follows, for vector fields $\Phi\in L^1(X, \nu;X)$. A function $\beta \in L^1(X, \nu)$ is called divergence of $\Phi$ and denoted by div$_{\nu}\Phi$ if 
$$\int_X \langle \nabla f(x), \Phi(x) \rangle\, \nu(dx) = -\int_X f(x)\beta(x) \nu(dx), \quad f\in C^1_b(X). $$
If $\Phi$ has values in the Cameron-Martin space $Q^{1/2}(X)$, the above formula reads as
\begin{equation}
\label{divSimone}
\int_X \langle Q^{1/2}\nabla f(x),  Q^{-1/2}\Phi(x) \rangle \, \nu(dx) = -\int_X f(x)\beta(x) \nu(dx), \quad f\in C^1_b(X). 
\end{equation}
If $\widetilde{\Phi} := Q^{-1/2}\Phi\in L^{p'}(X, \nu;X)$ and $\beta \in L^{p'}(X, \nu)$, \eqref{divSimone} means that  $ \widetilde{\Phi} \in D(M^{\nu *}_p)$,
 and $M^{\nu *}_p \widetilde{\Phi} = -\beta$. Conversely, if a vector field $\widetilde{\Phi}$ belongs to 
$D(M^{\nu *}_p)$, then $\Phi:= Q^{1/2}\widetilde{\Phi}$ has divergence in the sense of  \cite{Simone}, given by div$_{\nu}\Phi = -M^{\nu *}_p\widetilde{\Phi}$. Taking this equivalence into account, we use Proposition 5.5 of  \cite{Simone} , that states that any vector field $\Phi \in \mathbb D^{1,q}(X, \mu;H)$ has divergence div$_{\nu}\Phi$ belonging to $L^r(X, \nu)$ for every $r<q$. In our case, $\Phi = \nabla_Hg/ |\nabla_Hg|_H^2$ belongs to $\mathbb D^{1,q}(X, \mu;H)$ for every $q$, so that div$_{\nu}\Phi$ belongs to $L^q(X, \nu)$ for every $q$. Moreover, by the H\"older inequality $\widetilde{\Phi} = Q^{-1/2}\Phi$ is in $L^{p'}(X, \nu;X)$ for every $p'>1$. 
This implies that 
$\widetilde{\Phi}$ belongs to $D(M^{\nu *}_p)$ for every $p$, namely  
 Hypothesis \ref{h2} holds for the measure $\nu$. \end{proof} 

The weighted surface measure
considered in \cite{Simone} is $w^*\rho$, where $w^*$ is any $C_p$-quasicontinuous version of $w$, in the sense of the Gaussian capacity, and $\rho $ is the Gauss-Hausdorff measure of Feyel and de La Pradelle. Here we identify our surface measures $\rho_r$ with $w^*\rho$ on every surface level $g^{*-1}(r)$.

\begin{Proposition} Under the assumptions of Lemma \ref{Le:reg}, for every $r\in \R$ we have 
\begin{equation}
\label{uguali}
\int_X \varphi T(\|M^{\nu }g\|) d\sigma^g_r = \int_{g^{*-1}(r)} \varphi w^* d\rho , \quad \varphi \in C_b(X). 
\end{equation}
\end{Proposition}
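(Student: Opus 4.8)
The plan is to reduce \eqref{uguali} to the already known identification of surface measures in the purely Gaussian case, exploiting the relation $\nu = w\mu$ at the level of the functions $F_\varphi$. Throughout I write $Mg := Q^{-1/2}\nabla_H g$. Since $g\in\mathbb D^{2,p}(X,\mu)$ for every $p$, while $\mu$ and $\nu$ are mutually absolutely continuous with $w\in L^s(X,\mu)$ for all $s$, any approximation of $g$ in $W^{1,p}(X,\mu)$ by $C^1_b$ functions converges, by H\"older, in $W^{1,q}(X,\nu)$ for every $q$; hence $g\in W^{1,q}(X,\nu)$ for all $q$ with $M^\nu g=M^\mu g=Mg$, and likewise $\|Mg\|=|\nabla_Hg|_H\in W^{1,q}(X,\mu)\cap W^{1,q}(X,\nu)$ for all $q$ (the $\mu$-membership by Lemma \ref{Le:D1p}(iii)). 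In particular both normalized measures $\rho_r^{\nu}:=T_\nu(\|Mg\|)\sigma_r^{g,\nu}$ and $\rho_r^{\mu}:=T_\mu(\|Mg\|)\sigma_r^{g,\mu}$ are well defined, where $T_\nu=T$ is the trace of Theorem \ref{est} for $\nu$ and $T_\mu$ the one for $\mu$.

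The crucial elementary identity is that for every $\psi\in L^1(X,\nu)$ one has $F_\psi^\nu(r)=\int_{\{g\le r\}}\psi\,d\nu=\int_{\{g\le r\}}\psi w\,d\mu=F_{\psi w}^\mu(r)$, so that whenever $\psi\in W^{1,p}(X,\nu)$ and $\psi w\in W^{1,p}(X,\mu)$, Lemma \ref{l4} applied to both measures gives $q_\psi^\nu(r)=q_{\psi w}^\mu(r)$ for every $r$. Now fix $\varphi\in C^1_b(X)$. Iterating Theorem \ref{est}(ii)--(iii) for $\nu$ (legitimate since $\varphi\in C^1_b$ and $\|Mg\|\in W^{1,q}(X,\nu)$) gives $T_\nu(\varphi\|Mg\|)=\varphi\,T_\nu(\|Mg\|)$, whence by \eqref{e15e}
$$\int_X\varphi\,T_\nu(\|Mg\|)\,d\sigma_r^{g,\nu}=q_{\varphi\|Mg\|}^\nu(r).$$
Applying the identity above with $\psi=\varphi\|Mg\|$, then \eqref{e15e} for $\mu$ together with Theorem \ref{est}(ii)--(iii) for $\mu$ (which yields $T_\mu(\varphi w\|Mg\|)=\varphi\,T_\mu(w)\,T_\mu(\|Mg\|)=\varphi\,w^*\,T_\mu(\|Mg\|)$, since $T_\mu\varphi=\varphi$ and $T_\mu w=w^*$ is the Gaussian-capacity quasicontinuous version used in \cite{Simone}), I obtain
$$q_{\varphi\|Mg\|}^\nu(r)=q_{\varphi w\|Mg\|}^\mu(r)=\int_X\varphi\,w^*\,T_\mu(\|Mg\|)\,d\sigma_r^{g,\mu}=\int_X\varphi\,w^*\,d\rho_r^\mu.$$

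It then remains to invoke the Gaussian identification $\rho_r^\mu=\rho|_{g^{*-1}(r)}$, i.e. that for the Gaussian measure $\mu$ the normalized surface measure $T_\mu(\|Mg\|)\sigma_r^{g,\mu}$ coincides with the restriction of the Feyel--de La Pradelle measure $\rho$ to $g^{*-1}(r)$ (\cite{FePr92,DaLuTu14,CeLu14}); this gives $\int_X\varphi\,w^*\,d\rho_r^\mu=\int_{g^{*-1}(r)}\varphi w^*\,d\rho$ and hence \eqref{uguali} for every $\varphi\in C^1_b(X)$. To pass to arbitrary $\varphi\in C_b(X)$ I would argue that, $C^1_b(X)$ being dense in $UC_b(X)$ (\cite{LaLi86}) and both sides being integrals against finite Borel measures, the identity extends first to $\varphi\in UC_b(X)$; since the bounded Lipschitz functions are a measure-determining class on the separable space $X$, the two finite measures $\rho_r^{\nu}$ and $w^*\rho|_{g^{*-1}(r)}$ coincide, yielding \eqref{uguali} for all $\varphi\in C_b(X)$.

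The point requiring most care is the matching of the quasicontinuous versions and of the trace operators across the two measures: one must know that the $\nu$-capacities $C_p^\nu$ and the Gaussian capacities $C_p^\mu$ have the same negligible sets, so that $g^*$ and $w^*$ are unambiguous and $T_\mu w=w^*$ is exactly the version appearing in \cite{Simone}, and that the products under the trace split multiplicatively as in Theorem \ref{est}(iii). Everything else is the bookkeeping of Sobolev exponents ensuring that $\varphi\|Mg\|$ and $\varphi w\|Mg\|$ lie in the Sobolev spaces where Lemma \ref{l4} and \eqref{e15e} apply.
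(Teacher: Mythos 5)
Your proof is correct in its essentials, but it follows a genuinely different route from the paper's. The paper's proof is short and direct: it invokes Theorem 1.3 of \cite{Simone}, the integration by parts formula \eqref{partiSimone} for the weighted measure $\nu$, with the specific vector field $\Phi = \nabla_Hg/|\nabla_Hg|_H$; since $|\mathrm{Tr}\,\Phi|_H \equiv 1$ on $g^{*-1}(r)$, the boundary term collapses to $\int_{g^{*-1}(r)}\varphi\, w^*d\rho$, while the left-hand side is identified with $\int_X \varphi\, T(\|M^\nu g\|)\,d\sigma^g_r$ through \eqref{startingpoint2}. In other words, all the interaction between the weight $w$ and the Feyel--de La Pradelle measure $\rho$ is outsourced to \cite{Simone}. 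You instead bypass \cite{Simone}'s Theorem 1.3 entirely: your change-of-density identity $q_\psi^\nu = q_{\psi w}^\mu$ (which is legitimate, since $(\psi\nu)\circ g^{-1} = (\psi w\mu)\circ g^{-1}$ and both densities are continuous by Lemma \ref{l4}, applicable to both measures thanks to Lemma \ref{Le:reg}) transfers the problem to the Gaussian measure $\mu$, and you then invoke the purely Gaussian identification $T_\mu(\|Mg\|)\,\sigma_r^{g,\mu} = \rho\big|_{g^{*-1}(r)}$ from \cite{CeLu14}, a result the paper itself endorses at the start of Section 5. What your route buys is transparency -- it exhibits the weighted case as a literal change of density on top of the Gaussian case, using only the paper's internal machinery (Theorem \ref{est}(ii)--(iii) for both measures, Proposition \ref{capacity}) plus classical Gaussian literature; what it costs is exactly the bookkeeping you flag yourself: one must check that the paper's capacities for $\mu$ with $R=Q^{1/2}$ are the Gaussian capacities of \cite{Simone} (true, since $W^{1,p}(X,\mu) = \mathbb{D}^{1,p}(X,\mu)$, so quasicontinuous versions agree up to capacity-zero sets, which are $\sigma_r^{g,\mu}$-negligible by Proposition \ref{capacity}), and that the Gaussian identification in \cite{CeLu14} is stated under precisely the hypotheses of Lemma \ref{Le:reg} (it is). Also note your final density step could be replaced by the paper's cleaner observation that finite Borel measures are determined by their Fourier transforms, so agreement on $C^1_b(X)$ already suffices.
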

\begin{proof} 
 Since any finite Borel measure is uniquely determined by its Fourier transform, it is sufficient to show that \eqref{uguali} holds for every $ \varphi \in C^1_b(X)$. 
 Theorem 
1.3 of \cite{Simone} yields, for every $\Phi \in W^{1,p}(X, \nu; H)$, 
\begin{equation}
\label{partiSimone}
\int_{\{g<r\}} {\rm div}_{\nu} (\varphi \Phi) d\nu = \int_{g^{*-1}(r)}\varphi  \langle {\rm Tr}\, \Phi,  {\rm Tr}\bigg( \frac{\nabla_Hg}{|\nabla_Hg|_H}\bigg)\rangle_H w^* d\rho   
\end{equation}
 where Tr is the  trace operator  considered in  \cite{Simone}.
There, traces Tr$\, \varphi$ of Sobolev functions $\varphi$ are defined as in the present paper, with the surface measure $w^*\rho$ replacing $\sigma^g_r$. Traces of vector fields $\Phi\in W^{1,p}(X, \nu; H)$ are defined in a natural way, namely setting $\varphi_n(x) = \langle \Phi(x), h_n\rangle_H$, where $\{h_n:\; n\in \N\}$ is any orthonormal basis of $H$, then Tr$\,\Phi = \sum_{n=1}^{\infty}$ Tr$(\, \varphi_n) h_n$. 

 Taking in particular $\Phi = \nabla_Hg /|\nabla_Hg|_H$, that  belongs to $W^{1,p}(X, \nu;H)$ for every $p>1$, 
 we have $ |$Tr $\Phi |_H^2 \equiv 1$ on $g^{*-1}(r)$, and the right hand side of \eqref{partiSimone}  is equal to 
 $$ \int_{g^{*-1}(r)}\varphi \,  w^* d\rho .   $$
Recalling that   div$_{\nu} (\varphi \Phi) = -M_p^{\nu*}(\varphi  M^{\nu}g/\| M^{\nu}g\|)$,  the left hand side is equal to 
$$- \int_{\{g<r\}}M_p^{\nu*} \bigg(\varphi  \frac{M^{\nu}g}{\| M^{\nu}g\|}\bigg) d\nu , $$
which coincides with $\int_X \varphi T(\|M^{\nu}g\|) d\sigma^g_r $ by \eqref{startingpoint2}. \end{proof}

Since the assumptions on $g$ are the same as in \cite{CeLu14,Simone}, the examples exhibited in these papers fit here.  In particular, functions such as $g(x) = \sum_{k=1}^{\infty} \alpha_k \langle x-x_0, e_k\rangle^2$ with $\alpha_k\geq 0$ for every $k$, not eventually vanishing, and $\sum_{k=1}^{\infty} \alpha_k \mu_k <\infty$, satisfy the assumptions of Lemma \ref{Le:reg}. Therefore, the theory may be applied to spherical surfaces and surfaces of suitable ellipsoids. 
The elements of the dual space $g(x) = \langle x, v\rangle$ obviously satisfy the assumptions of Lemma \ref{Le:reg}, so that the theory may be applied to hyperplanes. The hyperplane $\{x:\; \langle x, v\rangle =r\}$, with $v\in X\setminus \{0\}$,  may be seen as the graph of the function $\varphi : $ span $\{ e_k: k\neq h\}\mapsto \R$, $\varphi (\widetilde{x}) = (r - \sum_{k\neq h} \widetilde{x}_kv_k)/v_h$, if $v_h\neq 0$. A generalization to graphs of other functions is in 
 \cite{CeLu14}. 
 
 When formula \eqref{uguali} holds, Proposition \ref{q_1>0} is not needed. Since $\rho_r$  coincides with the restriction of $w^* \rho $ to $g^{*-1}(r)$, for $\rho_r$ be nontrivial it is sufficient that 
 $w^*(r)\neq 0$ and that $\rho (g^{*-1}(r))\neq 0$.  Under the assumptions of Lemma \ref{Le:reg}, the latter condition holds iff $r\in ( \essinf g, \esssup g)$ by \cite[Lemma 3.9, Prop. 3.15]{DaLuTu14}.

\section{A family of non-Gaussian product measures} 

For any $\mu>0$, $m \geq 1$,  we define the probability measure on $\R$
   \begin{equation}
\label{e1}
 \nu_{m,\mu}(d\xi):= a_{m}\,\mu^{-\frac{1}{2m}}\;e^{-\frac{|\xi|^{2m}}{2m\mu}}\,d\xi,\quad \xi\in\R,
\end{equation}
where $a_{m}$ is a normalization constant such that $ \nu_{m,\mu}(\R)=1$, 
$$
a_m=\frac{(2m)^{1-\frac{1}{2m}}}{2\,\Gamma(\frac1{2m})}.
$$
For every $N>0$ we have
\begin{equation}
\label{e2}
\int_\R | \xi|^{2N}\nu_{m,\mu}(d\xi)=a_m\mu^{-\frac{1}{2m}}\int_\R |\xi| ^{2N}e^{-\frac{|\xi|^{2m}}{2m\mu}}\,d\xi=:b_{m,N}\mu^{N/m}
\end{equation}
where
$$
b_{m,N}= a_m \int_{\R} |\tau|^{N/m} e^{-\frac{|\tau|^{2m}}{2m }}\,d\tau = (2m)^{\frac{N}{m}}\;\frac{\Gamma(\frac{2N+1}{2m})}{\Gamma(\frac1{2m})}.
$$

The measure $\nu_{m,\mu}$ has mean $0$ and covariance $b_{m,1}\mu^{\frac1m}$.
The  following integration by parts formula holds, 
  \begin{equation}
\label{e3}
\int_{\R}  \varphi'(\xi)\,\nu_{m,\mu}(d\xi)=\frac1{\mu}\int_\R |\xi|^{2m-2}\xi \varphi(\xi)\,\nu_{m,\mu}(d\xi),\quad \varphi\in C^1_b(\R).
\end{equation}

Next,  we  define a product measure on $\R^\infty$, the space of all sequence of real numbers endowed with the product topology, associated to the distance
$d(x, y) = \sum_{n=1}^{\infty} 2^{-n}|x_n-y_n|(1+ |x_n-y_n|)^{-1}$. We set 

\begin{equation}
\label{e4}
\nu_m=\prod_{h=1}^\infty \nu_{m,\mu_h},
\end{equation}
where the sequence of positive numbers $(\mu_h)$ is chosen such as
\begin{equation}
\label{e5}
\Lambda_m := \sum_{h=1}^\infty\mu_h^{\frac1m}<\infty.
\end{equation}

 As usual, we denote by  $\ell^2$ the space of all sequence  $(x_h)$ of real numbers such that $\sum_{h=1}^\infty x_h^2<\infty$, 
 endowed with the scalar product
$$
\langle x, y \rangle=\sum_{h=1}^\infty x_hy_h,\quad x,y\in \ell^2.
$$

One checks easily that  $\ell^2$ is a Borel set in $\R^\infty$ and that $\nu$ is concentrated on $\ell^2$ because, in view of \eqref{e2}
$$\int_{\R^{\infty}}  |x|^2_{\ell_2}\,\nu(dx)=\sum_{h=1}^\infty\int_\R x_h^2\nu_{m,\mu_h}(dx_h)=b_{m,1}\sum_{k=1}^\infty\mu_h^{\frac1m}<\infty.$$
So, from now on we may forget $\R^\infty$ and consider only $\ell^2$, identifying it with $X$ through the mapping $x\mapsto (x_h)$, where $x_h = \langle x, e_h\rangle$ and  
$\{ e_h:\, h\in \N\} $ is any fixed  orthonormal basis of $X$.

One check easily that $\nu$ has mean $0$ and that it possesses   finite   moments of any order. The covariance $Q$ of $\nu $ is given by
\begin{equation}
\label{e29h}
Qe_h= b_{m,1}\,\mu_h^{\frac1m}\,e_h,\quad h\in\N. 
\end{equation}
Notice that if $m=1$ then  $\nu_1$ is  the Gaussian measure   $N_{0, Q}$. In this case  $Qe_h=  \mu_h \,e_h$, for all $ h\in\N$,  and
for all  $\varphi\in C^1_b(X)$, $z\in Q^{1/2}(X)$ the   classical integration  formula \eqref{partigaussiana} holds. 
%
%
%

We are going to generalize formula \eqref{partigaussiana}  to any $\nu_m$ with  $m\geq 1$.

\begin{Proposition}
\label{p13}
Let $m\geq 1$,  $\varphi\in C^1_b(X)$, $z\in X$. Then  
\begin{equation}
\label{e10}
\int_X \langle  Q^{\frac1{2}}\nabla \varphi(x),z\rangle \,\nu_{m}(dx)=\int_X W^m_z(x)\,\varphi(x)\,\nu_{m}(dx),
\end{equation}
where
\begin{equation}
\label{e9}
W^m_z(x)  := b^{1/2}_{m,1}\sum_{h=1}^\infty  \mu_h^{\frac1{2m}-1}\, |x_h|^{2m-2}x_hz_h,
\end{equation}
the series being convergent in  $L^p(X,\nu_m)$ for every $p\in(1, +\infty)$.  Consequently, Hypothesis  \ref{h1'} is satisfied, with $R=Q^{1/2}$ and $C_{p,z} = \|W^m_z\|_{L^{p'}(X, \nu)}$.
\end{Proposition}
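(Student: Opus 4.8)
The plan is to reduce the infinite--dimensional identity \eqref{e10} to the one--dimensional integration by parts formula \eqref{e3}, applied coordinate by coordinate, and then to control the resulting series by means of the moment formula \eqref{e2}. Writing $\langle Q^{1/2}\nabla\varphi(x),z\rangle=\sum_{h}b_{m,1}^{1/2}\mu_h^{1/(2m)}z_h\,\partial_h\varphi(x)$, where $\partial_h\varphi$ is the derivative along $e_h$, I would first freeze all coordinates but the $h$-th and apply \eqref{e3} to the function $\xi\mapsto\varphi(\dots,\xi,\dots)$, which is bounded by $\|\varphi\|_\infty$ with derivative bounded by $\|\nabla\varphi\|_\infty$, hence in $C^1_b(\R)$. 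Since $\nu_m$ is a product measure, Fubini then gives, for each $h$,
$$\int_X \partial_h\varphi\,d\nu_m=\frac{1}{\mu_h}\int_X |x_h|^{2m-2}x_h\,\varphi\,d\nu_m,$$
the integrand on the right being dominated by $\|\varphi\|_\infty|x_h|^{2m-1}$, which is $\nu_m$-integrable by \eqref{e2}.

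The second step is to establish $L^p$-convergence of the series defining $W^m_z$. Set $\eta_h(x):=b_{m,1}^{1/2}\mu_h^{1/(2m)-1}|x_h|^{2m-2}x_h\,z_h$. The functions $\eta_h$ are independent, each depending only on $x_h$, and have zero mean, because $|x_h|^{2m-2}x_h$ is odd while $\nu_{m,\mu_h}$ is symmetric. A direct computation using \eqref{e2} with $2N=(2m-1)p$ reveals the crucial cancellation of the $\mu_h$-powers: the exponent $p(1/(2m)-1)+(2m-1)p/(2m)$ equals $0$, so that $\|\eta_h\|_{L^p}=b_{m,1}^{1/2}\,b_{m,(2m-1)p/2}^{1/p}\,|z_h|$, whence $\sum_h\|\eta_h\|_{L^p}^2$ is a finite multiple of $\|z\|^2$. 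For $p\geq 2$ I would invoke the Marcinkiewicz--Zygmund inequality for sums of independent mean--zero variables, followed by the triangle inequality in $L^{p/2}$, to get $\|S_N-S_{N'}\|_{L^p}\leq C_p\big(\sum_{h>N'}\|\eta_h\|_{L^p}^2\big)^{1/2}$ for the partial sums $S_N=\sum_{h\le N}\eta_h$; thus $(S_N)$ is Cauchy and converges in $L^p$ to $W^m_z$. The range $1<p<2$ follows from the case $p=2$, since $\nu_m$ is a probability measure and therefore $L^2\subset L^p$.

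Finally I would assemble the two pieces. Summing the coordinatewise formula weighted by $b_{m,1}^{1/2}\mu_h^{1/(2m)}z_h$ and passing the sum inside the integral is legitimate on the gradient side by dominated convergence, the partial sums $\langle\nabla\varphi,P_N Q^{1/2}z\rangle$ being bounded by the constant $\|\nabla\varphi\|_\infty\|Q^{1/2}z\|$; on the $W^m_z$ side it is legitimate because $S_N\to W^m_z$ in $L^2$ while $\varphi$ is bounded. This yields \eqref{e10}. Hypothesis \ref{h1'} then follows immediately from the H\"older inequality, $\big|\int_X W^m_z\varphi\,d\nu_m\big|\le\|W^m_z\|_{L^{p'}(X,\nu_m)}\|\varphi\|_{L^p(X,\nu_m)}$, with $C_{p,z}=\|W^m_z\|_{L^{p'}(X,\nu_m)}<\infty$. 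The main obstacle is precisely the convergence of the series in every $L^p$: everything hinges on the exact cancellation of the weights $\mu_h^{1/m}$, which is what converts the summability condition \eqref{e5} together with $z\in\ell^2$ into the $L^p$-summability of the terms $\eta_h$.
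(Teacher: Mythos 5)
Your proof is correct, and it follows the paper's overall skeleton — coordinatewise integration by parts derived from \eqref{e3}, $L^p$-convergence of the series \eqref{e9}, then assembly and H\"older — but both key steps are carried out by genuinely different means. For the coordinatewise identity, your direct slicing of the product measure (Fubini applied to $\nu_m=\nu_{m,\mu_h}\otimes\prod_{j\neq h}\nu_{m,\mu_j}$, the slice $\xi\mapsto\varphi(y+\xi e_h)$ lying in $C^1_b(\R)$ for a.e.\ frozen $y$) replaces the paper's approximation of $\varphi$ by the cylindrical functions $x\mapsto\varphi(P_nx)$ in $W^{1,p}(X,\nu_m)$; incidentally, your factor $1/\mu_h$ is the one consistent with \eqref{e3}, while the power of $\mu_h$ displayed in \eqref{partih} is a typographical slip that the paper corrects implicitly in the chain of equalities following it. The real divergence is in the convergence of the series: the paper expands $(s_{l+n}-s_l)^{2p}$ by the multinomial theorem, discards the odd terms by symmetry, and computes the surviving joint moments exactly, the powers of $\mu_{l+j}$ cancelling inside each product; you obtain the same cancellation, $p(\tfrac{1}{2m}-1)+\tfrac{(2m-1)p}{2m}=0$, at the level of each single norm $\|\eta_h\|_{L^p}$, and then conclude from independence and mean zero via the Marcinkiewicz--Zygmund inequality followed by Minkowski in $L^{p/2}$. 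Your route is shorter, treats every real $p\ge 2$ at once (the paper handles even integer exponents, which suffices by inclusion of the $L^q$ spaces over a probability measure), and makes the probabilistic mechanism — independent, symmetric, mean-zero summands — transparent; its price is importing a classical but nontrivial inequality, whereas the paper's combinatorial computation is self-contained, using nothing beyond the moment formula \eqref{e2}. Both arguments ultimately rest on the identical cancellation of the weights $\mu_h$, which reduces everything to $z\in\ell^2$; note that, because of this cancellation, condition \eqref{e5} plays no role in the convergence of the series itself, but only in guaranteeing $\nu_m(\ell^2)=1$.
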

\begin{proof}
As a first step, we prove that for every $\varphi\in C^1_b(X)$, $h\in \N$ we have
\begin{equation}
\label{partih}
\int_X \frac{\partial  \varphi}{\partial e_h}(x)\,  \nu_m(dx) =  \mu_h^{\frac1{2m}-1} \int_X|x_h|^{2m-2}x_h \varphi (x) \,\nu_m(dx). 
\end{equation}
To this aim we approach $\varphi$ by a sequence of cylindrical functions, $\varphi_n(x) : = \varphi(P_n x)$, where $P_n$ is the orthogonal projection 
$$P_n(x) = \sum_{k=1}^n \langle x, e_k \rangle e_k. $$
The sequence $(\varphi_n)$ converges to $\varphi $ in $W^{1,p}(X, \nu_m)$ for every $p\in (1, +\infty)$. Indeed, it converges in $L^{p}(X, \nu_m)$ by the Dominated Convergence Theorem, and  moreover
$$Q^{1/2}\nabla \varphi_n(x)= Q^{1/2}P_n \nabla\varphi(P_nx), \quad n\in \N, $$
so that 
$$\begin{array}{l}
  \| Q^{1/2}\nabla \varphi_n  - Q^{1/2}\nabla \varphi \|_{L^p(X, \nu_m;X)} \leq
\\
\\
\ds \leq \bigg( \int_X \|Q^{1/2 }( P_n\nabla\varphi(P_nx) - P_n \nabla\varphi(x))\|^p\,\nu_m(dx) \bigg)^{1/p}
+ \bigg( \int_X \|Q^{1/2 }( P_n\nabla\varphi( x) -   \nabla\varphi(x))\|^p\,\nu_m(dx) \bigg)^{1/p}
\\
\\
\ds \leq  \|Q^{1/2 }\|_{{\mathcal L}(X)}  \bigg( \int_X \|\nabla\varphi(P_nx) - \nabla\varphi(x)\|^p \bigg)^{1/p}
+ \|Q^{1/2 }\|_{{\mathcal L}(X)}  \bigg( \int_X \| P_n\nabla\varphi( x) -   \nabla\varphi(x)\|^p\,\nu_m(dx) \bigg)^{1/p}
\end{array}$$
where both integrals in the right hand side vanish as $n\to \infty$ by the Dominated Convergence Theorem. 

So, it is enough to prove that \eqref{partih} holds for cylindrical functions of the type $\varphi(x) = \widetilde{\varphi}(x_1, \ldots, x_n)$ for some $ \widetilde{\varphi}\in C^1_b(\R^n)$, $n\in \N$. For such functions, 
$$\int_X \frac{\partial  \varphi}{\partial e_h}(x)\,  \nu_m(dx) = \int_{\R^n} \frac{\partial \widetilde{\varphi}}{\partial \xi_h} \Pi_{k=1}^n \nu_{m, \mu_k}(d\xi)$$
 and 
  \eqref{partih} is an immediate consequence of \eqref{e3}. 

Let now  $\varphi\in C^1_b(X)$, $z\in X$. We have
$$\begin{array}{lll}
\ds \int_X \langle  Q^{\frac1{2}}\nabla \varphi(x),z\rangle \,\nu_{m}(dx) & = & \ds \lim_{n\to \infty} \int_X \sum_{h=1}^n b_{m,1}^{1/2}\mu_h^{1/2m} \frac{\partial  \varphi}{\partial e_h}(x)z_h\, \nu_m(dx) 
\\
\\
& = & \ds
 \lim_{n\to \infty} b_{m,1}^{1/2} \int_X \sum_{h=1}^n  \mu_h^{\frac1{2m}-1}|x_h|^{2m-2}x_h \varphi (x) \,z_h \,\nu_m(dx). 
 \end{array}$$
To conclude the proof it is enough to show that the series 
$$
s_n(x) := \sum_{h=1}^n  \mu_h^{\frac1{2m}-1}\, |x_h^{2m-2}|x_h\,z_h
$$
is convergent in $L^{2p}(X,\nu_m)$ for every $p\in \N$. Recalling that 
$$(a_1+\ldots +a_n)^{2p} =   \sum_{k_1, \ldots, k_n \in \{0, \ldots, 2p\}, \; \sum_{j=1}^n k_j = 2p} \frac{(2p)!}{(k_1)!\cdot \ldots \cdot (k_n)!} a_1^{k_1}\cdot\ldots \cdot a_n^{k_n}$$
for every $l$, $n\in \N$ we get
 $$
\begin{array}{c}
 (s_{l+n}(x) - s_l(x))^{2p} =   
  \\
 \\
 \ds  =  (2p)! \sum_{k_1, \ldots, k_n \in \{0, \ldots, 2p\}, \; \sum_{j=1}^n k_j = 2p} \;  \prod_{j=1}^n \frac{1}{(k_j)! }\mu_{l+j}^{(\frac{1}{2m}-1)k_j}\, |x_{l+j}|^{(2m-2)k_j}
(x_{l+j}z_{l+j})^{k_j}. \end{array}$$
 Integrating with respect to $\nu_m$, the integrals of the terms with some odd $k_j$ vanish. What remains are the integrals of the terms where all the $k_j =2h_j$ are even, and recalling that 
 $$\int_X  x_{l+1}^{2(2m-1)h_1}\cdot \ldots \cdot  x_{l+n}^{2(2m-1)h_n}\nu_m(dx) =  \prod  _{j=1}^n b_{m, (2m-1)h_j}\mu_{l+j}^{(2-1/m)h_j } $$ 
 we get 
$$\begin{array}{l}
\ds \int_X   (s_{l+n}n(x) - s_l(x))^{2p}\nu_m(dx) = 
\\
\\
\ds  \sum_{h_1, \ldots, h_n \in \{0, \ldots, p\}, \; \sum_{j=1}^n h_j = p} \frac{(2p)!}{(2h_1)!\cdot \ldots \cdot (2h_n)!} \int_X \prod_{j=1}^n  \mu_{l+j}^{(\frac{1}{2m}-1)2h_j}\, |x_{l+j}|^{2(2m-1)h_j}
 z_{l+j}^{2h_j} \,d\nu_m
 \\
 \\
 = \ds   \sum_{h_1, \ldots, h_n \in \{0, \ldots, p\}, \; \sum_{j=1}^n h_j = p} \frac{(2p)!}{(2h_1)!\cdot \ldots \cdot (2h_n)!}  \prod  _{j=1}^n b_{m, (2m-1)h_j} z_{l+j}^{2h_j}  
 \\
 \\
\ds  \leq c_{m,p} \bigg(\sum_{j=1}^n  z_{l+j}^2\bigg)^p , 
 \end{array}$$
where $c_{m,p}= (\max \{ b_{m, (2m-1)h}:\; h=0, \ldots, p\})^p$. So, $(s_n)$ is a Cauchy series in $L^{2p}(X,\nu_m)$. \end{proof}

Proposition \ref{p13} yields   the following corollary.

\begin{Corollary}
Let $m\in\N$, and let \eqref{e5} hold. For every  $\varphi,\psi\in C^1_b(X)$, $z\in X$ we have
\begin{equation}
\label{e11}
\begin{array}{lll}
\ds \int_X \langle  Q^{\frac1{2}}\nabla \varphi(x),z\rangle \,\psi(x)\,\nu_{m}(dx)&=&\ds-\int_X \langle  Q^{\frac1{2}}\nabla \psi(x),z\rangle \,\varphi(x)\,\nu_{m}(dx)\\
\\
\ds&& + \ds\int_X W^m_z(x)\,\varphi(x)\,\psi(x)\,\nu_{m}(dx).
\end{array}
\end{equation}
In particular, 
$$\bigg| \int_X \langle  Q^{\frac1{2}}\nabla \varphi(x),z\rangle )\,\nu_{m}(dx)\bigg| \leq \|\varphi\|_{L^p(X, \nu_m)} \|W^m_z\|_{L^{p'}(X, \nu_m)}. $$
\end{Corollary}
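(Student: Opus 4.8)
The plan is to deduce the integration by parts formula \eqref{e11} directly from the one-function identity \eqref{e10} of Proposition \ref{p13}, applied not to a single test function but to the product $\varphi\psi$. The key observation is that $C^1_b(X)$ is an algebra: if $\varphi,\psi\in C^1_b(X)$ then $\varphi\psi\in C^1_b(X)$ as well, since it is bounded and continuously Fr\'echet differentiable with $\nabla(\varphi\psi)=\psi\nabla\varphi+\varphi\nabla\psi$, whose norm is controlled by $\|\psi\|_\infty\|\nabla\varphi\|_\infty+\|\varphi\|_\infty\|\nabla\psi\|_\infty$. Thus $\varphi\psi$ is an admissible test function in \eqref{e10}.

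First I would apply Proposition \ref{p13} with $\varphi$ replaced by $\varphi\psi$, obtaining
\[
\int_X \langle Q^{1/2}\nabla(\varphi\psi)(x),z\rangle\,\nu_m(dx)=\int_X W^m_z(x)\,\varphi(x)\psi(x)\,\nu_m(dx).
\]
Next I would expand the left-hand side using the Leibniz rule and the linearity of $w\mapsto\langle Q^{1/2}w,z\rangle$, namely $\langle Q^{1/2}\nabla(\varphi\psi),z\rangle=\psi\,\langle Q^{1/2}\nabla\varphi,z\rangle+\varphi\,\langle Q^{1/2}\nabla\psi,z\rangle$. Integrating this pointwise identity against $\nu_m$ and substituting into the previous display yields
\[
\int_X \langle Q^{1/2}\nabla\varphi,z\rangle\,\psi\,d\nu_m+\int_X \langle Q^{1/2}\nabla\psi,z\rangle\,\varphi\,d\nu_m=\int_X W^m_z\,\varphi\psi\,d\nu_m,
\]
and moving the second term on the left to the right-hand side is exactly \eqref{e11}. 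All integrals are finite because $\varphi,\psi$ and their gradients are bounded and $W^m_z\in L^p(X,\nu_m)$ for every $p>1$ by Proposition \ref{p13}, so no convergence issue arises.

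For the \emph{in particular} estimate I would take $\psi\equiv 1$ (equivalently, read off identity \eqref{e10} directly) and apply H\"older's inequality to its right-hand side, bounding $\bigl|\int_X W^m_z\varphi\,d\nu_m\bigr|\le \|W^m_z\|_{L^{p'}(X,\nu_m)}\|\varphi\|_{L^p(X,\nu_m)}$.

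There is essentially no obstacle here: the corollary is a formal consequence of Proposition \ref{p13} together with the product rule, and the only point worth verifying explicitly is that the product of two elements of $C^1_b(X)$ again lies in $C^1_b(X)$, which is what legitimates applying \eqref{e10} to $\varphi\psi$. The summability condition \eqref{e5} enters only through Proposition \ref{p13}, where it guarantees the $L^p$-convergence of the series defining $W^m_z$; once that proposition is available, \eqref{e5} plays no further role in the present argument.
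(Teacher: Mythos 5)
Your proof is correct and matches the paper's intended argument: the paper states the corollary as an immediate consequence of Proposition \ref{p13} (applying \eqref{e10} to the product $\varphi\psi\in C^1_b(X)$ and expanding via the Leibniz rule), which is exactly what you do, and the \emph{in particular} estimate follows from \eqref{e10} with H\"older's inequality just as you describe.
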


 



 Consequently, Hypothesis  \ref{h1} is satisfied, and all the results of Section 2 hold. 

  According to the notation of Section 1,  we denote by $M_p$ the closure of   $ Q^{1/2}\nabla :C^1_b(X) \mapsto L^p(X, \nu_m;X)$ in $L^p(X, \nu_m)$ and by $W^{1,p}(X,\nu_m)$ the domain of $M_p$.

We shall show that our surface measures are well defined on hyperplanes and spherical surfaces. For simplicity, we consider only balls centered at the origin.

 \subsubsection{Spherical surfaces}

 Here we take  $g(x)=\|x\|^2$, $x\in X$. Then $g$ is smooth 
 and $\{g <r\}$ is the open ball of center $0$ and radius $\sqrt r$, for $r\geq 0$. In this case the vector field $Mg/\|Mg\|^2$ in Hypothesis \ref{h2} is given by 
 $$
 \Psi (x) =\frac{Q^{1/2}x}{2\|Q^{1/2}x\|^2}. 
 $$

We have to prove that  $ \Psi \in D(M_p^*)$ for every $p>1$. We approach it by the sequence of vector fields 
 $S_n(x) = \sum_{h=1}^n \langle \Psi(x), e_h\rangle e_h$ that are sums of vector fields  of the type considered in Lemma \ref{Rem:divergenza}, with 
 $$f_h (x)=  \langle \Psi(x), e_h\rangle = b_{m,1}^{1/2}\mu_h^{1/2m}x_k /2\|Q^{1/2}x\|^2. $$
 
We use the following lemma. 

\begin{Lemma}
\label{Le:derivate}
\begin{itemize}
\item[(i)] The function $x\mapsto \|Q^{1/2}x\|^{-1}$ belongs to $L^q(X, \nu_m)$ for every $q>1$. 
\item[(ii)] For every $k\in \N$, the function $\varphi_k(x) := x_k/ \|Q^{1/2}x\|^2$ belongs to $W^{1,q}(X, \nu_m)$ for every $q>1$, and 
\begin{equation}
\label{Mvarphik}
M_q\varphi_k = \sum_{h=1}^{\infty} b_{m,1}^{1/2}\mu_h^{1/2m} \bigg( \frac{\delta_{h,k}}{\|Q^{1/2}x\|^2 } - \frac{b_{m,1} \mu_h^{1/m} x_hx_k}{(\|Q^{1/2}x\|^2  )^2}\bigg)e_h . 
\end{equation}
\end{itemize}
\end{Lemma}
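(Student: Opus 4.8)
The plan is to reduce both assertions to the single integrability statement (i), since (ii) will follow once the pointwise gradient of $\varphi_k$ is computed and controlled by negative powers of $\|Q^{1/2}x\|$. Throughout I write $A(x):=\|Q^{1/2}x\|^2 = b_{m,1}\sum_{h\ge 1}\mu_h^{1/m}x_h^2$, using \eqref{e29h}, and I record the elementary bound $A(x)\ge b_{m,1}\mu_h^{1/m}x_h^2$, valid for each $h$, which gives $|x_k|\le (b_{m,1}^{1/2}\mu_k^{1/2m})^{-1}\,A(x)^{1/2}$.

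For part (i) I would argue by a finite--dimensional reduction. For any $n$ one has $A(x)\ge \sum_{h=1}^n (Q^{1/2}x)_h^2$, so that $A(x)^{-q/2}$ is dominated by a function of the first $n$ coordinates only. Since $\nu_m$ is the product measure \eqref{e4}, its image under the projection onto the first $n$ coordinates is $\prod_{h=1}^n\nu_{m,\mu_h}$, and after the linear change of variables $y_h=(Q^{1/2}x)_h$ the corresponding image measure has a density $\varrho_n$ with respect to Lebesgue measure on $\R^n$ that is bounded, because each $\nu_{m,\mu_h}$ in \eqref{e1} has a bounded continuous density. Thus
$$\int_X A(x)^{-q/2}\,\nu_m(dx)\le \int_{\R^n}|y|^{-q}\,\varrho_n(y)\,dy,$$
with $\varrho_n$ a bounded probability density. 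Splitting the right-hand integral over $\{|y|>1\}$ and $\{|y|\le 1\}$, the outer part is finite since $|y|^{-q}\le 1$ there and $\varrho_n$ integrates to one, while the inner part is finite as soon as $n>q$, because $\varrho_n$ is bounded and $\int_{\{|y|\le 1\}}|y|^{-q}\,dy<\infty$ for $n>q$ (in polar coordinates $\int_0^1 r^{n-1-q}\,dr<\infty$). As $n$ is arbitrary this yields $\|Q^{1/2}x\|^{-1}\in L^q(X,\nu_m)$ for every $q>1$.

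For part (ii) I would first note that $\varphi_k$ is of class $C^1$ on $X\setminus\{0\}$ and that the quotient rule gives its pointwise gradient, from which $Q^{1/2}\nabla\varphi_k$ is exactly the right-hand side of \eqref{Mvarphik}. The bound of the previous paragraph yields $|\varphi_k|\le (b_{m,1}^{1/2}\mu_k^{1/2m})^{-1}\|Q^{1/2}x\|^{-1}$, and a direct estimate of \eqref{Mvarphik}, using $\sum_h\mu_h^{3/m}x_h^2\le (\sup_h\mu_h^{2/m})\,A(x)/b_{m,1}$ together with $x_k^2\le A(x)/(b_{m,1}\mu_k^{1/m})$, gives $\|Q^{1/2}\nabla\varphi_k\|\le C_k\,\|Q^{1/2}x\|^{-2}$; here $\sup_h\mu_h^{2/m}<\infty$ because $\mu_h\to 0$ by \eqref{e5}. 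By (i) both bounds belong to $L^q(X,\nu_m)$ for every $q>1$. To pass from the pointwise identity to a Sobolev statement I would regularize, setting $\varphi_k^{(\eps)}(x):=x_k/(A(x)+\eps)$ for $\eps>0$. Each $\varphi_k^{(\eps)}$ is bounded with bounded gradient on all of $X$, so Lemma \ref{C^1crescita} gives $\varphi_k^{(\eps)}\in W^{1,q}(X,\nu_m)$ with $M_q\varphi_k^{(\eps)}=Q^{1/2}\nabla\varphi_k^{(\eps)}$. As $\eps\to 0$ one has $\varphi_k^{(\eps)}\to\varphi_k$ and $Q^{1/2}\nabla\varphi_k^{(\eps)}\to Q^{1/2}\nabla\varphi_k$ pointwise off the origin, and the estimates above (with $A+\eps\ge A$) provide $\eps$-independent dominating functions in $L^q$; hence both convergences hold in $L^q$, and the closedness of $M_q$ (equivalently Lemma \ref{prodotto-ChainRule}(v)) gives $\varphi_k\in W^{1,q}(X,\nu_m)$ together with \eqref{Mvarphik}.

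The main obstacle lies entirely in (i): everything else is a controlled differentiation combined with the integrability of $\|Q^{1/2}x\|^{-1}$ and $\|Q^{1/2}x\|^{-2}$. The delicate point there is that no single coordinate controls the singularity — indeed $\int_\R |x_1|^{-q}\,\nu_{m,\mu_1}(dx_1)=\infty$ for $q\ge 1$ — so one genuinely has to spend $n>q$ coordinates at once, exploiting that in $\R^n$ the singularity $|y|^{-q}$ becomes integrable near the origin. This is precisely the mechanism by which the infinite-dimensional nondegeneracy encoded in \eqref{e5} pays off.
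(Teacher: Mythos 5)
Your proof is correct and follows essentially the same route as the paper: part (i) by reduction to the first $n>q$ coordinates, where the bounded product density makes $|y|^{-q}$ integrable near the origin, and part (ii) by regularizing the denominator (your $\varepsilon$ playing the role of the paper's $1/n$), dominating both $\varphi_k^{(\varepsilon)}$ and its $Q^{1/2}$-gradient by negative powers of $\|Q^{1/2}x\|$ furnished by (i), and invoking closedness of $M_q$. The only cosmetic differences are your change of variables $y_h=(Q^{1/2}x)_h$ in place of the paper's lower bound via $\min\{\mu_h\}$, and your appeal to Lemma \ref{C^1crescita} where the paper simply notes the regularized functions lie in $C^1_b(X)$.
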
 
\begin{proof}
The proof of statement (i) is the same as in the Gaussian case $m=1$; we write it for the reader's convenience. Let $p>1$. Since $1/\|Q^{1/2}x\| \leq 1/\|P_n Q^{1/2}x\|$ for every $n\in \N$ and $x\in X \setminus \{0\}$, it is sufficient to show that $x\mapsto 1/\|P_n Q^{1/2}x\|\in L^p (X, \nu_m)$ for a suitable $n$. For every $x\in X$ we have 
$$\|P_n Q^{1/2}x\|^2 = b_{m,1}\sum_{h=1}^n x_h^2 \mu_h^{1/m} \geq b_{m,1}(\min\{ \mu_h: \; h=1, \ldots , n\} )^{1/m} \sum_{h=1}^n x_h^2, $$
so that 
$$\int_X \frac{1}{\|P_n Q^{1/2}x\|^p}\, d\nu_m \leq \frac{1}{ (b_{m,1}(\min\{ \mu_h: \; h=1, \ldots , n\} )^{1/m})^{p/2}}\int_{\R^n}  \bigg( \sum_{h=1}^n \xi_h^2\bigg)^{-p/2}\Pi_{h=1}^n \nu_{m, \mu_h}(d\xi) $$
which is finite for $n>p$. 

Let us prove statement (ii). We approach  $\varphi_k$   by the functions
$$\varphi_{k,n}(x) = \frac{x_k}{\|Q^{1/2}x\|^2 + 1/n}, $$
that belong to $C^1_b(X)$ and that are easily seen to converge to $\varphi_k$ in $L^q(X, \nu_m)$ for every $q>1$, taking  (i) into account. Moreover we have 
$$\langle Q^{1/2}\nabla \varphi_{k,n}(x) , e_h\rangle = b_{m,1}^{1/2}\mu_h^{1/2m} \bigg( \frac{\delta_{h,k}}{\|Q^{1/2}x\|^2 + 1/n} - \frac{b_{m,1} \mu_h^{1/m} x_hx_k}{(\|Q^{1/2}x\|^2 + 1/n)^2}\bigg), \quad h\in \N. $$
Denoting by $F$ the vector field in the right-hand side of \eqref{Mvarphik} and using again (i), we see that $\lim_{n\to \infty}$ $\|Q^{1/2}\nabla \varphi_{k,n}-F\| =0$ in $L^q(X, \nu_m)$ for every $q>1$. Statement (ii) follows.  
 \end{proof}

\begin{Proposition}
The function $g(x) = \|x\|^2$ satisfies Hypothesis \ref{h2}, and $Mg \in W^{1,q}(X, \nu_m;X)$ for every $q>1$. 
\end{Proposition}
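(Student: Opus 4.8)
The plan is to establish the two claims separately, both resting on Lemma~\ref{Le:derivate} and the moment identity \eqref{e2}. First note that by Lemma~\ref{C^1crescita} we have $g\in\bigcap_{p>1}W^{1,p}(X,\nu_m)$ with $Mg(x)=R\nabla g(x)=2Q^{1/2}x$, so that $\|Mg(x)\|=2\|Q^{1/2}x\|$ and $\Psi=Mg/\|Mg\|^2=Q^{1/2}x/(2\|Q^{1/2}x\|^2)$, as recorded before the statement.

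The membership $Mg\in W^{1,q}(X,\nu_m;X)$ is the easy half. The components $f_i(x)=\langle Mg(x),e_i\rangle=2b_{m,1}^{1/2}\mu_i^{1/2m}x_i$ are linear, hence lie in $W^{1,q}(X,\nu_m)$ for every $q$ with the \emph{constant} gradients $M_qf_i=2b_{m,1}\mu_i^{1/m}e_i$. Therefore $\big(\sum_i\|M_qf_i\|^2\big)^{1/2}=2b_{m,1}\big(\sum_i\mu_i^{2/m}\big)^{1/2}$ is independent of $x$, and it is finite because $\mu_i^{2/m}\le\mu_i^{1/m}$ for all but finitely many $i$ while $\sum_i\mu_i^{1/m}=\Lambda_m<\infty$ by \eqref{e5}; a constant belongs to every $L^q(X,\nu_m)$, which is precisely the definition of $W^{1,q}(X,\nu_m;X)$.

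For Hypothesis~\ref{h2} I would prove $\Psi\in D(M_p^*)$ for every $p>1$ by approximating $\Psi$ with the truncations $S_n=\sum_{h=1}^n f_he_h$, where $f_h=\langle\Psi,e_h\rangle=\tfrac12 b_{m,1}^{1/2}\mu_h^{1/2m}\varphi_h$ and $\varphi_h$ is the function of Lemma~\ref{Le:derivate}(ii). Since $f_h\in W^{1,q}(X,\nu_m)$ for every $q$ by that lemma, Lemma~\ref{Rem:divergenza} shows $f_he_h\in D(M_p^*)$ for every $p>1$, with
\[
M_p^*(f_he_h)=-\langle M_pf_h,e_h\rangle+W^m_{e_h}f_h,
\]
$W^m_{e_h}$ being the function \eqref{e9} (which equals $v_{e_h}$). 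Substituting \eqref{Mvarphik} and summing, $M_p^*S_n=A_n+B_n+C_n$ with
\[
A_n=-\tfrac12 b_{m,1}\frac{\sum_{h\le n}\mu_h^{1/m}}{\|Q^{1/2}x\|^2},\qquad B_n=\tfrac12 b_{m,1}^2\frac{\sum_{h\le n}\mu_h^{2/m}x_h^2}{\|Q^{1/2}x\|^4},\qquad C_n=\tfrac12 b_{m,1}\frac{\sum_{h\le n}\mu_h^{1/m-1}|x_h|^{2m}}{\|Q^{1/2}x\|^2}.
\]
Because $\|\Psi-S_n\|\le\|\Psi\|=1/(2\|Q^{1/2}x\|)\in L^{p'}(X,\nu_m)$ by Lemma~\ref{Le:derivate}(i), dominated convergence gives $S_n\to\Psi$ in $L^{p'}(X,\nu_m;X)$; as $M_p^*$ is closed, it remains to show $A_n$, $B_n$, $C_n$ converge in $L^{p'}(X,\nu_m)$ for every $p'>1$.

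The terms $A_n$ and $B_n$ are routine: the tail of $A_n$ is $\tfrac12 b_{m,1}\big(\sum_{h>n}\mu_h^{1/m}\big)\|Q^{1/2}x\|^{-2}\to0$ in $L^{p'}$ since $\|Q^{1/2}x\|^{-2}\in L^{p'}$ by Lemma~\ref{Le:derivate}(i), and $0\le B_n\uparrow B_\infty\le\tfrac12 b_{m,1}(\sup_h\mu_h^{1/m})\|Q^{1/2}x\|^{-2}\in L^{p'}$ after using $\sum_h\mu_h^{2/m}x_h^2\le(\sup_h\mu_h^{1/m})\|Q^{1/2}x\|^2/b_{m,1}$, so dominated convergence applies. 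The main obstacle is $C_n$, whose denominator is small precisely where the numerator need not be: the naive bound $\|Q^{1/2}x\|^2\ge b_{m,1}\mu_h^{1/m}x_h^2$ discards the averaging over the remaining coordinates and yields the divergent series $\sum_h\mu_h^{-p'/m}$. Instead I would keep numerator and denominator apart and apply H\"older: with $N=\sum_h\mu_h^{1/m-1}|x_h|^{2m}$ and $C_\infty=\tfrac12 b_{m,1}N\,\|Q^{1/2}x\|^{-2}$, choose $1/a+1/b=1/p'$ so that $\int_X C_\infty^{p'}\,d\nu_m\le(\tfrac12 b_{m,1})^{p'}\|N\|_{L^a}^{p'}\big\|\,\|Q^{1/2}x\|^{-2}\,\big\|_{L^b}^{p'}$; the last factor is finite for every $b$ by Lemma~\ref{Le:derivate}(i), and for $\|N\|_{L^a}$ I would use Minkowski's inequality together with \eqref{e2}, which give $\|N\|_{L^a}\le\sum_h\big\|\mu_h^{1/m-1}|x_h|^{2m}\big\|_{L^a}=b_{m,ma}^{1/a}\sum_h\mu_h^{1/m}=b_{m,ma}^{1/a}\Lambda_m<\infty$. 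Thus $C_\infty\in L^{p'}$, and since $0\le C_n\uparrow C_\infty$ with $|C_\infty-C_n|\le C_\infty$, dominated convergence gives $C_n\to C_\infty$ in $L^{p'}$. Closedness of $M_p^*$ then yields $\Psi\in D(M_p^*)$ for every $p>1$, which is Hypothesis~\ref{h2}.
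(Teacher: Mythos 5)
Your proof is correct and follows essentially the same route as the paper: both treat $Mg\in W^{1,q}(X,\nu_m;X)$ via the linear components with constant gradients and summability from \eqref{e5}, and both prove $\Psi\in D(M_p^*)$ by truncating to $S_n=\sum_{h\le n}f_he_h$, invoking Lemma \ref{Le:derivate} and Lemma \ref{Rem:divergenza} for each $f_he_h$, splitting $M_p^*S_n$ into the same three terms, and passing to the limit in $L^{p'}$ using the moment identity \eqref{e2} together with Lemma \ref{Le:derivate}(i). If anything, your write-up is slightly more explicit than the paper's at the closing step (the $L^{p'}$ convergence $S_n\to\Psi$, the closedness of $M_p^*$, and the Minkowski--H\"older handling of the series $\sum_h\mu_h^{1/m-1}|x_h|^{2m}$), which the paper leaves implicit.
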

\begin{proof}
 By Lemma \ref{Le:derivate} and Lemma \ref{Rem:divergenza}, for every $k\in \N$ the vector field $f_k(x) e_k$ belongs to $D(M_p^*)$ for every $p>1$, and by \eqref{formulaM^*_p}   we have
 $$M_p^* (f_ke_k) = - \frac{b_{m,1} \mu_k^{1/m}}{2} \bigg(  \frac{1}{\|Q^{1/2}x\|^2} - 2b_{m,1} \frac{\mu_k^{1/m}x_k^2}{\|Q^{1/2}x\|^4}\bigg) + \frac{b_{m,1}  }{2} \frac{ \mu_k^{1/m -1}|x_k|^{2m}}{\|Q^{1/2}x\|^2}. $$
Therefore, the series $S_n(x) = \sum_{h=1}^n f_k (x) e_k$ converges pointwise to 
\begin{equation}
\label{eq:divPsi0}
 \frac{1}{2} \bigg( -\frac{ \mbox{\rm Tr}\;Q}{\|Q^{1/2}x\|^2}  + \frac{2\|Q^2x\|^2}{\|Q^{1/2}x\|^4}\bigg) + \frac{b_{m,1}}{2\|Q^{1/2}x\|^2} \sum_{k=1}^{\infty}  \mu_k^{1/m -1} |x_k|^{2m}  
 \end{equation}
where the series $ \sum_{k=1}^{\infty}  \mu_k^{1/m -1} x_k^{2m}$ converges in $L^q(X, \nu_m)$ for every $q>1$, since $(\int_X |x_k|^{2mq}\nu_m(dx))^{1/q}$ $ = b_{m,mq}^{1/q}\mu_k $. By Lemma 
 \ref{Le:derivate}(i), $x\mapsto 1/\|Q^{1/2}x\|^2 \in L^s(X, \nu_m)$ for every $s>1$. Therefore, $(S_n)$ converges to the right hand side of \eqref{eq:divPsi0} in $L^p(X, \nu_m)$ for every $p>1$. So, 
 $\Psi\in D(M_p^*)$ and 
 \begin{equation}
\label{eq:divPsi}
M_p^*\Psi=  \frac{1}{2} \bigg( -\frac{ \mbox{\rm Tr}\;Q}{\|Q^{1/2}x\|^2}  + \frac{2\|Q^2x\|^2}{\|Q^{1/2}x\|^4}\bigg) + \frac{b_{m,1}}{2\|Q^{1/2}x\|^2} \sum_{k=1}^{\infty}  \mu_k^{1/m -1} x_k^{2m}  . 
 \end{equation}
Hypothesis \ref{h2} is so fulfilled. Moreover, the vector field $Mg(x) = 2 Q^{1/2}x$ belongs to $W^{1,q}(X, \nu_m;X)$ for every $q>1$, since every component $f_i(x) = 2b_{m,1}^{1/2}\mu_i^{1/2m}x_i$  is in $W^{1,p}(X, \nu_m)$, and $\sum_{i=1}^{\infty} \|M_qf_i(x)\|^2 = 4 b_{m,1}^2\sum_{i=1}^{\infty}  \mu_i ^{1/m}$ is a real constant by assumption \eqref{e5}. Therefore, the assumptions of Proposition \ref{Th:perimeter} are satisfied. 
\end{proof}

For every $r>0$, let $\sigma^g_r$ be the measure given by Theorem \ref{costruzione}. Setting 
$$\rho_r(dx) : = 2 \| Q^{1/2}x\| \sigma^g_r(dx), $$
 formula \eqref{Th:divergenza} reads as
$$\int_{B(0,r)} \langle M_p\varphi, F\rangle \, d\nu_m = \int_{B(0,r)} \varphi M^*_pF\, d\nu_m  + \int_{\partial B(0,r)} T\bigg(\varphi  \langle F(x), \frac{Q^{1/2}x}{\|Q^{1/2}x\|}\rangle  \bigg)\rho_r(dx) ,  $$
 for every $F\in D(M^*_p)$,  $ \varphi\in W^{1,q}(X, \nu_m)$ with $q>p$. In particular, for a constant vector field $F(x)\equiv z$ and $\varphi \in C^1(X)\cap W^{1,q}(X, \nu_m)$ for some $q$ we get 
$$\int_{B(0,r)} \langle Q^{1/2}\nabla \varphi, z\rangle \, d\nu_m = \int_{B(0,r)} \varphi W_z^m\, d\nu_m  + \int_{\partial B(0,r)} \varphi  \langle z, \frac{Q^{1/2}x}{\|Q^{1/2}x\|} \rangle\, \rho_r(dx). $$

  \subsubsection{Hyperplanes}
 
 We take here $g(x)=\langle x,a \rangle$ where $a\in X\setminus \{0\}$ is fixed. Then
 $$\nabla g(x)=a, \quad x\in X,   $$
and the vector field $\Psi(x) = Mg(x)/\|Mg(x)\|^2$ of Hypothesis \ref{h2} is constant, equal to 
 $$\Psi(x) =\frac{Q^{1/2}a}{\|Q^{1/2}a\|^2}, \quad x\in X. $$
By Proposition \ref{p13},  Hypothesis \ref{h1'} is satisfied,  and therefore $\Psi \in D(M_p^*)$ for every $p\in (1, +\infty)$. By 
  \eqref{e9} it follows that
 \begin{equation}
\label{e122}
M_p^*(\Psi)(x)= \frac{v_{Q^{1/2}a}(x)}{\|Q^{1/2}a\|^2} =
\frac{b_{m,1}}{\|Q^{1/2}a\|^2} \sum_{h=1}^\infty \mu_h^{-1+1/m}|x_h|^{2m-2}x_h a_h. 
\end{equation}
Therefore, $g$ satisfies Hypothesis \ref{h2}. Since $Mg$ is constant, it belongs to all $W^{1,q}(X, \nu_m)$ spaces, and also the hypotheses of Proposition \ref{Th:perimeter} are satisfied.The normalized surface measure $\rho_r$  on the hyperplane $\{x: \; \langle x,a \rangle =r\}$  is now 
$$\rho_r(dx) =  \| Q^{1/2}a\| \sigma^g_r(dx), $$
for every $r \in \R$, where $\sigma^g_r$ is the measure given by Theorem \ref{costruzione}.  Formula \eqref{Th:divergenza} reads as
$$\int_{\{x:\, \langle x,a \rangle <r\} } \langle M_p\varphi, F\rangle \, d\nu_m = \int_{\{x:\, \langle x,a \rangle <r\} } \varphi M^*_pF\, d\nu_m  + \int_{\{x:\, \langle x,a \rangle =r\} } T\bigg(\varphi  \langle F(x), \frac{Q^{1/2}a}{\|Q^{1/2}a\| }\rangle \bigg)\, \rho_r(dx) ,  $$
 for every $F\in D(M^*_p)$,  $ \varphi\in W^{1,q}(X, \nu_m)$ with $q>p$. In particular, for a constant vector field $F(x)\equiv z$ and $\varphi \in C^1(X)\cap W^{1,q}(X, \nu_m)$ for some $q$ we get 
$$\int_{\{x:\, \langle x,a \rangle <r\}} \langle Q^{1/2}\nabla \varphi, z\rangle \, d\nu_m = \int_{\{x:\, \langle x,a \rangle <r\}} \varphi W_z^m\, d\nu_m  + \langle z, \frac{Q^{1/2}a}{\|Q^{1/2}a\| }\rangle\int_{\{x:\, \langle x,a \rangle =r\}} \varphi   \rho_r(dx). $$

 \subsection{A Markov semigroup having $\nu$ as an invariant measure}

We are going to construct a transition semigroup $P_t,\,t\ge 0,$ on  $X$ that has $\nu_m$ as an invariant measure.

To this purpose we start by introducing a family of  ordinary stochastic  differential equations,  indexed by $h\in \N$, 
 \begin{equation}
\label{e43}
\left\{\begin{array}{l}
dX_h=-\frac{1}{2\mu_h}  |X_h|^{2m-2}X_hdt+dW_h(t),\\
\\
 X_h(0)=x_h\in \R, 
\end{array}\right. 
\end{equation}
where $(W_h)$ is a sequence of real  mutually independent Brownian motions defined in a probability space $(\Omega,\mathcal F,\P). $

For any $h\in \N$ equation \eqref{e43} has a unique solution $X_h(t,x_h)$.  So, we can introduce a family of  transition semigroups on $\R$, 
  \begin{equation}
\label{e43c}
(P_t^h f)(\xi)=\E[f(X_h(t,\xi))],\quad f\in C_b(\R).
\end{equation}
Moreover, the measure   $ \nu_{m,\mu_h}$ (see \eqref{e1}) is the unique invariant measure of $P_t^h$, namely it is the unique Borel probability measure $\nu$ in $\R$ such that 
 \begin{equation}
\label{e44c}
\int_\R (P_t^h f)(\xi)\nu (d\xi)=\int_\R   f(\xi)\nu (d\xi),\quad \forall f\in C_b(\R), \; t\geq 0.
\end{equation}
See e.g. \cite[Ch. 2]{Ce01}, or else \cite[Ch. 4]{Kh12}. 

Similarly for any $N\in\N$ we introduce a transition semigroup in $\R^N$ setting
 \begin{equation}
\label{e43cc}
(P_t^{(N)} \varphi) (\xi)=\E[\varphi(X_1(t,\xi_1),\cdots,X_N(t,\xi_n))],\quad \varphi\in C_b(\R^N), 
\end{equation}
and $\prod_{h=1}^N\nu_{m,\mu_h}$ is an invariant measure for  $P_t^{(N)} $, so that 
 \begin{equation}
\label{e44cc}
\int_\R (P_t^{(N)} \varphi) (x)\prod_{h=1}^N\nu_{m,\mu_h}(dx)=\int_{\R^N}   \varphi(x)\prod_{h=1}^N\nu_{m,\mu_h}(dx),\quad\forall \varphi\in C_b(\R^N), \, t\geq 0.
\end{equation}

We are now ready to show  the main  result of this section. We fix an orthonormal basis  $\{e_h:\, h\in \N \}$ of $X$, and for every $x\in X$ and $h\in \N$  we set as usual $x_h:= \langle x, e_h\rangle$. 

\begin{Proposition}
\label{p14}
For any $x\in X$ define 
\begin{equation}
\label{e44b}
X(t,x):= \sum_{h=1}^\infty   X_h(t,x_h)e_h ,
\end{equation}
Then $X(t,x),\;t\ge 0$,  is a stochastic process in $X$. 
Moreover, defining the corresponding transition semigroup by 
\begin{equation}
\label{e44bb}
P_t\varphi(x):=\E[\varphi(X(t,x))], \quad  \varphi\in C_b(X),
\end{equation} 
$\nu_m$ is an invariant measure of $P_t $.
\end{Proposition}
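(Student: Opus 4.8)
The plan is to establish two things separately: first, that the series defining $X(t,x)$ converges in $X=\ell^2$ almost surely, so that $X(t,x)$ is a genuine $X$-valued random variable and $P_t\varphi$ is well defined for $\varphi\in C_b(X)$; second, that $\nu_m$ is invariant, by reducing to the finite-dimensional invariance \eqref{e44cc} and passing to the limit.

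For the convergence I would control $u_h(t):=\E[X_h(t,x_h)^2]$ via the dissipativity of the drift. Applying It\^o's formula to $X_h^2$ in \eqref{e43} and taking expectations (after the usual localization and monotone-convergence argument to remove the martingale term) gives $u_h'(t)=-\frac{1}{\mu_h}\E[|X_h(t,x_h)|^{2m}]+1$. Since $m\ge 1$, Jensen's inequality applied to the convex map $v\mapsto v^m$ yields $\E[|X_h|^{2m}]\ge u_h^m$, hence the differential inequality $u_h'\le -\frac{1}{\mu_h}u_h^m+1$. Comparing with the scalar equation $v'=-\frac{1}{\mu_h}v^m+1$, whose unique nonnegative equilibrium is $v=\mu_h^{1/m}$, one checks that $u_h$ can never rise above $\mu_h^{1/m}$ once it is below it, and strictly decreases while above it; since $u_h(0)=x_h^2$, this gives $u_h(t)\le\max\{x_h^2,\mu_h^{1/m}\}\le x_h^2+\mu_h^{1/m}$ for every $t\ge0$. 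Summing over $h$ and using $x\in\ell^2$ together with \eqref{e5} produces $\E\big[\sum_h X_h(t,x_h)^2\big]=\sum_h u_h(t)\le \|x\|^2+\Lambda_m<\infty$, so $\sum_h X_h(t,x_h)^2<\infty$ almost surely and $X(t,x)\in\ell^2=X$ a.s.; measurability in $\omega$ is inherited from that of each component.

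For the invariance I would first treat cylindrical functions. If $\varphi(x)=\widetilde\varphi(x_1,\dots,x_N)$ with $\widetilde\varphi\in C_b(\R^N)$, then, because the equations \eqref{e43} are decoupled, $\varphi(X(t,x))=\widetilde\varphi(X_1(t,x_1),\dots,X_N(t,x_N))$, whence $P_t\varphi(x)=(P_t^{(N)}\widetilde\varphi)(x_1,\dots,x_N)$ with $P_t^{(N)}$ as in \eqref{e43cc}. Since $\nu_m=\prod_h\nu_{m,\mu_h}$, integration of a cylindrical function reduces to the finite-dimensional integral, and \eqref{e44cc} gives
$$\int_X P_t\varphi\,d\nu_m=\int_{\R^N}(P_t^{(N)}\widetilde\varphi)\,d\Big(\prod_{h=1}^N\nu_{m,\mu_h}\Big)=\int_{\R^N}\widetilde\varphi\,d\Big(\prod_{h=1}^N\nu_{m,\mu_h}\Big)=\int_X\varphi\,d\nu_m.$$
To extend this to arbitrary $\varphi\in C_b(X)$, I would set $\varphi_N(x):=\varphi(P_Nx)$, which is cylindrical, satisfies $\|\varphi_N\|_\infty\le\|\varphi\|_\infty$, and converges to $\varphi$ pointwise on $X$. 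The invariance identity holds for each $\varphi_N$; letting $N\to\infty$, the right-hand side converges to $\int_X\varphi\,d\nu_m$ by dominated convergence, while on the left the a.s.\ convergence $P_NX(t,x)\to X(t,x)$ in $X$ (a consequence of $X(t,x)\in\ell^2$) and the continuity of $\varphi$ give $P_t\varphi_N(x)=\E[\varphi(P_NX(t,x))]\to P_t\varphi(x)$, and a further dominated convergence in $\nu_m$ yields $\int_X P_t\varphi_N\,d\nu_m\to\int_X P_t\varphi\,d\nu_m$. Equating the limits proves the invariance for all $\varphi\in C_b(X)$.

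The hard part will be the uniform-in-$h$ a priori bound $u_h(t)\le x_h^2+\mu_h^{1/m}$: this is precisely where the dissipativity of the nonlinear drift and the summability assumption \eqref{e5} combine to make the series summable, and where the Jensen and comparison arguments must be used carefully for general $m\ge1$. Once the process is known to live in $\ell^2$, the two limit passages for arbitrary $\varphi$ are routine.
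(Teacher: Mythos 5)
Your proof is correct and follows essentially the same route as the paper's: It\^o's formula combined with Jensen's inequality and comparison with the scalar ODE $v'=1-\mu_h^{-1}v^m$ to obtain the uniform bound $\E|X_h(t,x_h)|^2\le\max\{x_h^2,\mu_h^{1/m}\}$, which is summable by \eqref{e5}, followed by invariance for cylindrical functions via \eqref{e44cc} and extension to all of $C_b(X)$ by pointwise approximation and dominated convergence. The only cosmetic difference is that you run the phase-line argument directly on the differential inequality for the second moment, whereas the paper isolates the analysis of the comparison ODE as a separate preliminary step; your write-up in fact supplies the limit passages that the paper's final step leaves implicit.
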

\begin{proof}

First we   show that for all $x\in X$ we have
\begin{equation}
\label{e43b}
\sum_{h=1}^\infty\E|X_h(t,x_h)|^2<\infty,\quad\forall\;t>0.
\end{equation}
Then
 we   define the process
 \begin{equation}
\label{e44d}
X(t,x):= \sum_{h=1}^\infty   X_h(t,x_h)e_h ,
\end{equation}
and prove  that $\nu_m$ is invariant for   $P_t $. We proceed in three steps.\medskip

{\em Step 1: a preliminary estimate.}  Let $v$ be the solution of the initial value problem
 \begin{equation}
  \label{e47b}
  \left\{\begin{array}{l}
  v'=1-\alpha v^m,\\
  \\
   v(0)=\xi\ge 0,
  \end{array}\right.
  \end{equation}
  where $\alpha>0$ and $m\in \N$.
We shall show that the following estimate holds, 
\begin{equation}
\label{e48b}
v(t) \le \max \{  \alpha^{-1/m} , v(0)\}, \quad t\geq 0.
\end{equation}

First we notice that the only stationary point  of the equation is $c:= \alpha^{-1/m}$.  
Consequently if
 $v(0)\le c$ we have $v(t)\le c$ and if $v(0)\ge c$ we have $v(t)\ge c$, as long as $v(t)$ exists. Moreover, in the first case $v$ is increasing and in the second case it decreases. This implies that $v$ is bounded and therefore it is defined in $[0, +\infty)$ and $0\leq v(t) \leq c$ in the first case, $c\leq v(t) \leq v(0)$ in the second case. 
  \eqref{e48b} follows. \medskip

{\em Step 2:  Proof of \eqref{e43b}.}\medskip

From It\^o's formula we get
\begin{equation}
\label{e46}
\frac{d}{dt}\;\E|X_h(t,x)|^2=-\mu_h^{-1}\E|X_h(t,x)|^{2m}+1.
\end{equation}
 Since
$$
(\E|X_h(t,x)|^{2})^m\le \E|X_h(t,x)|^{2m},
$$
we get
$$
\frac{d}{dt}\;\E|X_h(t,x)|^2\le -\mu_h^{-1}(\E|X_h(t,x)|^{2})^m+1.
$$
 
A standard comparison result yields
$$
\E|X_h(t,x)|^2\le u_h(t),\quad t\ge 0,
$$
where $u_h$ is the nonnegative solution of the initial value problem
\begin{equation}
\label{e44a}
\left\{\begin{array}{l}
u_h'(t)=1- \mu_h^{-1}u_h^m, \\
\\
u_h(0)=x_h^2. 
\end{array}\right.
\end{equation}
By Step 1  it follows that
\begin{equation}
\label{e51b}
 \E\|X(t,x)\|^2\le \sum_{h=1}^\infty u_h(t)\le\sum_{h=1}^\infty
\max\{\mu_h^{1/m}, x^2_h \}, \quad t\geq 0.
\end{equation}
In particular, for all  $t\ge 0$,   \eqref{e43b} is fulfilled and  we have
\begin{equation}
\label{e52b}
 \E\|X(t,x)\|^2  \le   \Lambda_m + \|x\|^2,
\end{equation}
where $\Lambda_m$ is defined in  \eqref{e5}. So, \eqref{e43b} is proved.\medskip

{\em Step 3: $\nu_m$ is invariant for $P_t$.}

\medskip We have to show that 
\begin{equation}
\label{e55g}
\int_XP_t\varphi(x)\nu_m(dx)=\int_X\varphi(x)\nu_m(dx),\quad\forall\;\varphi\in C_b(X).
\end{equation} 
\medskip

 Equation \eqref{e55g} is  fulfilled if $\varphi$ is cylindrical, by \eqref{e44cc}. The conclusion follows by approximating pointwise any
 function $\varphi\in C_b(X)$ by cylindrical functions and taking into account  \eqref{e43b}. 
 \end{proof}

 
  \section{Some   invariant measures of SPDEs}

Here we   consider the invariant measures  of a stochastic reaction--diffusion equation (Section \ref{Reaction--Diffusion}) and of  the stochastic Burgers equation (Section \ref{Burgers}) in the space $X=L^2(0,1)$. We shall show that  surface integrals can be defined in both  cases on smooth surfaces such as spherical surfaces and hyperplanes of $X $.

Such equations look like
\begin{equation}
\label{e6.1}
\left\{\begin{array}{lll}
dX(t)=[AX(t)+f(X(t))]dt+(-A)^{-\gamma/2}dW(t),\\
\\
X(0)=x.
\end{array}\right.
\end{equation}
with $\gamma \in [0, 1)$. 
In both cases, $A$ is  the realization of the second order derivative in $X=L^2(0,1)$ with
Dirichlet boundary conditions,
$$D(A)=H^2(0,1)\cap H^1_0(0,1), \quad Ax(\xi) =  x''(\xi),  $$
$W$ is  an $X$--valued cylindrical Wiener process, and $f$ is a suitable function: either it is the composition with a polynomial, $f(x)(\xi) = \sum_{k=0}^d a_k (x(\xi))^k$, or $f(x)(\xi) = x(\xi)x'(\xi)$ for $x\in H^1(0,1)$, $\xi\in (0, 1)$.

We   consider the complete orthonormal system in $X$   given by 
$$\{e_h(\xi) := \sqrt{2}\sin(h\pi\xi), \quad  h\in \N\}, $$
  consisting  of eigenfunctions of $A$, 
since  
$$Ae_h=-h^2\pi^2 e_h =: -\alpha_he_h, \quad h\in \N. $$
 We recall that  $D((-A)^{\beta})=H^{2\beta}(0,1) \cap H^1_0(0,1)$ for all 
$\beta \in ( 1/2, 1]$. 

As in the previous section we set 
$$x_h := \langle x, e_h\rangle , \quad x\in X, \; h\in \N, $$
and for every $n\in \N$ we denote by $P_n$ the orthogonal projection on the subspace generated by $e_1$, \ldots , $e_n$,  namely
\begin{equation}
\label{Pn}
 P_nx :=   \sum_{h=1}^n x_h e_h. 
 \end{equation}

Moreover, we consider the space ${\mathcal E}_A (X)$, consisting of the linear span of real and imaginary parts of the functions $x\mapsto e^{i\langle x, y\rangle}$ with $y\in D(A)$.

The following approximation lemma will be used in both examples. 

\begin{Lemma}
\label{Le:Fejer}
Let $h\in \N \cup \{0\}$. For every $\varphi \in C^h_b(\R^n)$  there exists a sequence of trigonometric polynomials $\varphi_k$ (namely, functions in the  linear span of real and imaginary parts of the functions $x\mapsto \exp(i\langle x, a\rangle _{\R^n})$, with $a\in \R^n$) such that for every multi-index $\alpha$ with $0\leq |\alpha| \leq h$ we have
\begin{itemize}
\item[(i)] $\lim_{k\to \infty} D^{\alpha }\varphi_k(x) =  D^{\alpha }\varphi(x)$,  for every $ x\in \R^n$, 
\item[(ii)] $\| D^{\alpha }\varphi_k\|_{\infty} \leq C  \| D^{\alpha }\varphi \|_{\infty}$, 
\end{itemize}
where the constant $C$ depends only on $h$ and $n$. 
\end{Lemma}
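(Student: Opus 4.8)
The plan is to reduce the statement to a torus and then invoke Fejér (Cesàro) summation, whose kernel is a nonnegative approximate identity of unit mass; this is exactly what will produce the sup‑norm bound in (ii) with a constant essentially equal to $1$, whereas ordinary Fourier (Dirichlet) partial sums would spoil it. I would work with the $n$‑fold tensor product $F_N$ of one‑dimensional Fejér kernels on a torus $\mathbb{T}^n_L=(\R/L\Z)^n$: it is nonnegative, has integral $1$, and forms an approximate identity, so the associated means $\sigma_N u:=u*F_N$ satisfy $\|\sigma_N u\|_\infty\le\|u\|_\infty$ and, being convolutions, commute with differentiation, $D^\alpha\sigma_N u=\sigma_N D^\alpha u$. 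Moreover $\sigma_N u$ is a trigonometric polynomial whose frequencies lie in $\tfrac{2\pi}{L}\Z^n\subset\R^n$, hence exactly of the admissible form $\sum_j c_j\exp(i\langle x,a_j\rangle_{\R^n})$.

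The first step is to periodize $\varphi$. Since $\varphi$ lives on all of $\R^n$ and is not periodic, I would localize: fix $\chi\in C^\infty_c(\R^n)$ with $0\le\chi\le1$, $\chi\equiv1$ on $[-1,1]^n$ and $\operatorname{supp}\chi\subset(-2,2)^n$, set $\chi_R(x):=\chi(x/R)$ and $\psi_R:=\varphi\,\chi_R$. Taking $L$ a fixed multiple of $R$ large enough, $\psi_R$ is supported inside one fundamental domain of $\mathbb{T}^n_L$, hence extends to a function of class $C^h$ on $\mathbb{T}^n_L$. By the Leibniz rule $D^\alpha\psi_R=\chi_R\,D^\alpha\varphi+\sum_{\beta<\alpha}\binom{\alpha}{\beta}D^\beta\varphi\,D^{\alpha-\beta}\chi_R$, and because $D^{\alpha-\beta}\chi_R=R^{-|\alpha-\beta|}(D^{\alpha-\beta}\chi)(\cdot/R)$ for $\beta<\alpha$, the correction terms are $O(R^{-1})$ in sup norm; since $0\le\chi_R\le1$ this gives $\|D^\alpha\psi_R\|_\infty\le\|D^\alpha\varphi\|_\infty+C(\varphi,\chi)\,R^{-1}$ for all $|\alpha|\le h$.

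The second step applies Fejér summation on $\mathbb{T}^n_L$. By the tensorized Fejér theorem, for $u\in C^h(\mathbb{T}^n_L)$ the means and all their derivatives up to order $h$ converge uniformly, $D^\alpha\sigma_N u=\sigma_N D^\alpha u\to D^\alpha u$, while the contraction property yields $\|D^\alpha\sigma_N\psi_R\|_\infty\le\|D^\alpha\psi_R\|_\infty$. I would then run a diagonal argument: pick $R_k\to\infty$, and for each $k$ an $N_k$ so large that $\|D^\alpha(\sigma_{N_k}\psi_{R_k}-\psi_{R_k})\|_\infty\le1/k$ for all $|\alpha|\le h$, and set $\varphi_k:=\sigma_{N_k}\psi_{R_k}$. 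Since $\psi_{R_k}\equiv\varphi$ on $[-R_k,R_k]^n$, for each fixed $x$ one gets $D^\alpha\varphi_k(x)\to D^\alpha\varphi(x)$, which is (i); and $\|D^\alpha\varphi_k\|_\infty\le\|D^\alpha\varphi\|_\infty+C\,R_k^{-1}$, a uniform bound of the form required in (ii), with the constant controlled only by $h$ and $n$ through the tensor structure of $F_N$.

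The main obstacle is the tension between the non‑periodicity of $\varphi$ on $\R^n$ and the intrinsically periodic nature of trigonometric polynomials: the cutoff needed to periodize introduces boundary‑layer contributions in the derivatives, and the crux is to show these are of lower order ($O(R^{-1})$) so that they do not degrade the clean per‑order sup bound. The nonnegativity and unit mass of the Fejér kernel are essential here, since they keep the multiplicative constant in (ii) uniform in $\varphi$ and independent of $N$, which would fail for the plain Fourier partial sums.
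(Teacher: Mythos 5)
Your proof is correct and follows, in essence, the same route as the paper's: truncate $\varphi$ to make it periodic, apply tensorized Fej\'er means on a large torus (nonnegativity and unit mass of the kernel give the sup-norm contraction, commutation of convolution with $D^\alpha$ gives convergence of derivatives), and finish with a diagonal choice of the degree $N_k$. The one substantive difference is the cutoff. The paper uses truncations $\theta_k$ with a transition layer of \emph{fixed} width, so that $\|\theta_k\|_{C^h(\R)}$ is bounded uniformly in $k$ but not small; by the Leibniz rule this yields only $\|D^{\alpha}\widetilde\varphi_k\|_\infty \le C\max_{\beta\le\alpha}\|D^{\beta}\varphi\|_\infty$, and the per-order inequality asserted in the paper's proof does not literally follow. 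Your dilated cutoff $\chi(\cdot/R_k)$ makes the Leibniz corrections of size $O(R_k^{-1})$, which is sharper and asymptotically recovers the constant $1$ in (ii). Note, however, that neither construction gives (ii) verbatim, for every $k$, with $C=C(h,n)$: if $D^\alpha\varphi\equiv 0$ for some $1\le|\alpha|\le h$ (e.g.\ $\varphi$ constant, or $\varphi(x)=\sin x_1+\sin x_2$ with $\alpha=(1,1)$), the right-hand side of (ii) vanishes while the cutoff contributions, and hence their Fej\'er means, in general do not. What both arguments actually deliver is (i) together with $\|D^{\alpha}\varphi_k\|_\infty\le C(h,n)\max_{\beta\le\alpha}\|D^{\beta}\varphi\|_\infty$; this weaker form is all that is needed downstream (Lemmas \ref{cilindriche} and \ref{cilindricheBurgers} only use a uniform $C^2$-bound and pointwise convergence for dominated convergence), so your proof serves the paper's purposes exactly as well as the original one.
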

\begin{proof}
The result is classical for functions that are periodic in each variable.  Indeed, if $\varphi $ is $1$-periodic in all the variables we can take the convolutions with the Fejer kernels, 
$$\varphi_N(x) = \int_{[-1/2, 1/2]^n} K_N(y)\varphi(x-y)dy, \quad N\in \N, $$
with 
$$K_N(y) = \prod_{j=1}^n \frac{1}{N+1} \bigg( \frac{\sin \pi(N+1)y_j}{\sin \pi y_j}\bigg)^2, \quad N\in \N. $$
Then, $\|K_N\|_{L^1([-1/2, 1/2]^n)} =1$ for every $N$, and $ D^{\alpha} K_N \ast \varphi = K_N \ast  D^{\alpha}   \varphi $ converges uniformly to $  D^{\alpha}   \varphi $, for $|\alpha| \leq h$. In this case, the constant $C $ is $1$. See e.g. \cite[Exercise 73]{DS58}, or \cite{So84} for detailed proofs. 
 
If $\varphi$ is $T$-periodic in all variables, the convolutions over $[-T/2, T/2]^n$ with the rescaled Fejer kernels $K_{N,T}(y) : = K_N(y/T)/T^n$ make the same job. It is important to notice that the constant $C $ is still $1$. 

If  $\varphi$ is not periodic, we consider a sequence $\widetilde{\varphi}_k$ of functions that are $k$-periodic in all variables, and coincide with $\varphi $ in $[(-k+1)/2, (k-1)/2]^n$. To construct such a sequence, we take $\theta_k \in C^{\infty}(\R)$ such that $\theta_k \equiv 1$ in $[(-k+1)/2, (k-1)/2]$, $\theta_k \equiv 0 $ outside $[-k/2, k/2]$ and $\|\theta_k\|_{C^h(\R)}$ bounded by a constant  independent of $k$, and we define $\widetilde{\varphi}_k$ as the $k$-periodic function in all variables, that coincides with $\varphi(x)\prod_{j_1}^d\theta(x_j)$ in $[-k/2, k/2]^n$. 
So, there are constants $C_{|\alpha|}$, independent of $k$ and $\varphi$, such that $  \| D^{\alpha}\widetilde{\varphi}_k\|_{\infty} \leq C_{|\alpha|} \| D^{\alpha }\varphi \|_{\infty}$, for $0\leq |\alpha| \leq h$. 

By the above procedure, for every $k$ there exists a trigonometric polynomial $\varphi_k$ such that $\| D^{\alpha}(\varphi_k - \widetilde{\varphi}_k )\|_{\infty} \leq 1/k$, and 
$\| D^{\alpha}\varphi_k\|_{\infty} \leq  \| D^{\alpha}\widetilde{\varphi}_k\|_{\infty} \leq C_{|\alpha|} \| D^{\alpha }\varphi \|_{\infty}$, for  $0\leq |\alpha| \leq h$, so that (ii) holds. Since $\widetilde{\varphi}_k$ coincides with $\varphi $ in $[(-k+1)/2, (k-1)/2]^n$, the sequence 
$(\varphi_k)$  satisfies also (i). 
\end{proof}

\subsection{Reaction--Diffusion equations}
\label{Reaction--Diffusion}

Here we consider problem \eqref{e6.1} where $f(x)$ is the composition of  a decreasing polynomial  of odd degree $d$ greater than $1$ with $x$, 
$$f(x)(\xi )= \sum_{k=1}^d a_k (x(\xi))^k, \quad x\in X, \; \xi\in (0, 1). $$
 
 It is well known that for every $x\in X$ equation \eqref{e6.1} has a unique generalized  solution and that the associated transition semigroup $T(t)$ defined by 
 $$(T(t)\varphi )(x) := \E[\varphi (X(t,x)], \quad \varphi\in C_b(X), \; t\geq0, $$ 
 possesses a unique invariant measure $\nu_R$,   see e.g. \cite[Ch. 4]{DP04}.  So, $T(t)$ may be extended to a contraction semigroup $T_p(t)$ to all spaces $L^p(X, \nu_R)$, $p\in [1, +\infty)$.  

For $\gamma=0$ the measure $\nu_R$ is an explicit weighted Gaussian measure, 
 $$\nu_R(dx) = \frac{1}{Z} e^{2U(x)}N_{0, Q}(dx)$$ 
where $N_{0, Q}$ is the Gaussian measure with mean $0$ and covariance  $Q= -A^{-1}/2$, the function $U$ is defined by 
$$U(x)= \left\{ \begin{array}{lll}  & \ds \int_0^{1} f(x )d\xi, & x\in L^{d}(0,1), 
\\
\\
 & -\infty , & x \notin L^{d}(0,1), \end{array}\right. 
$$
and $Z= \int_X e^{2U}dN_{0, Q}$. See \cite[Sect. 5]{DaLu14}. 
Since $U$, $ e^{2U}\in W^{1, p}(X,N_{0, Q})$ for every $p>1$ by  \cite[Sect. 5]{DaLu14}, $\nu_R$ is one of the measures considered in Section 6.  

For $\gamma >0$, $\nu_R$ is not
   explicit.

The following result  is proved in \cite[Th. 1.2]{DaDe15} for $\delta <1-\gamma$, in \cite[Th. 10]{DaDe15a} for $\delta= 1-\gamma$.

 \begin{Theorem}
  \label{t6.1}
Let  $\delta\in (0,1-\gamma]$, $p\in (1,\infty)$. Then  there exists  $C_p>0$ such that  for all $\varphi\in C^1_b(X )$   we have
\begin{equation}
\label{reazdiffh}
\bigg| \int _X \langle \nabla \varphi(x),h\rangle  \, \nu_R (dx)\bigg|  \le C_p\|\varphi\|_{L^p(X,\nu_R)} \,\|h\|_{H^{1+\delta +\gamma}(0,1)},\quad  h\in H^{ 1+\delta+ \gamma}(0,1)\cap H^1_0(0,1).
\end{equation}
\end{Theorem}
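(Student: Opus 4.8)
The estimate \eqref{reazdiffh} is precisely Hypothesis \ref{h1'} for $\nu_R$ with the choice $R=(-A)^{-(1+\delta+\gamma)/2}$: writing $h=(-A)^{-(1+\delta+\gamma)/2}z$ turns its right-hand side into $C_p\|\varphi\|_{L^p(X,\nu_R)}\|z\|$, so what is really at stake is the Fomin differentiability of $\nu_R$ along the directions $h\in H^{1+\delta+\gamma}(0,1)\cap H^1_0(0,1)$, together with an $L^{p'}$ bound on the associated logarithmic derivative. Since $\nu_R$ is not explicit for $\gamma>0$, the plan is to argue through the spectral Galerkin approximations $P_n$. On $X_n=P_nX$ one considers the projected equation with drift $A_nx+P_nf(x)$ (where $A_n=P_nAP_n$) and covariance $\tfrac12(-A_n)^{-\gamma}$; its unique invariant measure has a smooth, strictly positive density, $\nu_n(dx)=\rho_n(x)\,dx$, and for $h\in X_n$ the elementary finite-dimensional integration by parts gives $\int_{X_n}\langle\nabla\varphi,h\rangle\,d\nu_n=\int_{X_n}\varphi\,\beta^n_h\,d\nu_n$ with $\beta^n_h=-\partial_h\log\rho_n$. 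Everything then reduces to the bound $\|\beta^n_h\|_{L^{p'}(X,\nu_n)}\le C_p\|h\|_{H^{1+\delta+\gamma}}$, uniform in $n$, followed by the passage to the limit $n\to\infty$.

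The heart of the matter is this uniform estimate, and it is where the exponent $1+\delta+\gamma$ is forced. I would read $\beta^n_h$ off the stationary Fokker--Planck equation for $\rho_n$; in the reversible case $\gamma=0$ this gives the explicit expression $\beta^n_h=-2\langle A_nx+P_nf(x),h\rangle$, from which the orders transparently add up. The Laplacian in the drift costs one derivative, so the dangerous term has the form $\langle x,(-A_n)^{1+\gamma}h\rangle$, which one controls by splitting the fractional powers between $x$ and $h$; the admissible regularity of $h$ is dictated by how many derivatives of $x$ possess finite moments under $\nu_n$. Here the noise covariance $(-A_n)^{-\gamma}$ fixes the base order $\gamma$, while the parabolic smoothing of $e^{tA}$ along the stationary trajectory supplies the additional gain $\delta$, the value $\delta=1-\gamma$ marking the point at which this gain is exhausted. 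The dissipativity $f'\le 0$ is what keeps these manipulations stable and, through the invariance identity $\int_{X_n}L_n\Phi\,d\nu_n=0$ applied to suitable Lyapunov functions $\Phi$, furnishes the uniform-in-$n$ moment bounds on $\nu_n$ that feed the estimate.

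Finally I would pass to the limit: the uniform bound renders $(\beta^n_h)$ weakly relatively compact in $L^{p'}$, the measures $\nu_n$ converge weakly to $\nu_R$, and the finite-dimensional identity survives the limit on cylindrical $\varphi$, yielding \eqref{reazdiffh} first for cylindrical test functions and then, by density, for all $\varphi\in C^1_b(X)$. The main obstacle is exactly the uniform control of $\beta^n_h$ in the genuinely non-reversible regime $\gamma>0$: there the formula for $\partial_h\log\rho_n$ acquires a divergence-free current correction, and the sharp exponent $1+\delta+\gamma$ can be attained only by exploiting the time-regularisation of the flow, since crude moment bounds alone would restrict one to $\delta>1/2$. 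This delicate analysis is carried out in \cite{DaDe15} for $\delta<1-\gamma$ and, by a separate borderline argument, in \cite{DaDe15a} for $\delta=1-\gamma$, and it is those results that I would ultimately invoke.
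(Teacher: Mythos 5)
Your proposal is correct and ultimately rests on exactly the same ground as the paper: Theorem \ref{t6.1} is not proved in the paper at all, but is simply quoted from \cite[Th. 1.2]{DaDe15} (for $\delta<1-\gamma$) and \cite[Th. 10]{DaDe15a} (for $\delta=1-\gamma$), which are precisely the results you invoke in your final paragraph. Your Galerkin/Fokker--Planck sketch is reasonable motivating context (and consistent with the reversible case $\gamma=0$, where the density is explicit), but since the decisive uniform estimate on the logarithmic derivatives is delegated to the same two citations, your argument and the paper's coincide in substance.
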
 

Setting  $ h = (-A)^{-(1+\delta +\gamma)/2}k$ with $k\in X$, formula \eqref{reazdiffh} may be rewritten as
$$\bigg| \int _X \langle (-A)^{-(1+\delta +\gamma)/2}\nabla \varphi(x),k\rangle  \, \nu_R (dx)\bigg| \le C_p\|\varphi\|_{L^p(X,\nu_R)} \,\|k\| ,\quad  k\in X. $$
Therefore,  fixed any $\beta\in((1+\gamma)/2,1]$, Hypothesis \ref{h1'} is  fulfilled with $R=(-A)^{-\beta}$. With this choice of $R$, Hypothesis \ref{h1} too  is fulfilled, and  we can consider the  operators $M_p$ and their adjoint operators $M_p^*$ described in Sections 1, 2 for $p\in (1, +\infty)$. We do not know whether Hypothesis \ref{h3} holds. 

To define  surface measures on the level sets  of a function $g:X\mapsto \R$, we need that $g$ satisfies Hypothesis \ref{h2}. 
If $g: X\mapsto \R$ is a twice Fr\'echet differentiable function, the vector field $\Psi$ in formula \eqref{psi} is given by 
\begin{equation}
\label{psiReazDiff}
\Psi(x) = \frac{(-A)^{-\beta} \nabla g(x)}{\|(-A)^{-\beta} \nabla g(x)\|^2} = \frac{1}{\|(-A)^{-\beta} \nabla g(x)\|^2} \sum_{h=1}^{\infty} \alpha_h^{-\beta} \partial_{e_h} g (x) e_h, \quad x\in X.
\end{equation}

We present below two examples of smooth functions $g$ that satisfy Hypothesis \ref{h2}, namely such that $g\in W^{1,p}(X, \nu_R)$ and $\Psi\in D(M^*_p)$ for every $p>1$.

\subsubsection{Spherical surfaces}

 Let $g(x):=\|x\|^2$. Theorem 4.20 of \cite{DP04} and the H\"older inequality yield $g\in L^d(X, \nu_R)$ where $d$ is the degree of $f$.   The arguments of \cite{DP04} can be easily carried on to improve this result.

\begin{Lemma}
\label{Le:sommabilita'}
\begin{itemize}
\item[(i)] $\nu_R(L^q(0,1)) = 1$ for every $q\geq 2$;
\item[(ii)] $x\mapsto\|x\|^2 \in L^p(X, \nu_R)$ for every $p>1$.
\end{itemize} 
\end{Lemma}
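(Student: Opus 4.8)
The plan is to transfer to $\nu_R$ uniform-in-time moment bounds for the solution $X(t,x)$ of \eqref{e6.1}, exploiting the dissipativity of the odd-degree decreasing polynomial $f$. The starting point is the splitting $X(t,x)=W_A(t)+v(t,x)$, where $W_A(t)=\int_0^t e^{(t-s)A}(-A)^{-\gamma/2}dW(s)$ is the stochastic convolution and $v=X-W_A$ solves the random equation $v'(t)=Av(t)+f(v(t)+W_A(t))$, $v(0)=x$. The decisive feature of this splitting is that, although the noise covariance $(-A)^{-\gamma}$ is trace class only for $\gamma>1/2$, the stationary covariance of $W_A$ is $Q_\infty=\tfrac12(-A)^{-1-\gamma}$, and $(-A)^sQ_\infty$ has finite trace for every $s<\gamma+\tfrac12$ and every $\gamma\in[0,1)$. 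Hence $W_A(t)$ takes values in $D((-A)^{s/2})$ for such $s$, which embeds into every $L^q(0,1)$ with $q<\infty$ (into $C([0,1])$ when $\gamma>0$); being Gaussian, $W_A(t)$ has finite moments of every order in each $L^q(0,1)$, and $\sup_{t\ge0}\E\,\|W_A(t)\|_{L^q}^{r}<\infty$ for all $q<\infty$, $r\ge1$.

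First I would record, from $a_d<0$ and $d$ odd, the pointwise dissipativity $f(\sigma)\sigma\le -c_1|\sigma|^{d+1}+c_2$, whence $\langle f(w),w\rangle\le -c_1\|w\|_{L^{d+1}}^{d+1}+c_2$. Testing the equation for $v$ with $v$ and writing $\langle f(v+W_A),v\rangle=\langle f(v+W_A),v+W_A\rangle-\langle f(v+W_A),W_A\rangle$, the first term is controlled by the dissipative bound, while the cross term is estimated by H\"older (using $\|f(w)\|_{L^{(d+1)/d}}\le C(1+\|w\|_{L^{d+1}}^{d})$) and Young against $\|W_A\|_{L^{d+1}}$. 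This produces a pathwise inequality of the form $\tfrac{d}{dt}\|v\|^2\le -c\|v\|^{d+1}+C(1+\|W_A\|_{L^{d+1}}^{d+1})$. Raising to the power $p$ and absorbing, through Young's inequality, all contributions of $W_A$ into the superlinear term $-c\|v\|^{d+1}$, I obtain, after taking expectations and using Jensen's inequality together with the uniform bounds on $W_A$,
\[
\tfrac{d}{dt}\,\E\,\|v(t,x)\|^{2p}\le -c_p\big(\E\,\|v(t,x)\|^{2p}\big)^{1+(d-1)/2p}+C_p,
\]
with $C_p$ independent of $t$ and of $x$. Since $(d-1)/2p>0$, a scalar comparison argument shows that every nonnegative solution of $m'\le -c_p m^{1+\eta}+C_p$ obeys $m(t)\le (c_p\eta t)^{-1/\eta}+(C_p/c_p)^{1/(1+\eta)}$, a bound \emph{independent of the initial value} $m(0)$. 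Consequently $\sup_{x\in X}\E\,\|v(t,x)\|^{2p}<\infty$ for every fixed $t>0$ and every $p$, and therefore $\sup_{x\in X}\E\,\|X(t,x)\|^{2p}=:C_p(t)<\infty$.

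To reach $\nu_R$ I would use invariance against the bounded continuous truncations $\psi_N:=\|\cdot\|^{2p}\wedge N$: by $\int_X\E[\psi_N(X(t,x))]\,\nu_R(dx)=\int_X\psi_N\,d\nu_R$ and the uniform-in-$x$ estimate just proved, $\int_X\psi_N\,d\nu_R\le C_p(t)$, and letting $N\to\infty$ gives $\int_X\|x\|^{2p}\nu_R(dx)\le C_p(t)<\infty$ for every $p$, which is statement (ii). For statement (i) the same scheme applies with $\|\cdot\|_{L^2}$ replaced by $\|\cdot\|_{L^q}$: using the smoothing of the analytic semigroup $e^{tA}$ to bootstrap the $L^q$-regularity of $v$ from that of $W_A$ and of $f(v+W_A)$, one obtains $\sup_x\E\,\|v(t,x)\|_{L^q}^{r}<\infty$, hence $\int_X\|x\|_{L^q}^{r}\nu_R(dx)<\infty$ for all $q\ge2$, $r\ge1$, and in particular $\nu_R(L^q(0,1))=1$. (Alternatively, that $\nu_R$ improves the $L^d$-bound of \cite[Th.\ 4.20]{DP04} could be obtained by the very arguments of \cite{DP04}, as announced.)

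The main obstacle is exactly the roughness of the noise for small $\gamma$. For $\gamma\le1/2$ the covariance $(-A)^{-\gamma}$ is not trace class, so the naive route of integrating $L\varphi$ against $\nu_R$ with $\varphi(x)=\|x\|^{2p}$ breaks down, the It\^o correction $\mathrm{Tr}\,[(-A)^{-\gamma}]$ being infinite. The splitting $X=W_A+v$ is what removes this difficulty, since it shifts the singular part into the genuinely smoother Gaussian process $W_A$ (whose time-integrated covariance $Q_\infty$ is trace class for all $\gamma<1$). The remaining delicate point is the cross term $\langle f(v+W_A),W_A\rangle$, where the superlinear dissipation coming from the degree-$(d+1)$ term of $f$ is essential, both to absorb the high powers of $\|W_A\|_{L^{d+1}}$ uniformly in $t$ and to render the final moment bound independent of the initial datum.
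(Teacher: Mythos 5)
Your treatment of statement (ii) is correct, and it is in substance the very argument the paper invokes: the paper's proof consists of citing the proof of Theorem 4.20 of \cite{DP04} (with $2d$ replaced by $2m$), and that proof rests precisely on the ingredients you reconstruct --- the splitting $X=v+W_A$, the pointwise dissipativity of the decreasing odd-degree polynomial, a superlinear differential inequality whose comparison solution is independent of the initial datum, and transfer to $\nu_R$ by invariance applied to bounded truncations.

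The gap is in statement (i). The mechanism you propose there --- bootstrapping the $L^q$-integrability of $v$ via the smoothing of $e^{tA}$ --- does not close for large $d$. Since $f$ has degree $d$, it maps $L^\rho$ into $L^{\rho/d}$, and the smoothing estimate $\|e^{\tau A}\|_{{\mathcal L}(L^p,L^q)}\le C\tau^{-\frac12(\frac1p-\frac1q)}$ requires $p\ge 1$; so the only admissible starting point is the dissipation-integral bound $v\in L^{d+1}_{\rm loc}\bigl((0,\infty);L^{d+1}(0,1)\bigr)$, i.e. $f(v+W_A)\in L^{(d+1)/d}_{\rm loc}\bigl(L^{(d+1)/d}\bigr)$. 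H\"older in time in the Duhamel integral then forces $\frac{d+1}{2}\bigl(\frac{d}{d+1}-\frac1q\bigr)<1$, that is $q<\frac{d+1}{d-2}$. For $d=3$ this gives $q<4$ and the iteration does close, but for every odd $d\ge 5$ one gets $q\le 2$: no gain over the $L^2$ bound you already have, so the bootstrap never starts. Since the lemma must hold for arbitrary odd degree $d>1$, this step fails as stated (your parenthetical fallback ``by the very arguments of \cite{DP04}'' is of course sound, but it is a citation, not a proof).

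The robust repair is to run \emph{your own} $L^2$ scheme directly in $L^{2m}$: test the equation for $v$ with $|v|^{2m-2}v$, use $\langle Av,|v|^{2m-2}v\rangle\le 0$ and the splitting $f(v+W_A)=[f(v+W_A)-f(W_A)]+f(W_A)$, where the monotonicity of $f$ gives the pointwise bound $[f(a)-f(b)](a-b)\le -c|a-b|^{d+1}+C(1+|b|^{d+1})$; after Young's inequality this produces a closed superlinear differential inequality for $\|v\|_{L^{2m}}^{2m}$ (since $\|v\|_{L^{2m}}\le\|v\|_{L^{d+2m-1}}$ on the unit interval), and your comparison and invariance steps then go through verbatim, yielding $\int_X\|x\|^{2m}_{L^{2m}(0,1)}\nu_R(dx)<\infty$ for every $m$, i.e. \eqref{summ}. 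Note finally that this makes your separate $L^2$ argument redundant: once \eqref{summ} is known, statement (ii) follows at once from the H\"older inequality $\|x\|_{L^2(0,1)}\le\|x\|_{L^{2m}(0,1)}$, which is exactly the order in which the paper deduces (i) and then (ii).
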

\begin{proof}
 We follow the proof of Theorem 4.20 of \cite{DP04}, replacing $2d$ by $2m$ with $m\in \N$, and obtaining 
\begin{equation}
\label{summ}
\int_X \|x\|^{2m}_{L^{2m}(0,1)} \nu_R(dx) <\infty, \quad m\in \N. 
\end{equation}
Therefore, the function $x\mapsto \|x\| _{L^{2m}(0,1)}$ has finite values $\nu_R$-a.e., namely 
$\nu_R (L^{2m}(0,1)) =1$ for every $m\in \N$, which is statement (i). By the H\"older inequality, $\|x\|_X \leq   \|x\|_{L^{2m}(0,1)}$ for every $x\in L^{2m}(0,1)$, 
and statement (ii) follows. 
\end{proof}

Lemma \ref{Le:sommabilita'} yields that $g\in L^p(X, \nu)$ for every $p>1$. 

As we mentioned in the Introduction, the verification of Hypothesis \ref{h2} will be reduced to check that $ \|Mg(\cdot)\|^{-1}$ belongs to $L^p(X, \nu)$ for every $p>1$. In this case, $\|Mg(x)\|^{-1} =( 2\|(-A)^{-\beta}x\|)^{-1}$, and the $p$-summability of this function is not obvious.

To begin with, we prove that suitable smooth cylindrical functions belong to the domain of the infinitesimal generator $L$ of $T_2(t)$. This will be used to get estimates through the equality $\int_X L\varphi \,d\nu_R =0$, which holds for every $\varphi \in D(L)$.

 \begin{Lemma}
 \label{cilindriche}
For every $n\in \N$ and $\theta \in  C^2_b(\R^n)$ the function $\varphi (x):= \theta(\langle x, e_1\rangle, \ldots \langle x, e_n\rangle )$ belongs to the domain of the infinitesimal generator $L$ of $T_2(t)$, and 
\begin{equation}
\label{Kolm}
\begin{array}{lll}
L\varphi (x) & = &\ds \frac12\;\mbox{\rm Tr}\;[(-A)^{-\gamma} D^2\varphi  ]+\langle x ,A \nabla \varphi (x) \rangle + \langle f(x),\nabla \varphi(x) \rangle =
\\
\\
&= & \ds\frac12 \sum_{h=1}^n \alpha_h^{-\gamma} \frac{\partial^2 \theta}{\partial \xi_h^2}(x_1, \ldots x_n) - \sum_{h=1}^n \alpha_h x_h   \frac{\partial \theta}{\partial \xi_h}(x_1, \ldots x_n) +
\sum_{h=1}^n \langle f(x),  e_h\rangle \frac{\partial \theta}{\partial \xi_h}(x_1, \ldots x_n). 
\end{array}
\end{equation}
\end{Lemma}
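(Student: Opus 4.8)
The plan is to prove that $\varphi$ lies in $D(L)$ by identifying its image. Denote by $\mathcal{N}\varphi$ the right-hand side of \eqref{Kolm}, i.e.\ the Kolmogorov operator of \eqref{e6.1} applied to $\varphi$. I would first check that $\mathcal{N}\varphi\in L^p(X,\nu_R)$ for every $p>1$, then apply It\^o's formula to $\varphi(X(t,x))$ and take expectations to obtain the integral identity
$$T(t)\varphi(x)-\varphi(x)=\int_0^t T(s)(\mathcal{N}\varphi)(x)\,ds,$$
and finally deduce from the strong continuity of $T_2(t)$ in $L^2(X,\nu_R)$ that $\varphi\in D(L)$ with $L\varphi=\mathcal{N}\varphi$.

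For the integrability step, the first two groups of terms in \eqref{Kolm} are harmless: the sum $\frac12\sum_{h=1}^n\alpha_h^{-\gamma}\partial^2_{\xi_h}\theta$ is bounded since $\theta\in C^2_b(\R^n)$, while $\sum_{h=1}^n\alpha_h x_h\,\partial_{\xi_h}\theta$ is dominated by $C\sum_{h=1}^n|x_h|\le C\|x\|$, which lies in every $L^p(X,\nu_R)$ because $\nu_R$ has finite moments of all orders by Lemma \ref{Le:sommabilita'}(ii). The delicate term is $\langle f(x),\nabla\varphi(x)\rangle$: since $f$ is pointwise a polynomial of degree $d$, one has $\|f(x)\|\le C(1+\|x\|^d_{L^{2d}(0,1)})$, and estimate \eqref{summ} in Lemma \ref{Le:sommabilita'} guarantees that $x\mapsto\|x\|_{L^{2d}(0,1)}$ belongs to all $L^q(X,\nu_R)$. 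Hence $\langle f(\cdot),\nabla\varphi(\cdot)\rangle\in L^p(X,\nu_R)$ for every $p>1$, so that $\mathcal{N}\varphi\in\bigcap_{p>1}L^p(X,\nu_R)$.

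For the It\^o step, I would exploit that $\varphi$ depends only on $x_1,\dots,x_n$. Setting $X_h(t):=\langle X(t,x),e_h\rangle$ and projecting \eqref{e6.1} onto $e_h$ gives $dX_h=(-\alpha_h X_h+\langle f(X),e_h\rangle)\,dt+\alpha_h^{-\gamma/2}\,d\beta_h$, where the $\beta_h$ are mutually independent real Brownian motions and I used $(-A)^{-\gamma/2}e_h=\alpha_h^{-\gamma/2}e_h$. Applying the finite-dimensional It\^o formula to $\theta(X_1,\dots,X_n)$ and using $d\langle X_h,X_k\rangle=\alpha_h^{-\gamma}\delta_{hk}\,dt$ yields $d\varphi(X(t))=\mathcal{N}\varphi(X(t))\,dt+dM_t$, with $M_t=\sum_{h=1}^n\int_0^t\alpha_h^{-\gamma/2}\partial_{\xi_h}\theta\,d\beta_h$ a genuine martingale, being a finite sum of stochastic integrals with bounded integrands. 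Taking expectations and using Fubini, legitimate thanks to the integrability just established, produces the integral identity above pointwise in $x$.

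The last step is routine: the pointwise identity upgrades to one in $L^2(X,\nu_R)$, and since $\mathcal{N}\varphi\in L^2(X,\nu_R)$ and $s\mapsto T_2(s)\mathcal{N}\varphi$ is continuous, $\frac1t\int_0^t T_2(s)\mathcal{N}\varphi\,ds\to\mathcal{N}\varphi$ in $L^2$ as $t\to0^+$, which is exactly the assertion $\varphi\in D(L)$, $L\varphi=\mathcal{N}\varphi$. I expect the main obstacle to be the rigorous justification of It\^o's formula for the \emph{generalized} solution of \eqref{e6.1} in the presence of the non-Lipschitz polynomial drift $f$: one must ensure that $f(X(s))$ is a well-defined $X$-valued process with locally integrable trajectories and that the unbounded operator $A$ produces no spurious boundary contributions. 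This is controlled by the regularity of the solution recorded in \cite{DP04} together with a standard localization/approximation argument (truncating $f$ and regularizing $A$ by its Yosida approximants, then passing to the limit), and it is where essentially all the work lies.
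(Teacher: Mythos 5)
Your route is genuinely different from the paper's: you identify $L\varphi$ probabilistically through a Dynkin-type formula for the process $X(t,x)$, whereas the paper never touches the stochastic equation at this point. The paper's proof invokes \cite[Thm. 4.23]{DP04}, which says that $L$ is the closure in $L^2(X,\nu_R)$ of the Kolmogorov operator $L_0$ defined on the exponential cylindrical functions ${\mathcal E}_A(X)$; it then approximates $\theta$ by trigonometric polynomials $\theta_k$ via Fej\'er kernels (Lemma \ref{Le:Fejer}), observes that $\psi_k(x)=\theta_k(x_1,\dots ,x_n)$ belongs to ${\mathcal E}_A(X)$ because $P_n(X)\subset D(A)$, and concludes by dominated convergence (using Lemma \ref{Le:sommabilita'} to dominate the drift term) that $\psi_k\to\varphi$ and $L_0\psi_k\to$ the right-hand side of \eqref{Kolm} in $L^2(X,\nu_R)$, so that closedness of $L$ finishes the proof. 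Your closing step (strong continuity of $T_2(t)$ plus the integral identity give $\varphi\in D(L)$ and $L\varphi=\mathcal{N}\varphi$) and your integrability estimates for $\mathcal{N}\varphi$ are correct.

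The genuine gap is the step you yourself flag: the rigorous derivation of the identity $T(t)\varphi(x)-\varphi(x)=\E\int_0^t\mathcal{N}\varphi(X(s,x))\,ds$ for the \emph{generalized} solution of \eqref{e6.1}. Saying ``truncate $f$, regularize $A$ by Yosida approximants, pass to the limit'' is not a proof: one must show that the approximating solutions converge in a topology strong enough to pass to the limit in the nonlinear term $\langle f(X^{\eps}(s)),e_h\rangle$ (a polynomial of degree $d$ evaluated on a process only known to lie in $X$, resp.\ $L^{2d}(0,1)$, for a.e.\ time), and that the limit process --- defined only as a limit, not directly as a semimartingale --- actually satisfies the projected scalar SDEs you wrote down. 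This is precisely the analytical content that \cite[Thm. 4.23]{DP04} packages into the statement ``$L$ is the closure of $L_0$ on ${\mathcal E}_A(X)$'', and it is why the paper's argument reduces to a soft approximation; your plan would have to redo that work. There is also a smaller slip: your Fubini justification appeals to integrability of $\mathcal{N}\varphi$ with respect to $\nu_R$, but the pointwise-in-$x$ Dynkin identity needs $\E\int_0^t|\mathcal{N}\varphi(X(s,x))|\,ds<\infty$ for the law of the process started at $x$; by invariance of $\nu_R$ this holds only for $\nu_R$-a.e.\ $x$ (or for every $x$ via moment bounds on the solution, which you did not invoke). That weaker statement still suffices for the identity in $L^2(X,\nu_R)$, but the argument as written claims more than it proves.
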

\begin{proof}
By \cite[Thm. 4.23]{DP04},  $L$ is the closure of the operator $L_0: {\mathcal E}_A(X)\mapsto L^2(X, \nu_R)$ defined by $L_0\psi(x)=  \frac12\;\mbox{\rm Tr}\;[(-A)^{-\gamma}  D^2\psi  ]+\langle x ,A \nabla \psi (x) \rangle + \langle f(x),\nabla \psi(x) \rangle $ for $\psi \in {\mathcal E}_A(X)$. To prove that $\varphi\in D(L)$ it is sufficient to approach $\varphi$ by a sequence $(\psi_k)$ of elements of $ {\mathcal E}_A(X)$ in $L^2(X, \nu_R)$, such  that the sequence $L_0\psi_k$ converges in $L^2(X, \nu_R)$. 

By Lemma \ref{Le:Fejer} there exists a sequence of trigonometric polynomials $(\theta_k)$ such that $\theta_k$ and its first and second order derivatives converge pointwise to $\theta$ and to its first and second order derivatives,respectively,  and moreover $\|\theta_k\|_{C^2_b(\R^n)} \leq C$ independent of $k$. We set 
\begin{equation}
\label{psik}
\psi_k(x) = \theta_k (x_1, \ldots x_n), \quad k\in \N, \; x\in X. 
\end{equation}
Since $P_n(X)\subset D(A)$, $\psi_k\in  {\mathcal E}_A(X)$ for every $k\in \N$. By the Dominated Convergence Theorem, $\psi_k\to \varphi$ in $L^2(X, \nu_R)$ as $k\to \infty$. 
Moreover, $\partial_j \psi_k(x) = \partial \theta_k/\partial \xi_j(x_1, \ldots x_n)$, $\partial_{ij }\psi_k(x) = \partial ^2\theta_k/\partial \xi_{i}\partial \xi_j(x_1, \ldots x_n)$ if $i$, $j\leq n$, 
and $\partial_j \psi_k(x) = \partial_{ij }\psi_k(x)  =0$ otherwise. So, for every $x\in L^{2d}(0,1)$ (and hence, almost everywhere)
$$L_0  \psi_k(x) =  \sum_{h=1}^n \alpha_h^{-\gamma} \frac{\partial^2 \theta_k}{\partial \xi_h^2}(x_1, \ldots x_n) - \sum_{h=1}^n \alpha_h x_h   \frac{\partial \theta_k}{\partial \xi_h}(x_1, \ldots x_n) +
\sum_{h=1}^n \langle f(x),  e_h\rangle \frac{\partial \theta_k}{\partial \xi_h}(x_1, \ldots x_n). $$
Therefore, $L_0  \psi_k$ converges pointwise a.e. to the function in the right-hand side of \eqref{Kolm}. Since $f$ is a polynomial, by Statement (i) of Lemma \ref{Le:sommabilita'}  for every $h=1, \ldots, n$ the function $x\mapsto \langle f(x),  e_h\rangle$ belongs to $L^2(X, \nu_R)$, as well as the function $x\mapsto x_h$. Therefore, $|L_0  \psi_k(x)|\leq g(x)$ where $g$ is an $L^2$ function independent of $k$, and again by the Dominated Convergence Theorem the sequence $(L_0  \psi_k)$ converges to the function in the right-hand side of \eqref{Kolm} in $L^2(X, \nu_R)$. 
 \end{proof}

\begin{Proposition}
\label{stimadenominatore}
If $\gamma \leq 1/2$, $x\mapsto \|(-A)^{-\beta}x\|^{-1} \in L^p(X, \nu_R)$ for every $p>1$. 
\end{Proposition}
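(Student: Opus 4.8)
Set $\rho(x):=\|(-A)^{-\beta}x\|^2=\sum_{h=1}^\infty\alpha_h^{-2\beta}x_h^2$, so that the assertion is exactly $\rho^{-s}\in L^1(X,\nu_R)$ for every $s>0$ (take $s=p/2$). The first move I would make is a finite–dimensional reduction, in the same spirit as Lemma \ref{Le:derivate}(i): since $\rho(x)\ge\rho_n(x):=\sum_{h=1}^n\alpha_h^{-2\beta}x_h^2$, one has $\rho^{-s}\le\rho_n^{-s}$ pointwise, so it suffices to produce, for each fixed $s$, a single $n$ with $\int_X\rho_n^{-s}\,d\nu_R<\infty$. As $\rho_n$ depends only on $x_1,\dots,x_n$, writing $\nu_R^{(n)}$ for the image of $\nu_R$ under $x\mapsto(x_1,\dots,x_n)$, this integral equals $\int_{\R^n}\rho_n(\xi)^{-s}\,d\nu_R^{(n)}(\xi)$, where $\rho_n$ is an equivalent squared Euclidean norm on $\R^n$. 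Since $\rho_n^{-s}$ is locally Lebesgue–integrable on $\R^n$ as soon as $n>2s$, the integral is finite provided $\nu_R^{(n)}$ has a bounded density near the origin, and this density estimate is the substance of the proof.

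To obtain it I would regularize and exploit the generator. For $\epsilon>0$ the function $\varphi_{n,\epsilon}(x):=(\rho_n(x)+\epsilon)^{-s}$ is a bounded $C^2$ cylindrical function of $x_1,\dots,x_n$, hence lies in $D(L)$ by Lemma \ref{cilindriche}, and therefore $\int_X L\varphi_{n,\epsilon}\,d\nu_R=0$. Expanding $L\varphi_{n,\epsilon}$ by \eqref{Kolm}, the dissipative linear part $\langle x,A\nabla\varphi_{n,\epsilon}\rangle$ produces the coercive contribution $2s\,(\rho_n+\epsilon)^{-s-1}\sum_{h\le n}\alpha_h^{1-2\beta}x_h^2\ge 2s\alpha_1(\rho_n+\epsilon)^{-s-1}\rho_n$; the Ornstein--Uhlenbeck trace term contributes the constant $\sum_{h\le n}\alpha_h^{-\gamma-2\beta}=\pi^{-2(\gamma+2\beta)}\sum_{h\le n} h^{-2(\gamma+2\beta)}$, which is finite uniformly in $n$ precisely because $\beta>(1+\gamma)/2$ forces $\gamma+2\beta>1/2$; the genuinely second–order term is nonnegative and may be discarded; and the nonlinear term $2s(\rho_n+\epsilon)^{-s-1}\langle(-A)^{-2\beta}x,f(x)\rangle$ must be absorbed using the dissipativity of the odd decreasing polynomial $f$ together with the positive–moment bounds of Lemma \ref{Le:sommabilita'}. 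An alternative and cleaner anchor for the density bound is the Fomin differentiability of $\nu_R$ along each smooth direction $e_h$: this follows from Theorem \ref{t6.1}, since $e_h\in H^{1+\delta+\gamma}(0,1)\cap H^1_0(0,1)$, and it forces $\nu_R^{(n)}\ll\lambda$ with density $d_n\in\bigcap_{p>1}W^{1,p}(\R^n)\hookrightarrow C_b(\R^n)$, whence $\int_{\R^n}\rho_n^{-s}\,d\nu_R^{(n)}\le\|d_n\|_\infty\int_{|\xi|\le1}\rho_n^{-s}\,d\xi+\sup_{|\xi|\ge1}\rho_n^{-s}<\infty$ for $n>2s$.

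The delicate point, and the one I would weigh most carefully, is that the identity $\int_X L\varphi_{n,\epsilon}\,d\nu_R=0$ taken alone relates a given negative moment to a strictly more singular one, so it cannot by itself manufacture finiteness from nothing; the argument has to be anchored either by the boundedness of the finite–dimensional density $d_n$ (and then closed by monotone convergence as $\epsilon\downarrow0$) or by a base estimate coming from the dissipativity of $f$. Concretely, controlling $\langle(-A)^{-2\beta}x,f(x)\rangle$ against the singular weight $(\rho_n+\epsilon)^{-s-1}$ uniformly in $\epsilon$ is the crux, and this is where the standing hypothesis $\gamma\le 1/2$ is used, guaranteeing enough integrability of $\|x\|_{L^{2m}(0,1)}$ (Lemma \ref{Le:sommabilita'}) and of the relevant series to dominate the nonlinear contribution. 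Once these estimates are in place, letting first $\epsilon\to0$ and then choosing $n>2s$ yields $\int_X\rho^{-s}\,d\nu_R<\infty$ for every $s>0$, that is, $\|(-A)^{-\beta}x\|^{-1}\in L^p(X,\nu_R)$ for all $p>1$.
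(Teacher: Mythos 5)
Your generator route is the paper's route in outline (finite--dimensional reduction, regularized cylindrical functions in $D(L)$ via Lemma \ref{cilindriche}, the identity $\int_X L\varphi_{n,\epsilon}\,d\nu_R=0$, a bound uniform in $\epsilon$), but as you have set it up it fails, for a concrete quantitative reason, and you have misidentified both the coercive term and the role of $\gamma\le 1/2$. In the identity $\int_X L\varphi_{n,\epsilon}\,d\nu_R=0$, the drift contribution $2s(\rho_n+\epsilon)^{-s-1}\sum_{h\le n}\alpha_h^{1-2\beta}x_h^2$ and the genuinely second--order contribution are \emph{both nonnegative}, so they sit on the same side of the identity; if you take the drift as coercive and discard the second--order term, as you propose, you end up bounding $\int_X(\rho_n+\epsilon)^{-s}d\nu_R$ by a multiple of $\int_X(\rho_n+\epsilon)^{-s-1}d\nu_R$ --- exactly the useless ``more singular'' inequality that you flag yourself. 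The term that must serve as coercive is the zeroth--order part of the trace term, $s\big(\sum_{h\le n}\alpha_h^{-\gamma-2\beta}\big)(\rho_n+\epsilon)^{-s-1}$, with everything else estimated from above. But then your weighted choice $\rho_n(x)=\sum_{h\le n}\alpha_h^{-2\beta}x_h^2$ is fatal: the second--order term obeys
\begin{equation*}
2s(s+1)\int_X\frac{\sum_{h\le n}\alpha_h^{-\gamma-4\beta}x_h^2}{(\rho_n+\epsilon)^{s+2}}\,d\nu_R
\;\le\; 2s(s+1)\,\alpha_1^{-\gamma-2\beta}\int_X\frac{d\nu_R}{(\rho_n+\epsilon)^{s+1}},
\end{equation*}
and this pointwise bound is attained on the first coordinate axis, so it cannot be improved; meanwhile your coercive constant satisfies $s\sum_{h\le n}\alpha_h^{-\gamma-2\beta}\le s\,\alpha_1^{-\gamma-2\beta}\,\zeta(2(\gamma+2\beta))<2s\,\alpha_1^{-\gamma-2\beta}$, precisely because $\gamma+2\beta>1$ makes that series converge ($\zeta(2(\gamma+2\beta))<\zeta(2)<2$). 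Since $2s(s+1)\ge 2s$, the coercive constant can never dominate, for any $s>0$ and any $n$. This is exactly why the paper makes the cruder reduction $\|(-A)^{-\beta}x\|^{-2}\le \alpha_n^{2\beta}\|P_nx\|^{-2}$ and applies $L$ to $(\epsilon+\|P_nx\|^2)^{-k}$: there the coercive constant is $k\sum_{j\le n}\alpha_j^{-\gamma}$, and the hypothesis $\gamma\le 1/2$ is used exactly --- and only --- to make this series \emph{divergent}, so that for fixed $k$ one can choose $n$ so large that it beats the $n$--independent second--order constant $2k(k+1)\alpha_1^{-\gamma}$ (see \eqref{e6.16}--\eqref{e6.22}). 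Your attribution of $\gamma\le 1/2$ to the moment bounds of Lemma \ref{Le:sommabilita'} is wrong: those hold for all $\gamma\in[0,1)$ and serve only to absorb the nonlinear term by H\"older and Young.

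Your ``alternative anchor'' via Fomin differentiability is a genuinely different route, not the paper's, and as written it is an assertion rather than a proof. What follows from Theorem \ref{t6.1} is that each marginal $\nu_R^{(n)}$ has logarithmic derivatives in every $L^q(\nu_R^{(n)})$, hence a density $d_n\in W^{1,1}(\R^n)$ with $\nabla d_n=\beta d_n$; the jump to $d_n\in\bigcap_{p>1}W^{1,p}(\R^n)\hookrightarrow C_b(\R^n)$ --- the only thing that makes $\int_{\{|\xi|\le 1\}}\rho_n^{-s}\,d\nu_R^{(n)}$ finite --- requires a Sobolev bootstrap (from $d_n\in L^{n/(n-1)}$ to $W^{1,r}$ for $r<n/(n-1)$, and so on until $r>n$), which you neither carry out nor cite. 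A warning sign is that this route never uses $\gamma\le 1/2$ and would prove the proposition for every $\gamma\in[0,1)$; that may be true, but it means the step needs a full argument, not one line. As it stands, your primary argument fails at the coercivity comparison and your fallback rests on an unproved claim, so the proposal does not establish Proposition \ref{stimadenominatore}.
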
 
\begin{proof}  
Recalling that the sequence $(\alpha_n)$ is increasing, for every $n\in \N$ we estimate
$$
\frac1{\|(-A)^{-\beta}x\|^2}\le \frac1{\|(-A)^{-\beta}P_nx\|^2}\le \frac{\alpha_n^{2\beta}}{\|P_nx\|^2}, 
$$  
where $P_n$ is the projection on span ${e_1, \ldots e_n}$ defined in \eqref{Pn}. So, it is enough to show that for every $k\in \N$ there exists  $n\in \N$ such that 
\begin{equation}
\label{e6.14}
x\mapsto \frac1{\|P_nx\|^2}\in L^{k+1}(X,\nu_R) .
\end{equation}
We shall show that \eqref{e6.14} holds for large enough $n$. 
To this aim we approach $1/\|P_nx\|^2$ by the smooth functions
$$
\varphi_\eps(x):=\frac1{(\eps +\|P_nx\|^2)^k},\quad x\in X, 
$$
that belong to the domain of the infinitesimal generator $L$ of the transition semigroup by  Lemma \ref{cilindriche}. For every $h$, $h_1$, $h_2\in X$ we have
$$
\langle \nabla \varphi_\eps(x),h\rangle=-\frac{2k\langle P_nx,P_nh\rangle }{(\eps+\|P_nx\|^2)^{k+1}}, 
$$
and
$$
 D^2\varphi_\eps(x)(h_1,h_2)=-2k\frac{\langle P_nh_1,P_nh_2\rangle}{(\eps+\|P_nx\|^2)^{k+1}}+4k(k+1)\frac{\langle P_nx,P_nh_1\rangle\,\langle P_nx,P_nh_2\rangle}{(\eps+\|P_nx\|^2)^{k+2}}.
$$
Therefore, 
$$\frac12\,\mbox{\rm Tr}\;[(-A)^{-\gamma} D^2\varphi_\eps(x)]=
- \frac{k\sum_{j=1}^n \alpha_j^{-\gamma}}{(\eps+\|P_nx\|^2)^{k+1}}+2k(k+1)\frac{\|(-A)^{-\gamma/2}P_nx\|^2}{(\eps+\|P_nx\|^2)^{k+2}}. 
$$
So, \eqref{Kolm} yields
\begin{equation}
\label{e6.15}
\begin{array}{lll}
\ds L\varphi_\eps(x)&=&\ds- \frac{k\sum_{j=1}^n \alpha_j^{-\gamma} }{(\eps+\|P_nx\|^2)^{k+1}}+2k(k+1)\frac{\|(-A)^{-\gamma/2}P_nx\|^2}{(\eps+\|P_nx\|^2)^{k+2}}\\
\\
&&\ds  -\frac{2k\langle AP_nx,x \rangle }{(\eps+\|P_nx\|^2)^{k+1}} -\frac{2k\langle P_nx,f(x)\rangle }{(\eps+\|P_nx\|^2)^{k+1}}.
\end{array} 
\end{equation}
Since  $\nu_R$ is invariant we have
$$
\int_X L\varphi_\eps(x)\,\nu_R(dx)=0, 
$$
and therefore
\begin{equation}
\label{e6.16}
\begin{array}{lll}
\ds k\sum_{j=1}^n \alpha_j^{-\gamma}\,\int_H\frac{1}{(\eps+\|P_nx\|^2)^{k+1}}\,\nu_R(dx)& =& \ds 2k\int_H\frac{\|(-A)^{1/2}P_nx\|^2}{(\eps+\|P_nx\|^2)^{k+1}}\,\nu_R(dx)\\
\\
& &\ds 
-2k\int_H \frac{ \langle P_nx,f(x)  \rangle}{(\eps+\|P_nx\|^2)^{k+1}}\,\nu_R(dx)\\
\\
&& \ds +2k(k+1)\int_H\frac{\| (-A)^{-\gamma/2} P_nx\|^2}{(\eps+\|P_nx\|^2)^{k+2}}\,\nu_R(dx)\\
\\
& =& :I_1+I_2+I_3.
\end{array} 
\end{equation}
Let us estimate $I_1$. Since $(\alpha_n)$ is an increasing sequence, 
$$
\|(-A)^{1/2}P_nx\|^2  \leq  \alpha_n \|P_nx\|^2 \leq \alpha_n(\eps+\|P_nx\|^2),$$
and using the H\"older and Young inequalities we obtain that for any $\delta>0$ there is $C_1(\delta,k,n)$ such that
\begin{equation}
\label{e6.18}
\begin{array}{lll}
 |I_1|&\le &\ds 2k\alpha_n\int_X\frac{1}{(\eps+\|P_nx\|^2)^{k}}\,\nu_R(dx)\le 2k\alpha_n\left(\int_X\frac{1}{(\eps+\|P_nx\|^2)^{k+1}}\,\nu_R(dx)\right)^{\frac{k}{k+1}}\\
\\
&\le&\ds C_1(\delta,k,n)+\delta\int_X\frac{1}{(\eps+\|P_nx\|^2)^{k+1}}\,\nu_R(dx).
\end{array} 
\end{equation}
Let us estimate $I_2$. Since
$$
| \langle P_nx,f(x)  \rangle|\le \|P_nf(x)\|\,\|P_nx\|\le
\|f(x)\|\,(\eps+\|P_nx\|^2)^{1/2},
$$
arguing as before and taking \eqref{summ} into account, we see that for any $\delta>0$ there is $C_2(\delta,k)$ such that 
\begin{equation}
\label{e6.19}
\begin{array}{lll}
  |I_2|&\le &\ds 2k \int_X\frac{\|P_nf(x)\|}{(\eps+\|P_nx\|^2)^{k+1/2}}\,\nu_R(dx)\\
  \\
  &\le&\ds 2k \left(\int_X\|f(x)\|^{2k+2}\,\nu_R(dx)   \right)^{\frac1{2k+2}}\;\left(\int_X\frac{1}{(\eps+\|P_nx\|^2)^{k+1}}\,\nu_R(dx)\right)^{\frac{2k+1}{2k+2}}\\
\\
&\le&\ds C_2(\delta,k)+\delta\int_X\frac{1}{(\eps+\|P_nx\|^2)^{k+1}}\,\nu_R(dx).
\end{array} 
\end{equation}
To estimate  $I_3$ we recall once again  that $(\alpha_n)$ is an increasing sequence, so that $\|(-A)^{-\gamma/2}P_nx\|^2 \leq \alpha_1^{-\gamma} 
\|P_nx\|^2 $. Then
\begin{equation}
\label{e6.20}
|I_3|\le \frac{2k(k+1)}{\alpha_1^{\gamma}} \int_X\frac{1}{(\eps+\|Px\|^2)^{k+1}}\,\nu_R(dx).
\end{equation}
Estimates  \eqref{e6.18}--\eqref{e6.20} yield
\begin{equation}
\label{e6.21}
k\sum_{j=1}^n \alpha_j^{-\gamma}\,\int_X\frac{\nu_R(dx)}{(\varepsilon+\|P_nx\|^2)^{k+1}}
\le C_1(\delta,k,n)+C_2(\delta,k)+\bigg(\frac{2k(k+1)}{\alpha_1^{\gamma}}+2\delta \bigg)\int_X \frac{\nu_R(dx)}{(\varepsilon+\|P_nx\|^2)^{k+1}}.
\end{equation}
Since $\gamma \leq 1/2$, the series $s_n= \sum_{j=1}^n \alpha_j^{-\gamma}$ is divergent (recall that $\alpha_j= \pi^2j^2$). 
Now we choose $n$ and $\delta$ such  that
$$
k\sum_{j=1}^n \alpha_j^{-\gamma} > \frac{2k(k+1)}{\alpha_1^{\gamma}}+2\delta 
$$
and we conclude that there exists $M>0$, independent of $\eps$,  such that
\begin{equation}
\label{e6.22}
\int_X\frac{1}{(\eps+\|P_nx\|^2)^{k+1}}\,\nu_R(dx)\le M.
\end{equation}
Letting $\eps\to 0$  concludes the proof.
\end{proof}

With the aid of Lemma \ref{Le:sommabilita'} and Proposition \ref{stimadenominatore} we prove the main result of this section. 

\begin{Proposition}
\label{h3RD}
If $0\leq \gamma \leq 1/2$, the function $g(x) = \|x\|^2$ satisfies Hypothesis \ref{h2}. 
\end{Proposition}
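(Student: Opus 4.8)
The plan is to check the two clauses of Hypothesis \ref{h2} in turn: first that $g\in W^{1,p}(X,\nu_R)$ with $M_pg=R\nabla g=2(-A)^{-\beta}x$, and then, writing $R:=(-A)^{-\beta}$, that the field $\Psi=M_pg/\|M_pg\|^2=Rx/(2\|Rx\|^2)$ lies in $D(M_p^*)$ for every $p>1$. For the first clause I would apply Lemma \ref{C^1crescita} to $g(x)=\|x\|^2$. This is smooth with $\nabla g(x)=2x$, so $\|\nabla g\|=2\|x\|$ is bounded on each sublevel set $g^{-1}(-r,r)=\{\|x\|<\sqrt r\}$; moreover $\|R\nabla g(x)\|=2\|Rx\|\le 2\|R\|_{\mathcal L(X)}\|x\|$, so both $\int_X|g|^p\,d\nu_R$ and $\int_X\|R\nabla g\|^p\,d\nu_R$ are finite for every $p$ by Lemma \ref{Le:sommabilita'}(ii). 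Hence $g\in W^{1,p}(X,\nu_R)$ and $Mg=2Rx$, giving $\Psi=Rx/(2\|Rx\|^2)$.

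For the second (and main) clause I would reduce $\Psi$ to the single linear field $V(x):=Rx$ via the product rule. Writing $\Psi=\varphi\,V$ with $\varphi(x)=1/(2\|Rx\|^2)$, one checks that $\varphi\in W^{1,q}(X,\nu_R)$ for every $q$: approximating $\varphi$ by the bounded $C^1$ functions $1/(2(\|Rx\|^2+\eps))$ exactly as in Lemma \ref{Le:derivate}(ii), and using $\|Rx\|^{-1}\in L^s(X,\nu_R)$ for all $s$ (Proposition \ref{stimadenominatore}) to pass to the limit, yields $M_q\varphi=-R^3x/\|Rx\|^4$ (which is integrable, since $\|R^3x\|\le\|R\|^2\|Rx\|$). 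Granting $V\in D(M_p^*)$ for all $p$, the generalized form of Lemma \ref{Le:divergenza}(i) then gives $\Psi=\varphi V\in D(M_s^*)$ for arbitrarily large $s$, hence for every $s>1$ by the monotonicity $D(M_s^*)\supset D(M_{s'}^*)$ when $s<s'$.

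It remains to prove $V=Rx\in D(M_p^*)$, and this is where the argument genuinely departs from the explicit product-measure case. I would write $V=\sum_h\alpha_h^{-\beta}x_he_h$ and apply Lemma \ref{Rem:divergenza} to each summand $f_he_h$ with $f_h(x)=\alpha_h^{-\beta}\langle x,e_h\rangle$: since $f_h$ is linear with $M_pf_h=\alpha_h^{-2\beta}e_h$, one gets $f_he_h\in D(M_p^*)$ and $M_p^*(f_he_h)=-\alpha_h^{-2\beta}+\alpha_h^{-\beta}v_{e_h}x_h$, where $v_{e_h}$ is the function of \eqref{e2m}. The partial sums $S_n=\sum_{h\le n}f_he_h$ converge to $V$ in every $L^{p'}(X,\nu_R;X)$, and I must show $M_p^*S_n$ converges in $L^{p'}$. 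The constant part contributes $-\sum_h\alpha_h^{-2\beta}=-\mathrm{Tr}(R^2)$, which is finite because $\alpha_h=\pi^2h^2$ and $4\beta>2$. For the series $\sum_h\alpha_h^{-\beta}v_{e_h}x_h$ the key observation is that Theorem \ref{t6.1} yields quantitative decay of $\|v_{e_h}\|_{L^{p'}}$: since $v_z$ satisfies $\int_X\langle\nabla\varphi,(-A)^{-\beta}z\rangle\,d\nu_R=\int_X\varphi\,v_z\,d\nu_R$, choosing $\delta\in(0,2\beta-1-\gamma)$ (possible as $\beta>(1+\gamma)/2$) and applying \eqref{reazdiffh} with $h=(-A)^{-\beta}e_j$ gives $\|v_{e_j}\|_{L^{p'}}\le C_p\|(-A)^{(1+\delta+\gamma)/2-\beta}e_j\|=C_p\alpha_j^{-c}$ with $c=\beta-(1+\delta+\gamma)/2>0$. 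Together with the uniform moment bounds on the $x_j$ and Hölder's inequality, this makes $\sum_h\|\alpha_h^{-\beta}v_{e_h}x_h\|_{L^{p'}}\le C\sum_h\alpha_h^{-\beta-c}<\infty$, so the series converges absolutely. By closedness of the adjoint $M_p^*$, the limit $V$ belongs to $D(M_p^*)$.

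The main obstacle is precisely the control of $\sum_h v_{e_h}x_h$ without an explicit formula for $v_{e_h}$ (in the product-measure example one has $v_z=W_z^m$ at hand); extracting the decay rate $\alpha_h^{-c}$ from Theorem \ref{t6.1} by exploiting the freedom in $\delta$ is the crucial point. Note that the hypothesis $\gamma\le 1/2$ enters only through Proposition \ref{stimadenominatore}, whose proof needs the divergence of $\sum_j\alpha_j^{-\gamma}$, whereas the admissibility of $\beta\in((1+\gamma)/2,1]$ and the decay estimate above only require $\gamma<1$.
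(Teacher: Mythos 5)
Your proof is correct and rests on the same pillars as the paper's: Lemma \ref{C^1crescita} with Lemma \ref{Le:sommabilita'} for the first clause, Proposition \ref{stimadenominatore} (the only point where $\gamma\le 1/2$ enters, as you note), Lemma \ref{Rem:divergenza}, and closedness of $M_p^*$. The organization of the main step is genuinely different, however. The paper never factors $\Psi$: it decomposes the full nonlinear field componentwise, $\Psi_n=\sum_{h\le n}\psi_he_h$ with $\psi_h(x)=\alpha_h^{-\beta}x_h/(2\|(-A)^{-\beta}x\|^2)$, proves each $\psi_h\in W^{1,p}(X,\nu_R)$ by the same $\eps$-regularization you apply to the scalar factor, applies Lemma \ref{Rem:divergenza} to each $\psi_he_h$, and lets $n\to\infty$ in the resulting three-sum expression for $M_p^*\Psi_n$. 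Your factorization $\Psi=\varphi V$, treating the linear field $V(x)=Rx$ first and attaching the singular scalar $\varphi=1/(2\|Rx\|^2)$ afterwards via the product rule of Lemma \ref{Le:divergenza}(i), buys a cleaner split: Lemma \ref{Rem:divergenza} is only invoked for linear $f_h$, and each hypothesis is used in exactly one place. You also supply something the paper only asserts: the uniform bound on $\|v_{e_h}\|_{L^{p'}(X,\nu_R)}$, which you extract from Theorem \ref{t6.1} as a decay $\|v_{e_h}\|_{L^{p'}}\le C_p\alpha_h^{-c}$ by factoring $(-A)^{-\beta}e_h=\alpha_h^{(1+\delta+\gamma)/2-\beta}(-A)^{-(1+\delta+\gamma)/2}e_h$ with $\delta<2\beta-1-\gamma$. (Uniform boundedness alone would suffice, since $\sum_h\alpha_h^{-\beta}<\infty$ already because $2\beta>1$; the decay rate is a bonus.)

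One slip needs fixing: your monotonicity claim is backwards. By Section 3 of the paper, $D(M_p^*)\supset D(M_q^*)$ when $p>q$, so membership in the adjoint domains propagates from small exponents up to large ones; knowing $\Psi\in D(M_s^*)$ for arbitrarily large $s$ would not, by itself, give every $s>1$. Fortunately your construction does not need any monotonicity: given a target $s_0>1$, choose $q$ so large that $1/s_0+1/q<1$ and set $1/p=1/s_0+1/q$; then $1<p<q$, $V\in D(M_p^*)$, $\varphi\in W^{1,q}(X,\nu_R)$, and Lemma \ref{Le:divergenza}(i) gives $\Psi=\varphi V\in D(M_{s_0}^*)$ exactly (equivalently, take $p$ near $1$ and $q$ large to reach $s$ near $1$, and then the correct monotonicity covers all larger exponents). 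With this correction the argument is complete.
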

\begin{proof}
$g$ is smooth and it belongs to $L^p(X, \nu_R)$ for every $p>1$ by Lemma  \ref{Le:sommabilita'}(ii). Moreover, $(-A)^{-\beta}\nabla g(x) = 2 (-A)^{-\beta}x$ for every $x\in X$, and since 
$\|(-A)^{-\beta}x\| \leq \pi^{-2\beta}\|x\|$, still by Lemma  \ref{Le:sommabilita'}(ii) $x\mapsto \|(-A)^{-\beta}\nabla g(x)\|$ $\in$ $ L^p(X, \nu_R)$ for every $p>1$. By Lemma \ref{C^1crescita}, $g\in W^{1,p}(X, \nu_R)$ for every $p>1$. 

It remains to prove that the vector field $\Psi  $ in formula \eqref{psi} belongs to $D(M^*_p)$ for every $p>1$. It is given by (see \eqref{psiReazDiff}) 
\begin{equation}
\label{Psi}
\Psi(x) = \frac{(-A)^{-\beta}x}{2\|(-A)^{-\beta}x\|^2}  
= \lim_{n\to \infty} \Psi_n(x), 
\end{equation}
where  
$$ \Psi_n(x) = \sum_{h=1}^n \frac{\alpha_h^{-\beta}x_h}{2\|(-A)^{-\beta}x\|^2}e_h =:  \sum_{h=1}^n \psi_h(x) e_h .  $$
Approaching every $\psi_h$ by the $C^1_b$  functions $\psi_{h,\eps}(x) := \alpha_h^{-\beta} x_h/ 2(\|(-A)^{-\beta}x\|^2 +\eps)$ and using  Proposition \ref{stimadenominatore},  one sees easily that $\psi_h$ belongs to $W^{1,p}(X, \nu_R)$ for every $p>1$, and 
 $$\langle M \psi_h(x),  e_h \rangle  = \alpha_h^{-2\beta}/2\|(-A)^{-\beta}x\|^2 - \alpha_h^{-4\beta} x_h^2/\|(-A)^{-\beta}x\|^2. $$
 By Lemma \ref{Rem:divergenza},  
$\Psi_n$ 
 belongs to $D(M^*_p)$ for every $p>1$, and  by \eqref{formulaM^*_p} we get  
\begin{equation}
\label{M^*Psi_n}
M^*_p \Psi_n(x) = - \sum_{h=1}^n \frac{\alpha_h^{-2\beta} }{2\|(-A)^{-\beta}x\|^2}  + \sum_{h=1}^n \frac{\alpha_h^{-4\beta} x_h^2}{2\|(-A)^{-\beta}x\|^4}
+ \sum_{h=1}^n \frac{\alpha_h^{-\beta} x_hv_{e_h}(x)}{2\|(-A)^{-\beta}x\|^2}. 
\end{equation}
Recalling that the series $\sum_{h=1}^n \alpha_h^{-2\beta}$ converges, that $\|v_{e_h}\|_{L^{p'}(X, \nu_R)}$ is bounded by a constant independent of $h$, and using Lemma  \ref{Le:sommabilita'} and Proposition \ref{stimadenominatore}, we easily deduce that $(M^*_p \Psi_n)$ converges in $L^{p'}(X, \nu_R)$, for every $p>1$. Therefore, $\Psi\in D(M^*_p)$ for every $p>1$, and Hypothesis \ref{h2} is satisfied. \end{proof}

 \subsubsection{Hyperplanes} 
 
Let $g(x)=\langle x,b\rangle$,  where $b\in X\setminus \{0\}$. $g$ is smooth, it has constant gradient,  and $Mg(x) = (-A)^{-\beta}b$ (constant).  Therefore, $g$ belongs to all spaces $W^{1,p}(X, \nu_R)$, for $p>1$, by Lemmas \ref{Le:sommabilita'} and \ref{C^1crescita}. The vector field $\Psi = Mg/\|Mg\|^2$ is also constant and it is given by 
 $$\Psi(x) =\frac{(-A)^{-\beta}b}{\|(-A)^{-\beta}b\|^2}, \quad x\in X.$$
 Since Hypothesis \ref{h1'} is satisfied,  $ \Psi$ belongs to $D(M_p^*)$ for every $p>1$, and we have
 $$M_p^*  \Psi =  \frac{v_{(-A)^{-\beta}b}}{\|(-A)^{-\beta}b\|^2}. $$
  Therefore, $g$ satisfies Hypothesis  \ref{h2}.


\subsection{Burgers equation}
\label{Burgers}

We are concerned with the   stochastic differential  equation \eqref{e6.1}
with $\gamma=0$ and 
$$f(x)= 2x x',\quad x\in H^1_0(0,1), $$
where the prime denotes the weak derivative.   It is well known that for every $x\in X$,  equation \eqref{e6.1} has a unique mild solution,  and that the associated transition semigroup $P(t)$, defined on $C_b(X)$ by 
$$P(t) \varphi (x):= \E [\varphi (X(t, x)], \quad t\geq 0, \; x\in X, $$
possesses a unique invariant measure $\nu_B$, see e.g. \cite[Thm. 14.4.4]{DPZ97}. So, $P(t)$ may be extended to a strongly continuous semigroup $P_p(t)$ in $L^p(X, \nu_B)$, for every $p\geq 1$. 
  
A result analogous to Theorem \ref{t6.1} was proved in \cite[Theorem 2]{DaDe16}.

\begin{Theorem}
\label{t7.1}
For any $p>1$, $\delta>0$, there exists $C>0$ such that for all  $\varphi\in  C^1_b(X)$ and all $h\in H^{1+\delta}(0,1)\cap H^1_0(0,1)$, we have
\begin{equation}
\label{e2d}
\left|\int _X \langle D\varphi(x),h\rangle \, \nu_B(dx)\right| \le C\|\varphi\|_{L^p(X,\nu_B)} \,\|h\|_{H^{1+\delta}(0,1)}. 
\end{equation}
\end {Theorem}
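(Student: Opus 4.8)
The plan is to reduce the estimate to a Bismut--Elworthy--Li formula for the \emph{forward} linearized flow, using the invariance of $\nu_B$ to turn the fixed direction $h$ into a time integral of gradients of $P_t\varphi$. Set $G(t):=\int_X\langle\nabla P_t\varphi(x),h\rangle\,\nu_B(dx)$, so that the quantity to be bounded is $G(0)$. Writing $w=P_t\varphi$ and differentiating the Kolmogorov equation $\partial_t w=Lw$ in the direction $h$, a direct computation using $f(x)=2xx'$, hence $f'(x)h=2\partial_\xi(xh)$, gives $\partial_h(Lw)=L(\partial_h w)+\langle (A+f'(x))h,\nabla w\rangle$. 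Integrating against $\nu_B$ and using that $\int_X L\psi\,d\nu_B=0$ (the invariance identity, as in the proof of Proposition \ref{stimadenominatore}) kills the first term, so
\[
G'(t)=\int_X\langle\nabla P_t\varphi(x),(A+f'(x))h\rangle\,\nu_B(dx).
\]
Since $P_t\varphi\to\int_X\varphi\,d\nu_B$ as $t\to\infty$, we have $G(t)\to0$, and therefore
\[
\int_X\langle\nabla\varphi,h\rangle\,d\nu_B=G(0)=-\int_0^\infty\!\int_X\langle\nabla P_t\varphi(x),(A+f'(x))h\rangle\,\nu_B(dx)\,dt.
\]

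The effect of this identity is that the gradient of $\varphi$ has been replaced by the gradient of the \emph{smoothed} function $P_t\varphi$, at the price of testing against the rough direction $k(x):=(A+f'(x))h$, which for $h\in H^{1+\delta}(0,1)$ lies only in $H^{\delta-1}(0,1)$. Now I would apply, for each fixed $x$ and each fixed direction $k$, the Bismut--Elworthy--Li formula (available here because $\gamma=0$, so the noise is cylindrical and the diffusion coefficient is the identity),
\[
\langle\nabla P_t\varphi(x),k\rangle=\frac1t\,\E\Big[\varphi(X(t,x))\int_0^t\langle\eta^k_x(r),dW(r)\rangle\Big],
\]
where $\eta^k_x(r)$ solves the forward linearized equation $\partial_r\eta=A\eta+2\partial_\xi(X(r,x)\,\eta)$, $\eta(0)=k$. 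Hölder's inequality in $\omega$ together with the Burkholder--Davis--Gundy inequality yields the pointwise bound
\[
|\langle\nabla P_t\varphi(x),k\rangle|\le\frac{c_{p'}}{t}\big(P_t|\varphi|^p(x)\big)^{1/p}\Big(\E\int_0^t\|\eta^k_x(r)\|^2\,dr\Big)^{1/2}.
\]

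The heart of the matter, and the step I expect to be the main obstacle, is the smoothing estimate for the linearized flow on the rough datum $k=(A+f'(x))h$: one must show that $\E\int_0^t\|\eta^k_x(r)\|^2dr$ is integrable in $t$ near $0$ and controlled, after integration in $x$ against $\nu_B$, by $\|h\|_{H^{1+\delta}(0,1)}^2$. The mechanism is the parabolic gain of derivatives of $A=\partial_\xi^2$: formally $\|e^{rA}Ah\|\lesssim r^{-(1-\delta)/2}\|h\|_{H^{1+\delta}}$, which is square--integrable in $r$ precisely because $\delta>0$, while the transport term $2\partial_\xi(X\eta)$ costs only one derivative and is absorbed by a priori bounds on $X(r,x)$ in $L^\infty(0,1)$ and in fractional Sobolev spaces (together with moment estimates $\int_X\|x\|_{L^q}^m\,d\nu_B<\infty$ analogous to Lemma \ref{Le:sommabilita'}). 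Making this uniform with respect to the random trajectory $X(\cdot,x)$, and ensuring the $t$--integral also converges at infinity via the exponential decay of $\nabla P_t\varphi$ coming from ergodicity, is where all the work lies.

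To assemble the proof I would integrate the pointwise bound over $\nu_B$, pull out $\|\varphi\|_{L^p(X,\nu_B)}$ by Hölder in $x$ and the invariance identity $\int_X P_t|\varphi|^p\,d\nu_B=\int_X|\varphi|^p\,d\nu_B$, and bound the remaining factor by $C\|h\|_{H^{1+\delta}(0,1)}$ using the previous step; this gives exactly \eqref{e2d}. Since the differentiation of the semigroup, membership in $D(L)$, and the stochastic calculus above are only formal on the infinite--dimensional equation, I would carry out the entire argument on the spectral Galerkin truncations of \eqref{e6.1}, which are nondegenerate finite--dimensional SDEs with smooth invariant densities $\nu_n$, prove the estimate uniformly in $n$, and then pass to the limit using the convergence of $\nu_n$ to $\nu_B$ and of the truncated dynamics.
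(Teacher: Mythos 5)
You should first be aware that the paper does not prove Theorem \ref{t7.1} at all: it quotes it verbatim from \cite[Theorem 2]{DaDe16}, so your proposal has to stand as a self-contained reconstruction of that external proof. Your toolkit (invariance of $\nu_B$, Bismut--Elworthy--Li, parabolic smoothing quantified by $\delta>0$) is the right one, and your formal identity $G'(t)=\int_X\langle\nabla P_t\varphi,(A+f'(\cdot))h\rangle\,d\nu_B$ is correct, but the scheme has a structural gap: writing $G(0)=-\int_0^\infty G'(t)\,dt$ requires $G(t)\to 0$ \emph{and} integrability of $G'$ at infinity, i.e.\ quantitative decay of $\int_X\langle\nabla P_t\varphi,\cdot\rangle\,d\nu_B$ with a right-hand side expressed in $\|\varphi\|_{L^p(X,\nu_B)}$. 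That is an $L^p(\nu_B)$ spectral-gap-type property of the Burgers semigroup which is not known: the available exponential-ergodicity results for stochastic Burgers are in (weighted) total variation and do not convert into $L^p(\nu_B)$ decay for a non-reversible semigroup, and the paper itself concedes it cannot verify even the much weaker irreducibility Hypothesis \ref{h3} for $\nu_B$. This reliance on long-time behaviour is avoidable and is exactly what \cite{DaDe16} avoids: there, invariance $\int_X P_t\psi\,d\nu_B=\int_X\psi\,d\nu_B$ is applied to $\psi=\langle\nabla\varphi,h\rangle$ at a \emph{single fixed time}, and the whole content is a pointwise estimate of $P_t(\langle\nabla\varphi,h\rangle)(x)$ via a Malliavin integration by parts whose adapted control puts the roughness on the deterministic smoothing factor $e^{(t-s)A}h$, which is where $\|(-A)e^{(t-s)A}h\|\le C(t-s)^{(\delta-1)/2}\|h\|_{H^{1+\delta}}$, hence $\delta>0$, enters. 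You could reorganize your argument the same way (bound $G(T)$ directly at a fixed $T$ instead of letting $T\to\infty$), but as written it hinges on an unproven mixing property.

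The second gap is that the step you yourself call the heart of the matter is essentially the entire theorem, and the phrase ``absorbed by a priori bounds on $X(r,x)$'' conceals a concrete obstruction. Under $\nu_B$ the trajectory is only known to satisfy the polynomial moment bounds \eqref{momentsBurgers} in $L^q(0,1)$; any Gronwall argument on the forward linearized equation $\partial_r\eta=A\eta+2\partial_\xi(X\eta)$ in mild form (e.g.\ via $\|e^{rA}\partial_\xi u\|\le Cr^{-3/4}\|u\|_{L^1}$ and $\|X\eta\|_{L^1}\le\|X\|\,\|\eta\|$) produces factors of the type $\exp\bigl(c\int_0^t\|X(s)\|^4\,ds\bigr)$, i.e.\ exponential moments of the solution, which are not available and cannot be extracted from \eqref{momentsBurgers}. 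This is precisely why the cited proof never propagates rough data $k\in H^{\delta-1}$ through the forward linearized flow. Two smaller points: in the Bismut--Elworthy--Li step, Burkholder--Davis--Gundy bounds $(\E|\int_0^t\langle\eta,dW\rangle|^{p'})^{1/p'}$ by $(\E(\int_0^t\|\eta\|^2\,dr)^{p'/2})^{1/p'}$, so your factor $(\E\int_0^t\|\eta\|^2\,dr)^{1/2}$ is only legitimate for $p\ge 2$; and the closing Galerkin-approximation framing, while standard and appropriate, does not repair either of the two gaps above.
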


As in Section \ref{Reaction--Diffusion},   it follows that Hypotheses \ref{h1} and \ref{h1'}  are fulfilled with $R=A^{-\beta}$ for  all $\beta\in (1/2, 1)$.  Also in this case, we do not know whether Hypothesis \ref{h2} holds. And also in this case we are going to show that our theory fits to spherical surfaces and to hyperplanes. The proofs are similar to the proofs in Section  \ref{Reaction--Diffusion} and we only sketch them. 
  
 \medskip

Let $g(x) := \|x\|^2$. It was proved in \cite[Prop. 2.3]{DaDe07} that 
\begin{equation}
\label{momentsBurgers}
\int_X \|x\|^k_{L^q(0,1)}\, \nu_B (dx) < +\infty, \quad k\in \N, \; q\geq 2. 
\end{equation}  
It follows that  $ \nu_B (L^q(0,1)) = 1$ for every $q\geq 2$, and that $g\in L^p(X, \nu_B)$ for every $p>1$. To prove that $g$ satisfies Hypothesis \ref{h2}, we argue as in  Proposition \ref{h3RD}. 
First, we remark that  
$g\in W^{1,p}(X, \nu_B)$ for every $p>1$, by  \eqref{momentsBurgers} and Lemma \ref{C^1crescita}. 
Second, the vector field $\Psi = Mg/\|Mg\|^2$ is still given by formula \eqref{Psi}. Proving that it belongs to $D(M^*_p)$ for every $p>1$ amounts to show that $x\mapsto \|(-A)^{-\beta}x\|^{-2}$ belongs to $L^p(X, \nu_B)$ for every $p>1$. This can be proved as in the case of reaction-diffusion equations, with the aid of the following lemma.

 \begin{Lemma}
 \label{cilindricheBurgers}
For every $n\in \N$ and $\theta \in  C^2_b(\R^n)$ the function $\varphi (x):= \theta(x_1, \ldots x_n)$ belongs to the domain of the infinitesimal generator $N$ of $P_2(t)$, and 
\begin{equation}
\label{KolmB}
\begin{array}{lll}
N\varphi (x) & = & \frac12\;\mbox{\rm Tr}\;[ D^2\varphi  ]+\langle x ,A \nabla \varphi (x) \rangle + \langle x^2, (\nabla \varphi(x))' \rangle =
\\
\\
&= & \ds  \frac12 \sum_{h=1}^n \frac{\partial^2 \theta}{\partial \xi_h^2}(x_1, \ldots x_n) - \sum_{h=1}^n \alpha_h x_h   \frac{\partial \theta}{\partial \xi_h}(x_1, \ldots x_n) -
\sum_{h=1}^n  \frac{\partial \theta}{\partial \xi_h}(x_1, \ldots x_n) \langle x^2,  e_h' \rangle . 
\end{array}
\end{equation}
\end{Lemma}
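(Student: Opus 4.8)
The plan is to mimic, step by step, the proof of Lemma \ref{cilindriche}, replacing the reaction--diffusion generator $L$ by the Burgers generator $N$ of $P_2(t)$. First I would invoke the Burgers analogue of \cite[Thm. 4.23]{DP04} (available through the theory underlying \cite[Thm. 14.4.4]{DPZ97}): that $N$ is the closure in $L^2(X,\nu_B)$ of the operator $N_0:\mathcal{E}_A(X)\mapsto L^2(X,\nu_B)$ given by $N_0\psi(x)=\frac12\,\mbox{\rm Tr}\,[D^2\psi(x)]+\langle x,A\nabla\psi(x)\rangle+\langle f(x),\nabla\psi(x)\rangle$ with $f(x)=2xx'$. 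Granting this, it suffices to exhibit a sequence $(\psi_k)\subset\mathcal{E}_A(X)$ with $\psi_k\to\varphi$ in $L^2(X,\nu_B)$ and $N_0\psi_k$ convergent in $L^2(X,\nu_B)$, the limit being then $N\varphi$.

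The approximating sequence is the one used in Lemma \ref{cilindriche}: by Lemma \ref{Le:Fejer} I would pick trigonometric polynomials $\theta_k$ whose derivatives up to order $2$ converge pointwise to those of $\theta$ with $\|\theta_k\|_{C^2_b(\R^n)}\leq C$ uniformly in $k$, and set $\psi_k(x)=\theta_k(x_1,\dots,x_n)$. Since $e_1,\dots,e_n\in D(A)$ we have $\psi_k\in\mathcal{E}_A(X)$, and $\psi_k\to\varphi$ in $L^2(X,\nu_B)$ by dominated convergence. Computing $N_0\psi_k$ explicitly gives the three sums appearing in \eqref{KolmB} (with $\theta_k$ in place of $\theta$), which converge pointwise $\nu_B$-a.e.\ to the right-hand side of \eqref{KolmB}.

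The delicate point, and the real content specific to Burgers, is the nonlinear drift term. The crucial observation is that $f(x)=2xx'=(x^2)'$, so that after integrating by parts, and using that each $e_h$ vanishes at $\xi=0,1$ (whence the boundary terms drop), $\langle f(x),\nabla\psi_k(x)\rangle=-\langle x^2,(\nabla\psi_k(x))'\rangle=-\sum_{h=1}^n\tfrac{\partial\theta_k}{\partial\xi_h}(x_1,\dots,x_n)\,\langle x^2,e_h'\rangle$. This reformulation no longer involves the derivative $x'$, and is meaningful for $\nu_B$-a.e.\ $x\in L^2(0,1)$, which is the whole reason the generator is written in the form \eqref{KolmB}. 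To pass to the limit in $L^2$ I would dominate: since $|\langle x^2,e_h'\rangle|\leq\|e_h'\|_\infty\,\|x\|^2_{L^2(0,1)}$ and, by \eqref{momentsBurgers}, $\|x\|^2_{L^2(0,1)}$ (as well as each $x_h$) has finite moments of every order under $\nu_B$, the uniform bound on the derivatives of $\theta_k$ from Lemma \ref{Le:Fejer} yields $|N_0\psi_k(x)|\leq G(x)$ for a fixed $G\in L^2(X,\nu_B)$ independent of $k$. The Dominated Convergence Theorem then gives $N_0\psi_k\to$ (right-hand side of \eqref{KolmB}) in $L^2(X,\nu_B)$, and the closedness of $N$ finishes the proof.

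The main obstacle I anticipate is precisely the $L^2(X,\nu_B)$-integrability and domination of the nonlinear term $\langle x^2,e_h'\rangle$: one must both justify the integration-by-parts rewriting that removes $x'$ and then control $\|x\|^2_{L^2(0,1)}$ in every $L^p(X,\nu_B)$, which is exactly what the moment estimate \eqref{momentsBurgers} of \cite[Prop. 2.3]{DaDe07} provides. Everything else is a routine transcription of Lemma \ref{cilindriche}.
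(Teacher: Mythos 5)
Your proposal is correct and follows essentially the same route as the paper's proof: identify $N$ as the closure of $N_0$ on ${\mathcal E}_A(X)$ (the paper cites \cite[\S 4.1]{DaDe07} for this), approximate $\varphi$ by the trigonometric cylindrical functions $\psi_k$ of Lemma \ref{cilindriche} via Lemma \ref{Le:Fejer}, rewrite the nonlinear drift by integration by parts as $-\langle x^2,(\nabla\psi_k)'\rangle$ (exactly because $\nu_B(H^1(0,1))=1$ is unknown), and conclude by dominated convergence using the bound $|N_0\psi_k(x)|\le C\|\theta\|_{C^2_b(\R^n)}(1+\|x\|+\|x\|^2)$ together with the moment estimate \eqref{momentsBurgers}. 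Note also that your derivation yields the minus sign on $\sum_h \frac{\partial\theta}{\partial\xi_h}\langle x^2,e_h'\rangle$, which agrees with the second line of \eqref{KolmB} and the paper's proof (the plus sign in the first line of \eqref{KolmB} is a typo).
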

\begin{proof}
By \cite[\S 4.1]{DaDe07},  $N$ is the closure of the operator $N_0: {\mathcal E}_A(X)\mapsto L^2(X, \nu_R)$ defined by 
$N_0\psi(x)=  \frac12\;\mbox{\rm Tr}\;[ D^2\psi  ]+\langle x ,A \nabla \psi (x) \rangle - \langle x^2,(\nabla \psi(x))' \rangle $ for $\psi \in {\mathcal E}_A(X)$. 
In fact, $N_0 \psi (x)$ is formally defined by 
$$N_0\psi(x)=  \frac12\;\mbox{\rm Tr}\;[ D^2\psi  ]+\langle x ,A \nabla \psi (x) \rangle + \langle 2x x', \nabla \psi(x) \rangle,  $$
which is meaningful for $x\in H^1(0,1)$. However, we do not know whether $\nu_B(H^1(0,1)) =1$ so that the scalar product $ \langle 2x x', \nabla \psi(x) \rangle$ has to be rewritten in the more convenient way
$\langle x^2,(\nabla \psi(x))' \rangle $, obtained just integrating by parts.

As in Lemma \ref{cilindriche}, we approach  $\varphi$ by a sequence $(\psi_k)$ of elements of $ {\mathcal E}_A(X)$ in $L^2(X, \nu_B)$, such  that the sequence $L_0\psi_k$ converges in $L^2(X, \nu_B)$. 
 $(\psi_k)$ is the   sequence defined in \eqref{psik}, and it converges to $\psi$ in $L^2(X, \nu_B)$ by the Dominated Convergence Theorem. Moreover, 
$$N_0\psi_k(x)= \frac{1}{2}  \sum_{h=1}^n \frac{\partial^2 \theta_k}{\partial \xi_h^2}(x_1, \ldots x_n) - \sum_{h=1}^n \alpha_h x_h   \frac{\partial \theta_k}{\partial \xi_h}(x_1, \ldots x_n)
 - \sum_{h=1}^n  \frac{\partial \theta_k}{\partial \xi_h}(x_1, \ldots x_n) \langle x^2,  e_h' \rangle $$
which converges pointwise  to the function in the right-hand side of \eqref{KolmB}. Moreover, $|N_0\psi_k(x)| \leq C\|\theta \|_{C^2_b(\R^n)} ( 1 + \|x\| + \|x\|^2) $ which is in $L^2(X, \nu_B)$ by \eqref{momentsBurgers}, and again by the Dominated Convergence Theorem the sequence $(N_0  \psi_k)$ converges to the function in the right-hand side of \eqref{KolmB} in $L^2(X, \nu_B)$. 
 \end{proof}

\begin{Proposition}
\label{l7.4}
$$
x\mapsto \frac1{\|(-A)^{-\beta}x\|^2}\in L^{k+1}(X,\nu_B),\quad\forall\,k\in\N.
$$
\end{Proposition}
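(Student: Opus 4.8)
The plan is to follow essentially verbatim the strategy of Proposition \ref{stimadenominatore}, since the only structural difference between the reaction--diffusion and the Burgers generators is the nonlinear drift. First I would reduce the claim to a statement about finite-dimensional projections: because $(\alpha_j)$ is increasing, $\|(-A)^{-\beta}x\|^2\ge \alpha_n^{-2\beta}\|P_nx\|^2$ for every $n$, so $\|(-A)^{-\beta}x\|^{-2}\le \alpha_n^{2\beta}\|P_nx\|^{-2}$ and it is enough to prove that $x\mapsto \|P_nx\|^{-2}\in L^{k+1}(X,\nu_B)$ for $n$ large enough. To this end I would regularize, setting $\varphi_\eps(x):=(\eps+\|P_nx\|^2)^{-k}$; by Lemma \ref{cilindricheBurgers} each $\varphi_\eps$ lies in the domain of the generator $N$ of $P_2(t)$, so invariance of $\nu_B$ gives $\int_X N\varphi_\eps\,d\nu_B=0$, which is the source of the estimate.

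Next I would compute $N\varphi_\eps$ from \eqref{KolmB}. The trace part produces the good term $-kn\,(\eps+\|P_nx\|^2)^{-(k+1)}$ together with $2k(k+1)\|P_nx\|^2(\eps+\|P_nx\|^2)^{-(k+2)}$; the linear drift $\langle x,A\nabla\varphi_\eps\rangle$ produces $2k\|(-A)^{1/2}P_nx\|^2(\eps+\|P_nx\|^2)^{-(k+1)}$; and the Burgers term $-\langle x^2,(\nabla\varphi_\eps)'\rangle$ produces $2k\langle x^2,(P_nx)'\rangle(\eps+\|P_nx\|^2)^{-(k+1)}$. Setting the integral to zero and moving the good term to the left yields
\begin{equation*}
kn\int_X\frac{d\nu_B}{(\eps+\|P_nx\|^2)^{k+1}}=\tilde I_1+\tilde I_2+\tilde I_3,
\end{equation*}
exactly as in \eqref{e6.16}, the key point being that here the coefficient of the good term is $kn$ (the case $\gamma=0$ of $k\sum_{j=1}^n\alpha_j^{-\gamma}$), which diverges as $n\to\infty$ with no constraint on $k$. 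The terms $\tilde I_1$ and $\tilde I_3$ are handled just as $I_1$ and $I_3$ there: using $\|(-A)^{1/2}P_nx\|^2\le\alpha_n(\eps+\|P_nx\|^2)$ and $\|P_nx\|^2\le\eps+\|P_nx\|^2$, together with H\"older and Young, one gets $|\tilde I_1|\le C_1(\delta,k,n)+\delta\int(\eps+\|P_nx\|^2)^{-(k+1)}d\nu_B$ and $\tilde I_3\le 2k(k+1)\int(\eps+\|P_nx\|^2)^{-(k+1)}d\nu_B$.

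The only genuinely new step, which I expect to be the main obstacle, is the control of the nonlinear term $\tilde I_2$. I would first rewrite $\sum_{h=1}^n x_h\langle x^2,e_h'\rangle=\langle x^2,(P_nx)'\rangle$ and use the identity $\|(P_nx)'\|_{L^2(0,1)}^2=\sum_{h=1}^n\alpha_h x_h^2=\|(-A)^{1/2}P_nx\|^2$ (integration by parts against the Dirichlet eigenfunctions) together with Cauchy--Schwarz to obtain $|\langle x^2,(P_nx)'\rangle|\le \|x\|_{L^4(0,1)}^2\,\|(-A)^{1/2}P_nx\|\le \alpha_n^{1/2}\|x\|_{L^4(0,1)}^2(\eps+\|P_nx\|^2)^{1/2}$. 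Inserting this and applying H\"older with the conjugate exponents $2k+2$ and $(2k+2)/(2k+1)$, the power of $(\eps+\|P_nx\|^2)$ in the denominator becomes exactly $k+1$, so that
\begin{equation*}
|\tilde I_2|\le 2k\,\alpha_n^{1/2}\Big(\int_X\|x\|_{L^4(0,1)}^{4k+4}\,d\nu_B\Big)^{\frac{1}{2k+2}}\Big(\int_X\frac{d\nu_B}{(\eps+\|P_nx\|^2)^{k+1}}\Big)^{\frac{2k+1}{2k+2}},
\end{equation*}
where the $L^4$-moment is finite by \eqref{momentsBurgers}; Young's inequality then absorbs this into $C_2(\delta,k,n)+\delta\int(\eps+\|P_nx\|^2)^{-(k+1)}d\nu_B$. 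Collecting the three estimates and choosing $n$ so large (and $\delta$ so small) that $kn>2k(k+1)+2\delta$, I can absorb the remaining integral on the right into the left-hand side, obtaining a bound on $\int_X(\eps+\|P_nx\|^2)^{-(k+1)}d\nu_B$ uniform in $\eps$; letting $\eps\to 0$ by monotone convergence and invoking the reduction of the first paragraph concludes the proof. Thus the new difficulty compared with the reaction--diffusion case is precisely that the nonlinearity is no longer a polynomial with finite moments, but the quadratic Burgers term, whose derivative costs a factor $\alpha_n^{1/2}$ that is nonetheless absorbable because the good coefficient $kn$ dominates for large $n$.
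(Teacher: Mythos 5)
Your proof is correct and follows essentially the same route as the paper's: reduction to the finite-dimensional projections $P_n$, regularization with $\varphi_\eps=(\eps+\|P_nx\|^2)^{-k}$, the identity $\int_X N\varphi_\eps\,d\nu_B=0$ coming from Lemma \ref{cilindricheBurgers} and the invariance of $\nu_B$, the Cauchy--Schwarz/H\"older/Young treatment of the nonlinear term using the moments \eqref{momentsBurgers}, and absorption of the right-hand side for $n$ large since the good coefficient $kn$ diverges. Your write-up is in fact slightly sharper on two details where the paper has harmless typos: you identify the constant $C_n$ in the estimate of $\langle x^2,(P_nx)'\rangle$ explicitly as $\alpha_n^{1/2}$ via $\|(P_nx)'\|_{L^2}^2=\|(-A)^{1/2}P_nx\|^2$, and you use the correct moment exponent $4k+4$ in the H\"older step.
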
\begin{proof}
We follow the proof of Proposition \ref{stimadenominatore}. For every  $n\in \N$   we estimate
$$
\frac1{\|(-A)^{-\beta}x\|^2}\le \frac1{\|(-A)^{-\beta}P_nx\|^2}\le \frac{\alpha^\beta_n}{\|P_nx\|^2}.
$$  
Then it is enough to show that for each $k\in \N$ there is $n\in \N$ such that 
\begin{equation}
\label{e7.28}
\frac1{\|P_nx\|^2}\in L^{k+1}(X,\nu_B),
\end{equation}
and to this aim we approach $1/\|P_nx\|^{2(k+1)}$ by the functions 
$$
\varphi_\varepsilon(x)=\frac1{(\varepsilon+\|P_nx\|^2)^{k+1}}, 
$$
that belong to $D(N)$ by Lemma \ref{cilindricheBurgers}. Formula \eqref{KolmB} (recall that now $\gamma=0$) yields
\begin{equation}
\label{e7.29}
\begin{array}{lll}
\ds N \varphi_\varepsilon (x)&=&\ds- \frac{kn}{(\varepsilon+\|P_nx\|^2)^{k+1}}+2k(k+1)\frac{\|P_nx\|^2}{(\varepsilon+\|P_nx\|^2)^{k+2}}\\
\\
&&\ds  -\frac{2k\langle AP_nx,x\rangle}{(\varepsilon+\|Px\|^2)^{k+1}} +  \frac{2k\langle (P_nx)', x^2\rangle}{(\varepsilon+\|Px\|^2)^{k+1}}.
\end{array} 
\end{equation}
Since
$$
\int_X N \varphi_\varepsilon(x)\,\nu_B(dx)=0
$$
by the invariance of $\nu_B$, we find
\begin{equation}
\label{e7.30}
\begin{array}{l}
\ds kn \,\int_X\frac{1}{(\varepsilon+\|P_nx\|^2)^{k+1}}\,\nu_B(dx)=2k\int_X\frac{\|(-A)^{1/2}P_nx\|^2}{(\varepsilon+\|P_nx\|^2)^{k+1}}\,\nu_B(dx)\\
\\
\ds + 2k\int_X \frac{ \langle (P_nx)',x^2 \rangle}{(\varepsilon+\|P_nx\|^2)^{k+1}}\,\nu_B(dx) +2k(k+1)\int_X\frac{\| Px\|^2}{(\varepsilon+\|Px\|^2)^{k+2}}\,\nu_B(dx)\\
\\
=:I_1+I_2+I_3.
\end{array} 
\end{equation}
Estimates of $I_1$ and $I_3$ are identical to the corresponding ones in the proof of Proposition \ref{stimadenominatore} with  $\gamma=0$; to estimate  $I_2$ we need different arguments. 
We have
$$\langle x^2,  (P_nx)'\rangle = \int_0^1 (x(\xi))^2  \sum_{h=1}^n \langle x, e_h\rangle e_h'(\xi) \,d\xi $$
so that 
$$\begin{array}{lll}
|\langle x^2,  (P_nx)'\rangle | & \leq & \ds   \bigg(\int_0^1 x^4d\xi\bigg)^{1/2}  \bigg(\int_0^1 \bigg(\sum_{h=1}^n \langle x, e_h\rangle e_h'(\xi)\bigg)^2d\xi\bigg)^{1/2} \leq \|x\|_{L^4(0,1)}^2 C_n \|P_nx\|
\\
\\
&\leq & \|x\|_{L^4(0,1)}^2 C_n (\varepsilon+\|P_nx\|^2)^{1/2}, 
\end{array}$$
and therefore
\begin{equation}
 \label{e8.2}
\begin{array}{lll}
  |I_2|&\le &\ds 2kC_n \int_X\frac{\|x\|_{L^4(0,1)}^2}{(\varepsilon+\|Px\|^2)^{k+1/2}}\,\nu_B(dx)\\
  \\
  &\le&\ds 2kC_n \left(\int_X\|x\|_{L^4(0,1)}^{2k+2}\,\nu_B(dx)   \right)^{\frac1{2k+2}}\;\left(\int_X\frac{1}{(\varepsilon+\|Px\|^2)^{k+1}}\,\nu_B(dx)\right)^{\frac{2k+1}{2k+2}}.
  \end{array} 
\end{equation}
Since $\int_X\|x\|_{L^4(0,1)}^{2k+2}\,\nu_B(dx) <\infty $ by \eqref{momentsBurgers},   there exists a constant $C(k,n)>0$ such that
\begin{equation}
 \label{e8.3}
\begin{array}{lll}
  |I_2|&\le & \ds C(k,n) \left(\int_X\frac{1}{(\varepsilon+\|P_nx\|^2)^{k+1}}\,\nu_B(dx)\right)^{\frac{2k+1}{2k+2}}\\
\\
&\le&\ds C(k,n,\delta)+\delta\int_X\frac{1}{(\varepsilon+\|P_nx\|^2)^{k+1}}\,\nu_B(dx),
\end{array} 
\end{equation}
for any $\delta>0$ and a suitable $ C(k,n,\delta)>0$, by Young's inequality. The conclusion follows now as in the proof of Proposition \ref{stimadenominatore} .
 \end{proof}

The procedure of Subsection 8.1.2 works as well in this case, without any modification. Therefore, for every $b\in X\setminus \{0\}$ the function $g(x) :=\langle x, b\rangle$ satisfies Hypothesis 
  \ref{h2}. 
 
\section{Final remarks and bibliographical notes}

{\em 1. Sobolev spaces.} The theory of Sobolev spaces for differentiable measures is well developed only in the Gaussian case. See \cite{Boga} for Gaussian measures in general locally convex spaces, 
\cite{DPZ} for Gaussian measures in Hilbert spaces. Basic results for general differentiable measures are in \cite[Ch. 2]{Boga2}. 

We did not consider the space $W^{1,1}(X, \nu)$, which is a very special case (even for Gaussian measures) and would deserve a specific treatment. Together with  $W^{1,1}(X, \nu)$,  spaces of  BV functions 
are still to be thoroughly investigated.  Some initial results are in \cite{RZZ}. The case of weighted Gaussian measures in Hilbert spaces was considered in \cite{AmDaGoPa12}. 

Sobolev spaces of functions defined in (smooth) domains rather than in the whole $X$ are even more puzzling. Even in the case of Gaussian measures the theory is far from being complete. A major difficulty comes from the lack of a bounded extension operator from $W^{1,p}(\Omega, \nu)$ to $W^{1,p}(X, \nu)$: see \cite{Bogaetal} for a counterexample. If $X$ is a separable infinite dimensional Hilbert space and $\nu$ is a nondegenerate Gaussian measure in $X$, the existence of a bounded extension operator from $W^{1,2}(B(0,1), \nu)$ to $W^{1,2}(X, \nu)$ is still an open question. 

   \vspace{3mm}
         
{\em 2. Surface measures.}  For a detailed account on the existing literature on surface measures in infinite dimension, we refer to the survey paper \cite{BogaReview}. 

 Hypothesis \ref{h2} on the defining function $g$  is our main assumption. It could be replaced by $Mg/\|Mg\|^2\in D(M^*_{\overline{p}})$ for {\em some}  $\overline{p}$, but this would lead to restrictions on the validity of several results. For instance, 
in Lemma \ref{l4} and in all its consequences  we should take $\varphi\in L^p(X, \nu)$ only  with $p\geq \overline{p}'$. 

Checking Hypothesis \ref{h2}
 in specific  examples  is  reduced to some regularity/summability assumptions on $g$, plus summability of $\|Mg\|^{-p}$ for every $p$. While the regularity and summability properties of $Mg$ can be considered standard conditions and can be checked in standard ways, to prove that $\|Mg\|^{-p}$ belongs to $L^1(X, \nu)$ is more difficult. To overcome this difficulty, we could replace the function $F_{\varphi}$ used throughout the paper by 
$$\widetilde{F}_{\varphi}(r) = \int_{\{g<r\}} \varphi(x) \|Mg(x)\|^2 \nu(dx), \quad r\in \R, $$
and replace Hypothesis  \ref{h2} by $Mg\in D(M^*_p)$ for every $p>1$, as suggested in \cite{BogaReview}. Then, the procedure of Lemma \ref{l4} yields that the measure $(\varphi \|Mg\|\nu)\circ g^{-1}$ is absolutely continuous with respect to the Lebesgue measure, with density
$$\widetilde{q}_{\varphi}(r) = \int_{\{g<r\}} (\langle M_p\varphi, Mg\rangle - \varphi M^*(Mg))d\nu, \quad r\in \R, $$
and the procedure of Theorem \ref{costruzione} gives a Borel measure $\widetilde{\sigma}_r^g$
such that $\widetilde{F}_\varphi '( r)  =\int_{X} \varphi(x)\,\widetilde{\sigma}_r^g(dx)$, 
for every $\varphi\in C_b(X)$. However, as the measures $\sigma^g_r$, these measures depend explicitly on $g$, and have not any intrinsic geometric or analytic meaning. The geometrically meaningful
 measure is what we called $\rho_r$, see Section 5, and to obtain it the assumption $\|Mg\|^{-p}\in L^1(X, \nu)$ for some $p$ seems to be unavoidable.

 \section{Acknowledgements}
 
 We thank V. Bogachev for enlightening discussions, and for letting us know  a preliminary version of      \cite{BogaReview}.    Our work was partially supported by the research project 
 PRIN 2010MXMAJR ``Evolution differential problems: deterministic and stochastic approaches and their interactions".

\end{document}